\documentclass[11pt,a4paper]{amsart}

\usepackage{color}
\usepackage{amsmath, enumerate}
\usepackage{amsfonts}
\usepackage{amsthm}
\usepackage{mathrsfs}
\usepackage{amssymb}
\usepackage[english]{babel}
\usepackage{graphicx}
\usepackage[all]{xy}
\usepackage{yfonts}

\usepackage[active]{srcltx}
\usepackage[parfill]{parskip}

%\thispagestyle{empty}

%\numberwithin{equation}{section}
%\renewcommand{\theequation}{\thesection.\arabic{equation}}

%%%%%%%%%%%%%%%%%%%%%%%%%%%%%%%%%%%%%%

%\DeclareMathAlphabet{\mathpzc}{OT1}{pzc}

\newtheorem{theorem}{Theorem}%[section]
\newtheorem{lemma}[theorem]{Lemma}

\newtheorem{corollary}[theorem]{Corollary}
\newtheorem{proposition}[theorem]{Proposition}
\newtheorem{remark}[theorem]{Remark}
\newtheorem{example}[theorem]{Example}
\newtheorem{conjecture}[theorem]{Conjecture}
\newtheorem{question}[theorem]{Question}

\newcommand{\Ev}{{\mathrm{Ev}}}

\newcommand{\add}{{\mathrm{add}}}

\newcommand{\eins}{\leavevmode\hbox{\small1\kern-3.8pt\normalsize1}}

\newcommand{\tto}{\twoheadrightarrow}

\newcommand{\mC}{\mathbb{C}}

\newcommand{\mZ}{\mathbb{Z}}

\newcommand{\fk}{{\mathfrak k}}

\font\sc=rsfs10
\newcommand{\cC}{\sc\mbox{C}\hspace{1.0pt}}

\newcommand{\cI}{\sc\mbox{I}\hspace{1.0pt}}

\newcommand{\cL}{\sc\mbox{L}\hspace{1.0pt}}
\newcommand{\cP}{\sc\mbox{P}\hspace{1.0pt}}

%Calligraphic symbols are compactly denoted
% \newcommand{\cD}{\mathcal{D}}
% \newcommand{\cM}{\mathcal{M}}
% \newcommand{\cH}{\mathcal{H}}
% \newcommand{\cE}{\mathcal{E}}
% \newcommand{\cF}{\mathcal{F}}
% \newcommand{\cP}{\mathcal{P}}
% \newcommand{\cQ}{\mathcal{Q}}
% \newcommand{\cC}{\mathcal{C}}
% \newcommand{\cR}{\mathcal{R}}
% \newcommand{\cK}{\mathcal{K}}
% \newcommand{\cS}{\mathcal{S}}
% \newcommand{\cG}{\mathcal{G}}
% \newcommand{\cW}{\mathcal{W}}
% %\newcommand{\cO}{\mathcal{O}}
% \newcommand{\cV}{\mathcal{V}}
% \newcommand{\cN}{\mathcal{N}}
% \newcommand{\cU}{\mathcal{U}}
% \newcommand{\cCl}{\mathcal{C}_{m|2n}}
% \newcommand{\cA}{\mathcal{A}}
% \newcommand{\cJ}{\mathcal{J}}
% \newcommand{\cL}{\mathcal{L}}
% \newcommand{\cB}{\mathcal{B}}
% \newcommand{\cZ}{\mathcal{Z}}
% \newcommand{\cY}{\mathcal{Y}}
% \newcommand{\cX}{\mathcal{X}}
% \newcommand{\cI}{\mathcal{I}}
% \newcommand{\cT}{\mathcal{T}}

\newcommand{\End}{{\rm End}}
\newcommand{\Id}{{\rm Id}}

\begin{document}
%%%%%%%%%%%%%%%%%%%%%%%%%%%%%%%%%%%%%%%%%%%%%%%%%%%%%%%%

\title[manipulations with simple modules]
{Indecomposable manipulations with \\simple modules in category $\mathcal{O}$}

\author{Kevin Coulembier, Volodymyr Mazorchuk and Xiaoting Zhang}
\date{}

\begin{abstract}
We study the problem of indecomposability of translations 
of simple modules in the principal block of BGG category $\mathcal{O}$ for $\mathfrak{sl}_n$, 
as conjectured in \cite{KiM}. We describe some general techniques and prove 
a few general results which may be
applied to study various special cases of this problem.
We apply our results to verify indecomposability for $n\leq 6$.
We also study the problem of indecomposability of shufflings and twistings of simple modules
and obtain some partial results.
\end{abstract}

\maketitle

\noindent
\textbf{MSC 2010 :} 17B10  %16E35, 16E30, 17B10, 16S37

\noindent
\textbf{Keywords :} category $\mathcal{O}$; simple module; translation functor; twisting functor;
equivalence

\section{Introduction}\label{s1}

Consider the simple complex Lie algebra $\mathfrak{sl}_n$, for $n\geq 2$,
with a fixed standard triangular decomposition $\mathfrak{sl}_n=\mathfrak{n}_-\oplus \mathfrak{h}\oplus
\mathfrak{n}_+$ and let $\mathcal{O}$ be the associated BGG category $\mathcal{O}$ as in \cite{BGG,Humphreys}.
The Weyl group $W$ of $\mathfrak{sl}_n$ indexes naturally both simple modules $L(w)$, where $w\in W$, in the
principal block $\mathcal{O}_0$ of $\mathcal{O}$ and indecomposable projective functors
$\theta_w$, where $w\in W$, in $\mathcal{O}_0$, see \cite{BG}. The following conjecture is
formulated in \cite[Conjecture~2]{KiM}.

\begin{conjecture}\label{conj1}
For all $x,y\in W$, the module $\theta_x\,L(y)$ is either indecomposable or zero.
\end{conjecture}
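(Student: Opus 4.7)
The plan is to reduce the indecomposability of $\theta_x L(y)$ to the statement that $\End(\theta_x L(y))$ is local whenever $\theta_x L(y)\neq 0$, and then to analyze that endomorphism ring through adjunction and Kazhdan--Lusztig combinatorics.

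\emph{Base cases.} For $x=e$ one has $\theta_e L(y) = L(y)$, which is simple. For a simple reflection $s$, the classical theory of translation across walls gives that $\theta_s L(y)$ is either zero or a self-dual module with simple top $L(y)$; in particular it is indecomposable. This covers the boundary cases and motivates an induction on the length of $x$.

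\emph{Reduction to combinatorics.} Since each indecomposable projective functor $\theta_x$ is biadjoint to $\theta_{x^{-1}}$, adjunction gives
\[
\End(\theta_x L(y)) \;\cong\; \mathrm{Hom}_{\mathcal{O}_0}\!\bigl(L(y),\, \theta_{x^{-1}}\theta_x L(y)\bigr).
\]
Decomposing $\theta_{x^{-1}}\theta_x \cong \bigoplus_z m_{x,z}\,\theta_z$ into indecomposable projective functors (with multiplicities computed from products of Kazhdan--Lusztig basis elements), the task becomes to understand each space $\mathrm{Hom}_{\mathcal{O}_0}(L(y),\theta_z L(y))$ together with the algebra structure on the resulting direct sum. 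To simplify further I would exploit translation to and from walls in the stabilizer of $y$ (which frequently annihilates $\theta_x L(y)$ outright), Koszul self-duality of $\mathcal{O}_0$, the long-element $w_0$ symmetry, and the twisting and shuffling functors also studied in this paper, with the aim of transporting difficult pairs $(x,y)$ to ones already handled.

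\emph{Main obstacle.} The central difficulty is that any reduced expression $x = s_{i_1}\cdots s_{i_k}$ yields $\theta_{s_{i_1}}\cdots\theta_{s_{i_k}} \cong \theta_x\oplus(\text{other indecomposable projective functors})$, and the extra summands cannot be discarded when acting on $L(y)$: they interact with $\theta_x L(y)$ through extensions controlled by higher Kazhdan--Lusztig coefficients, so one cannot straightforwardly induct by peeling off a simple reflection. Deciding which of the morphisms produced by adjunction contribute units versus nilpotents in $\End(\theta_x L(y))$ requires fine $H$-cell-level information about $W$ that does not seem to be uniformly available. This is presumably why the authors resort to a collection of general reduction techniques together with a direct case analysis for $n\le 6$, rather than a single uniform proof.
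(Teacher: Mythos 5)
The statement you are addressing is Conjecture~\ref{conj1}, and the paper does not prove it: it remains open, and the paper's content is a collection of conditional techniques plus a complete verification only for $\mathfrak{sl}_n$ with $n\le 6$ (Section~\ref{s8}), together with special cases such as $x=w_0^{\mathfrak{p}}$. Your proposal is likewise not a proof. The reduction to locality of $\End(\theta_x L(y))$, the adjunction isomorphism $\End(\theta_x L(y))\cong\mathrm{Hom}(L(y),\theta_{x^{-1}}\theta_x L(y))$, and the base cases for $x=e$ and $x=s$ are all correct, but the decisive step --- showing that the unit contribution in this endomorphism ring is one-dimensional, i.e.\ controlling the algebra structure rather than just dimensions of $\mathrm{Hom}(L(y),\theta_z L(y))$ --- is exactly what you leave open, as you acknowledge in your ``main obstacle'' paragraph. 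So the gap is not a subtle flaw in a step; it is that no step closes the argument.

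For comparison with what the paper actually does: your ingredients largely coincide with the paper's toolkit, but the paper makes each of them precise and shows that each is conditional or partial. Locality of $\End(\theta_x L(y))$ is attacked in Proposition~\ref{prop23} by comparing with $\End(\theta_x\Delta(y))\cong\End(P(x))$ (local and positively graded), but this requires an $\mathrm{Ext}^1$-vanishing or positivity hypothesis that is not known in general (Questions~\ref{quest25}--\ref{quest27}, and Example~\ref{ex28} shows the naive vanishing can fail). Your appeal to $w_0$-symmetry and Koszul self-duality appears as the identity $f(x,y)=f(y^{-1}w_0,w_0x^{-1})$ in \eqref{eq7}, and invariance under cells is Proposition~\ref{prop21}; the reduction you hope to get from twisting/shuffling and parabolic structure is realized as the Serre-subquotient equivalences of Section~\ref{s5} (Corollaries~\ref{cor71} and~\ref{cor72}, reducing to $\mathrm{supp}(x)=S$). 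Even after all of these, the paper must handle five residual pairs for $n=6$ by explicit Kazhdan--Lusztig computations (Lemmata~\ref{lem85}--\ref{lem88-2}). Note also that the paper's inductive attempt runs by downward induction on $y$ in the Bruhat order (Theorem~\ref{thm31}), again conditional on an $\mathrm{Ext}^1$-vanishing, rather than induction on $\ell(x)$, which fails for exactly the reason you state (extra indecomposable summands of $\theta_{s_{i_1}}\cdots\theta_{s_{i_k}}$ cannot be discarded). In short, your plan points at the right circle of ideas, but as it stands it proves nothing beyond the known special cases, and the conjecture itself is not settled by the paper either.
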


The present paper arose as a result of various, so far unsuccessful, attempts to either prove or disprove
this conjecture. If true, the statement of Conjecture~\ref{conj1} would be a type A phenomenon, as it
is well-known that it fails already in type $B_2$, see e.g. \cite[Subsection~5.1]{KiM}.

As mentioned in \cite{KiM}, the question posed by Conjecture~\ref{conj1} has a history
related to classification of projective functors on parabolic category $\mathcal{O}$ in type A.
That problem was solved in \cite{KiM} using results from higher representation theory, namely, classification
of simple transitive $2$-representations of the $2$-category of Soergel bimodules over the
coinvariant algebra of $W$, see \cite{MM5}. The latter motivates one of the approaches which
we try in the present paper. The other approaches are more ``classical'' and directed towards
understanding the endomorphisms algebra of $\theta_x\,L(y)$, by comparing it, using homological 
methods, with the endomorphism algebra of some other indecomposable modules.

The paper is organized as follows: In Section~\ref{s2} we collected all preliminaries 
and generalities on category $\mathcal{O}$. In Section~\ref{s3} we describe some special
cases of Conjecture~\ref{conj1} and also recall the results and observations from \cite{KiM}.
In Section~\ref{s4} we describe two homological approaches to Conjecture~\ref{conj1}. One of them,
presented in Subsection~\ref{s4.1}, compares the endomorphism algebra $\theta_x\,L(y)$ to that
of a (twisted) indecomposable projective module in $\mathcal{O}_0$. The other one, 
presented in Subsection~\ref{s4.2}, explores the possibility of an inductive proof with respect
to the Bruhat order. Section~\ref{s7} outlines a higher representation theoretic approach which 
is based on understanding the action on $\mathcal{O}_0$ of the $2$-full $2$-subcategory of the
$2$-category of projective functors generated by $\theta_x$. None of the above approaches seem
to work in full generality for the moment. However, a common feature of the above approaches is that
they all work in the special case of $x$ being the longest element of a parabolic subalgebra
(this case is ``easy'' and was already described in \cite{KiM}). All the approaches mentioned
about motivate a number of question, of homological or higher representation theoretical nature, 
respectively, which we emphasize and study (or describe the answer to) in special case.

Section~\ref{s5} contains two general observations which, in particular, turn out to be useful
for the study of Conjecture~\ref{conj1}. We observe a ``recursive'' structure of $\mathcal{O}_0$
in the sense that, for any parabolic subgroup of $W$, the category $\mathcal{O}_0$ turns out to have 
a filtration by Serre subcategories such that the corresponding subquotients are equivalent to 
the category $\mathcal{O}_0$ considered with respect to the Lie algebra associated with the parabolic 
subgroup. We show two ways to construct such a filtration (using left, respectively, right cosets
of $W$ with respect to the parabolic subgroup). One of this ways is well-coordinated with the 
action of twisting functors while the other one is well-coordinated with the action of projective functors.
The latter in particular, implies that, for fixed $x$ and $y$, Conjecture~\ref{conj1} has the same answer 
for all $n$ for which the elements $x$ and $y$ make sense (i.e. may be defined using 
the obvious chain of inclusions for symmetric groups). Another consequence is that if,
for fixed $x$ and $n$, Conjecture~\ref{conj1} is true for all $y$, then it is true,
for the same $x$, for all $n$ and for all $y$ for which it can be formulated.

In Section~\ref{s8}, we prove Conjecture~\ref{conj1} in the cases $n=2,3,4,5,6$.
The cases $n=2,3,4$ are easy and the case $n=5$ can be dealt with using the results from 
Section~\ref{s5}. The case $n=6$ is substantially more difficult and requires a number of case-by-case studies.
After application of all general methods which we know (in particular, of those from Section~\ref{s5}),
for $n=6$, we are left with five cases which we have to go through on a case-by-case basis and using
various tricks and explicit computations with Kazhdan-Lusztig polynomials (for this we use explicit
tables from \cite{Go}). So far we did not manage to extend the arguments we used in these specific
cases to any more general situation. However, the versatility of difficulties that show up 
for $n=6$ suggests that  the general case of Conjecture~\ref{conj1} might be really difficult.

Finally, in  Section~\ref{s9} we study the problem of indecomposability of shufflings of simple modules in category
$\mathcal{O}$. We show that it is equivalent to the problem of indecomposability of twistings 
of simple modules in category $\mathcal{O}$ and also establish this indecomposability in a number of cases.
We conjecture that any shuffling (or twisting) of a simple module in category $\mathcal{O}$ in any Lie type
is either indecomposable or zero.
\vspace{0.5cm}

{\bf Acknowledgements.}
The first author is supported by the Australian Research Council.
The second and the third authors are supported by the Swedish Research Council and
G{\"o}ran Gustafsson Foundation.
\vspace{2cm}

\section{Category $\mathcal{O}$}\label{s2}

\subsection{The Lie algebra and weights}\label{s2.1}
We work over the ground field $\mathbb{C}$ of complex numbers. Some of our results will be
also valid outside type A. Therefore we introduce all main objects in wider generality and
then, eventually, restrict to type A when it is necessary.

Let $\mathfrak{g}$ be a {\em reductive Lie algebra} with a fixed {\em triangular decomposition}
\begin{equation}\label{eq1}
\mathfrak{g}\;=\;\mathfrak{n}^-\oplus\mathfrak{h}\oplus\mathfrak{n}^+.
\end{equation}
Here $\mathfrak{h}$ is a fixed {\em Cartan subalgebra} and
$\mathfrak{b}=\mathfrak{h}\oplus\mathfrak{n}^+$ is the corresponding {\em Borel subalgebra}.
We denote by $R=R_+\cup R_-$ the corresponding root system of $\mathfrak{g}$ in
$\mathfrak{h}^*$ decomposed into positive and negative roots and
by $W$ the associated Weyl group. 
The half of the sum of all positive roots is denoted by $\rho\in\mathfrak{h}^\ast$
and the $W$-invariant form on $\mathfrak{h}^*$ is denoted $\langle \cdot,\cdot\rangle$.

Let~$\Lambda\subset\mathfrak{h}^\ast$ denote the set of
{\em integral weights}, that is weights which appear in finite dimensional
$\mathfrak{g}$-modules. The {\em natural partial order} on $\Lambda$ is
defined by setting $\lambda\ge \mu$ if and only if $\lambda-\mu$ is a (possibly empty)
sum of positive roots.  The {\em dominant weights} form the subset
\begin{displaymath}
\Lambda^+=\{\lambda\,|\,\langle \lambda+\rho,\alpha\rangle \ge 0,\;\;\mbox{for all $\alpha\in \Delta^+$}\}.
\end{displaymath}
The subset $\Lambda^{++}$ of {\em regular} dominant weights in $\Lambda^{+}$ consists of those
weights for which the above inequality is strict.

For a given {\em parabolic subalgebra} $\mathfrak{p}$ satisfying
$\mathfrak{b}\subset\mathfrak{p}\subset \mathfrak{g}$, we have the corresponding
{\em parabolic decomposition}
\begin{displaymath}
\mathfrak{g}\;=\;\mathfrak{u}^-\oplus \mathfrak{l}\oplus \mathfrak{u}^+,
\end{displaymath}
where $\mathfrak{l}$ is the {\em Levi subalgebra} of $\mathfrak{p}$ and $\mathfrak{u}^+$
is the {\em nilpotent radical} of $\mathfrak{p}$. As $\langle \cdot,\cdot\rangle$ is non-degenerate,
there exists an element $H_{\mathfrak{l}}\in\mathfrak{h}$, such that~$\lambda(H_{\mathfrak{l}})=\langle\lambda,\rho-\rho(\mathfrak{l}) \rangle$, with $\rho(\mathfrak{l})$ being the half of the sum of all positive roots
of $\mathfrak{l}$. It then follows that the action of $\mathrm{ad}_{H_{\mathfrak{l}}}$ on~$\mathfrak{g}$ yields
a $\mZ$-grading where
\begin{displaymath}
\mathfrak{g}_0=\mathfrak{l},\qquad
\mathfrak{u}^-=\bigoplus_{i<0}\mathfrak{g}_i\qquad\text{ and }\qquad
\mathfrak{u}^-=\bigoplus_{i<0}\mathfrak{g}_i.
\end{displaymath}

For a Lie algebra $\mathfrak{a}$, we denote by $U(\mathfrak{a})$ the universal enveloping algebra
of $\mathfrak{a}$. We set $U:=U(\mathfrak{g})$.

\subsection{The Weyl group and parabolic subgroups}\label{s2.2}

The Weyl group $W$ is partially ordered with respect to the {\em Bruhat order} $\preceq$,
see \cite[Section~0.4]{Humphreys}. We choose the convention where the identity element $e$
of $W$ is the minimum element and the {\em longest element} $w_0$ in $W$ is the maximum element.
The {\em length function} on $W$ is denoted by $\ell$ and the set of {\em simple reflections} in $W$
is denoted by $S$.

We consider the {\em dot action} of $W$ on $\mathfrak{h}^*$ given by
$w\cdot\lambda=w(\lambda+\rho)-\rho$, for $w\in W$ and~$\lambda\in\Lambda$.
The stabilizer of $\lambda\in \mathfrak{h}^*$ with respect to this action is denoted by
$W_\lambda\subset W$. Then we let $X_\lambda$ denote the set of longest representatives in
$W$ of the cosets in $W/W_\lambda$.

Given a parabolic subalgebra $\mathfrak{p}$ of $\mathfrak{g}$ as in the previous subsection,
the Weyl group of $\mathfrak{l}$ with respect to $\mathfrak{h}$ is denoted by $W^{\mathfrak{p}}$
and has longest element $w_0^{\mathfrak{p}}$. The set of shortest coset representatives in
$W/W^{\mathfrak{p}}$ is denoted by~$X^{\mathfrak{p}}$ and the corresponding set for
$W^{\mathfrak{p}}\backslash W$ is denoted by~${}^{\mathfrak{p}}X$.

%The following observation is immediate.
%\begin{lemma}
%For any $x,x'\in X^{\mathfrak{p}}$, $y,y'\in {}^{\mathfrak{p}}X$ and $w\in W^{\mathfrak{p}}$, we have
%$$x\preceq x'w\quad\Leftrightarrow \quad x\preceq x'\qquad\mbox{and}\qquad y\preceq wy'\quad\Leftrightarrow \quad y\preceq y'.$$
%\end{lemma}

The second partial order on $\Lambda$ only has relations inside  Weyl group
orbits and, for any $\lambda\in\Lambda^+$, we set
\begin{displaymath}
w_1\cdot\lambda\preceq w_2\cdot\lambda\quad\Leftrightarrow\quad w_1
\succeq w_2,\qquad\mbox{for all $w_1,w_2\in X_\lambda$}.
\end{displaymath}
The partial order $\preceq$ is also generated by the relations~$r\cdot\mu\prec\mu$, where $\mu\in\Lambda$
and $r\in W$ is a (not necessarily simple) reflection such that~$r\cdot\mu\le \mu$.

A subset $K\subset\Lambda$ is called {\bf saturated} if it is an ideal for the partial order
$\preceq$. Concretely, for any~$\lambda\in K$ and~$\mu\preceq \lambda$, the requirement is
that~$\mu\in K$.

For $w\in W$, the {\em support} $\mathrm{supp}(w)$ of $w$ is the set of simple reflections
which appear in (any) reduced decomposition of $w$.

\subsection{BGG category $\mathcal{O}$}\label{s2.3}
Consider the {\em BGG category  $\mathcal{O}$} associated to the triangular decomposition \eqref{eq1},
see \cite{BGG, Humphreys, Jantzen}. Simple objects in $\mathcal{O}$ are, up to isomorphism,
{\em simple highest weight modules} $L(\mu)$, where $\mu\in\mathfrak{h}^\ast$. The module
$L(\mu)$ is the simple top of the {\em Verma module}
$\Delta(\mu)$ and has highest weight $\mu$. The projective cover of $L(\mu)$ in $\mathcal{O}$
is denoted $P(\mu)$. The injective envelope of $L(\mu)$ in $\mathcal{O}$
is denoted $I(\mu)$.

We will only consider the {\em integral part} $\mathcal{O}_\Lambda$ of $\mathcal{O}$ which consists of all modules
with weights in $\Lambda$.  The category $\mathcal{O}_\Lambda$ decomposes into {\em indecomposable} blocks
as follows:
\begin{displaymath}
\mathcal{O}_\Lambda\;=\;\bigoplus_{\lambda\in\Lambda^+}\mathcal{O}_\lambda,
\end{displaymath}
where $\mathcal{O}_\lambda$, for $\lambda\in\Lambda^+$, is the Serre subcategory of $\mathcal{O}$ generated by
all simples of the form $L(x\cdot\lambda)$, where $x\in X_\lambda$.
For $\lambda=0$, the corresponding block $\mathcal{O}_0$ is called the
{\em principal block}. Note that the orbit $W\cdot 0$ is regular and hence isomorphism classes of simples
in $\mathcal{O}_0$ are in bijection with elements in $W$. For $w\in W$, we will often denote $L(w\cdot 0)$
simply  by $L(w)$ and similarly for all other structural modules.

By \cite[Theorem~5.1]{Humphreys}, for all $\mu,\nu\in\Lambda$, we have
\begin{equation}\label{VermaBGG}
[\Delta(\mu):L(\nu)]\not=0\quad\Leftrightarrow\quad \nu\preceq\mu\quad\Leftrightarrow\quad
\Delta(\nu)\subset \Delta(\mu).
\end{equation}

For any $K\subset\Lambda$, we consider the Serre subcategory
$\mathcal{O}^K$ of $\mathcal{O}_\Lambda$ generated by~$\{L(\mu)\,|\,\mu\in K\}$.
The projective cover of $L(\mu)$ in $\mathcal{O}^K$ will be denoted by~$P^K(\mu)$
and is, by construction, the largest quotient of $P(\mu)$ which belongs to~$\mathcal{O}^K$.
If $K$ is saturated, then $\Delta(\mu)\in \mathcal{O}^K$ if and only if~$\mu\in K$.

\subsection{Projective functors}\label{s2.4}

A {\em projective functor} on $\mathcal{O}$ is a  direct summand of a functor of the form
$-\otimes V$, where $V$ is a finite dimensional $\mathfrak{g}$-module. By \cite[Theorem~3.3]{BG},
isomorphism classes of indecomposable projective functors on $\mathcal{O}_0$ are in bijection
with elements in $W$. For $w\in W$, we denote by $\theta_w$ the unique (up to isomorphism)
indecomposable projective functor on $\mathcal{O}_0$ such that
\begin{equation}\label{DefTheta}
\theta_w \Delta(e)\cong P(w).
\end{equation}
For any $w\in W$, the pair $(\theta_w,\theta_{w^{-1}})$ is an adjoint pair of functors,
see \cite[Section~3]{BG}.

By \cite[4.12]{Jantzen}, for any $w\in W$ and $s\in S$ such that $\ell(ws)>\ell(w)$,
we have an isomorphism and a short exact sequence as follows:
\begin{equation}\label{eqtheDel}
\theta_s\Delta(w)\cong \theta_s\Delta(ws),\quad\mbox{ and }
\qquad 0\to \Delta(w)\to \theta_s\Delta(w)\to \Delta(ws)\to 0.
\end{equation}

For $x,y\in W$, we have 
\begin{equation}\label{neweq123}
\theta_x L(y)\neq 0 \quad\text{ if and only if }\quad x^{-1}\leq_{\mathcal{L}}y
\quad\text{ if and only if }\quad x\leq_{\mathcal{R}}y^{-1},
\end{equation}
see \cite[Equation~(1)]{KiM}.

\subsection{Twisting functors}\label{s2.5}

For every simple reflection~$s$ in $W$, we have the corresponding right exact {\em twisting functor} $T_s$ on~$\mathcal{O}_\Lambda$, see e.g. \cite{Arkhipov,AS,Kho}.
By \cite[Lemma~2.1(5)]{AS}, we have
\begin{equation}\label{commProj}T_s \,\theta\;\cong \;\theta\,T_s,\end{equation} for any projective functor
$\theta$.  For any $\mu\in\Lambda$ with $s\cdot\mu\le \mu$, we have
\begin{equation}\label{twistVerma}
T_s\Delta(\mu)\;\cong\;\Delta(s\cdot\mu),
\end{equation}
see e.g. \cite[Lemma~5.7]{CM1}. In general, for any $\mu\in\Lambda$, we have
\begin{equation}\label{twistVerma2}
[T_s\Delta(\mu)]=[\Delta(s\cdot\mu)]
\end{equation}
in the Grothendieck group of $\mathcal{O}$, see \cite[Lemma~2.1(3)]{AS}.

By \cite[Theorem~2]{Kho}, twisting functors satisfy braid relations, which allows us to
unambiguously define
\begin{equation}\label{eq2}
T_w:= T_{s_1}T_{s_2}\cdots T_{s_k},
\end{equation}
where $w=s_1\cdots s_k$ is a reduced expression.
By \cite[Theorem~2.2 and Corollary~4.2]{AS}, $T_w$ induces an isomorphism
\begin{equation}\label{isomPT}
T_w:\;{\mathrm{Hom}}_{\mathcal{O}}(M,N)\;\tilde\to\; {\mathrm{Hom}}_{\mathcal{O}}(T_wM,T_w N)
\end{equation}
for any two modules $M,N$ with $\Delta$-flag. More generally, the left derived functor of
$T_w$ is an autoequivalence of $\mathcal{D}^b(\mathcal{O})$.

The functor  $T_s$, where $s\in S$, is defined in \cite{Arkhipov,AS} via tensoring with a certain
semi-infinite $U$-$U$-bimodule. This means that $T_w$ is defined as a functor on the category of
all $U$-modules. Furthermore, the definition is applicable to any Lie algebra with a fixed
$\mathfrak{sl}_2$-subalgebra. From the definition in \cite[Sections~2.1 and 2.3]{Arkhipov},
we have that
\begin{equation}\label{TwistInd}
T_s\circ{\rm Ind}^{\mathfrak{g}}_{\fk}\;\cong\;{\rm Ind}^{\mathfrak{g}}_{\fk}\circ T_s,\end{equation}
for any subalgebra $\fk\subset \mathfrak{g}$ which contains the $\mathfrak{sl}_2$-subalgebra corresponding
to the simple reflection~$s$.

Let $G_s$, where $s\in S$, denote {\em Joseph's completion functor} which is right adjoint to $T_s$, see
\cite{Kho}. For a reduced expression $w=s_1s_2\cdots s_k$, we define
\begin{displaymath}
G_w:= G_{s_k}\cdots G_{s_2}G_{s_1}
\end{displaymath}
and have that $(T_w,G_w)$ is an adjoint pair.

\subsection{Graded versions}\label{s2.6}

The principal origin of the grading on $\mathcal{O}$ comes from the center of $\mathcal{O}$
which is isomorphic to the coinvariant algebra
\begin{displaymath}
\mathtt{C}:= S(\mathfrak{h})/\langle S(\mathfrak{h})_+^{W}\rangle 
\end{displaymath}
naturally graded by setting the degree of $\mathfrak{h}$ to be $2$.

We denote by $\mathcal{O}^{\mathbb{Z}}$ the {\em $\mathbb{Z}$-graded version} of $\mathcal{O}$ where
in each block of $\mathcal{O}$ we fix the corresponding Koszul grading, see \cite{BGS}.
We denote by $\langle 1\rangle$  the functor which decreases the grading by $1$.

All structural modules (simples, projectives, Vermas etc.) admit graded lifts. We fix the {\em standard
graded lift}, which we will denote by the same symbol as the corresponding ungraded module, as follows:
\begin{itemize}
\item for simple modules, their standard graded lifts are concentrated in degree zero;
\item for Verma modules, their standard graded lifts are such that the natural projection onto the
simple top is homogeneous of degree zero;
\item for projectives modules, their standard graded lifts are such that the natural projection onto the
simple top is homogeneous of degree zero.
\end{itemize}

Both projective and twisting functors admit graded lifts. For $\theta_w$, where $w\in W$, we fix its
{\em standard graded lift}, which we will denote by the same symbol, such that \eqref{DefTheta} holds in
$\mathcal{O}^{\mathbb{Z}}$. For $T_s$, where $s\in S$, we fix its
{\em standard graded lift}, which we will denote by the same symbol, such that \eqref{twistVerma} holds in
$\mathcal{O}^{\mathbb{Z}}$. By \eqref{eq2}, this uniquely defines standard graded lifts of all
$T_w$, where $w\in W$. We refer the reader to \cite{pairing,SHPO2,KiM}  for details.

\subsection{Shuffling functors}\label{SecShuff}

The shuffling functor $C_s$ corresponding to a simple reflection $s$, 
see~\cite{Ca, simple}, is the endofunctor of $\mathcal{O}_0$ defined as the cokernel of the 
adjunction morphism from the identity functor to the projective functor $\theta_s$. 
The graded lift of $C_s$ is defined by the exact sequence
\begin{displaymath}
\Id\langle -1\rangle \to \theta_s\to C_s\to 0. 
\end{displaymath}

For any $w\in W$ with reduced expression $w=s_{1}s_2\cdots s_m$, we can 
define the functor
\begin{displaymath}
C_{w}=C_{s_m}C_{s_{m-1}}\cdots C_{s_1}, 
\end{displaymath}
where the resulting functor does not depend on the choice of a reduced expression, 
see \cite[Lemma~5.10]{shuffling} or \cite[Theorem~2]{Kho} and 
\cite[Section~6.5]{MOS}. The functor $C_w$ is right exact and the corresponding left derived
functor $\cL C_w$ is an auto-equivalence of $\mathcal{D}^b(\mathcal{O}_0)$, see 
\cite[Theorem~5.7]{shuffling}. The cohomology functors of $C_w$ vanish on modules with Verma flag, see \cite[Proposition~5.3]{shuffling}.

\section{Some special cases and general reductions}\label{s3}

\subsection{Koszul-Ringel duality reduction}\label{s3.1}

For $M\in\mathcal{O}$, denote by $\mathbf{n}(M)$ the {\em number of indecomposable summands} of $M$.
Then Conjecture~\ref{conj1} can be reformulated as the conjecture that, in type A, the function
\begin{displaymath}
f:W\times W\to \{0,1,2,\dots\}\quad \text{ defined via }f(x,y):=\mathbf{n}(\theta_x L(y)),
\end{displaymath}
has only values $0$ and $1$.

By \cite{SoergelD}, the category $\mathcal{O}_0$ is Koszul self-dual. By
\cite{SoergelT}, the category $\mathcal{O}_0$ is Ringel self-dual.
By \cite[Theorem~16]{SHPO2}, the composition of Koszul and Ringel dualities on
$\mathcal{D}^b(\mathcal{O}_0)$ maps $\theta_x L(y)$
to $\theta_{y^{-1}w_0} L(w_0x^{-1})$. Both of these dualities are equivalences of 
certain derived categories which are Krull-Schmidt. Therefore, for all $x,y\in W$, we have
\begin{equation}\label{eq7}
f(x,y)=f(y^{-1}w_0,w_0x^{-1}).
\end{equation}

\subsection{Invariance under Kazhdan-Lusztig cells}\label{s3.2}

Let $\leq_{\mathcal{L}}$, $\leq_{\mathcal{R}}$ and $\leq_{\mathcal{J}}$ denote the
{\em Kazhdan-Lusztig left, right and two-sided} pre-orders on $W$, respectively.
Let $\sim_{\mathcal{L}}$, $\sim_{\mathcal{R}}$ and $\sim_{\mathcal{J}}$ denote the
corresponding equivalence classes (also known as {\em cells}).

\begin{proposition}\label{prop21}
Assume that we are in type A. Let $x,x',y,y'\in W$ be such that
$x\sim_{\mathcal{R}}x'$ and $y\sim_{\mathcal{L}}y'$. Then $f(x,y)=f(x',y')$.
\end{proposition}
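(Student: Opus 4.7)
My plan is to split the statement into two separate cell invariances and prove them using categorified Kazhdan-Lusztig cell theory in type A.

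First, I would use the Koszul-Ringel duality \eqref{eq7}, $f(x,y) = f(y^{-1}w_0, w_0 x^{-1})$, to reduce the full claim to the single statement that $y \sim_{\mathcal{L}} y'$ implies $f(x,y) = f(x,y')$ for all $x$. The reduction uses that $x \sim_{\mathcal{R}} x'$ is equivalent to $x^{-1} \sim_{\mathcal{L}} (x')^{-1}$ by the inversion symmetry of cells, and that left multiplication by $w_0$ preserves $\sim_{\mathcal{L}}$ (a standard Kazhdan-Lusztig fact, visible in type A on the RSK recording tableau). Hence $w_0 x^{-1} \sim_{\mathcal{L}} w_0 (x')^{-1}$, and the $\mathcal{R}$-invariance in the first slot follows from the assumed $\mathcal{L}$-invariance in the second slot via \eqref{eq7}.

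For the remaining invariance, the zero case is immediate: by \eqref{neweq123}, $f(x,y) = 0$ iff $x^{-1} \not\leq_{\mathcal{L}} y$, which depends on $y$ only through its left cell, so we may assume both $f(x,y), f(x,y') > 0$. The main strategy is then to pick an element $z$ in the $H$-cell intersection of the right cell of $y$ and the left cell of $y'$ (nonempty since $y \sim_{\mathcal{J}} y'$) such that $\theta_z L(y)$ decomposes as $L(y')$, with possible graded shifts and multiplicities, plus summands whose apex is a strictly smaller two-sided cell. A symmetric choice $z'$ provides the reverse relationship. Composing with $\theta_x$ and using the decomposition $\theta_x \theta_z \cong \bigoplus_w \theta_w^{\oplus m_w}$ into indecomposable projective functors via KL structure constants, one matches the indecomposable summands of $\theta_x L(y)$ with those of $\theta_x L(y')$ modulo apex-smaller contributions; an induction on the two-sided-cell order then yields $\mathbf{n}(\theta_x L(y)) = \mathbf{n}(\theta_x L(y'))$.

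The main obstacle is producing the element $z$ with the required clean decomposition of $\theta_z L(y)$, and ensuring the matching of summands survives application of $\theta_x$. This is exactly where the type A restriction is essential: in other types (e.g.\ $B_2$, see \cite[Subsection~5.1]{KiM}) the analogue of Conjecture~\ref{conj1} already fails, so no bijective matching of summands of this form can exist. The proof should therefore use the type A structure of the asymptotic Hecke algebra (a matrix algebra on each two-sided cell) together with its categorification via simple transitive $2$-representations of Soergel bimodules from \cite{MM5}, as deployed in \cite{KiM}.
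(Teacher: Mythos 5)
Your opening reduction is exactly the paper's: the paper also derives the invariance $f(x,y)=f(x',y)$ in the first argument from the invariance $f(x',y)=f(x',y')$ in the second argument by applying \eqref{eq7}, using that $x\sim_{\mathcal{R}}x'$ gives $xw_0\sim_{\mathcal{R}}x'w_0$, i.e. $w_0x^{-1}\sim_{\mathcal{L}}w_0(x')^{-1}$ (the cell fact cited from \cite{BB} in the proof of Corollary~\ref{cor22}). The difference is that the paper does not reprove the second-argument invariance at all: it quotes it from the proof of \cite[Proposition~11]{KiM}, which relies on the $2$-representation theory of \cite{MM5}. Your proposal replaces that citation by a sketch, and this is where it breaks down.

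The central lemma of your sketch is false: for $y\neq y'$ there is no $z$ (in the relevant $\mathcal{H}$-cell or anywhere else) such that $\theta_z L(y)$ has $L(y')$, up to grading shift, as a direct summand. Indeed, a simple direct summand would in particular occur in the top of $\theta_z L(y)$, and every simple constituent of the top (and socle) of a nonzero $\theta_z L(y)$ lies in the right cell of $y$ — this is the fact invoked in the paper's proof of Theorem~\ref{thm31} (see also the proof of \cite[Theorem~6]{SHPO2}). Combined with your hypothesis $y'\sim_{\mathcal{L}}y$ and strong regularity of Kazhdan--Lusztig cells in type A (a left and a right cell inside one two-sided cell meet in a single element), this forces $y'=y$. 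Concretely, for $\mathfrak{sl}_3$ take $y=s$ and $y'=ts$, which lie in the same left cell; your prescription forces $z=s$, and $\theta_s L(s)$ is indecomposable with top and socle $L(s)$ and Jantzen middle $L(e)\oplus L(ts)$, so $L(ts)$ occurs only as a composition factor, never as a summand. Consequently the subsequent step — matching indecomposable summands of $\theta_x L(y)$ and $\theta_x L(y')$ ``modulo apex-smaller contributions'' and inducting along the two-sided order — has no foundation; note also that the number of indecomposable summands is not controlled under passage to Serre quotients by lower cells, so even a weakened version of your lemma in such a quotient would not yield $\mathbf{n}(\theta_xL(y))=\mathbf{n}(\theta_xL(y'))$. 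The zero case via \eqref{neweq123} is fine, but to close the argument you need the actual mechanism of \cite[Proposition~11]{KiM} (simple transitive $2$-representations of the projective-functor $2$-category), not a summand-by-summand transport through a single $\theta_z$.
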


\begin{proof}
That  $f(x',y)=f(x',y')$, is proved in the proof of \cite[Proposition~11]{KiM}.
Applying  \eqref{eq7} to this statement (for all $x',y$ and $y'$), we get  
$f(x,y)=f(x',y)$ and the claim follows.
\end{proof}

Let $I(W)$ denote the set of all involutions in $W$. Let $I'(W)$ denote the set of
all elements in $W$ of the form $ww_0$, where $w\in I(W)$.

\begin{corollary}\label{cor22}
Assume that $\mathfrak{g}$ is of type A. The following claims are equivalent.
\begin{enumerate}[$($a$)$]
\item\label{cor22.1} Conjecture~\ref{conj1} is true.
\item\label{cor22.2} The assertion of Conjecture~\ref{conj1} is true for all $x,y\in I(W)$.
\item\label{cor22.3} The assertion of Conjecture~\ref{conj1} is true for all $x\in I(W)$ and $y\in I'(W)$.
\end{enumerate}
\end{corollary}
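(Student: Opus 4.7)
The plan is to transport values of $f(x,y)$ across Kazhdan-Lusztig cells via Proposition~\ref{prop21}, combined with two classical type~A facts: (i) each left and each right Kazhdan-Lusztig cell contains a unique involution (a consequence of the Robinson-Schensted correspondence), and (ii) right multiplication by~$w_0$ induces a bijection $W\to W$ that interchanges left cells and right cells. Since (b) and (c) are obvious special cases of (a), only the implications (b)$\Rightarrow$(a) and (c)$\Rightarrow$(a) require work.

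For (b)$\Rightarrow$(a), given arbitrary $x,y\in W$, let $\tilde{x}\in I(W)$ be the unique involution in the right cell of~$x$ and $\tilde{y}\in I(W)$ the unique involution in the left cell of~$y$. Proposition~\ref{prop21} then yields $f(x,y)=f(\tilde{x},\tilde{y})$, and~(b) bounds the right hand side by~$1$.

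For (c)$\Rightarrow$(a), I first claim that, in type~A, each left cell contains exactly one element of $I'(W)=I(W)w_0$. Indeed, by~(ii) the map $w\mapsto ww_0$ sends the unique involution~$\sigma$ of any right cell to an element $\sigma w_0\in I'(W)$ of the corresponding left cell; since $|I'(W)|=|I(W)|$ equals the total number of left cells (again by~(i)), this accounts for exactly one $I'(W)$-representative in every left cell. Consequently, any $y\in W$ is left-cell equivalent to some $\tilde{y}\in I'(W)$, and, combined with the right-cell involution reduction for~$x$ as in the previous paragraph, Proposition~\ref{prop21} together with~(c) gives $f(x,y)=f(\tilde{x},\tilde{y})\leq 1$.

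The only point requiring care beyond Proposition~\ref{prop21} and the Robinson-Schensted correspondence is fact~(ii), which follows from a standard Kazhdan-Lusztig cell symmetry such as $x\leq_{\mathcal{L}}y\Leftrightarrow xw_0\geq_{\mathcal{R}}yw_0$; I expect no further subtleties, and both non-trivial implications reduce to a single application of Proposition~\ref{prop21}.
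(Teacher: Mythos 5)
Your reduction of (b) to (a) is exactly the paper's argument and is fine. The gap is in the combinatorial fact you label (ii) and rely on for (c)$\Rightarrow$(a): right multiplication by $w_0$ does \emph{not} interchange left and right Kazhdan--Lusztig cells, and the symmetry you invoke, $x\leq_{\mathcal{L}}y\Leftrightarrow xw_0\geq_{\mathcal{R}}yw_0$, is false. The correct statement (the one the paper cites from [BB, p.~179]) is that right multiplication by $w_0$ maps left cells to left cells and right cells to right cells; the operation exchanging the two families is $w\mapsto w^{-1}$, not $w\mapsto ww_0$. A counterexample occurs already for $n=3$: with $W=\{e,s,t,st,ts,w_0\}$ one has $s\sim_{\mathcal{L}}ts$, yet $sw_0=ts$ and $(ts)w_0=s$ lie in the distinct right cells $\{t,ts\}$ and $\{s,st\}$. (Via Robinson--Schensted, $w\mapsto ww_0$ transposes, up to evacuation, each of the two tableaux separately, so it preserves rather than swaps the cell type.)

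Because of this, your claim that every left cell contains exactly one element of $I'(W)$ is not established as written: sending the unique involution $\sigma$ of a right cell to $\sigma w_0$ gives no control over which left cell $\sigma w_0$ lands in, and the count $|I'(W)|=|I(W)|=$ number of left cells does not by itself yield one representative per cell without an ``at most one'' or an injective ``at least one'' statement. The claim itself is true, and the repair is short and is precisely the paper's argument: since $\mathcal{L}w_0$ is again a left cell whenever $\mathcal{L}$ is, the elements of $I'(W)$ lying in a left cell $\mathcal{L}$ are exactly the elements $\sigma w_0$ with $\sigma$ an involution in the left cell $\mathcal{L}w_0$, and by (i) there is precisely one such $\sigma$. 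With this substitution your proof coincides with the paper's: both non-trivial implications follow from a single application of Proposition~\ref{prop21}, replacing $x$ by the involution in its right cell and $y$ by the involution, respectively the unique $I'(W)$-element, in its left cell.
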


\begin{proof}
In type A, any left and any right cell contains a unique involution (the Duflo involution).
If $\mathcal{L}$ is a left (right) cell, then $\mathcal{L}w_0$ is a left (right)  cell as well, see
\cite[Page~179]{BB}. Therefore any left and any right cell contains a unique element
in $I'(W)$. Hence equivalence of \eqref{cor22.1}--\eqref{cor22.3} follows directly from
Proposition~\ref{prop21}.
\end{proof}

\subsection{Special cases}\label{s3.3}

In \cite[Section~5]{KiM}, the following special cases (here $x,y\in W$) are given:
\begin{itemize}
\item In type A, from $x\sim_{\mathcal{J}}y$ it follows that $f(x,y)\in\{0,1\}$.
\item In any type, from $x=w_0^{\mathfrak{p}}$, for some $\mathfrak{p}$, it follows that $f(x,y)\in\{0,1\}$.
\item In any type, from $y=w_0^{\mathfrak{p}}w_0$, for some $\mathfrak{p}$, it follows that we have $f(x,y)\in\{0,1\}$
(combine the previous case with \eqref{eq7}).
\item In any type, from $y\in I(W)$ and $y\sim_{\mathcal{R}}w_0^{\mathfrak{p}}w_0$,
for some $\mathfrak{p}$, it follows that $f(x,y)\in\{0,1\}$.
\end{itemize}

\subsection{Regular vs singular blocks}\label{s3.4}

One can also ask the question of whether $\theta L(\lambda)$ is indecomposable or zero for any projective
functor $\theta$ and any simple highest weight module $L(\lambda)$. Conjecture~\ref{conj1} formally refers
only to regular $\lambda$. The general case, however, follows easily from the regular one.

\begin{lemma}\label{regvssing}
If  Conjecture~\ref{conj1} is true, then $\theta L(\lambda)$ is indecomposable or zero 
for any projective functor $\theta$ and any simple highest weight module $L(\lambda)$.
\end{lemma}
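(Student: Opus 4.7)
The plan is to reduce to Conjecture~\ref{conj1} via translation onto the wall. Let $\mu\in\Lambda^+$ denote the dominant weight in the Weyl orbit of $\lambda$, so that $L(\lambda)\in\mathcal{O}_\mu$, and write $\lambda=w\cdot\mu$ with $w\in X_\mu$ the longest representative of the coset $wW_\mu$. The standard property of the translation onto the $\mu$-wall $\theta^{\mathrm{on}}_\mu\colon\mathcal{O}_0\to\mathcal{O}_\mu$ gives $\theta^{\mathrm{on}}_\mu L(w)\cong L(\lambda)$, and hence $\theta L(\lambda)\cong \theta\theta^{\mathrm{on}}_\mu L(w)$.

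Next, I would verify that the composite $\theta\theta^{\mathrm{on}}_\mu$ is itself an indecomposable projective functor from $\mathcal{O}_0$ to the target block $\mathcal{O}_\sigma$ of $\theta$. Indeed, $\theta\theta^{\mathrm{on}}_\mu\Delta(e)\cong\theta\Delta(\mu)\cong\theta P(\mu)$ is an indecomposable projective in $\mathcal{O}_\sigma$, because $\Delta(\mu)=P(\mu)$ is the dominant projective of $\mathcal{O}_\mu$ and $\theta$ is an indecomposable projective functor (hence sends the distinguished dominant projective to an indecomposable projective). By the Bernstein--Gelfand classification of projective functors, every indecomposable projective functor $\mathcal{O}_0\to\mathcal{O}_\sigma$ has the form $\theta^{\mathrm{on}}_\sigma\theta_z$ for a unique $z\in X_\sigma$, so $\theta\theta^{\mathrm{on}}_\mu\cong\theta^{\mathrm{on}}_\sigma\theta_z$ and consequently
\[
\theta L(\lambda)\;\cong\; \theta^{\mathrm{on}}_\sigma\theta_z L(w).
\]
By Conjecture~\ref{conj1}, the module $\theta_z L(w)$ is indecomposable or zero.

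If $\theta_z L(w)=0$, then $\theta L(\lambda)=0$. Otherwise, what remains is to show that $\theta^{\mathrm{on}}_\sigma$ applied to the indecomposable module $\theta_z L(w)$ yields an indecomposable (or zero) module in $\mathcal{O}_\sigma$. This is the main obstacle. I would proceed by analysing the top of $\theta_z L(w)$: the adjunction $\mathrm{Hom}_{\mathcal{O}_0}(\theta_z L(w),L(v))\cong \mathrm{Hom}_{\mathcal{O}_0}(L(w),\theta_{z^{-1}}L(v))$ combined with Conjecture~\ref{conj1} applied to each $\theta_{z^{-1}}L(v)$ should force $\theta_z L(w)$ to have a simple top $L(u)$ for some $u\in W$. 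If $u\in X_\sigma$, then $\theta^{\mathrm{on}}_\sigma L(u)=L(u\cdot\sigma)$ is simple and the right exactness of $\theta^{\mathrm{on}}_\sigma$ yields that $\theta^{\mathrm{on}}_\sigma\theta_z L(w)$ has simple top $L(u\cdot\sigma)$, hence is indecomposable. If $u\notin X_\sigma$, a closer inspection of the Loewy structure of $\theta_z L(w)$, together with the vanishing $\theta^{\mathrm{on}}_\sigma L(v)=0$ for $v\notin X_\sigma$, is needed to conclude that $\theta^{\mathrm{on}}_\sigma\theta_z L(w)$ is either zero or remains indecomposable. Handling this last case in full generality is the delicate point of the argument.
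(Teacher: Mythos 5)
Your reduction up to the identification $\theta L(\lambda)\cong\theta^{\mathrm{on}}_\sigma\theta_z L(w)$ is sound and parallels the paper's setup, but the final step is a genuine gap rather than just a delicate point: translation onto a wall does not preserve indecomposability, so knowing that $\theta_z L(w)$ is indecomposable (from Conjecture~\ref{conj1}) says nothing by itself about $\theta^{\mathrm{on}}_\sigma\theta_z L(w)$. Already for $\mathfrak{g}=\mathfrak{sl}_2$ one has
$\theta^{\mathrm{on}}_s\big(\theta_s L(s)\big)\cong\theta^{\mathrm{on}}_s P(s)\cong\theta^{\mathrm{on}}_s\theta^{\mathrm{out}}_s\Delta(\mu)\cong\Delta(\mu)^{\oplus 2}$
for $\mu$ on the $s$-wall, so translation onto the wall can split an indecomposable module. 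The proposed patch also does not work: Conjecture~\ref{conj1} only asserts that each $\theta_{z^{-1}}L(v)$ is indecomposable or zero, and indecomposability of $\theta_z L(w)$ does not force it to have simple top (this is a strictly stronger, open, property); even granting a simple top $L(u)$, your case $u\notin X_\sigma$ is left unresolved. So the argument cannot be closed along the lines you sketch.

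The paper sidesteps this trap by running the argument in the opposite direction. Assume $\theta L(\lambda)\cong M\oplus N$ with $M,N\neq 0$ and apply the translation \emph{out} of the wall of the target block. Since $\theta^{\mathrm{on}}\theta^{\mathrm{out}}$ is isomorphic to a direct sum of copies of the identity on the singular block, $\theta^{\mathrm{out}}$ annihilates no non-zero module there, so $\theta^{\mathrm{out}}M\oplus\theta^{\mathrm{out}}N$ is a non-trivial decomposition of $\big(\theta^{\mathrm{out}}\theta\,\theta^{\mathrm{on}}\big)L(w)$. The composite $\theta^{\mathrm{out}}\theta\,\theta^{\mathrm{on}}$ sends the dominant Verma module $\Delta(e)$ to the translation out of the wall of an indecomposable projective, which is again an indecomposable projective; by the Bernstein--Gelfand classification (which you already invoke for the composite $\theta\theta^{\mathrm{on}}_\mu$) it is therefore isomorphic to some $\theta_{z'}$ on $\mathcal{O}_0$, and Conjecture~\ref{conj1} yields a contradiction. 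In short: keep your identification of the composite functor, but decompose first and translate out of the wall, instead of trying to push indecomposability through a translation onto the wall.
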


\begin{proof}
If $\lambda$ is singular, we may write $L(\lambda)$ as the translation to the wall of a regular simple
highest weight modules, say $L(\lambda)=\theta^{\mathrm{on}}L(\mu)$. If we assume  $\theta L(\lambda)\cong M\oplus N$
with non-zero $M$ and $N$, then, translating $M$ and $N$ out of the wall, we get a non-trivial decomposition
$\theta^{\mathrm{out}}M\oplus \theta^{\mathrm{out}}N$. This means that 
$\theta^{\mathrm{out}}\theta\theta^{\mathrm{on}}L(\mu)$ decomposes. As the projective functor 
$\theta^{\mathrm{out}}\theta\theta^{\mathrm{on}}$ is indecomposable (as it sends the dominant Verma module
to an indecomposable projective module, which is the translation out of the wall of an indecomposable
projective module in our singular block), the claim follows. 
\end{proof}

\section{Homological approach}\label{s4}

To simplify notation, in this section
we write $\mathrm{Hom}$ and $\mathrm{Ext}$ instead of  $\mathrm{Hom}_{\mathcal{O}}$ and
$\mathrm{Ext}_{\mathcal{O}}$, respectively.

\subsection{Via twisted projective modules}\label{s4.1}

Recall that a module is indecomposable if and only if its endomorphism algebra is local. Therefore, to prove
that some module is indecomposable it is enough to show that the endomorphism algebra of this module
is either a unital subalgebra or a quotient algebra of some local algebra.

For $w\in W$, we will denote by $K(w)$ the kernel of the natural projection $\Delta(w)\tto L(w)$.
Note that $K(w)$ is naturally graded and lives in strictly positive degrees.

\begin{proposition}\label{prop23}
Assume that we are in any type and let $x,y\in W$.
Assume that the graded vector space $\mathrm{Ext}^1(\theta_x\Delta(y),\theta_x K(y))$
is either zero or lives in strictly positive degrees. Then $f(x,y)\in\{0,1\}$.
\end{proposition}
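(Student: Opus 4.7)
The plan is to leverage the fact that $\theta_x\Delta(y)$ is, up to applying a twisting functor, just $P(x)$, and then to use the Ext hypothesis to transfer the local structure of its endomorphism ring down to $\theta_x L(y)$.

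First I would rewrite $\theta_x\Delta(y)$ as a twisted projective. Iterating \eqref{twistVerma} along a reduced expression for $y$ yields $T_y\Delta(e)\cong\Delta(y)$, since at each step the dot-action weight decreases in the natural partial order. Combining this with \eqref{commProj} and \eqref{DefTheta} gives
\[
\theta_x\Delta(y)\;\cong\;\theta_x T_y\Delta(e)\;\cong\;T_y\theta_x\Delta(e)\;\cong\;T_yP(x).
\]
Since $P(x)$ is indecomposable and admits a $\Delta$-flag, \eqref{isomPT} with $M=N=P(x)$ provides a graded isomorphism $\mathrm{End}(P(x))\cong\mathrm{End}(T_yP(x))\cong\mathrm{End}(\theta_x\Delta(y))$. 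In particular, as the standard graded lift of $P(x)$ has simple top $L(x)$ concentrated in degree zero with multiplicity one, the degree-zero part $\mathrm{End}_0(P(x))$ is $\mathbb{C}\cdot\mathrm{id}$, and hence $\mathrm{End}_0(\theta_x\Delta(y))\cong\mathbb{C}$.

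Next I would apply the exact functor $\theta_x$ to the short exact sequence $0\to K(y)\to\Delta(y)\to L(y)\to 0$ to obtain
\[
0\to\theta_xK(y)\to\theta_x\Delta(y)\xrightarrow{\pi}\theta_xL(y)\to 0,
\]
and apply $\mathrm{Hom}(\theta_x\Delta(y),-)$ to extract the long exact sequence
\[
\mathrm{End}(\theta_x\Delta(y))\xrightarrow{\pi_\ast}\mathrm{Hom}(\theta_x\Delta(y),\theta_xL(y))\to\mathrm{Ext}^1(\theta_x\Delta(y),\theta_xK(y))
\]
of graded vector spaces. By hypothesis the rightmost term is either zero or concentrated in strictly positive degrees, so $\pi_\ast$ is surjective in degree zero. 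Given any $\psi\in\mathrm{End}_0(\theta_xL(y))$, the composite $\psi\circ\pi$ then lifts to some $\tilde\psi\in\mathrm{End}_0(\theta_x\Delta(y))$ satisfying $\pi\circ\tilde\psi=\psi\circ\pi$; consequently $\tilde\psi$ sends $\theta_xK(y)=\ker\pi$ into itself, and the induced endomorphism of the quotient equals $\psi$.

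Thus $\mathrm{End}_0(\theta_xL(y))$ arises as a quotient of a unital subalgebra of $\mathrm{End}_0(\theta_x\Delta(y))\cong\mathbb{C}$, so it is either $0$ or $\mathbb{C}$. In the first case $\theta_xL(y)=0$ and $f(x,y)=0$; in the second case $\mathrm{End}_0(\theta_xL(y))=\mathbb{C}$ has no non-trivial idempotents, so the standard graded lift of $\theta_xL(y)$ is indecomposable in $\mathcal{O}^{\mathbb{Z}}$, and hence $\theta_xL(y)$ is indecomposable in $\mathcal{O}$, giving $f(x,y)=1$. I do not expect a substantive obstacle: all the tools used — the twisting/projective commutation \eqref{commProj}, the $T_y$-invariance of Hom between Verma-flag modules \eqref{isomPT}, and the Koszul-grading conventions from Subsection~\ref{s2.6} — are already in place. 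The only subtlety is bookkeeping of graded degrees: the hypothesis on where $\mathrm{Ext}^1$ lives is exactly what makes $\pi_\ast$ surjective in degree zero, which in turn is exactly what is needed to lift a degree-zero endomorphism.
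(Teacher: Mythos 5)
Your proposal is correct and follows essentially the same route as the paper: the identification $\mathrm{End}(\theta_x\Delta(y))\cong\mathrm{End}(T_yP(x))\cong\mathrm{End}(P(x))$ via \eqref{twistVerma}, \eqref{commProj} and \eqref{isomPT}, together with the long exact sequence coming from $0\to\theta_xK(y)\to\theta_x\Delta(y)\to\theta_xL(y)\to 0$ and the hypothesis on $\mathrm{Ext}^1$, are exactly the paper's ingredients. The only cosmetic difference is the final step: you lift degree-zero endomorphisms to get $\mathrm{End}_0(\theta_xL(y))\in\{0,\mathbb{C}\}$ and then pass from graded to ungraded indecomposability (a standard but here uncited fact, e.g. Gordon--Green), whereas the paper instead uses the inclusion $\mathrm{End}(\theta_xL(y))\hookrightarrow\mathrm{Hom}(\theta_x\Delta(y),\theta_xL(y))$ to conclude that the full endomorphism algebra is non-negatively graded with one-dimensional degree-zero component, hence local, which sidesteps that extra appeal.
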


\begin{proof}
Applying the bifunctor $\mathrm{Hom}({}_-,{}_-)$ to the short exact sequence
\begin{displaymath}
\theta_x K(y)\hookrightarrow \theta_x\Delta(y)\tto  \theta_x L(y),
\end{displaymath}
we get the following diagram in which the bottom row is exact:
\begin{displaymath}
\resizebox{12.5cm}{!}{
\xymatrix{
&\mathrm{Hom}(\theta_x L(y),\theta_x L(y))\ar@{^{(}->}[d]&\\
\mathrm{Hom}(\theta_x\Delta(y),\theta_x\Delta(y))\ar[r]&
\mathrm{Hom}(\theta_x\Delta(y),\theta_x L(y))\ar[r]&
\mathrm{Ext}^1(\theta_x\Delta(y),\theta_x K(y))
}
}
\end{displaymath}
Using Subsection~\ref{s2.5}, we have:
\begin{displaymath}
\begin{array}{rcl}
\mathrm{End}(\theta_x\Delta(y))&\cong&
\mathrm{End}(\theta_xT_y \Delta(e))\\
&\cong&
\mathrm{End}(T_y\theta_x \Delta(e))\\
&\cong&
\mathrm{End}(T_y P(x))\\
&\cong&
\mathrm{End}(P(x)).
\end{array}
\end{displaymath}
As $P(x)$ is an indecomposable projective module over a Koszul algebra,
$\mathrm{End}(P(x))$ is positively graded and local. In particular, it
lives in non-negative degrees and the degree zero component is one-dimensional.

If $\mathrm{Ext}^1(\theta_x\Delta(y),\theta_x K(y))=0$ or if
$\mathrm{Ext}^1(\theta_x\Delta(y),\theta_x K(y))$ lives in strictly positive
degrees, we obtain that $\mathrm{End}(\theta_x L(y))$ is finite dimensional,
non-negatively graded and  the degree zero component is one-dimensional.
Therefore $\mathrm{End}(\theta_x L(y))$ is local and the claim follows.
\end{proof}

The following observations should be compared with the results of Subsection~\ref{s3.3}.

\begin{lemma}\label{lem24}
Assume that we are in any type and let $x,y\in W$ be such that
$x=w_0^{\mathfrak{p}}$, for some $\mathfrak{p}$.
Then we have
\begin{displaymath}
\mathrm{Ext}^1(\theta_x\Delta(y),\theta_x K(y))=0=\mathrm{Ext}^1(\theta_x\Delta(y),K(y)).
\end{displaymath}
\end{lemma}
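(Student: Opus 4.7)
The plan is to factor $\theta_x=\theta_{w_0^{\mathfrak p}}$ as a wall-crossing through a suitable singular block and transport both Ext-groups into that singular block via biadjointness. Fix a weight $\lambda$ whose dot-stabilizer equals $W^{\mathfrak p}$, and let $\theta^{\mathrm{on}}\colon\mathcal{O}_0\to\mathcal{O}_\lambda$ and $\theta^{\mathrm{out}}\colon\mathcal{O}_\lambda\to\mathcal{O}_0$ be the corresponding translation functors; they are exact and biadjoint. A short computation with $\Delta(e)$ gives $\theta^{\mathrm{out}}\theta^{\mathrm{on}}\Delta(e)\cong P(w_0^{\mathfrak p})$, so \eqref{DefTheta} identifies $\theta_x\cong\theta^{\mathrm{out}}\theta^{\mathrm{on}}$. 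Write $\bar{y}$ for the shortest representative of $yW^{\mathfrak p}$, so that $\theta^{\mathrm{on}}\Delta(y)\cong\Delta^{\mathrm{sing}}(\bar{y}\cdot\lambda)$ independently of whether $y\in X^{\mathfrak p}$.

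For the second vanishing $\mathrm{Ext}^1(\theta_x\Delta(y),K(y))=0$, I apply the $(\theta^{\mathrm{out}},\theta^{\mathrm{on}})$-adjunction on the first slot:
$$\mathrm{Ext}^1(\theta_x\Delta(y),K(y))\;\cong\;\mathrm{Ext}^1(\theta^{\mathrm{on}}\Delta(y),\theta^{\mathrm{on}}K(y)).$$
Applying the exact functor $\theta^{\mathrm{on}}$ to $K(y)\hookrightarrow\Delta(y)\twoheadrightarrow L(y)$, I split into two cases. If $y\in X^{\mathfrak p}$, then $\bar{y}=y$, $\theta^{\mathrm{on}}L(y)$ is the simple head of $\Delta^{\mathrm{sing}}(y\cdot\lambda)$, and $\theta^{\mathrm{on}}K(y)$ is its radical; the vanishing is then the standard fact $\mathrm{Ext}^1(\Delta(\mu),\mathrm{Rad}\,\Delta(\mu))=0$ inside the highest weight category $\mathcal{O}_\lambda$, which follows from $\mathrm{Ext}^1(\Delta(\mu),L(\nu))=0$ for $\nu\not>\mu$ by induction on composition length. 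If $y\notin X^{\mathfrak p}$, then $\theta^{\mathrm{on}}L(y)=0$ and $\theta^{\mathrm{on}}K(y)=\theta^{\mathrm{on}}\Delta(y)$, so one needs $\mathrm{Ext}^1(\Delta^{\mathrm{sing}}(\bar{y}\cdot\lambda),\Delta^{\mathrm{sing}}(\bar{y}\cdot\lambda))=0$, which is an immediate consequence of the same vanishing.

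For the first vanishing $\mathrm{Ext}^1(\theta_x\Delta(y),\theta_xK(y))=0$, I apply the $(\theta^{\mathrm{on}},\theta^{\mathrm{out}})$-adjunction on the second slot, which moves the outer $\theta^{\mathrm{out}}$ on the right across as $\theta^{\mathrm{on}}$ on the left:
$$\mathrm{Ext}^1(\theta_x\Delta(y),\theta_xK(y))\;\cong\;\mathrm{Ext}^1(\theta^{\mathrm{on}}\theta_x\Delta(y),\theta^{\mathrm{on}}K(y)).$$
The module $\theta^{\mathrm{on}}\theta_x\Delta(y)=\theta^{\mathrm{on}}\theta^{\mathrm{out}}\Delta^{\mathrm{sing}}(\bar{y}\cdot\lambda)$ inherits from the Verma flag of $\theta^{\mathrm{out}}\Delta^{\mathrm{sing}}(\bar{y}\cdot\lambda)$ (whose factors are $\Delta(\bar{y}w)$ for $w\in W^{\mathfrak p}$) a filtration by $|W^{\mathfrak p}|$ copies of $\Delta^{\mathrm{sing}}(\bar{y}\cdot\lambda)$. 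Since $\mathrm{Ext}^1(\Delta^{\mathrm{sing}},\Delta^{\mathrm{sing}})=0$ by the argument above, this filtration splits into a direct sum, so the claim reduces to exactly the Ext-vanishing established in the previous paragraph.

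The only place where genuine care is needed is the case split on whether $y$ is shortest in its $W^{\mathfrak p}$-coset, as this governs vanishing of $\theta^{\mathrm{on}}L(y)$; beyond this, the argument is a routine chain of biadjunctions together with the standard highest weight Ext-vanishing between a Verma and modules supported strictly below it, so I do not foresee any substantive obstacle.
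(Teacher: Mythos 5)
Your argument is essentially the paper's: factor $\theta_{w_0^{\mathfrak{p}}}\cong\theta^{\mathrm{out}}\theta^{\mathrm{on}}$, move both Ext-groups to the singular block by biadjunction (your Verma-flag-plus-splitting identification of $\theta^{\mathrm{on}}\theta^{\mathrm{out}}\Delta^{\mathrm{sing}}$ as $|W^{\mathfrak{p}}|$ copies of $\Delta^{\mathrm{sing}}$ is just the isomorphism $\theta^{\mathrm{on}}\theta^{\mathrm{out}}\cong\mathrm{Id}^{\oplus|W^{\mathfrak{p}}|}$ that the paper quotes), and conclude by the standard highest-weight Ext-vanishing in the singular block, splitting into the cases $\theta^{\mathrm{on}}L(y)\neq 0$ and $\theta^{\mathrm{on}}L(y)=0$. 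The only slip is the labelling of these cases: with the paper's conventions (cf. \eqref{eqtheDel} and \eqref{neweq123}, which give $\theta_s L(y)\neq 0$ iff $ys\prec y$), the module $\theta^{\mathrm{on}}L(y)$ is non-zero precisely when $y$ is the \emph{longest}, not the shortest, element of $yW^{\mathfrak{p}}$; this does not affect the proof, since your two cases are genuinely distinguished by whether $\theta^{\mathrm{on}}L(y)$ vanishes and each case is handled correctly.
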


\begin{proof}
The functor $\theta_x=\theta_{w_0^{\mathfrak{p}}}$ is the translation functor through the
intersection of all walls which correspond to simple reflections of $\mathfrak{l}$.
This functor can be factorized $\theta_x=\theta^{out}\theta^{on}$, where
$\theta^{on}$ is the translation to this intersection and $\theta^{out}$ is the translation
out of this intersection. The two latter functors are biadjoint and the composition
$\theta^{on}\theta^{out}$ is isomorphic to a direct sum of $|W^{\mathfrak{p}}|$ copies of
the identity functor for the singular block on the intersection of these walls. Therefore, using
biadjunction and additivity, it is enough to prove that
$\mathrm{Ext}^1(\theta_x\Delta(y),K(y))=0$. Again, using biadjunction, we have
\begin{displaymath}
\mathrm{Ext}^1(\theta_x\Delta(y),K(y))=
\mathrm{Ext}^1(\theta^{on} \Delta(y),\theta^{on} K(y)).
\end{displaymath}

Now, $\theta^{on} \Delta(y)$ is a Verma module in the singular block.
If $\theta_x L(y)=0$, then $\theta_x\Delta(y)=\theta_x K(y)$ and we get the assertion of the
lemma due to the fact that Verma modules have no self-extensions in $\mathcal{O}$.
If $\theta_x L(y)\neq 0$, we note that each composition subquotient $L(z)$
of $K(y)$ satisfies $z\succ y$ and this is preserved by translation to the singular block.
therefore the assertion of the lemma follows from the fact that the singular block
is a highest weight category and hence $\theta^{on} \Delta(y)$ is relatively projective
with respect to $\theta^{on} K(y)$ because of the ordering property above.
\end{proof}

\begin{lemma}\label{lem24b}
Assume that we are in any type and let $x,y\in W$ be such that
$y=w_0^{\mathfrak{p}}w_0$, for some $\mathfrak{p}$.
Then $\mathrm{Ext}^1(\theta_x\Delta(y), K(y))=0$.
\end{lemma}

\begin{proof}
Under our assumptions on $y$, we may consider the BGG resolution of $K(y)$, 
that is a complex
\begin{displaymath}
\mathcal{X}_{\bullet}:\qquad 0\to X_{-k}\to \dots \to X_{-1} \to X_0\to 0
\end{displaymath}
obtained, using the parabolic induction and then truncation, 
from the classical BGG-resolution, see  \cite{BGGr}. Here
\begin{displaymath}
X_0=\bigoplus_{w\in W^{\mathfrak{p}}:\ell(w)=1}\Delta(wy),\quad 
X_{-1}=\bigoplus_{w\in W^{\mathfrak{p}}:\ell(w)=2}\Delta(wy),\quad \dots.
\end{displaymath}
Note that each $\Delta(y')$ appearing ion this resolution satisfies
$y'\succ y$ and can be written
as $T_y\Delta(y'')$, where $y''\preceq w_0w_0^{\mathfrak{p}}w_0$. Therefore
$\mathcal{X}_{\bullet}$ has the form $T_y \mathcal{Y}_{\bullet}$ for some
complex $\mathcal{Y}_{\bullet}$ of Verma modules concentrated in non-positive positions.
Now, using Subsection~\ref{s2.5}, $\mathrm{Ext}^1(\theta_x\Delta(y), K(y))$ can be rewritten
as
\begin{displaymath}
\mathrm{Hom}_{\mathcal{D}^b(\mathcal{O})}(P(x),\mathcal{Y}_{\bullet}[1])
\end{displaymath}
and the latter is zero as the shifted complex $\mathcal{Y}_{\bullet}[1]$ has only zero in position zero.
\end{proof}

Lemma~\ref{lem24b} implies, by adjunction, that
$\mathrm{Ext}^1(\theta_x\Delta(y), \theta_{x'}K(y))=0$, for all $x'\in W$ and all $x,y$ as in the formulation
of the lemma.

The above observations naturally raise the following questions:

\begin{question}\label{quest25}
For which $x,y\in W$ does the space $\mathrm{Ext}^1(\theta_x\Delta(y),\theta_x K(y))$ live
in strictly positive degrees?
\end{question}

\begin{question}\label{quest26}
For which $x,y\in W$ do we have $\mathrm{Ext}^1(\theta_x\Delta(y),\theta_x K(y))=0$?
\end{question}

\begin{question}\label{quest27}
For which $x,y\in W$ do we have $\mathrm{Ext}^1(\theta_x\Delta(y),K(y))=0$?
\end{question}

None of these questions seems to be easy in full generality, for instance,
due to the following example.

\begin{example}\label{ex28}
{\rm
For $\mathfrak{g}=\mathfrak{sl}_3$, we have $W=\{e,s,t,st,ts,w_0\}$. Using the adjunction
$(T_s,G_s)$, we obtain that $\mathrm{Ext}^1(\theta_{ts}\Delta(s),K(s))$ is isomorphic to
$\mathrm{Hom}(P(ts),\mathcal{R}^1G_s K(s))$, where $\mathcal{R}^1G_s$ is the functor of
taking the maximal $s$-finite quotient. In the case of  $K(s)$, this quotient
is isomorphic to $L(ts)$. Tracking the grading, one sees that this $L(ts)$ does live in degree zero.
Hence $\mathrm{Ext}^1(\theta_{ts}\Delta(s),K(s))$ is one-dimensional and is concentrated in
degree zero. At the same time, as $\theta_{ts}L(ts)=\theta_{s}\theta_{t}L(ts)=0$, it follows that
$\mathrm{Ext}^1(\theta_{ts}\Delta(s),\theta_{ts}K(s))=0$.
}
\end{example}

We finish this subsection with the following observation.

\begin{proposition}\label{prop29}
Assume that we are in any type and let $x,y\in W$.
Then the graded vector space $\mathrm{Ext}^1(\theta_x\Delta(y),K(y))$, if non-zero,
lives in non-negative degrees.
\end{proposition}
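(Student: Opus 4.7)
The plan is to apply $\mathrm{Hom}(\theta_x\Delta(y),-)$ to the short exact sequence
$$0 \to K(y) \to \Delta(y) \to L(y) \to 0,$$
producing the long exact sequence
$$\cdots \to \mathrm{Hom}(\theta_x\Delta(y),L(y)) \to \mathrm{Ext}^1(\theta_x\Delta(y),K(y)) \to \mathrm{Ext}^1(\theta_x\Delta(y),\Delta(y)) \to \cdots.$$
My strategy is to first show $\mathrm{Ext}^1(\theta_x\Delta(y),\Delta(y)) = 0$, thereby exhibiting $\mathrm{Ext}^1(\theta_x\Delta(y),K(y))$ as a graded quotient of $\mathrm{Hom}(\theta_x\Delta(y),L(y))$, and then argue that the latter $\mathrm{Hom}$-space is concentrated in non-negative degrees.

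For the vanishing, I would use \eqref{commProj} to rewrite $\theta_x\Delta(y) \cong T_y P(x)$ and $\Delta(y) = T_y\Delta(e)$. Both $P(x)$ and $\Delta(e)$ have $\Delta$-flags, so $\mathcal{L}T_y$ acts on them without higher cohomology. Since $\mathcal{L}T_y$ is moreover an autoequivalence of $\mathcal{D}^b(\mathcal{O})$, I would obtain
$$\mathrm{Ext}^1(T_yP(x), T_y\Delta(e)) \cong \mathrm{Ext}^1(P(x), \Delta(e)) = 0,$$
as $P(x)$ is projective.

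For the positivity of $\mathrm{Hom}(\theta_x\Delta(y),L(y))$, I would use the adjunction $(\theta_x, \theta_{x^{-1}})$ to rewrite this $\mathrm{Hom}$-space as $\mathrm{Hom}(\Delta(y), \theta_{x^{-1}}L(y))$. A homogeneous degree-$d$ element of the latter corresponds to a highest-weight vector of weight $y\cdot 0$ in the degree-$d$ component of $\theta_{x^{-1}}L(y)$. Since $L(y)$ lives in degree zero by the standard graded normalization of Subsection~\ref{s2.6}, it then suffices to verify that $\theta_{x^{-1}}L(y)$ is concentrated in non-negative degrees as a graded module.

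The main obstacle is precisely this last step: showing that an indecomposable projective functor, in its standard graded lift, carries a simple module in degree zero to a module concentrated in non-negative degrees. This is a structural positivity fact for the graded category $\mathcal{O}$, ultimately traceable to Kazhdan--Lusztig positivity for the action of the graded Hecke algebra on $K_0(\mathcal{O}_0^{\mathbb{Z}})$ together with the normalization $\theta_w\Delta(e) = P(w)$. An alternative path uses the derived adjunction $(\mathcal{L}T_y, \mathcal{R}G_y)$ to rewrite $\mathrm{Hom}(\theta_x\Delta(y),L(y)) \cong \mathrm{Hom}(P(x), G_yL(y))$ and then analyzes the top of $G_yL(y)$ via the isomorphism $G_y \cong d\,T_y\,d$, where $d$ is the simple-preserving duality on $\mathcal{O}_0$; the positivity question has equivalent content in this reformulation.
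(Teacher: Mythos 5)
Your reduction via the long exact sequence is a legitimate start, and your first step is correct: writing $\theta_x\Delta(y)\cong T_yP(x)$ and $\Delta(y)\cong T_y\Delta(e)$, using acyclicity of $T_y$ on modules with Verma flag and the fact that $\cL T_y$ is a derived self-equivalence, indeed gives $\mathrm{Ext}^1(\theta_x\Delta(y),\Delta(y))=0$, so $\mathrm{Ext}^1(\theta_x\Delta(y),K(y))$ is a graded quotient of $\mathrm{Hom}(\theta_x\Delta(y),L(y))$. The gap is in the second step, which you yourself flag as the ``main obstacle'' and do not prove. Worse, the sufficient condition you propose to verify --- that $\theta_{x^{-1}}L(y)$ in its standard graded lift is concentrated in non-negative degrees --- is false in general: already $\theta_sL(y)$, for $ys\prec y$, has its simple top $L(y)$ in degree $-1$ (this is exactly the graded picture in Proposition~\ref{prop81}(a)), and more generally the graded character of $\theta_wL(y)$ is symmetric about degree $0$, so it spreads into negative degrees whenever it is not semisimple concentrated in degree zero. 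Kazhdan--Lusztig positivity gives non-negativity of graded multiplicities, not concentration of the module in non-negative degrees, so the appeal to it cannot close this step. What you actually need is the weaker statement that $\mathrm{Hom}(\theta_x\Delta(y),L(y))$, equivalently $\mathrm{Hom}(\Delta(y),\theta_{x^{-1}}L(y))$, lives in non-negative degrees, i.e.\ that highest-weight vectors of weight $y\cdot 0$ occur in $\theta_{x^{-1}}L(y)$ only in degrees $\geq 0$; this is true but is essentially a statement of the same nature as the proposition itself and is nowhere established in your argument.

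For comparison, the paper avoids the short exact sequence altogether: by the adjunction $(\cL T_y,\mathcal{R}G_y)$ one has $\mathrm{Ext}^1(\theta_x\Delta(y),K(y))\cong\mathrm{Hom}(P(x),\mathcal{R}^1G_yK(y))$, and since $P(x)$ is generated in degree $0$ it suffices to show that $\mathcal{R}^1G_yK(y)$ lives in non-negative degrees. This follows from the two-step structure of $\mathcal{R}G_s$ (zeroth component $G_s$, which does not lower the minimal degree, and first component $Z_s\langle 1\rangle$, which lowers it by at most one) together with the fact that $K(y)$ lives in strictly positive degrees: forming $\mathcal{R}^1$ of the composition allows at most one factor $Z_s\langle 1\rangle$, so the minimal degree drops by at most one, from $\geq 1$ to $\geq 0$. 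Your closing remark about analyzing $G_yL(y)$ via $\mathcal{R}G_y$ points in this direction (and that same bookkeeping would in fact prove the positivity of $\mathrm{Hom}(\theta_x\Delta(y),L(y))$ that your route needs), but as written the argument is not carried out, so the proposal does not yet constitute a proof.
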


\begin{proof}
By adjunction, we can rewrite $\mathrm{Ext}^1(\theta_x\Delta(y),K(y))$ as
\begin{displaymath}
\mathrm{Hom}_{\mathcal{D}^b(\mathcal{O})}(P(x),\mathcal{R}G_y K(y)[1])
\end{displaymath}
and hence reduce the statement to that of module $\mathcal{R}^1G_y K(y)$ living in
non-negative degrees. Note that $K(y)$ lives in strictly positive degrees, by construction.

By \cite{AS,Kho,simple}, the functor $\mathcal{R}G_s$ has derived length two with the zero component
$G_s$ and the first homology component $Z_s\langle 1\rangle$, where $Z_s$ is the corresponding
{\em Zuckerman} functor of taking the maximal $s$-finite quotient. Applying $G_s$ to a module
concentrated in degree $0$, either gives $0$ or a module concentrated in degrees $0$ and $1$
(this is the dual of \cite[Section~6]{AS}).
Therefore $G_s$ cannot lower the minimum non-zero degree of a graded module. In turn,
$Z_s\langle 1\rangle$ may lower this minimum degree by one.

Now, writing $\mathcal{R}G_y$ as a composition of various $\mathcal{R}G_s$, where $s$ is a simple reflection,
we can use the previous paragraph and the fact that $K(y)$ lives in strictly positive degrees
to deduce that $\mathcal{R}^1G_y K(y)$ lives in non-negative degrees.
\end{proof}

\subsection{By induction with respect to the Bruhat order}\label{s4.2}
In this subsection we work in type A.

A (much more subtle) variation of the approach described in the previous subsection is
to prove Conjecture~\ref{conj1} by the downward induction on $y$ with respect to the Bruhat order.
The basis of the induction, that is the case $y=w_0$, is covered by Subsection~\ref{s3.3}.
So, the trick is to prove the induction step. Note also that, by Ringel self-duality of $\mathcal{O}_0$,
we also know that each algebra $\mathrm{End}(\theta_x L(w_0))$ is non-negatively graded with one
dimensional  zero component.

Let $y\in W$ and $s\in S$ be such that $sy\preceq y$. Then $\mathrm{Ext}^1(L(sy),L(y))$ is one-dimensional.
Let $M_{y,s}$ be an indecomposable module which fits into a short exact sequence
\begin{equation}\label{eq3}
0\to L(y)\to M_{y,s}\to L(sy)\to 0.
\end{equation}
Due to Proposition~\ref{prop21}, without loss of generality we may assume that $y$ and $sy$ belong
to different left cells and hence also to different two-sided cells.

\begin{theorem}\label{thm31}
Assume that $y$ and $s$ are as above and $x\in W$. Then, under the vanishing condition
$\mathrm{Ext}^1(\theta_x M_{y,s},\theta_x L(y))=0$,
the fact that $\mathrm{End}(\theta_x L(y))$ is non-negatively graded with one dimensional
zero component implies that $\mathrm{End}(\theta_x L(sy))$
is non-negatively graded with one dimensional  zero component.
\end{theorem}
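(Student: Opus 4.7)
The plan is to work in $\mathcal{O}^{\mathbb{Z}}$ and apply the exact graded functor $\theta_x$ to \eqref{eq3} to obtain the short exact sequence
$$
0 \to \theta_x L(y) \stackrel{\iota}{\to} \theta_x M_{y,s} \stackrel{\pi}{\to} \theta_x L(sy) \to 0. \qquad (\ast)
$$
Applying $\mathrm{Hom}(\theta_x M_{y,s},-)$ to $(\ast)$ and invoking the $\mathrm{Ext}^1$ vanishing hypothesis gives the short exact sequence of graded vector spaces
$$
0 \to \mathrm{Hom}(\theta_x M_{y,s},\theta_x L(y)) \to \mathrm{End}(\theta_x M_{y,s}) \to \mathrm{Hom}(\theta_x M_{y,s},\theta_x L(sy)) \to 0.
$$
Consequently every $\varphi \in \mathrm{End}(\theta_x L(sy))$ admits a lift $\tilde\varphi \in \mathrm{End}(\theta_x M_{y,s})$ with $\pi\tilde\varphi = \varphi\pi$; since $\pi\tilde\varphi\iota = \varphi\pi\iota = 0$, this lift automatically preserves $\theta_x L(y)$ and restricts to some $\bar\varphi \in \mathrm{End}(\theta_x L(y))$.

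Let $J \subseteq \mathrm{End}(\theta_x L(y))$ denote the graded image of the restriction map $f \mapsto f\iota$ on $\mathrm{Hom}(\theta_x M_{y,s},\theta_x L(y))$. The ambiguity in the lift $\tilde\varphi$ is exactly $\mathrm{Hom}(\theta_x M_{y,s},\theta_x L(y))$, so the assignment $\varphi \mapsto \bar\varphi + J$ defines a grading-preserving unital algebra homomorphism
$$
\Phi \colon \mathrm{End}(\theta_x L(sy)) \longrightarrow \mathrm{End}(\theta_x L(y))/J.
$$
The element $\mathrm{id}_{\theta_x L(y)}$ lies in $J$ precisely when $\iota$ admits a retraction, that is, when $(\ast)$ splits. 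In the non-split case, the hypothesis on $\mathrm{End}(\theta_x L(y))$ forces $J$ to sit in strictly positive degrees, so $\mathrm{End}(\theta_x L(y))/J$ is non-negatively graded with one-dimensional degree-zero component. Once $\Phi$ is shown to be injective in non-positive degrees, the required properties of $\mathrm{End}(\theta_x L(sy))$ follow: non-negativity from injectivity, and one-dimensionality of the degree-zero component from the combination of injectivity and the degree-zero element $\mathrm{id}_{\theta_x L(sy)}$.

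The main obstacle is establishing this injectivity. Applying $\mathrm{Hom}(\theta_x L(sy),-)$ to $(\ast)$ identifies $\ker\Phi$ with the kernel of the connecting map $\delta\colon \mathrm{End}(\theta_x L(sy)) \to \mathrm{Ext}^1(\theta_x L(sy),\theta_x L(y))$, which sends $\mathrm{id}$ to the image under $\theta_x$ of the non-zero extension class of $M_{y,s}$, an element of internal degree one in the Koszul grading. A careful graded analysis of $\delta$ exploiting this degree-one concentration should yield the required injectivity. The split case of $(\ast)$ needs a separate argument: when $(\ast)$ splits, the $\mathrm{Ext}^1$ vanishing hypothesis reduces to $\mathrm{Ext}^1(\theta_x L(y),\theta_x L(y)) = 0 = \mathrm{Ext}^1(\theta_x L(sy),\theta_x L(y))$, and one can argue directly using the decomposition $\theta_x M_{y,s} \cong \theta_x L(y) \oplus \theta_x L(sy)$.
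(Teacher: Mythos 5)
Your two long exact sequences are set up correctly, and your identification of $\ker\Phi$ with the image of $\mathrm{Hom}(\theta_xL(sy),\theta_xM_{y,s})\to\mathrm{End}(\theta_xL(sy))$ is accurate, but the proposal stops exactly where the content of the theorem lies: the injectivity of $\Phi$ is never proved, and the route you suggest cannot deliver it. Injectivity of the connecting map $\delta$ is equivalent to the vanishing of the image of $\mathrm{Hom}(\theta_xL(sy),\theta_xM_{y,s})$ in $\mathrm{End}(\theta_xL(sy))$; this is a Hom-vanishing statement, not a degree statement. The fact that the extension class of \eqref{eq3} sits in degree one says nothing about whether $\theta_x$ kills it, and no graded analysis of $\delta$ alone rules out degree-zero maps $\theta_xL(sy)\to\theta_xM_{y,s}$ whose composition with $\pi$ is the identity. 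The paper supplies precisely this missing input by Kazhdan--Lusztig cell combinatorics, using the standing reduction (via Proposition~\ref{prop21}) that $y$ and $sy$ lie in different left, hence two-sided, cells, so $sy<_{\mathcal{J}}y$ --- an assumption your argument never invokes. Combining this with the facts that the top and socle of $\theta_xL(y)$ lie in the right cell of $y$, that all composition factors of $\theta_xL(y)$ are $\leq y$ in the right order, and that every simple in the socle of $\theta_xI(y)$ is $\geq y$ in the two-sided order, one obtains the vanishings \eqref{eq4}, in particular $\mathrm{Hom}(\theta_xL(sy),\theta_xM_{y,s})=0$. Once these are available, the proof is the short chain: $\mathrm{End}(\theta_xL(sy))\cong\mathrm{Hom}(\theta_xM_{y,s},\theta_xL(sy))$, the $\mathrm{Ext}^1$-hypothesis gives a surjection $\mathrm{End}(\theta_xM_{y,s})\tto\mathrm{End}(\theta_xL(sy))$, and \eqref{eq4} gives an embedding $\mathrm{End}(\theta_xM_{y,s})\hookrightarrow\mathrm{Hom}(\theta_xL(y),\theta_xM_{y,s})\cong\mathrm{End}(\theta_xL(y))$, from which the grading claim follows.

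Two further, smaller problems. First, for $\mathrm{End}(\theta_xL(y))/J$ to be an algebra and $\Phi$ a unital algebra homomorphism you need $J$ to be a two-sided ideal, which is not checked: right composition of $g\iota$ with an endomorphism of $\theta_xL(y)$ need not again factor through $\iota$. Second, the split case is not actually handled: if your sequence $(\ast)$ splits, then $\mathrm{id}\in J$, the target of $\Phi$ is the zero algebra, and nothing can be concluded from $\Phi$; the decomposition $\theta_xM_{y,s}\cong\theta_xL(y)\oplus\theta_xL(sy)$ by itself gives no control of $\mathrm{End}(\theta_xL(sy))$. What one must show is that splitting is impossible when $\theta_xL(sy)\neq0$ and $\theta_xL(y)\neq 0$, and this is again exactly the vanishing $\mathrm{Hom}(\theta_xL(sy),\theta_xM_{y,s})=0$ coming from the cell argument.
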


\begin{proof}
Applying the bifunctor $\mathrm{Hom}({}_-,{}_-)$ to the image of
the short exact sequence \eqref{eq3} under $\theta_x$, we obtain
the following commutative diagram:
\begin{displaymath}
\resizebox{12.5cm}{!}{\xymatrix{
\mathrm{Hom}(\theta_xL(sy),\theta_xL(y))\ar@{^{(}->}[d]\ar@{^{(}->}[r]&
\mathrm{Hom}(\theta_xL(sy),\theta_xM_{y,s})\ar@{^{(}->}[d]\ar[r]&
\mathrm{Hom}(\theta_xL(sy),\theta_xL(sy))\ar@{^{(}->}[d]&\\
\mathrm{Hom}(\theta_xM_{y,s},\theta_xL(y))\ar@{^{(}->}[r]\ar[d]&
\mathrm{Hom}(\theta_xM_{y,s},\theta_xM_{y,s})\ar[d]\ar[r]&
\mathrm{Hom}(\theta_xM_{y,s},\theta_xL(sy))\ar[d]\ar[r]&
\mathrm{Ext}^1(\theta_xM_{y,s},\theta_xL(y))\\
\mathrm{Hom}(\theta_xL(y),\theta_xL(y))\ar@{^{(}->}[r]&
\mathrm{Hom}(\theta_xL(y),\theta_xM_{y,s})\ar[r]&
\mathrm{Hom}(\theta_xL(y),\theta_xL(sy))&
}}
\end{displaymath}

By our assumptions, $sy<_{\mathcal{J}}y$.
Recall that all top components and all socular components of $\theta_x L(y)$ belong to the
right cell of $L(y)$ and all composition subquotients of $\theta_x L(y)$ are less than or equal
to $y$ with respect to the right order. Further, each simple in the socle of $\theta_x I(y)$ is greater than
or equal to $y$ with respect to the two-sided  order. It follows that 
$\theta_xL(sy)$ cannot contribute to the socle of $\theta_xM_{y,s}$ and that none of the socular or top
components of $\theta_xL(y)$ appears in $\theta_xL(sy)$. Taking all
this into account, we get
\begin{equation}\label{eq4}
\resizebox{11.5cm}{!}{$
\mathrm{Hom}(\theta_xL(sy),\theta_xL(y))=
\mathrm{Hom}(\theta_xL(sy),\theta_xM_{y,s})=
\mathrm{Hom}(\theta_xL(y),\theta_xL(sy))=0.$
}
\end{equation}
Therefore
\begin{displaymath}
\mathrm{End}(\theta_x L(sy))\cong \mathrm{Hom}(\theta_xM_{y,s},\theta_xL(sy))
\end{displaymath}
and, under the assumption $\mathrm{Ext}^1(\theta_x M_{y,s},\theta_x L(y))=0$,
we get a surjection
\begin{equation}\label{eq5}
\mathrm{End}(\theta_x M_{y,s})\tto\mathrm{End}(\theta_x L(sy)).
\end{equation}

Furthermore, \eqref{eq4} also yields
\begin{displaymath}
\mathrm{End}(\theta_xL(y))\cong
\mathrm{Hom}(\theta_xL(y),\theta_xM_{y,s})
\end{displaymath}
and
\begin{displaymath}
\mathrm{End}(\theta_x M_{y,s})\hookrightarrow\mathrm{Hom}(\theta_xL(y),\theta_xM_{y,s}).
\end{displaymath}
Therefore $\mathrm{Hom}(\theta_xL(y),\theta_xM_{y,s})$ is non-negatively graded with
one dimensional zero component by the inductive assumption, which implies that
$\mathrm{End}(\theta_x M_{y,s})$ is non-negatively graded with
one dimensional zero component and, via \eqref{eq5}, that $\mathrm{End}(\theta_x L(sy))$
 is non-negatively graded with one dimensional  zero component.
\end{proof}

As an immediate consequence from Theorem~\ref{thm31} we have:

\begin{corollary}\label{cor32}
Assume that  $\mathrm{Ext}^1(\theta_x M_{y,s},\theta_x L(y))=0$, for all $x,y$ and $s$
as in Theorem~\ref{thm31}. Then Conjecture~\ref{conj1} is true.\hfill$\qed$
\end{corollary}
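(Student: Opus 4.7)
The plan is to carry out the downward induction on $y$ in the Bruhat order that is sketched at the beginning of Subsection~\ref{s4.2}, with Theorem~\ref{thm31} supplying the inductive step. The target statement to maintain along the induction is: for every $x \in W$ and every $y \in W$, either $\theta_x L(y) = 0$ or $\mathrm{End}(\theta_x L(y))$ is non-negatively graded with one-dimensional degree zero component. Since $\mathrm{End}(\theta_x L(y))$ is finite-dimensional, this property forces it to be local (its positive-degree part is a nilpotent two-sided ideal with quotient $\mathbb{C}$), and a module is indecomposable iff its endomorphism algebra is local. Hence maintaining this invariant on all of $W$ immediately yields $f(x,y) \in \{0,1\}$, which is Conjecture~\ref{conj1}.

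Fix $x \in W$. For the base case $y = w_0$, the module $L(w_0) = \Delta(w_0) = P(w_0)$ is simultaneously simple, standard, and projective; Ringel self-duality of $\mathcal{O}_0$, recalled in the paragraph preceding Theorem~\ref{thm31}, then guarantees that $\mathrm{End}(\theta_x L(w_0))$ is non-negatively graded with one-dimensional degree zero component. (Equivalently, this case is among the list of Subsection~\ref{s3.3}, taking $\mathfrak{p} = \mathfrak{b}$.)

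For the inductive step, let $y \neq w_0$ and pick a simple reflection $s$ with $sy \succ y$ in the Bruhat order; this exists because $y \neq w_0$. By the induction hypothesis, the invariant is known for $sy$. If $y \sim_{\mathcal{L}} sy$, then Proposition~\ref{prop21} yields $f(x,y) = f(x,sy)$, transporting the conclusion directly. Otherwise $y$ and $sy$ lie in different left cells, so Theorem~\ref{thm31} applies with its ``$y$'' playing the role of our $sy$ and its ``$sy$'' playing the role of our $y$; the blanket vanishing assumed in the statement of the corollary supplies the needed $\mathrm{Ext}^1(\theta_x M_{sy,s}, \theta_x L(sy)) = 0$, and the theorem produces the desired graded structure of $\mathrm{End}(\theta_x L(y))$.

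The only genuinely fiddly point in this unwinding is the corner case in which the induction hypothesis for $sy$ reads ``$\theta_x L(sy) = 0$'' and yet $\theta_x L(y) \neq 0$, where the invariant for $sy$ gives no structural information on endomorphisms to propagate. One handles this by noting, via \eqref{neweq123}, that the vanishing of $\theta_x L(y)$ and of $\theta_x L(sy)$ is governed by the KL left preorder, so that one can either verify compatibility directly for the choice of $s$ above or, if necessary, rephrase the induction to range only over pairs $(x,y)$ with $\theta_x L(y) \neq 0$ and choose $s$ so that the predecessor $sy$ still satisfies $\theta_x L(sy) \neq 0$. The real content of the corollary is not this inductive packaging, which is essentially formal, but the universal $\mathrm{Ext}^1$-vanishing assumed in its hypothesis; that is the hard input, and verifying it in general is what would make Theorem~\ref{thm31} a viable avenue to Conjecture~\ref{conj1}.
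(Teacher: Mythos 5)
Your route is the one the paper intends (the paper itself gives no written proof, declaring the corollary immediate): downward induction along the Bruhat order with the invariant ``either $\theta_xL(y)=0$ or $\mathrm{End}(\theta_xL(y))$ is non-negatively graded with one-dimensional degree zero part'', base case $y=w_0$ from Ringel self-duality, inductive step from Theorem~\ref{thm31}. Your second ``fiddly point'' is in fact a non-issue and can be closed cleanly: if $sy\succ y$ is a Bruhat ascent by left multiplication with a simple reflection, then $y\leq_{\mathcal{L}}sy$ (in the Kazhdan--Lusztig basis $\underline{H}_{sy}$ occurs in $\underline{H}_s\underline{H}_y$), so by \eqref{neweq123} the vanishing of $\theta_xL(sy)$ forces the vanishing of $\theta_xL(y)$; thus the situation $\theta_xL(sy)=0$, $\theta_xL(y)\neq 0$ never arises and the invariant passes down trivially in that case.

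The genuine gap is your treatment of the case $y\sim_{\mathcal{L}}sy$. Proposition~\ref{prop21} transports only the number of indecomposable summands $f(x,\cdot)$ along a left cell; it does not transport the graded statement that your induction must carry, and that statement is strictly stronger than locality of the endomorphism algebra. Theorem~\ref{thm31} cannot be applied inside a left cell (its proof uses $sy<_{\mathcal{J}}y$ in an essential way), and such steps cannot always be avoided: already for $\mathfrak{sl}_3$ the element $s$ has $ts$ as its unique Bruhat ascent under left multiplication by a simple reflection, and $s\sim_{\mathcal{L}}ts$, so every descending chain from $w_0$ to $s$ ends with a same-left-cell step. Consequently your induction never establishes the graded invariant at such an element, only its indecomposability, and in a larger group this blocks further descent through it, since lower elements may have all of their Bruhat ascents of this kind. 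To repair the argument one needs an upgraded form of Proposition~\ref{prop21} asserting that the non-negatively-graded/one-dimensional-degree-zero property of $\mathrm{End}(\theta_xL(y))$ is itself constant along left cells (or a reorganization of the induction achieving the same effect). To be fair, the paper's ``immediate consequence'' glosses over exactly this point, so your write-up is faithful to the intended argument; but as written the inductive invariant is not maintained at those steps, so the proof is incomplete there.
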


This motivates the following questions:

\begin{question}\label{quest35}
For which $x,y\in W$ and $s\in S$ does $\mathrm{Ext}^1(\theta_x M_{y,s},\theta_x L(y))$ vanish?
\end{question}

\begin{question}\label{quest36}
For which $x,y\in W$ and $s\in S$ does $\mathrm{Ext}^1(\theta_x M_{y,s},L(y))$ vanish?
\end{question}

These questions, in turn, motivate the following related, but not equivalent questions:

\begin{question}\label{quest37}
For which $x,y\in W$ do we have $\mathrm{Ext}^1(\theta_x L(y),\theta_x L(y))=0$?
\end{question}

\begin{question}\label{quest38}
For which $x,y\in W$ do we have $\mathrm{Ext}^1(\theta_x L(y),L(y))=0$?
\end{question}

Again, all the above questions can be answered in the special
case $x=w_0^{\mathfrak{p}}$, for some $\mathfrak{p}$.

\begin{lemma}\label{lem39}
If $x=w_0^{\mathfrak{p}}$, for some $\mathfrak{p}$, then
$\mathrm{Ext}^1(\theta_x L(y),L(y))=0$, for all $y\in W$. In particular,
$\mathrm{Ext}^1(\theta_x L(y),\theta_x L(y))=0$.
\end{lemma}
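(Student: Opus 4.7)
The plan is to reduce the computation to one involving a simple module in a singular block and then invoke the standard vanishing of self-extensions of simples in $\mathcal{O}$. Conceptually this parallels the argument for Lemma~\ref{lem24}.

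First, I factor $\theta_x = \theta^{\mathrm{out}}\theta^{\mathrm{on}}$ exactly as in the proof of Lemma~\ref{lem24}, where $\theta^{\mathrm{on}}$ is the translation to the intersection of the walls corresponding to simple reflections of $\mathfrak{l}$ and $\theta^{\mathrm{out}}$ is its biadjoint. Both functors are exact, so derived adjunction yields
\begin{displaymath}
\mathrm{Ext}^1(\theta_x L(y), L(y)) \;\cong\; \mathrm{Ext}^1\bigl(\theta^{\mathrm{on}} L(y),\,\theta^{\mathrm{on}} L(y)\bigr).
\end{displaymath}

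Next, I use that translation to a wall sends a simple highest weight module either to zero or to a simple module in the singular block, so $\theta^{\mathrm{on}} L(y)$ is either zero or simple. In both cases, the standard fact that a simple module in $\mathcal{O}$ has no self-extensions kills the right-hand side; this vanishing can be verified by a direct weight-space argument: given a non-split $0\to L(\mu)\to M\to L(\mu)\to 0$, the weight space $M_\mu$ is two-dimensional and annihilated by $\mathfrak{n}^+$, which forces $M$ to be a quotient of $\Delta(\mu)\oplus\Delta(\mu)$ whose top is $L(\mu)\oplus L(\mu)$, and hence $M\cong L(\mu)\oplus L(\mu)$, contradicting the non-split assumption.

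For the ``in particular'' statement, a second application of biadjunction gives
\begin{displaymath}
\mathrm{Ext}^1(\theta_x L(y), \theta_x L(y)) \;\cong\; \mathrm{Ext}^1\bigl(\theta^{\mathrm{on}} L(y),\,\theta^{\mathrm{on}}\theta^{\mathrm{out}}\theta^{\mathrm{on}} L(y)\bigr).
\end{displaymath}
Since $\theta^{\mathrm{on}}\theta^{\mathrm{out}}$ is a direct sum of $|W^{\mathfrak{p}}|$ copies of the identity on the singular block (as already used in the proof of Lemma~\ref{lem24}), the right-hand side is a direct sum of copies of $\mathrm{Ext}^1(\theta^{\mathrm{on}} L(y),\theta^{\mathrm{on}} L(y))=0$, finishing the argument. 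I expect no serious obstacle: the only non-formal inputs are the classical behaviour of translation to the wall on simples and the vanishing of self-extensions of simples in $\mathcal{O}$, both of which are standard.
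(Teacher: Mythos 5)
Your proposal is correct and follows essentially the same route as the paper: factor $\theta_x=\theta^{\mathrm{out}}\theta^{\mathrm{on}}$, reduce by biadjunction to $\mathrm{Ext}^1(\theta^{\mathrm{on}}L(y),\theta^{\mathrm{on}}L(y))$, and use that $\theta^{\mathrm{on}}L(y)$ is zero or a simple module in the singular block together with the vanishing of self-extensions of simples (the paper deduces this last fact from the finite global dimension of $\mathcal{O}$ rather than a weight argument). The only loose point is your inline justification of that standard fact: being a quotient of $\Delta(\mu)\oplus\Delta(\mu)$ does not by itself force the top of $M$ to be $L(\mu)\oplus L(\mu)$; one should instead observe that a non-split self-extension would have to be a highest weight module generated by a single vector of weight $\mu$, contradicting $\dim M_\mu=2$.
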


\begin{proof}
Similarly to the proof of Lemma~\ref{lem24}, it is enough to prove the first vanishing property
$\mathrm{Ext}^1(\theta_x L(y),L(y))=0$. By adjunction, this is equivalent to
$\mathrm{Ext}^1(\theta^{on} L(y),\theta^{on} L(y))=0$. If $\theta^{on} L(y)=0$, we are done.
In case we have $\theta^{on} L(y)\neq 0$, the module $\theta^{on} L(y)$ is a simple module in the singular block.
As category $\mathcal{O}$ has finite projective dimension, simple modules cannot have
non-zero first extensions. The claim follows.
\end{proof}

\begin{lemma}\label{lem391}
If $x=w_0^{\mathfrak{p}}$, for some $\mathfrak{p}$, then
$\mathrm{Ext}^1(\theta_x M_{y,s},L(y))=0$, for all $y\in W$ and $s\in S$ as above. In particular,
$\mathrm{Ext}^1(\theta_x M_{y,s},\theta_x L(y))=0$.
\end{lemma}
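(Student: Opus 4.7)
The strategy parallels Lemma~\ref{lem39}: factor $\theta_x=\theta^{out}\theta^{on}$ with $\theta^{on}$ translating to the wall associated with $\mathfrak{p}$, use the biadjoint pair $(\theta^{out},\theta^{on})$ to push the computation into the singular block, and there exploit the fact that the images of the simple modules involved are either zero or simple. The novelty is that $M_{y,s}$ has length two, so the singular-block reduction yields a length-at-most-two module rather than a simple one, and a small amount of additional bookkeeping is required.

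\textbf{Step 1.} By the biadjunction of $\theta^{out}$ and $\theta^{on}$,
\[
\mathrm{Ext}^1(\theta_x M_{y,s},L(y))\;\cong\;\mathrm{Ext}^1(\theta^{on}M_{y,s},\theta^{on}L(y)),
\]
so it suffices to establish vanishing on the right-hand side. Applying the exact functor $\theta^{on}$ to the defining short exact sequence of $M_{y,s}$ gives a short exact sequence in the singular block whose outer terms are $\theta^{on}L(y)$ and $\theta^{on}L(sy)$, each of which is either zero or simple.

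\textbf{Step 2.} If $\theta^{on}L(y)=0$ the target vanishes and there is nothing to prove. If instead $\theta^{on}L(sy)=0$, then $\theta^{on}M_{y,s}\cong\theta^{on}L(y)$ is simple, and the desired Ext reduces to the self-extension of a simple module, which vanishes. In the remaining case both simples are non-zero, and they must be distinct, since $y$ and $sy$ cannot both be the longest element of the same $W^{\mathfrak{p}}$-coset.

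\textbf{Step 3.} In this remaining case, apply $\mathrm{Hom}(-,\theta^{on}L(y))$ to the short exact sequence in the singular block:
\[
\mathrm{End}(\theta^{on}L(y))\xrightarrow{\delta}\mathrm{Ext}^1(\theta^{on}L(sy),\theta^{on}L(y))\to\mathrm{Ext}^1(\theta^{on}M_{y,s},\theta^{on}L(y))\to\mathrm{Ext}^1(\theta^{on}L(y),\theta^{on}L(y)).
\]
The right-most term is zero since $\theta^{on}L(y)$ is simple, and $\delta$ sends the identity endomorphism to the extension class $[\theta^{on}M_{y,s}]$. Hence the desired Ext group vanishes exactly when $\delta$ is surjective. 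The hard part is to establish surjectivity.

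The main obstacle is therefore to argue that $[\theta^{on}M_{y,s}]$ spans $\mathrm{Ext}^1(\theta^{on}L(sy),\theta^{on}L(y))$. I expect to do this via two ingredients: (a) non-vanishing of $[\theta^{on}M_{y,s}]$, which follows because $\theta^{on}$ induces a non-trivial map on $\mathrm{Ext}^1$ when neither simple is annihilated, so the unique non-zero class $[M_{y,s}]\in\mathrm{Ext}^1(L(sy),L(y))$ goes to a non-zero class; and (b) a one-dimensionality bound for $\mathrm{Ext}^1(\theta^{on}L(sy),\theta^{on}L(y))$ in the singular block. For (b), the cell-theoretic hypothesis (the WLOG preceding Theorem~\ref{thm31}) combined with the fact that the two cosets $yW^{\mathfrak{p}}$ and $syW^{\mathfrak{p}}$ are related by the left action of a single simple reflection should force the relevant Kazhdan--Lusztig $\mu$-coefficient to equal~$1$, yielding the bound. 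Checking this in type A is directly verifiable from the KL tables; in arbitrary type it requires invoking the standard highest-weight-category formula relating $\mathrm{Ext}^1$ between singular-block simples to the second Loewy layer of the corresponding Verma module.

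Finally, the ``in particular'' statement is obtained by the same reasoning as in Lemma~\ref{lem39}: since $\theta^{on}\theta^{out}$ is a direct sum of $|W^{\mathfrak{p}}|$ copies of the identity on the singular block, one has $\theta_x^2\cong|W^{\mathfrak{p}}|\cdot\theta_x$, so biadjunction rewrites $\mathrm{Ext}^1(\theta_x M_{y,s},\theta_x L(y))$ as a direct sum of $|W^{\mathfrak{p}}|$ copies of $\mathrm{Ext}^1(\theta_x M_{y,s},L(y))$, each vanishing by the first part of the lemma.
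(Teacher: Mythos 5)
Your proposal follows essentially the same route as the paper: reduce by biadjunction to $\mathrm{Ext}^1(\theta^{\mathrm{on}}M_{y,s},\theta^{\mathrm{on}}L(y))$, dispose of the cases $\theta^{\mathrm{on}}L(y)=0$ and $\theta^{\mathrm{on}}L(sy)=0$ exactly as the paper does, and in the remaining length-two case run the long exact sequence, so that everything hinges on (a) the image of the class $[M_{y,s}]$ being non-zero, i.e. $\theta^{\mathrm{on}}M_{y,s}$ being indecomposable, and (b) $\dim\mathrm{Ext}^1(\theta^{\mathrm{on}}L(sy),\theta^{\mathrm{on}}L(y))=1$; these are precisely the two facts the paper itself invokes at this point (it states that $\theta^{\mathrm{on}}M_{y,s}$ of length two is indecomposable and that the singular $\mathrm{Ext}^1$ is one-dimensional, without further argument), so you have correctly located the crux and your argument is on par with the paper's. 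Two caveats on your sketch of the crux: your justification of (a) is circular as written, since the assertion that $\theta^{\mathrm{on}}$ induces a non-zero map on $\mathrm{Ext}^1$ whenever neither simple is annihilated \emph{is} the indecomposability statement; a genuine argument is needed, e.g. showing via socle/grading considerations (the second socle layer of $\theta_xL(y)$ consists of simples killed by $\theta^{\mathrm{on}}$, as in the Jantzen-middle description of Proposition~\ref{prop81}) that $M_{y,s}$ cannot embed into $\theta_xL(y)$, so that $\mathrm{Hom}(\theta^{\mathrm{on}}M_{y,s},\theta^{\mathrm{on}}L(y))=0$, which rules out splitting. For (b), the cell-theoretic reduction preceding Theorem~\ref{thm31} is not what is relevant; what matters is that $y$ and $sy$ are both longest in their $W^{\mathfrak{p}}$-cosets and differ in length by one, so the graded (Kazhdan--Lusztig) decomposition numbers of the singular block in degree one give the bound, rather than a formula about second Loewy layers of Verma modules. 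The treatment of the ``in particular'' statement via $\theta^{\mathrm{on}}\theta^{\mathrm{out}}\cong\mathrm{Id}^{\oplus|W^{\mathfrak{p}}|}$ agrees with the paper.
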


\begin{proof}
As above, it is enough to prove $\mathrm{Ext}^1(\theta_x M_{y,s},L(y))=0$ which is equivalent to
$\mathrm{Ext}^1(\theta^{on} M_{y,s},\theta^{on} L(y))=0$. If $\theta^{on} L(y)=0$, the claim is clear.
If $\theta^{on} L(sy)=0$, the claim reduces to Lemma~\ref{lem39} due to exactness of $\theta^{on}$.
If $\theta^{on} M_{y,s}$ has length two, then it is an indecomposable module and the claim follows
from the fact that $\mathrm{Ext}^1(\theta^{on} L(sy),\theta^{on} L(y))$ is one-dimensional.
\end{proof}

\section{Higher representation theoretic approach}\label{s7}

\subsection{Basics on $2$-categories and $2$-representations}\label{s7.0}

In this subsection we recall some basics on finitary $2$-categories and their $2$-representations 
following \cite{MM1,MM3,MM5,MM6}. For generalities on $2$-categories, we refer the reader to \cite{McL}.

A {\em $2$-category} is a category enriched over the category of small categories. A {\em finitary}
$2$-category is a $2$-category $\cC$ for which all relevant structural information is finite.
In particular, such $\cC$ has finitely many objects, its morphisms categories are additive,
$\mathbb{C}$-linear Krull-Schmidt categories with finitely many isomorphism classes of 
indecomposable objects and finite dimensional morphism spaces. All identity $1$-morphisms in
$\cC$ are indecomposable.

Functors between $2$-categories preserving all relevant $2$-structure are called {\em $2$-functors}.
A {\em $2$-representation} of a $2$-category is a $2$-functor to a suitable $2$-category.
Quite often, a $2$-representation of a $2$-category $\cC$ can be viewed as a functorial action
of $\cC$ on some category. All $2$-representations of $\cC$ form a $2$-category where
$1$-morphisms are {\em $2$-natural transformations} and $2$-morphisms are {\em modifications}.
For finitary $2$-categories, the most natural $2$-representations are functorial actions, by additive functors,
on additive $\mathbb{C}$-linear Krull-Schmidt categories. 

A finitary $2$-category $\cC$ is called {\em fiat} if it has a weak involution and the corresponding
adjunction morphisms. 

For two indecomposable $1$-morphisms $\mathrm{F}$ and $\mathrm{G}$ of a finitary $2$-category $\cC$,
one writes $\mathrm{F}\geq_L\mathrm{G}$ provided that $\mathrm{F}$ is isomorphic to a direct summand 
of $\mathrm{H}\circ \mathrm{G}$, for some $1$-morphism $\mathrm{H}$. The defines the {\em left preorder}
$\geq_L$ and the corresponding equivalence classes are called {\em left cells}. 
The   {\em right preorder} $\geq_L$ and the corresponding {\em right cells}, and also the
{\em two-sided preorder} $\geq_J$ and the corresponding {\em two-sided cells} cells are define
similarly using multiplication from the right, or from both sides, respectively.

\subsection{Ingredients}\label{s7.1}

We denote by $\cP$ the fiat $2$-category of projective functors acting on $\mathcal{O}_0$, see
\cite[Subsection~7.1]{MM1} for details. Up to isomorphism, indecomposable $1$-morphisms in $\cP$
are exactly $\theta_w$, $w\in W$. Following the conventions of \cite{MM5}, due to the right
nature of the action of $\cP$ on $\mathcal{O}_0$, {\em left cells} of $\cP$ are indexed by  
right Kazhdan-Lusztig cells in $W$, {\em right cells} of $\cP$ are indexed by  
left Kazhdan-Lusztig cells in $W$ and {\em two-sided cells} of $\cP$ are indexed by  
two-sided Kazhdan-Lusztig cells in $W$. The notion of {\em Duflo involution} for $\cP$,
see \cite[Section~4]{MM1} corresponds to the notion of Duflo involution in $W$.

If $\mathfrak{g}$ is of type $A$, all two-sided cells in $\cP$ are {\em strongly regular} in the
sense that the intersection of any left and any right cell inside the same two-sided cell 
consists of exactly one indecomposable $1$-morphism. 

\subsection{$2$-subcategories of $\cP$ generated by involutions}\label{s7.2}

For $x\in I(W)$, we denote by $\cP_x$ the $2$-full $2$-subcategory of $\cP$ whose $1$-morphisms are
all endofunctors  of $\mathcal{O}_0$ which belong to the additive closure of all endofunctors of the
form 
\begin{displaymath}
\theta_x^k:=
\begin{cases}
\underbrace{\theta_x\circ\theta_x\circ\dots\circ\theta_x}_{k \text{ factors}},&
k\geq 1;\\ 
\theta_e,& k=0.
\end{cases}
\end{displaymath}

\begin{lemma}\label{lem61-1}
Let $A$ be a finite dimensional, basic, non-simple, associative, unital algebra 
over an algebraically closed field $\Bbbk$ and $\{e_1,e_2,\dots,e_m\}$ 
a complete set of pairwise orthogonal primitive idempotents in $A$. 
Then, for any $i,j,k\in\{1,2,\dots,m\}$,
we have
\begin{displaymath}
\big(Ae_i\otimes_{\Bbbk}e_jA\big)\otimes_A \big(Ae_j\otimes_{\Bbbk}e_kA\big)\cong
\big(Ae_i\otimes_{\Bbbk}e_kA\big)^{\oplus \dim(e_j Ae_j)}\neq 0.
\end{displaymath}
\end{lemma}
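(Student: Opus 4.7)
The plan is to establish, in two steps, a natural isomorphism of $A$-$A$-bimodules
\[
\bigl(Ae_i\otimes_{\Bbbk}e_jA\bigr)\otimes_A \bigl(Ae_j\otimes_{\Bbbk}e_kA\bigr)\;\cong\;Ae_i\otimes_{\Bbbk}(e_jAe_j)\otimes_{\Bbbk}e_kA,
\]
and then to peel off the middle factor as a $\Bbbk$-vector space of dimension $\dim(e_jAe_j)$.

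First, I would exploit the fact that on the left-hand side only the middle two tensorands carry compatible $A$-actions that participate in the balancing over $A$: the leftmost factor $Ae_i$ has trivial right $A$-action in this tensor (there is no right $A$-module structure being balanced from that side), and symmetrically for the rightmost factor $e_kA$. So the tensor over $A$ reduces to the standard multiplication-of-idempotents isomorphism
\[
e_jA\otimes_A Ae_j\;\xrightarrow{\;\sim\;}\;e_jAe_j,\qquad e_ja\otimes be_j\mapsto e_jabe_j,
\]
whose inverse sends $z\in e_jAe_j$ to $e_j\otimes_A z$. Pulling the outer factors around this identification yields the displayed bimodule isomorphism.

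Second, the $A$-bimodule structure on $Ae_i\otimes_{\Bbbk}(e_jAe_j)\otimes_{\Bbbk}e_kA$ is induced solely from left multiplication on $Ae_i$ and right multiplication on $e_kA$; the middle factor $e_jAe_j$ carries no $A$-action relevant to this bimodule structure and is merely a $\Bbbk$-vector space. Choosing any $\Bbbk$-basis of $e_jAe_j$ of cardinality $\dim(e_jAe_j)$ therefore produces a direct sum decomposition
\[
Ae_i\otimes_{\Bbbk}(e_jAe_j)\otimes_{\Bbbk}e_kA\;\cong\;\bigl(Ae_i\otimes_{\Bbbk}e_kA\bigr)^{\oplus \dim(e_jAe_j)}
\]
of $A$-bimodules, which yields the claimed isomorphism. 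For the non-vanishing, since $e_j$ is a primitive (in particular non-zero) idempotent, $e_j\in e_jAe_j$ witnesses $\dim(e_jAe_j)\geq 1$, and $Ae_i,\,e_kA$ are non-zero because $e_i,e_k$ are themselves non-zero.

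There is no serious obstacle here: the argument is a direct manipulation of bimodule tensor products. The only point requiring a moment of care is the identification $e_jA\otimes_A Ae_j\cong e_jAe_j$, which is entirely standard, and the verification that the middle tensorand acts trivially in the outer bimodule structure so that it may legitimately be split off as a sum of copies of the unit object $\Bbbk$.
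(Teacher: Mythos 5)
Your proposal is correct, and it simply spells out the direct computation that the paper itself invokes without detail: the balancing over $A$ collapses $e_jA\otimes_A Ae_j$ to $e_jAe_j$, the middle factor splits off as $\dim(e_jAe_j)$ copies of $\Bbbk$, and non-vanishing follows since $e_i,e_j,e_k$ are non-zero idempotents. This is exactly the argument the paper has in mind when it says the claim "follows directly from the definitions."
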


\begin{proof}
This is a direct computation which follows directly from the definitions. 
\end{proof}

We recall that, by \cite[Theorem~13]{MM3}, given a fiat $2$-category $\cC$ and a strongly regular
two-sided cell $\mathcal{J}$ in $\cC$, there is a unique maximal $2$-ideal $\cI$ in $\cC$ such that
$\mathcal{J}$ becomes a unique maximal two-sided cell in the $\mathcal{J}$-simple quotient
$\cC/\cI$. Furthermore, there is a finite dimensional algebra $A$ such that $1$-morphisms in 
$\mathcal{J}$ in the quotient $\cC/\cI$ can be viewed as indecomposable
projective functors for a finite dimensional associative algebra. With the notation as in
Lemma~\ref{lem61-1}, we have that  left cells inside $\mathcal{J}$ have the form
$\{Ae_i\otimes_{\Bbbk}e_jA:i=1,\dots,m\}$, where $j$ is arbitrary, right cells inside $\mathcal{J}$ have the form
$\{Ae_i\otimes_{\Bbbk}e_jA:j=1,\dots,m\}$,  where $i$ is arbitrary, and each left or right cell contains a unique
{\em Duflo involution}, namely a $1$-morphism of the form $Ae_i\otimes_{\Bbbk}e_iA$.
Finally, the adjoint of $Ae_i\otimes_{\Bbbk}e_jA$ is given by $Ae_j\otimes_{\Bbbk}e_iA$, for all $i$ and $j$.
We refer to \cite[Section~5]{MM5} for details.

\begin{proposition}\label{prop61}
The  $2$-category  $\cP_x$ is fiat and all of its two-sided cells are strongly regular. 
\end{proposition}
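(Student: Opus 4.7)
The plan is to verify the two properties asserted: $\cP_x$ is fiat, and its two-sided cells are strongly regular. I would inherit the fiat structure from $\cP$ and derive strong regularity by comparing $\cP_x$-cells with $\cP$-cells, with the involution condition $x = x^{-1}$ playing the central role.

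For fiatness, I first check that $\cP_x$ is finitary. By construction it is a $2$-full $2$-subcategory of the finitary $2$-category $\cP$, it contains the identity $1$-morphism $\theta_e = \theta_x^0$, and it is closed under composition (since $\theta_x^a \circ \theta_x^b = \theta_x^{a+b}$, and additive closures are preserved under composition). Its indecomposable $1$-morphisms are summands of some $\theta_x^k$, hence isomorphic to $\theta_w$ for some $w \in W$, and therefore finite in number up to isomorphism; the $2$-morphism spaces are finite-dimensional and Krull--Schmidt by inheritance from $\cP$. Since $x \in I(W)$ means $x = x^{-1}$, the weak involution of $\cP$ (which sends $\theta_w \mapsto \theta_{w^{-1}}$) fixes $\theta_x$, and thus preserves all $\theta_x^k$ together with their additive closure; so it restricts to a weak involution on $\cP_x$. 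By $2$-fullness, the adjunction units and counits witnessing self-biadjointness of $\theta_x$ in $\cP$ already lie in the $2$-morphism spaces of $\cP_x$, providing the required fiat structure.

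For strong regularity, the key observation is that every $1$-morphism $\mathrm{F} \in \cP_x$ is also a $1$-morphism of $\cP$, so a $\cP_x$-summand relation $\theta_u \geq_L \theta_v$ (witnessed by some $\mathrm{F} \in \cP_x$) implies the analogous relation in $\cP$. Consequently each $\cP_x$-cell (left, right, or two-sided) is contained in a single $\cP$-cell of the corresponding type. If $\theta_u, \theta_v$ lie in the same $\cP_x$-left cell and the same $\cP_x$-right cell within some $\cP_x$-two-sided cell, they also share a $\cP$-left and a $\cP$-right cell, and strong regularity of cells in $\cP$ (from Subsection~\ref{s7.1}, in type~A) forces $\theta_u \cong \theta_v$. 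For non-emptiness of the intersection of a $\cP_x$-left cell with a $\cP_x$-right cell within the same $\cP_x$-two-sided cell, I would use the fact that two-sided equivalence is generated by chains of left and right equivalences, combined with the strongly regular structure inherited from $\cP$, to construct an element in the intersection.

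The main obstacle is the dependence on type~A for strong regularity of the ambient $\cP$. If the proposition is intended beyond type~A, one would instead analyze the indecomposable summands of $\theta_x^k$ intrinsically using Lemma~\ref{lem61-1}: one expects the $\cP_x$-two-sided cell containing $\theta_x$ to be identifiable with a matrix-like family $\{A e_i \otimes_{\Bbbk} e_j A\}_{i,j}$ for a finite-dimensional algebra $A$, with $\theta_x$ corresponding to a Duflo element $A e_i \otimes_{\Bbbk} e_i A$ (which is consistent with $x$ being an involution and $\theta_x$ being self-adjoint). The explicit multiplication formula of Lemma~\ref{lem61-1} would then yield strong regularity of this cell directly; cells in $\cP_x$ containing proper summands from lower $\cP$-cells would be handled inductively by the same argument.
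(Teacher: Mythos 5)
Your fiatness argument is fine and is essentially the paper's: since $x=x^{-1}$, the $1$-morphism $\theta_x$ is self-adjoint, so all $\theta_x^k$ are, and a $2$-full subcategory of the fiat $2$-category $\cP$ that is stable under adjunction is fiat. (The type~A worry is moot here: Section~\ref{s7} works in the setting of Conjecture~\ref{conj1}, where all two-sided cells of $\cP$ are strongly regular.)

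The strong regularity part, however, has a genuine gap. Passing to the ambient category only gives you the \emph{uniqueness} half of strong regularity: if $\theta_u,\theta_v\in\cP_x$ lie in the same $\cP_x$-left cell and the same $\cP_x$-right cell, then they lie in the same left and right cells of $\cP$, and strong regularity of $\cP$ forces $\theta_u\cong\theta_v$. The real content is the \emph{existence} half: given a left cell and a right cell of $\cP_x$ inside one $\cP_x$-two-sided cell, one must exhibit an indecomposable $1$-morphism in their intersection, i.e.\ show that a specific $\theta_z$ of $\cP$ actually occurs as a summand of some $\theta_x^k$ and sits in the prescribed $\cP_x$-cells. Your sentence ``use the fact that two-sided equivalence is generated by chains of left and right equivalences, combined with the strongly regular structure inherited from $\cP$'' does not do this --- and it risks circularity, since knowing that $\cP_x$-two-sided equivalence is generated by left and right equivalences in the needed form is essentially what strong regularity of $\cP_x$ provides. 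The paper's proof supplies exactly this missing step: using closure of $\cP_x$ under adjunction, the description of the $\mathcal{J}$-simple quotient from \cite[Theorem~13]{MM3}, and the multiplication formula of Lemma~\ref{lem61-1}, one shows that for any $\theta_z$ in $\cP_x$ the products $\theta_z\theta_{z^{-1}}$ and $\theta_{z^{-1}}\theta_z$ force the Duflo involutions of the left and right cells of $\theta_z$ into $\cP_x$; one then defines a relation on these Duflo involutions (an edge when the unique element in the relevant right-cell/left-cell intersection lies in $\cP_x$), checks it is an equivalence relation via Lemma~\ref{lem61-1}, and identifies its classes with the two-sided cells of $\cP_x$, each inheriting the full ``matrix'' of connecting $1$-morphisms $Ae_i\otimes_{\Bbbk}e_jA$ and hence strong regularity. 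Your fallback ``intrinsic'' argument (identifying the cell of $\theta_x$ with such a matrix family directly) is the right intuition but is likewise only a statement of what one expects, not a proof; making it precise again amounts to the Duflo-involution argument above.
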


\begin{proof}
As $x$ is an involution, $\theta_x$ is self-adjoint, and hence all $\theta_x^k$ are self-adjoint as well.
This implies that $\cP_x$ is stable under taking adjoint functors and hence, being a $2$-full subcategory
of a fiat $2$-category, is fiat.
 
Let $\mathcal{J}$ be a two-sided cell in $\cP$. Assume that this two-sided cell contains
some $\theta_z$ which also belongs to $\cP_x$. Then $\theta_{z^{-1}}$ is in $\cP_x$ 
as well by the previous paragraph. Considering $\theta_z\theta_{z^{-1}}$ and $\theta_{z^{-1}}\theta_z$,
from  Lemma~\ref{lem61-1} we obtain that $\cP_x$ contains both the Duflo involution 
in the left cell of $\theta_z$
and the Duflo involution in the right cell of $\theta_z$. 

Let $\Gamma$ be a directed graph (without multiple edges in the same direction) 
whose vertices are those Duflo involutions in $\mathcal{J}$
which belong to $\cP_x$. For two such Duflo involutions $\theta_1$ and $\theta_2$,
we say that $\Gamma$ contains a directed edge from $\theta_1$ to $\theta_2$ if and only if the
unique elements in the intersection of the right cell of $\theta_1$ and the left cell of
$\theta_2$ is in $\cP_x$. In particular, $\Gamma$ contains loops in all vertices.
So, the binary relation represented by $\Gamma$ is reflexive. From the previous paragraph 
we also see that this relation is symmetric. 
From Lemma~\ref{lem61-1} it follows that the relation represented by $\Gamma$
is transitive. Therefore it is an equivalence relation. Given an equivalence class
with respect to this relation, we see that $\cP_x$ contains $1$-morphisms in the
intersection between the left cell of any Duflo involution in this equivalence class
and the right cell of any Duflo involution in this equivalence class. This equivalence
class thus constitutes a two-sided cell in $\cP_x$ and this two-sided cell is
strongly regular (as the corresponding two-sided cell of $\cP$ is strongly regular).

This shows that any two-sided cell of $\cP_x$ is strongly regular. The claim follows.
\end{proof}

The example below shows how the case of $x=w_0^{\mathfrak{p}}$, for some $\mathfrak{p}$,
distinguishes itself with respect to $\cP_x$ among other involutions.

\begin{example}\label{ex62}
{\rm
If $x=w_0^{\mathfrak{p}}$, for some $\mathfrak{p}$, then 
$\theta_x\circ\theta_x\cong\theta_x^{\oplus |W_{\mathfrak{p}}|}$
and hence $\theta_x$ is the only indecomposable $1$-morphism in $\cP_x$, up to isomorphism,
which is not isomorphic to the identity.
}
\end{example}

Example~\ref{ex62} should be compared with the following statement:

\begin{lemma}\label{lem63}
Let $x\in W$ and $s_1,s_2,\dots, s_k$ be the list of simple reflections which 
appear in a reduced decomposition of $x$. Let $W'$ be the parabolic subgroup of $W$
generated by these reflections and $w'_0$ be the longest element in $W'$. Then
the  $2$-category  $\cP_x$ contains $\theta_{w'_0}$. 
\end{lemma}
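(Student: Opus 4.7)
The plan is to show that $\theta_{w'_0}$ is a direct summand of $\theta_x^N$ for some $N\ge 1$, which then forces $\theta_{w'_0}\in\cP_x$.

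First, I would reduce to the case $W=W'$. Since $\mathrm{supp}(x)=S'$, the functor $\theta_x$ and all of its compositions lie in the $2$-full $2$-subcategory $\cP_{S'}\subseteq\cP$ generated by $\{\theta_s:s\in S'\}$. The latter is equivalent, via the classification of projective functors in \cite{BG}, to the $2$-category of projective functors on the principal block for the Levi associated to $W'$, and under this equivalence $\theta_{w'_0}$ becomes the projective functor indexed by the longest element of the ambient Weyl group. Hence it suffices to treat the case $W=W'$, so that $\mathrm{supp}(x)=S$ and $w'_0=w_0$.

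Next, I would pick a reduced expression $\underline{x}=(s_{i_1},\ldots,s_{i_k})$ of $x$ and observe that for $N\ge\ell(w_0)$ the $N$-fold concatenation $\underline{x}^N$ contains any fixed reduced expression of $w_0$ as a subword, since each simple reflection appears in $\underline{x}$ by full support and therefore occurs at least $N$ times in $\underline{x}^N$, so one can locate the letters of a reduced word for $w_0$ greedily. The standard subword criterion for summands of a composition $\theta_{s_{i_1}}\circ\cdots\circ\theta_{s_{i_k}}$ (via the equivalence with Soergel bimodules and the Libedinsky/Elias--Williamson subword theorem) then yields $\theta_{w_0}$ as a direct summand of the $N$-fold product $\theta_{\underline{x}}^N=(\theta_{s_{i_1}}\circ\cdots\circ\theta_{s_{i_k}})^N$.

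The main obstacle is to transfer this summand from $\theta_{\underline{x}}^N$ to $\theta_x^N$ itself. Since $\theta_{\underline{x}}\cong\theta_x\oplus\theta'$ with $\theta'$ a direct sum of $\theta_y$ for $y<x$ in the Bruhat order, the power $\theta_{\underline{x}}^N$ splits into many mixed tensor products beyond $\theta_x^N$. I would address this by combining the fiat structure and strong regularity of the cells of $\cP_x$ from Proposition~\ref{prop61} with the absorbing property $\theta_{w_0}\circ\theta_y\cong\theta_{w_0}^{\oplus c_y}$ (with $c_y\neq 0$) of the longest-element functor: any non-zero graded map $\theta_{w_0}\to\theta_{\underline{x}}^N$ coming from the subword argument must, when projected via the Krull--Schmidt splitting onto the $\theta_x^N$-summand, remain non-zero modulo contributions from strictly lower Bruhat terms, which can themselves be controlled inductively. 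Making this non-triviality rigorous is the technical heart of the argument, and I expect it to reduce to an explicit Hecke-algebra computation showing that the coefficient of $C_{w_0}$ in $C_x^N$ is a non-zero element of $\mathbb{Z}_{\ge 0}[v,v^{-1}]$, using positivity of Kazhdan--Lusztig structure constants together with induction on $\ell(w_0)-\ell(x)$ within $W'$.
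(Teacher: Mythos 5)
Your reduction to $W=W'$ and your identification of the correct target (exhibiting $\theta_{w'_0}$ as a direct summand of some power $\theta_x^N$) are fine, but the proof has a genuine gap at exactly the point you flag as ``the technical heart''. The subword argument only shows that $\theta_{w_0}$ is a summand of the Bott--Samelson-type power $(\theta_{s_{i_1}}\circ\cdots\circ\theta_{s_{i_k}})^N$, and since $\theta_{\underline{x}}\cong\theta_x\oplus\theta'$ with $\theta'$ supported on lower Bruhat terms, that summand could a priori arise entirely from mixed products involving $\theta'$. The statement you would need -- that the coefficient of $b_{w_0}$ in $b_x^N$ (Kazhdan--Lusztig basis) is non-zero -- is precisely the content of the lemma, and the sketched bridge (projecting a map $\theta_{w_0}\to\theta_{\underline{x}}^N$ onto the $\theta_x^N$-component and ``controlling'' lower terms inductively) is not substantiated; neither the absorbing property $\theta_y\theta_{w_0}\cong\theta_{w_0}^{\oplus c_y}$ nor Proposition~\ref{prop61} supplies it. Note also that Proposition~\ref{prop61} is only available for $x$ an involution and relies on strong regularity (a type $A$ feature), whereas the lemma is stated for arbitrary $x$, so leaning on it would needlessly restrict the generality.

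The paper closes this gap by a much more elementary device that avoids Bott--Samelson products altogether: it tracks a single Verma module through powers of $\theta_x$ itself. One shows that for every $w\in W$ the Verma flag of $\theta_x\Delta(w)$ contains all $\Delta(ws)$ with $s\in S$ and $ws\succ w$. For $w=e$ this is immediate from $\theta_x\Delta(e)\cong P(x)$ (see \eqref{DefTheta}) together with BGG reciprocity, since full support gives $x\succeq s$ for all $s\in S$; for general $w$ one applies $T_w$, using that twisting functors commute with projective functors \eqref{commProj} and permute Verma classes in the Grothendieck group \eqref{twistVerma2}. Iterating $\ell(w_0)$ times along a chain $e\prec s_{j_1}\prec s_{j_1}s_{j_2}\prec\cdots\prec w_0$ shows that the projective module $\theta_x^{\ell(w_0)}\Delta(e)$ has $\Delta(w_0)$ in its Verma flag, hence contains $P(w_0)$ as a direct summand, so $\theta_{w_0}$ is a summand of $\theta_x^{\ell(w_0)}$. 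If you want to repair your argument along your own lines, you should prove the Hecke-algebra positivity statement for $b_x^N$ directly rather than for the Bott--Samelson power; the Verma-flag computation above is exactly such a proof.
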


\begin{proof}
Without loss of generality we may assume $S=\{s_1,s_2,\dots, s_k\}$, which means that
$W=W'$ and hence $w_0=w'_0$. It is enough to show that, for any $w\in W$, the Verma flag of
$\theta_x\Delta(w)$ contains all $\Delta(ws)$, where $s\in S$ and $ws\succ w$. Indeed, then, 
by induction,  we will have that $\theta_x^{\ell(w_0)}\Delta(e)$ contains $\Delta(w_0)$ and hence 
$\theta_{w_0}$ is a direct summand of $\theta_x^{\ell(w_0)}$.
 
That $\theta_x\Delta(e)$ contains all  $\Delta(s)$, where $s\in S$, follows directly from 
our assumptions as they guarantee that $x\succ s$, for all $s\in S$. For an arbitrary $w$
the necessary claim is now obtained by applying $T_w$.
\end{proof}

The above raises the following natural question:

\begin{question}\label{qest64}
Which $\theta_z$, where $z\in W$, appear in $\cP_x$, for a fixed $x\in I(W)$?
\end{question}

From \cite[Section~5]{MM1} we know that, for any $x\in I(W)$, we have
\begin{displaymath}
\theta_x\circ\theta_x=\theta_x^{\oplus k}\oplus \theta,
\end{displaymath}
where $k=\dim\mathrm{End}(\theta_x L(x))>0$ and each indecomposable 
direct summand of $\theta$ is strictly $\mathcal{J}$-bigger
that $\theta_x$. Further, if $\theta_z$ appears in $\cP_x$, then, clearly 
$\mathrm{supp}(z)\subset \mathrm{supp}(x)$. As any element different from $w'_0$
and contained in the same $\mathcal{J}$-cell as $w'_0$ must have strictly bigger support,
it follows that all $\theta_z$ appearing in $\cP_x$ and different from $\theta_x$
and $\theta_{w'_0}$ have the property $x<_{\mathcal{J}} z<_{\mathcal{J}} w'_0$.

\subsection{The case $x=w_0^{\mathfrak{p}}$}\label{s7.3}

This subsection provides higher representation theoretic proof for various,
mostly already know, results. Thus the value of this subsection is in
the corresponding methods and arguments, rather than the results. We hope 
that some of these ideas and methods could be extended to 
attack more general cases of Conjecture~\ref{conj1}.

\begin{proposition}\label{prop65}
Let $A$ be a finite dimensional, connected, associative and unital $\Bbbk$-algebra 
and $\mathrm{F}$ an indecomposable and self-adjoint
endofunctor of $A\text{-}\mathrm{mod}$ satisfying $\mathrm{F}\circ \mathrm{F}\cong \mathrm{F}^{\oplus k}$,
for some $k\in\{1,2,\dots\}$. Let, further, $L$ be a simple $A$-module such that $\mathrm{F}\, L\neq 0$. 
Then we have the following:
\begin{enumerate}[$($a$)$]
\item\label{prop65.1}  All indecomposable summands of $\mathrm{F}\, L$ are isomorphic.
\item\label{prop65.2}  For every simple $A$-module $L'$ appearing in the top or socle of $\mathrm{F}\, L$,
we have $\mathrm{add}(\mathrm{F}\, L)=\mathrm{add}(\mathrm{F}\, L')$. 
\item\label{prop65.3}  Every simple $A$-module $L'$ appearing in the top or socle of $\mathrm{F}\, L$
is isomorphic to $L$. 
% \item\label{prop65.4}  The module $\theta_z L(y)$ is indecomposable and has simple top $L(y)$. 
\end{enumerate}
\end{proposition}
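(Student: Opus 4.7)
The plan rests on three structural observations about $\mathrm{F}$. First, $\mathrm{F}$ is exact: being self-adjoint, it is simultaneously a left and a right adjoint, hence preserves both colimits and limits. Second, the identity $\mathrm{F}\circ\mathrm{F}\cong\mathrm{F}^{\oplus k}$ implies that the full additive subcategory $\mathcal{I}\subset A\text{-}\mathrm{mod}$ of direct summands of objects of the form $\mathrm{F}\,N$ is stable under $\mathrm{F}$, and that $\mathrm{F}|_{\mathcal{I}}\cong\mathrm{id}_{\mathcal{I}}^{\oplus k}$; in particular, for every indecomposable summand $M$ of $\mathrm{F}\,L$ we have $\mathrm{F}\,M\cong M^{\oplus k}$. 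Third, self-adjointness gives the reciprocity $\dim\mathrm{Hom}(\mathrm{F}X,Y)=\dim\mathrm{Hom}(X,\mathrm{F}Y)$, which for simple $X,Y$ translates top-multiplicities of $\mathrm{F}X$ into socle-multiplicities of $\mathrm{F}Y$ and vice versa.

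The first step is to establish (b). Take $L'\in\mathrm{Top}(\mathrm{F}\,L)$; the case $L'\in\mathrm{Soc}(\mathrm{F}\,L)$ is dual. A surjection $\mathrm{F}\,L\twoheadrightarrow L'$ together with right-exactness of $\mathrm{F}$ and the isomorphism $\mathrm{F}^2\cong\mathrm{F}^{\oplus k}$ produce a surjection $(\mathrm{F}\,L)^{\oplus k}\twoheadrightarrow \mathrm{F}\,L'$, showing that every indecomposable summand of $\mathrm{F}\,L'$ is a summand of $\mathrm{F}\,L$. Conversely, the reciprocity yields $L\in\mathrm{Soc}(\mathrm{F}\,L')$, and applying $\mathrm{F}$ (left-exact) to the injection $L\hookrightarrow \mathrm{F}\,L'$ gives $\mathrm{F}\,L\hookrightarrow (\mathrm{F}\,L')^{\oplus k}$, so every summand of $\mathrm{F}\,L$ is a summand of $\mathrm{F}\,L'$. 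Hence $\mathrm{add}(\mathrm{F}\,L)=\mathrm{add}(\mathrm{F}\,L')$.

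For (a), the plan is to write $\mathrm{F}\,L=\bigoplus_{i=1}^{r}M_i^{n_i}$ as Krull--Schmidt decomposition with pairwise non-isomorphic indecomposables $M_i$, each satisfying $\mathrm{F}\,M_i\cong M_i^{\oplus k}$. Self-adjointness then yields
\[
\dim\mathrm{End}(\mathrm{F}\,L)=k\cdot\dim\mathrm{Hom}(L,\mathrm{F}\,L),\qquad
\dim\mathrm{Hom}(M_i,\mathrm{F}\,L)=k\cdot\dim\mathrm{Hom}(M_i,L),
\]
together with analogues having sources and targets swapped. These couple the multiplicity vector $(n_i)$ to the hom-matrix $(\dim\mathrm{Hom}(M_i,M_j))$; combined with the indecomposability of $\mathrm{F}$ (so that the evaluation ring map $\mathrm{End}(\mathrm{F})\to \mathrm{End}(\mathrm{F}\,L)$ has local source) and the connectedness of $A$, the resulting rigidity forces $r=1$. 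Part (c) then follows from (a) and (b): for $L'\in\mathrm{Top}(\mathrm{F}\,L)$, we have $\mathrm{F}\,L\cong M^{\oplus n}$ and $\mathrm{F}\,L'\cong M^{\oplus n'}$; since $L$ and $L'$ both lie in $\mathrm{Top}(M)\cap\mathrm{Soc}(M)$ by reciprocity, and the assignment $L''\mapsto [\mathrm{F}\,L'']$ restricted to the simples with $\mathrm{F}\,L''\neq 0$ is injective (using connectedness of $A$ together with indecomposability of $\mathrm{F}$), we conclude $L'\cong L$.

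The main obstacle is the numerical argument in (a): extracting $r=1$ from the identities above requires a careful bookkeeping that genuinely uses the indecomposability of $\mathrm{F}$. Dropping either the indecomposability of $\mathrm{F}$ (for instance, $\mathrm{F}=\mathrm{id}^{\oplus 2}$) or the connectedness of $A$ breaks the conclusion, so the proof must thread these two assumptions precisely through the final counting step in (a) and the injectivity claim used in (c).
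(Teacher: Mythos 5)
Your second ``structural observation''---that $\mathrm{F}|_{\mathcal{I}}\cong\mathrm{id}_{\mathcal{I}}^{\oplus k}$, and in particular $\mathrm{F}M\cong M^{\oplus k}$ for every indecomposable summand $M$ of $\mathrm{F}L$---is not a formal consequence of $\mathrm{F}\circ\mathrm{F}\cong\mathrm{F}^{\oplus k}$, and it is exactly the hard point of the proposition. The natural isomorphism $\mathrm{F}^{2}\cong\mathrm{F}^{\oplus k}$ only controls objects of the form $\mathrm{F}N$ and morphisms of the form $\mathrm{F}(f)$; the idempotent of $\mathrm{End}(\mathrm{F}L)$ cutting out $M$ need not lie in the image of $\mathrm{F}$, so nothing formal prevents $\mathrm{F}$ from mixing non-isomorphic summands of $\mathrm{F}L$ (for instance $\mathrm{F}M_1\cong M_1\oplus M_2\cong\mathrm{F}M_2$ with $k=2$ is compatible with all object-wise evaluations of $\mathrm{F}^2\cong\mathrm{F}^{\oplus 2}$ and with the adjunction identities). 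In the paper this is precisely where higher representation theory enters: one forms the fiat $2$-category generated by $\mathrm{F}$, invokes the classification of its simple transitive $2$-representations as cell $2$-representations, \cite[Theorem~18]{MM5}, together with \cite[Theorem~25]{ChM} or \cite[Theorem~4]{MM6}, to obtain $\mathrm{F}M_i\in\mathrm{add}(M_i)$. Since you assume this without proof, the foundation of your argument is missing. Moreover, you leave the decisive step of (a) (``the resulting rigidity forces $r=1$'') unproved, and pure dimension bookkeeping is unlikely to force it; the paper instead proves (a) by producing, via adjunction, surjections $M_i^{\oplus k}\cong\mathrm{F}M_i\twoheadrightarrow\mathrm{F}L'\twoheadrightarrow L$ and hence $M_i^{\oplus k^{2}}\twoheadrightarrow\mathrm{F}L$, and then applying two splitting lemmas (any surjection $X^{\oplus n}\twoheadrightarrow X$ with $X$ indecomposable splits; two indecomposables that are quotients of powers of each other are isomorphic).

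Two further steps are flawed. In (b), from $L\hookrightarrow\mathrm{F}L'$ you apply $\mathrm{F}$ and conclude from the embedding $\mathrm{F}L\hookrightarrow(\mathrm{F}L')^{\oplus k}$ that every summand of $\mathrm{F}L$ is a summand of $\mathrm{F}L'$; a submodule of a direct sum need not have its indecomposable summands in the additive closure of that module (the radical of a projective is already a counterexample), so this direction collapses---the paper argues with surjections in both directions and the splitting lemma, after (a) is available. In (c), the injectivity of $L''\mapsto[\mathrm{F}L'']$ on simples not killed by $\mathrm{F}$ is asserted with no argument and is essentially equivalent to (c) itself (it follows from (c)), so invoking it here is circular; the paper instead shows, using \cite[Theorem~25]{ChM} again, that the multiplicity matrix of $\mathrm{F}$ on indecomposable projectives has the form $\left(\begin{smallmatrix}0&0\\ *&kE\end{smallmatrix}\right)$, and that by self-adjointness its transpose records the composition multiplicities $[\mathrm{F}L(i):L(j)]$, whence $P(i)^{\oplus k}\twoheadrightarrow\mathrm{F}L(i)$ whenever $\mathrm{F}L(i)\neq 0$, which gives (c).
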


\begin{proof}
Let $\mathrm{F}\, L=M_1\oplus M_2\oplus\dots M_m$, where all $M_i$ are indecomposable.
If $\mathrm{F}\cong \mathrm{Id}_{A\text{-}\mathrm{mod}}$, then the claim is clear, so we assume that 
$\mathrm{F}\not\cong \mathrm{Id}_{A\text{-}\mathrm{mod}}$.

Let $\cC$ be the $2$-full $2$-subcategory of the $2$-category of right exact endofunctors of $A$-mod 
whose $1$-morphisms are all endofunctors in $\mathrm{add}(\mathrm{Id}_{A\text{-}\mathrm{mod}}\oplus \mathrm{F})$.
Then $\cC$ is fiat and has two two-sided cells, 
one consisting of $\mathrm{Id}_{A\text{-}\mathrm{mod}}$ and the other one consisting of $\mathrm{F}$. 
By \cite[Theorem~18]{MM5}, the only simple transitive $2$-representations
of $\cC$ are cell $2$-representations.  As every $M_i$ is in the image of $\mathrm{F}$,
this $M_i$ corresponds to the cell $2$-rep\-re\-sen\-ta\-ti\-on of the cell containing $\mathrm{F}$. 
Now, from \cite[Theorem~25]{ChM} or \cite[Theorem~4]{MM6}, it follows that 
$\mathrm{F} M_i\in\mathrm{add}(M_i)$, for every $i$.

Pick some $M_i$ and let $L'$ be a simple in the top of $M_i$. 
As $\mathrm{F}$ is self-adjoint, by adjunction, $\mathrm{F}\, L'\neq 0$, moreover
$L$ appears in the socle of $\mathrm{F}\, L'$
(and hence also in the top by the self-duality of $\mathrm{F}\, L'$). 
Applying $\mathrm{F}$ to $M_i\tto L'$, we get
\begin{equation}\label{eq8}
M_i^{\oplus k}\cong \mathrm{F} M_i\tto \mathrm{F}\, L'\tto L.
\end{equation}
Applying $\mathrm{F}$ to \eqref{eq8} again, we get 
\begin{displaymath}
M_i^{\oplus k^2}\tto \mathrm{F}\, L\cong M_1\oplus M_2\oplus\dots M_m.
\end{displaymath}
In particular, each $M_j$ is a homomorphic image of $M_i^{\oplus k^2}$.
Now we will need:

\begin{lemma}\label{lem65-0}
Let $B$ be a finite dimensional algebra and $X$ an indecomposable $B$-module. Then,
for any positive integer $n$, any surjection $X^{\oplus n}\tto X$ splits.
\end{lemma}

\begin{proof}
Let $D$ denote the local endomorphism algebra of $X$ and $\mathbf{m}$ the
unique maximal ideal in $D$. Then any $\alpha:X^{\oplus n}\to X$ is given by
a $1\times n$ matrix with coefficients in $D$. If one of those coefficients
is not in $\mathbf{m}$, then this coefficient is an isomorphism and hence
$\alpha$ is a surjection and is, obviously, split. In particular,
if $\mathbf{m}=0$, then the claim is clear. So, in what follows we
assume $\mathbf{m}\neq 0$.

Assume now that $\alpha$ is a surjection and that all coefficients in the
corresponding matrix are in $\mathbf{m}$. As $\mathbf{m}$ is nilpotent, 
it contains a non-zero element $f$ such that $f\mathbf{m}=0$. As $\alpha$
is a surjection, we have $f\circ\alpha\neq 0$. On the other hand,
$fg=0$ for any coefficient $g$ in the matrix of $\alpha$ as $g\in\mathbf{m}$.
This is a contradiction which implies our statement.
\end{proof}

\begin{lemma}\label{lem65-1}
Let $B$ be a finite dimensional algebra and $X$ and $Y$ indecomposable $B$-modules such that  
$X$ is a quotient of some $Y^{\oplus m}$ and $Y$ is a quotient of some $X^{\oplus n}$.
Then $X\cong Y$.  
\end{lemma}

\begin{proof}
By our assumptions, we have $Y^{\oplus mn}\tto X^{\oplus n}\tto Y$. 
By Lemma~\ref{lem65-0}, the epimorphism $Y^{\oplus mn}\tto Y$ splits. Composing this splitting
with the map $Y^{\oplus mn}\tto X^{\oplus n}$, we get a splitting for $X^{\oplus n}\tto Y$. As
$X$ is indecomposable, it follows that $X\cong Y$. 
\end{proof}

As each $M_j$ is a homomorphism image of $M_i^{\oplus k^2}$, from Lemma~\ref{lem65-1}
we obtain that all $M_i$ are isomorphic. This proves claim~\eqref{prop65.1}.

To prove claim~\eqref{prop65.2}, let $L'$ be a simple module such that $\mathrm{F}\,  L\tto L'$
or, equivalently $L'\hookrightarrow \mathrm{F}\,  L$.
By claim~\eqref{prop65.1}, we have $\mathrm{F}\,  L\cong M^{\oplus m}$ and 
$\mathrm{F}\,  L'\cong N^{\oplus m'}$, for some indecomposable $M$ and $N$ and some
positive integers $m$ and $m'$. Applying $\mathrm{F}$ to 
$\mathrm{F}\,  L\tto L'$, we get  $\mathrm{F}^{\oplus k}\, L\cong \mathrm{F}\circ
\mathrm{F}\, L\, \tto \mathrm{F}\,  L'$,
in particular, $M^{\oplus p}\tto N$, for some $p$.

By adjunction, $\mathrm{F}\,  L'\tto L$. Therefore the previous paragraph gives that 
$N^{\oplus q}\tto M$, for some $q$. Therefore $M\cong N$ by Lemma~\ref{lem65-1},
implying claim~\eqref{prop65.2}.

The $2$-category $\cC$ is fiat with strongly regular two-sided cells,
it has two two-sided cells and hence two equivalence classes of 
cell $2$-representations. One of these is annihilated by $\mathrm{F}$, while, in the other
one, we have $\mathrm{F}\, P\cong P^{\oplus k}$, where $P$ is the unique, up to isomorphism, 
indecomposable object. 

Let $I$ be an indexing set of the isomorphism classes of indecomposable $A$-mo\-du\-les. 
Let $\mathtt{Q}:=(m_{i,j})_{i,j\in I}$ be the matrix describing the multiplicity of 
$P(i)$ in $\mathrm{F} P(j)$, for $i,j\in I$. Then, from the previous paragraph and 
\cite[Theorem~25]{ChM}, it follows
that one can order elements in $I$ such that $\mathtt{Q}$ has the form
\begin{displaymath}
\left(\begin{array}{cc}0&0\\ *&kE\end{array}\right),
\end{displaymath}
where $E$ is the identity matrix. By adjunction, the transpose of this matrix gives the matrix counting 
the decomposition multiplicities $[\mathrm{F}\, L(i):L(j)]$.
Consequently, $\mathrm{F} P(i)\cong P(i)^{\oplus k}$ provided that $\mathrm{F}\, L(i)\neq 0$.
This means that $P(i)^{\oplus k} \tto \mathrm{F}\, L(i)$ and proves claim~\eqref{prop65.3}.
\end{proof}

Proposition~\ref{prop65} applies, in particular, to the situation when $A\text{-}\mathrm{mod}\cong \mathcal{O}_0$
and $\mathrm{F}\cong\theta_{w_0^{\mathfrak{p}}}$, for some $\mathfrak{p}$.
The proof of Proposition~\ref{prop65} raises the following natural questions.

\begin{question}\label{quest68}
For which $x\in I(W)$ and $y\in W$, does the module $L(y)$ appear in the top of any summand in $\theta_x L(y)$? 
\end{question}

\begin{question}\label{quest69}
For which $x\in I(W)$ and $y\in W$, none of the summands in $\theta_x L(y)$ is isomorphic to a summand
of $\theta_{x'} L(y)$, for some $x'\in I(W)$ such that $x<_{\mathcal{J}}x'$ and $\theta_{x'}$ 
is a $1$-morphism in $\cP_x$? 
\end{question}

\begin{corollary}\label{cor697}
Assume that $x=w_0^{\mathfrak{p}}$, for some $\mathfrak{p}$. Let  $y\in W$ be such that 
$\theta_x L(y)\neq 0$. Then the module $\theta_x L(y)$ is indecomposable.
\end{corollary}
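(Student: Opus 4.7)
The plan is to combine Proposition~\ref{prop65} with the classical factorization of $\theta_x$ as translation through the wall. Proposition~\ref{prop65} reduces indecomposability of $\theta_x L(y)$ to the computation of a single Hom-dimension, and biadjunction of translation functors will finish the argument in one line.

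First, I apply Proposition~\ref{prop65} with $\mathrm{F}=\theta_x$ and $L=L(y)$. All hypotheses are in place: the basic algebra of $\mathcal{O}_0$ is connected (as the block is indecomposable), $\theta_x$ is self-adjoint because $x=w_0^{\mathfrak{p}}$ is an involution, Example~\ref{ex62} gives $\theta_x\circ\theta_x\cong \theta_x^{\oplus |W^{\mathfrak{p}}|}$, and $\theta_x L(y)\neq 0$ by assumption. Part~(a) then yields $\theta_x L(y)\cong M^{\oplus m}$ for some indecomposable $M$ and integer $m\geq 1$, while part~(c) forces every simple occurring in the socle of $\theta_x L(y)$ to be isomorphic to $L(y)$. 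In particular, $L(y)$ embeds into $M$, so $\dim\mathrm{Hom}(L(y),M)\geq 1$ and consequently
\[
m \;\leq\; m\cdot\dim\mathrm{Hom}(L(y),M) \;=\; \dim\mathrm{Hom}(L(y),\theta_x L(y)).
\]

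Second, I evaluate this Hom-space using the factorization $\theta_x\cong \theta^{\mathrm{out}}\theta^{\mathrm{on}}$ into biadjoint translations to and out of the intersection of the walls corresponding to the simple reflections of $\mathfrak{l}$, as in the proof of Lemma~\ref{lem24}. Since translation to a wall sends a simple module either to zero or to a simple module, and $\theta^{\mathrm{out}}\theta^{\mathrm{on}}L(y)\neq 0$, the module $L':=\theta^{\mathrm{on}}L(y)$ is simple in the singular block. Biadjunction then gives
\[
\mathrm{Hom}(L(y),\theta_x L(y))\;\cong\;\mathrm{Hom}(\theta^{\mathrm{on}}L(y),\theta^{\mathrm{on}}L(y))\;=\;\mathrm{End}(L')\;\cong\;\mathbb{C}.
\]

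Combining the two steps yields $m\leq 1$, hence $m=1$ and $\theta_x L(y)\cong M$ is indecomposable. I do not expect any genuine technical obstacle here: Proposition~\ref{prop65} already does the structural heavy lifting by cutting down to a single indecomposable summand up to multiplicity, and the remaining multiplicity count is the same biadjunction computation that was exploited in Lemma~\ref{lem24}. The special feature of $x=w_0^{\mathfrak{p}}$ is used twice, through Example~\ref{ex62} to invoke Proposition~\ref{prop65} and through the on/off-the-wall factorization, which is why the argument does not extend verbatim to a general involution $x$.
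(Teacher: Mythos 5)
Your argument is correct, but it is not the route the paper takes. After the shared starting point (Proposition~\ref{prop65} applied to $\mathrm{F}=\theta_x$, whose hypotheses you verify correctly: $\theta_x$ is indecomposable and self-adjoint since $w_0^{\mathfrak{p}}$ is an involution, and $\theta_x\theta_x\cong\theta_x^{\oplus|W^{\mathfrak{p}}|}$ by Example~\ref{ex62}), you finish classically: parts \eqref{prop65.1} and \eqref{prop65.3} give $\theta_xL(y)\cong M^{\oplus m}$ with $L(y)$-isotypic socle, so $m\le\dim\mathrm{Hom}(L(y),\theta_xL(y))$, and the biadjoint factorization $\theta_x\cong\theta^{\mathrm{out}}\theta^{\mathrm{on}}$ together with the standard fact that translation onto a wall sends a simple module to a simple module or to zero (the paper uses this fact only in the proofs of Lemmata~\ref{lem39} and \ref{lem391}, not here) identifies that Hom-space with $\mathrm{End}(\theta^{\mathrm{on}}L(y))\cong\mathbb{C}$, where $\theta^{\mathrm{on}}L(y)\neq0$ because $\theta_xL(y)\neq0$. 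The paper's own proof of Corollary~\ref{cor697} deliberately avoids any wall computation: it first shows via the Duflo involution theory of \cite{MM1} and Kazhdan--Lusztig combinatorics that $\theta_xL(x)$ has Loewy length exactly $2\ell(w_0^{\mathfrak{p}})+1$, then argues by contradiction that a decomposition $\theta_xL(y)\cong N^{\oplus m}$ with $m\ge2$ would force $N$ to have strictly smaller Loewy length (a codimension-one submodule argument using Koszulity), while the $2$-representation of $\cP_x$ carried by $\mathrm{add}(N)$ must have the cell $2$-representation, built on $\theta_xL(x)$, as its unique simple transitive quotient, which requires a quotient of $N$ of the full Loewy length $2\ell(w_0^{\mathfrak{p}})+1$. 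Your version is shorter and essentially recovers the ``classical'' proof already indicated in \cite{KiM} and Subsection~\ref{s3.3}; what the paper's longer argument buys is a proof carried out entirely within the higher representation theoretic framework of Section~\ref{s7}, precisely the kind of reasoning the authors hope to extend to involutions $x$ not of the form $w_0^{\mathfrak{p}}$, where no on/out-of-the-wall factorization exists and your second step has no analogue.
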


\begin{proof}
Let us first consider the case $y=x$. As $x=w_0^{\mathfrak{p}}$, the $1$-morphism
$\theta_x$ is the Duflo involution in its left (and right) cell of $\cP$.
Therefore $\theta_x L(x)$ is indecomposable and has simple top $L(x)$,
see \cite[Subsection~4.5]{MM1}.
By self-duality, it also has simple socle $L(x)$. From the Kazhdan-Lusztig
combinatorics we get that, as a graded module,  $\theta_x L(x)$ is 
concentrated between degrees $\pm \ell(w_0^{\mathfrak{p}})$ with simple top and socle 
in the respective extremal degrees. As $\mathcal{O}_0$ is Koszul, it follows that 
$\theta_x L(x)$ has Loewy length $2\ell(w_0^{\mathfrak{p}})+1$.
The additive closure of $\theta_x L(x)$ carries, by \cite[Subsection~4.5]{MM1},
the natural structure of a $2$-representation of $\cP_x$ which is equivalent to
the cell $2$-representation of $\cP_x$ corresponding to the cell $\{\theta_x\}$.

Now consider the case of general $y$. From the Kazhdan-Lusztig
combinatorics we get that, as a graded module, $\theta_x L(y)$ is 
concentrated between degrees $\pm \ell(w_0^{\mathfrak{p}})$ and both extremal degrees
are one-dimensional.  As $\mathcal{O}_0$ is Koszul, it follows that the Loewy length
of $\theta_x L(y)$ is at most $2\ell(w_0^{\mathfrak{p}})+1$.

Assume that $\theta_x L(y)$ decomposes. Then, by 
Proposition~\ref{prop65}\eqref{prop65.1}, the module $\theta_x L(y)$ is a direct sum of 
several copies of the same indecomposable module $N$. 
We claim that the Loewy length of $N$ is strictly smaller than $2\ell(w_0^{\mathfrak{p}})+1$.
By the previous paragraph, the Loewy length of $N$ is at most $2\ell(w_0^{\mathfrak{p}})+1$.
If the Loewy length of $N$ were $2\ell(w_0^{\mathfrak{p}})+1$, then every submodule of codimension
one in $N\oplus N$ would also have Loewy length $2\ell(w_0^{\mathfrak{p}})+1$. At the
same time, the submodule
\begin{displaymath}
\bigoplus_{i=-\ell(w_0^{\mathfrak{p}})+1}^{\ell(w_0^{\mathfrak{p}})} \big(\theta_x L(y)\big)_i
\end{displaymath}
has Loewy length at most $2\ell(w_0^{\mathfrak{p}})$, a contradiction.

The above means that both $N$ and $\theta_x L(y)$ have Loewy length at most 
$2\ell(w_0^{\mathfrak{p}})$. At the same time, the additive closure of 
$N$ carries the structure of a $2$-representation of $\cP_x$ whose unique simple transitive quotient is the
cell $2$-representation of $\cP_x$ corresponding to the cell $\{\theta_x\}$.
Therefore some quotient of $N$ must have Loewy length $2\ell(w_0^{\mathfrak{p}})+1$ by the first paragraph
of this proof to provide an equivalence with the cell $2$-representation. 
This is a contradiction which completes the proof of this corollary.
\end{proof}

\section{Equivalences}\label{s5}

\subsection{Serre subquotient categories}\label{SecSerre}

For an arbitrary abelian category $\mathcal{C}$, a non-empty full subcategory 
$\mathcal{B}$ of $\mathcal{C}$ is called a {\em Serre subcategory} provided that,  for every short exact sequence
\begin{displaymath}
0\to Y_1\to X\to Y_2\to 0
\end{displaymath}
in $\mathcal{C}$, we have $X\in \mathcal{B}$ if and only if~$Y_1,Y_2\in \mathcal{B}$.

For a Serre subcategory $\mathcal{B}\subset \mathcal{C}$, we have the {\em Serre quotient category} 
$\mathcal{C}/\mathcal{B}$, which is defined as follows:
\begin{itemize}
\item the objects of $\mathcal{C}/\mathcal{B}$ are those of $\mathcal{C}$;
\item for any $X,Y\in \mathcal{C}$, we have
\begin{displaymath}
{\mathrm{Hom}}_{\mathcal{C}/\mathcal{B}}(X,Y)\;:=\;\varinjlim {\mathrm{Hom}}_{\mathcal{C}}(X',Y/Y'), 
\end{displaymath}
where $X'$, resp. $Y'$, runs over all sub-objects in $\mathcal{C}$ (ordered by inclusion) 
of $X$, resp. $Y$, such that~$X/X'\in\mathcal{B}$, resp. $Y'\in \mathcal{B}$.
\end{itemize}
We have the corresponding exact functor $\pi:\mathcal{C}\to\mathcal{C}/\mathcal{B}$, 
which is the identity on objects and maps a morphism $f:X\to Y$ to the corresponding element in the direct limit.

Assume now that~$\mathcal{C}=A$-mod, for $A$ a finite dimensional, associative and unital algebra
over an algebraically closed field $\Bbbk$. Any Serre subcategory 
$\mathcal{B}$ is then of the form $A/(AeA)$-mod, for some idempotent $e\in A$. The corresponding 
Serre quotient $\mathcal{C}/\mathcal{B}$ is then equivalent to~$eAe$-mod. In this case the 
functor $\pi$ is given by multiplication with $e$. In particular, we find that~$\pi$ yields 
isomorphisms
\begin{displaymath}
\pi:\;\,{\mathrm{Hom}}_{\mathcal{C}}(P,Q)\;\tilde{\to}\; {\mathrm{Hom}}_{\mathcal{C}/\mathcal{B}}(\pi P, \pi Q),\qquad\mbox{for $P,Q\in \add(Ae)$.} 
\end{displaymath}

Now consider a right exact functor $F$ on~$\mathcal{C}=A$-mod, which, up to isomorphism, is of the form 
$X\otimes_A-$ for some $A$-$A$-bimodule $X$. Assume that~$F$ restricts to a
functor on the Serre subcategory corresponding to the idempotent $e$ as above, that is $eX(1-e)=0$. 
Consequently, there is a right exact endofunctor $\overline{F}\cong eXe\otimes_{eAe}-$ of the 
Serre subquotient $eAe$-mod induced by~$F$. In other words, the following diagram commutes up to an isomorphism
of functors:
\begin{equation}\label{inducFun}
\xymatrix{\mathcal{C}\ar[d]^{\pi}\ar[rrr]^{F}&&&\mathcal{C}\ar[d]^{\pi}\\
\mathcal{C}/\mathcal{B}\ar[rrr]^{\overline{F}}&&&\mathcal{C}/\mathcal{B}}.
\end{equation}

For any right exact functor $F$ on~$\mathcal{C}=A$-mod, which preserves Serre subcategories
$\mathcal{A}\subset\mathcal{B}\subset\mathcal{C}$, we will refer to the right exact 
endofunctor $\overline{F}$ of $\mathcal{B}/\mathcal{A}$ obtained by restriction of $F$ 
to~$\mathcal{B}$ followed by the above procedure simply as the {\em functor induced from $F$.}

\subsection{Equivalences intertwining twisting functors}\label{s5.2}

Let $\lambda\in \Lambda$ be $W^{\mathfrak{p}}$-dominant. We define the sets $\mathsf{I}_\lambda$ and 
$\mathsf{J}_\lambda$ by
\begin{displaymath}
\mathsf{I}_\lambda\;=\;\{\mu\in\Lambda\,|\,\mu\preceq \lambda\}\;=\; 
W^{\mathfrak{p}}\cdot\lambda \;\amalg\; \mathsf{J}_\lambda. 
\end{displaymath}
We consider the Serre subquotient 
$\mathcal{A}_\lambda=\mathcal{O}^{\mathsf{I}_\lambda}/\mathcal{O}^{\mathsf{J}_\lambda}$ 
and use the notation of~\eqref{inducFun} for induced functors on Serre quotients.

\begin{theorem}\label{Thm1}
There exists an equivalence of categories
\begin{displaymath}
\Psi: \mathcal{O}_{\lambda}(\mathfrak{l})\;\tilde\to\; \mathcal{A}_\lambda,\quad\mbox{with}\quad\Psi(\Delta_{\mathfrak{l}}(\mu))\cong \Delta(\mu),\quad\mbox{for all $\mu\in W^{\mathfrak{p}}\cdot\lambda$,}
\end{displaymath}
such that the following diagram commutes, for any $w\in W^{\mathfrak{p}}$, up to isomorphism of functors:
\begin{displaymath}
\xymatrix{\mathcal{O}_\lambda(\mathfrak{l})
\ar[rd]_{\Psi}\ar[ddd]_{T_w}\ar[rr]^{{\rm Ind}^{\mathfrak{g}}_{\mathfrak{p}}}&&
\ar[dl]^{\pi}\mathcal{O}^{\mathsf{I}_\lambda}\ar[ddd]^{T_w}\\
&\mathcal{A}_\lambda\ar[d]^{\overline{T}_w}&\\
&\mathcal{A}_\lambda&\\
\mathcal{O}_\lambda(\mathfrak{l})\ar[ru]^{\Psi}
\ar[rr]_{{\rm Ind}^{\mathfrak{g}}_{\mathfrak{p}}}&&\ar[lu]_{\pi}\mathcal{O}^{\mathsf{I}_\lambda}
} 
\end{displaymath}
Here ${\rm Ind}^{\mathfrak{g}}_{\mathfrak{p}}$ is the parabolic induction functor which is full and faithful.
\end{theorem}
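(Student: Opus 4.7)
The plan is to define $\Psi := \pi \circ {\rm Ind}^{\mathfrak{g}}_{\mathfrak{p}}$ and then verify (i) well-definedness, (ii) that it is an equivalence, and (iii) the intertwining with twisting functors. Well-definedness follows because for $\mu \in W^{\mathfrak{p}}\cdot\lambda$, parabolic induction sends $\Delta_{\mathfrak{l}}(\mu)$ to $\Delta(\mu)$, whose composition factors $L(\nu)$ all satisfy $\nu \preceq \mu \preceq \lambda$ by \eqref{VermaBGG}, so $\Delta(\mu) \in \mathcal{O}^{\mathsf{I}_\lambda}$; since any object of $\mathcal{O}_\lambda(\mathfrak{l})$ admits a presentation by parabolic Vermas, ${\rm Ind}^{\mathfrak{g}}_{\mathfrak{p}}$ lands in $\mathcal{O}^{\mathsf{I}_\lambda}$. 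The action on standards, $\Psi\Delta_{\mathfrak{l}}(\mu)\cong\Delta(\mu)$, is immediate once one identifies $\pi\Delta(\mu)$ with the standard object of $\mathcal{A}_\lambda$: the Serre quotient kills exactly the composition factors labelled by $\mathsf{J}_\lambda$, leaving composition series labelled by $\{\nu \in W^{\mathfrak{p}}\cdot\lambda : \nu \preceq \mu\}$, which matches that of $\Delta_{\mathfrak{l}}(\mu)$.

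To prove that $\Psi$ is an equivalence, I would compare endomorphism algebras of projective generators. Set $P := \bigoplus_{\mu \in W^{\mathfrak{p}}\cdot\lambda} P_{\mathfrak{l}}(\mu)$ and $P' := {\rm Ind}^{\mathfrak{g}}_{\mathfrak{p}} P$. The asserted full and faithfulness of ${\rm Ind}^{\mathfrak{g}}_{\mathfrak{p}}$ gives $\mathrm{End}(P)\cong \mathrm{End}(P')$. Next, I would check that $P'$ is the projective cover in $\mathcal{O}^{\mathsf{I}_\lambda}$ of $\bigoplus_{\mu \in W^{\mathfrak{p}}\cdot\lambda} L(\mu)$, using the standard fact that parabolic induction sends indecomposable projectives in $\mathcal{O}(\mathfrak{l})$ to indecomposable projectives in the parabolic category $\mathcal{O}^{\mathfrak{p}}$, combined with the truncation to the saturated subset $\mathsf{I}_\lambda$. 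Writing $\mathcal{O}^{\mathsf{I}_\lambda}\simeq A\text{-mod}$ with idempotent $e$ corresponding to $W^{\mathfrak{p}}\cdot\lambda$, we have $P' \in \mathrm{add}(Ae)$; hence by Subsection~\ref{SecSerre}, $\mathrm{End}(P')\cong \mathrm{End}_{\mathcal{A}_\lambda}(\pi P')$, and $\pi P'$ is a projective generator of $\mathcal{A}_\lambda\simeq eAe\text{-mod}$. The composite isomorphism $\mathrm{End}(P)\cong \mathrm{End}_{\mathcal{A}_\lambda}(\pi P')$ between progenerators delivers the equivalence.

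For the commuting diagram, I would first verify that $T_w$ preserves both $\mathcal{O}^{\mathsf{I}_\lambda}$ and $\mathcal{O}^{\mathsf{J}_\lambda}$ for $w\in W^{\mathfrak{p}}$. By \eqref{twistVerma2} and induction along a reduced expression, this reduces to the combinatorial claim that both $\mathsf{I}_\lambda$ and $\mathsf{J}_\lambda$ are $W^{\mathfrak{p}}$-stable; the former uses the $W^{\mathfrak{p}}$-dominance of $\lambda$, which forces $W^{\mathfrak{p}}\cdot\lambda$ to be the top of $\mathsf{I}_\lambda$ in $\preceq$, while the latter follows because $W^{\mathfrak{p}}$ permutes $W^{\mathfrak{p}}$-orbits and $\mathsf{J}_\lambda$ is a union of $W^{\mathfrak{p}}$-orbits. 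The induced endofunctor $\overline{T}_w$ on $\mathcal{A}_\lambda$ is then given by \eqref{inducFun}. The outer square, i.e.\ the commutativity of $T_w\circ {\rm Ind}^{\mathfrak{g}}_{\mathfrak{p}}$ with ${\rm Ind}^{\mathfrak{g}}_{\mathfrak{p}}\circ T_w$, is \eqref{TwistInd} applied iteratively to a reduced expression of $w$, using that all $\mathfrak{sl}_2$-subalgebras attached to simple reflections of $W^{\mathfrak{p}}$ lie inside $\mathfrak{l}\subset \mathfrak{p}$; postcomposing with $\pi$ produces the full diagram.

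The principal obstacle is the careful matching of projective covers so that parabolic induction genuinely lands in $\mathrm{add}(Ae)$, since this is what makes the Serre quotient transparent at the level of endomorphism algebras, together with the combinatorial verification that $\mathsf{I}_\lambda$ and $\mathsf{J}_\lambda$ are $W^{\mathfrak{p}}$-stable, which is the step where the hypothesis of $W^{\mathfrak{p}}$-dominance of $\lambda$ is decisively used.
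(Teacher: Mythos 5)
Your overall skeleton ($\Psi=\pi\circ{\rm Ind}^{\mathfrak{g}}_{\mathfrak{p}}$, comparison of progenerators through the Serre quotient, and \eqref{TwistInd} for the outer square) is the same as the paper's, but the two load-bearing steps are not actually carried. First, the crux of the intertwining statement is precisely the claim you dispose of in one phrase: that $\mathsf{I}_\lambda$ (hence $\mathsf{J}_\lambda$) is stable under the dot action of $W^{\mathfrak{p}}$. Saying that $W^{\mathfrak{p}}$-dominance ``forces $W^{\mathfrak{p}}\cdot\lambda$ to be the top of $\mathsf{I}_\lambda$'' is not an argument: what is needed is that $\mu\preceq\lambda$ and $s\in W^{\mathfrak{p}}$ simple imply $s\cdot\mu\preceq\lambda$, which is a genuine lifting-property statement for $\preceq$ (nontrivial already because $\lambda$ need only be $W^{\mathfrak{p}}$-dominant and may be singular). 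The paper does not assume it; Lemma~\ref{TwistPreserves} proves $T_s\Delta(\mu)\in\mathcal{O}^{\mathsf{I}_\lambda}$ directly, by applying $T_s$ to the inclusion $\Delta(\mu)\subset\Delta(\lambda)$ from \eqref{VermaBGG} and using that $\cL_1T_s N$ is the maximal $s$-finite submodule of $N$, and only then deduces the $W^{\mathfrak{p}}$-stability of $\mathsf{I}_\lambda$ as a corollary, using it (together with \eqref{twistVerma2}) for the $\mathsf{J}_\lambda$ case. If you want your order of reasoning, you must supply a proof of the stability (e.g.\ via the Bruhat lifting property, with care about singular orbits and longest coset representatives), and you should also record the reduction that makes checking on Verma modules sufficient (saturatedness of the sets plus right exactness of $T_s$ and the filtration of any object by highest weight modules), which \eqref{twistVerma2} alone does not give.

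Second, your justification that $P'={\rm Ind}^{\mathfrak{g}}_{\mathfrak{p}}P$ is projective in $\mathcal{O}^{\mathsf{I}_\lambda}$ rests on a ``standard fact'' that is wrong as stated: parabolic induction from $\mathcal{O}(\mathfrak{l})$ does not land in the parabolic category $\mathcal{O}^{\mathfrak{p}}$ at all (already ${\rm Ind}^{\mathfrak{g}}_{\mathfrak{p}}\Delta_{\mathfrak{l}}(\mu)=\Delta(\mu)$ is not locally $U(\mathfrak{l})$-finite), so no truncation of that statement applies. The correct argument, which is the one in the paper, is that on the truncated category $\mathcal{O}^{\mathsf{I}_\lambda}$ the functor ${\rm Ind}^{\mathfrak{g}}_{\mathfrak{p}}$ is left adjoint to the exact functor ``restrict and take the appropriate $\mathrm{ad}_{H_{\mathfrak{l}}}$-eigenspace'', whence it preserves projectivity, and the identification of $P'$ with $\bigoplus_{\mu\in W^{\mathfrak{p}}\cdot\lambda}P^{\mathsf{I}_\lambda}(\mu)$ uses the multiplicity observation $[{\rm Res}^{\mathfrak{g}}_{\mathfrak{l}}L(\nu):L_{\mathfrak{l}}(\mu)]=\delta_{\nu\mu}$. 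Relatedly, you treat full faithfulness of ${\rm Ind}^{\mathfrak{g}}_{\mathfrak{p}}$ as given, but it is part of the assertion and is proved in the paper by exactly this adjunction together with the eigenvalue decomposition ${\rm Res}^{\mathfrak{g}}_{\mathfrak{p}}{\rm Ind}^{\mathfrak{g}}_{\mathfrak{p}}N\cong N\oplus X$. Once these points are supplied, the rest of your argument (Morita-type comparison of endomorphism algebras of progenerators, the computation $\Psi\Delta_{\mathfrak{l}}(\mu)=\pi\Delta(\mu)$ by transitivity of induction, and iterating \eqref{TwistInd} along a reduced expression) goes through as in the paper.
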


We start with the following lemma.

\begin{lemma}
The sets $\mathsf{I}_\lambda$ and $\mathsf{J}_\lambda$ are saturated.
\end{lemma}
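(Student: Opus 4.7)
The plan is to treat the two sets separately. The saturation of $\mathsf{I}_\lambda = \{\mu : \mu\preceq\lambda\}$ is immediate from the transitivity of $\preceq$: for $\mu\in\mathsf{I}_\lambda$ and $\nu\preceq\mu$, one has $\nu\preceq\lambda$, so $\nu\in\mathsf{I}_\lambda$.

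For $\mathsf{J}_\lambda = \mathsf{I}_\lambda\setminus W^{\mathfrak{p}}\cdot\lambda$, the transitivity step above reduces saturation to the following claim: whenever $\nu\in W^{\mathfrak{p}}\cdot\lambda$ and $\nu\preceq\mu\preceq\lambda$, one has $\mu\in W^{\mathfrak{p}}\cdot\lambda$. In other words, $W^{\mathfrak{p}}\cdot\lambda$ is upward-closed inside $\mathsf{I}_\lambda$ under $\preceq$. I would translate this into Bruhat-order combinatorics using the description from Subsection~\ref{s2.2}: writing $\mu = u\cdot\lambda$ and $\nu = v\cdot\lambda$ with $u, v \in X_\lambda$, the relation $\nu\preceq\mu$ becomes $u\preceq v$ in the Bruhat order on $W$, while $\nu\in W^{\mathfrak{p}}\cdot\lambda$ translates to $v\in W^{\mathfrak{p}}\cdot W_\lambda$. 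The desired conclusion $\mu\in W^{\mathfrak{p}}\cdot\lambda$ then follows once one knows $u\in W^{\mathfrak{p}}\cdot W_\lambda$: writing $u = w_1 w_2$ with $w_1\in W^{\mathfrak{p}}$ and $w_2\in W_\lambda$ gives $u\cdot\lambda = w_1\cdot\lambda\in W^{\mathfrak{p}}\cdot\lambda$.

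The key input for the last step is the standard Coxeter-combinatorial fact that the product of two standard parabolic subgroups is downward-closed in the Bruhat order; this rests on choosing a length-additive factorization $v = w_1' w_2'$ with $w_1'\in W^{\mathfrak{p}}$, $w_2'\in W_\lambda$, concatenating reduced expressions for the two factors, and then applying the subword characterization of Bruhat intervals. For this to apply to $W_\lambda$, I use that $\lambda$ is dominant (the setting in which the blocks $\mathcal{O}_\lambda$ are defined), so by Chevalley's theorem $W_\lambda$ is a standard parabolic subgroup of $W$. The main obstacle I anticipate is the careful bookkeeping between the longest-representative convention defining $X_\lambda$ and the coset factorization in $W^{\mathfrak{p}}\cdot W_\lambda$, especially when $W_\lambda$ is not contained in $W^{\mathfrak{p}}$; the $W^{\mathfrak{p}}$-dominance of $\lambda$ ensures that $W^{\mathfrak{p}}\cap W_\lambda$ is itself a standard parabolic subgroup of $W^{\mathfrak{p}}$, which should make this interaction routine rather than genuinely delicate.
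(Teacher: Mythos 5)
Your handling of $\mathsf{I}_\lambda$ and your reduction of the saturation of $\mathsf{J}_\lambda$ to the claim that $W^{\mathfrak{p}}\cdot\lambda$ is upward closed inside $\mathsf{I}_\lambda$ (i.e.\ that the orbit is an interval for $\preceq$) agree with the paper's first step. The gap is in the combinatorial execution: you assume $\lambda$ is dominant, but in Subsection~\ref{s5.2} the weight $\lambda$ is only assumed $W^{\mathfrak{p}}$-dominant, and this generality is forced by the intended applications (Corollary~\ref{CorMult} applies Theorem~\ref{Thm1} with $\lambda=x\cdot 0$ for $x\in{}^{\mathfrak{p}}X$, which is dominant only for $x=e$). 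For non-dominant $\lambda$ the two ingredients you invoke are unavailable as stated: the stabilizer $W_\lambda$ is only a conjugate of a standard parabolic subgroup, and the dictionary ``$\nu\preceq\mu$ iff $u\preceq v$ for $u,v\in X_\lambda$'' is defined in Subsection~\ref{s2.2} (and is true) only for $\lambda\in\Lambda^+$. If you repair this by passing to the dominant representative $\lambda^+$ of the orbit and writing $\lambda=u_0\cdot\lambda^+$ with $u_0\in X_{\lambda^+}$, then the subset of $W$ you must control corresponds not to the product $W^{\mathfrak{p}}W_{\lambda^+}$ of two standard parabolic subgroups but to the double coset $W^{\mathfrak{p}}u_0W_{\lambda^+}$, which is \emph{not} downward closed in the Bruhat order (already $e\preceq u_0$ leaves it); what is needed is a relative statement about Bruhat intervals lying above the longest representative of $u_0W_{\lambda^+}$. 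That is exactly the point you deferred to ``bookkeeping,'' and it is where the argument, as structured, does not go through; only the special case of $W$-dominant $\lambda$ is actually proved by your subword argument.

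The paper closes the general case in one line by a weight-theoretic, not Coxeter-theoretic, argument: evaluate weights at $H_{\mathfrak{l}}$, i.e.\ pair with $\rho-\rho(\mathfrak{l})$. Since $\preceq$ is generated by relations $r\cdot\mu\prec\mu$ in which $\mu-r\cdot\mu$ is a nonnegative multiple of a positive root, and since $\langle\alpha,\rho-\rho(\mathfrak{l})\rangle\geq 0$ for $\alpha>0$ with equality exactly when $\alpha$ is a root of $\mathfrak{l}$ (this is the grading by $\mathrm{ad}_{H_{\mathfrak{l}}}$ from Subsection~\ref{s2.1}), the value $\mu(H_{\mathfrak{l}})$ is weakly monotone along $\preceq$ and can stay constant between comparable weights only if they lie in the same $W^{\mathfrak{p}}$-orbit; sandwiching $\nu\preceq\mu\preceq\lambda$ with $\nu\in W^{\mathfrak{p}}\cdot\lambda$ then forces $\mu\in W^{\mathfrak{p}}\cdot\lambda$. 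This works uniformly for every $W^{\mathfrak{p}}$-dominant $\lambda$ and is the simplest way to fill your gap; alternatively, your Bruhat-order route can be salvaged, but it requires a genuinely finer lemma about double cosets and relative Bruhat intervals than the downward closure of $W^{\mathfrak{p}}W_{\lambda^+}$.
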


\begin{proof}
For $\mathsf{I}_\lambda$, this is by construction. The claim for $\mathsf{J}_\lambda$ is 
equivalent to the claim that~$W^{\mathfrak{p}}\cdot\lambda$ forms an interval for $\preceq$. 
It is easy to see that for any $\mu_1\preceq \mu_2$, we have 
$\mu_1(H_{\mathfrak{l}})\le \mu_2(H_{\mathfrak{l}})$ with equality holding only if $\mu_1$ 
and $\mu_2$ are in the same $W^{\mathfrak{p}}$-orbit. This implies that any $W^{\mathfrak{p}}$-orbit is an interval.
\end{proof}

\begin{lemma}\label{TwistPreserves}
For any $y\in W^{\mathfrak{p}}$, the twisting functor $T_y$ on~$\mathcal{O}_\Lambda$ 
restricts to an endofunctor of both 
$\mathcal{O}^{\mathsf{I}_\lambda}$ and $\mathcal{O}^{\mathsf{J}_\lambda}$.
\end{lemma}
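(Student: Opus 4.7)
Plan.

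The plan is to reduce both preservation statements to a single combinatorial assertion about the partial order $\preceq$, and then to establish that assertion via the lifting property of the Bruhat order. Since $T_y$ is right exact and both $\mathsf{I}_\lambda$ and $\mathsf{J}_\lambda$ are saturated (by the previous lemma), the Serre subcategories $\mathcal{O}^{\mathsf{I}_\lambda}$ and $\mathcal{O}^{\mathsf{J}_\lambda}$ are highest weight, and their projective modules admit $\Delta$-flags whose standard factors are indexed by the corresponding set. A standard induction along the length of such a $\Delta$-flag, using right exactness of $T_y$ together with closure of a Serre subcategory under quotients and extensions, reduces the claim to showing $T_y\Delta(\mu)\in\mathcal{O}^{\mathsf{I}_\lambda}$ whenever $\mu\in\mathsf{I}_\lambda$, and analogously for $\mathsf{J}_\lambda$.

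Iterating \eqref{twistVerma2} along a reduced expression of $y$ yields $[T_y\Delta(\mu)] = [\Delta(y\cdot\mu)]$ in the Grothendieck group, and \eqref{VermaBGG} then identifies the composition factors of $T_y\Delta(\mu)$ as the $L(\nu)$ with $\nu\preceq y\cdot\mu$. Saturation of $\mathsf{I}_\lambda$ and $\mathsf{J}_\lambda$ therefore reduces the preservation requirement to showing that $y\cdot\mu$ lies in the relevant set. For the $\mathsf{J}_\lambda$ statement, once $y\cdot\mu\in \mathsf{I}_\lambda$ has been established, the observation that $y\cdot\mu\in W^{\mathfrak{p}}\cdot\lambda$ would force $\mu=y^{-1}(y\cdot\mu)\in W^{\mathfrak{p}}\cdot\lambda$, contradicting $\mu\in\mathsf{J}_\lambda$, completes that half of the reduction.

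The combinatorial heart is thus the claim: if $\lambda$ is $W^{\mathfrak{p}}$-dominant, $\mu\preceq\lambda$ and $y\in W^{\mathfrak{p}}$, then $y\cdot\mu\preceq\lambda$. I would argue by induction on $\ell(y)$, reducing to $y=s$ a simple reflection in $W^{\mathfrak{p}}$. Writing $\lambda = v_\lambda\cdot\lambda_0$ and $\mu = v_\mu\cdot\lambda_0$ with $v_\lambda,v_\mu$ the longest coset representatives in $X_{\lambda_0}$, the relation $\mu\preceq\lambda$ reads $v_\mu\succeq v_\lambda$ in the Bruhat order, while $W^{\mathfrak{p}}$-dominance of $\lambda$ translates into $v_\lambda$ being the shortest element in its left $W^{\mathfrak{p}}$-coset (in the regular case, $v_\lambda\in{}^{\mathfrak{p}}X$; the singular case is analogous, with additional $W_{\lambda_0}$-coset bookkeeping). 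The lifting property of the parabolic Bruhat order then gives that whether $v\succeq v_\lambda$ depends only on the left $W^{\mathfrak{p}}$-coset of $v$; since $sv_\mu$ lies in the same $W^{\mathfrak{p}}$-coset as $v_\mu$, we obtain $sv_\mu\succeq v_\lambda$, that is, $s\cdot\mu\preceq\lambda$.

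The main obstacle is this last combinatorial step, especially in the singular case, where the interaction between left $W^{\mathfrak{p}}$-cosets and right $W_{\lambda_0}$-cosets has to be handled with some care; an alternative route is to deform to a nearby regular dominant weight and then reduce to the regular argument above.
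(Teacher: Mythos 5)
Your overall route is genuinely different from the paper's. The paper never proves the combinatorial statement ``$\mu\preceq\lambda$, $y\in W^{\mathfrak{p}}$ $\Rightarrow$ $y\cdot\mu\preceq\lambda$'' directly: after the same reduction to $y=s$ simple and to Verma modules, it embeds $\Delta(\mu)\hookrightarrow\Delta(\lambda)$ via \eqref{VermaBGG}, applies $T_s$, and uses that $T_s\Delta(\lambda)\cong\Delta(s\cdot\lambda)$ (here $W^{\mathfrak{p}}$-dominance enters, through \eqref{twistVerma}) together with the fact that $\cL_1T_sN$ is a submodule of $N$, to place $T_s\Delta(\mu)$ between two objects of $\mathcal{O}^{\mathsf{I}_\lambda}$; the closure of $\mathsf{I}_\lambda$ (hence of $\mathsf{J}_\lambda$) under the dot action of $W^{\mathfrak{p}}$ is then deduced from this via \eqref{twistVerma2}, rather than used as an input. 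Your plan inverts this: prove the orbit-closure statement combinatorially and let \eqref{twistVerma2} plus saturation do the rest. That is a legitimate alternative, and in the regular case your lifting argument (with $v_\lambda\in{}^{\mathfrak{p}}X$ and order-preservation of the projection to minimal coset representatives) is correct.

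Two points need repair. First, you cannot ``iterate \eqref{twistVerma2} along a reduced expression'' to get $[T_y\Delta(\mu)]=[\Delta(y\cdot\mu)]$: the functors $T_{s_i}$ are not exact, so an equality of Grothendieck classes does not propagate through them. This is harmless but should be restructured: since a reduced expression of $y\in W^{\mathfrak{p}}$ uses only simple reflections of $W^{\mathfrak{p}}$, $T_y$ is a composition of such $T_s$, so it suffices to prove the lemma for $y=s$, where \eqref{twistVerma2} applies verbatim. Second, and more seriously, the singular case of your combinatorial claim is left unproved, and the suggested fix ``deform to a nearby regular dominant weight'' does not work as stated: if you transport $\lambda=v_\lambda\cdot\lambda_0$ to a regular orbit via the same longest representative $v_\lambda$, $W^{\mathfrak{p}}$-dominance may be lost, because $\langle\lambda+\rho,\alpha^\vee\rangle$ can vanish for a positive root $\alpha$ of $\mathfrak{l}$ with $v_\lambda^{-1}\alpha$ a negative root of $W_{\lambda_0}$. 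The claim is nevertheless true and your lifting idea can be salvaged by passing to shortest representatives: writing $\bar v_\lambda$, $\bar v_\mu$ for the minimal elements of $v_\lambda W_{\lambda_0}$, $v_\mu W_{\lambda_0}$, the condition that $\lambda$ is $W^{\mathfrak{p}}$-dominant says precisely that for every simple $s\in W^{\mathfrak{p}}$ either $s\bar v_\lambda>\bar v_\lambda$ with $s\bar v_\lambda$ again a minimal representative, or $s\bar v_\lambda=\bar v_\lambda t$ for a simple $t\in W_{\lambda_0}$ (so in all cases $s\bar v_\lambda>\bar v_\lambda$), and then the ordinary lifting property applied to $\bar v_\mu\succeq\bar v_\lambda$ gives that the minimal representative of $s\bar v_\mu W_{\lambda_0}$ still dominates $\bar v_\lambda$, i.e. $s\cdot\mu\preceq\lambda$. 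Without spelling out this step (or replacing it by the paper's homological argument), your proof covers only regular $\lambda$, whereas the lemma is stated, and later used, for arbitrary $W^{\mathfrak{p}}$-dominant $\lambda\in\Lambda$.
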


\begin{proof}
By induction, we can restrict to the case where $y=s$ is a simple reflection in $W^{\mathfrak{p}}$. 
Since the sets $\mathsf{I}_\lambda$ and $\mathsf{J}_\lambda$ are saturated and $T_s$ is right exact, 
it suffices to show that~$T_s\Delta(\mu)$ is in $\mathcal{O}^{\mathsf{I}_\lambda}$, 
resp. $\mathcal{O}^{\mathsf{J}_\lambda}$, for all $\mu\in \mathsf{I}_\lambda$, resp. 
$\mu\in\mathsf{J}_\lambda$.

For any $\mu\in \mathsf{I}_\lambda$, equation \eqref{VermaBGG} yields a short exact sequence
\begin{displaymath}
0\to \Delta(\mu)\to \Delta(\lambda)\to N\to 0,
\end{displaymath}
leading, by \eqref{twistVerma}, to the exact sequence
\begin{displaymath}
\cL_1T_sN\to T_s\Delta(\mu)\to \Delta(s\cdot\lambda). 
\end{displaymath}
The left term in is $\mathcal{O}^{\mathsf{I}_\lambda}$ since $\cL_1T_s$ is the functor taking 
the maximal $s$-finite submodule of $N$, see \cite[Proposition~5.10]{CM1} or the adjoint of 
\cite[Theorem~2.2]{simple}, so $\cL_1 T_sN\subset N\in \mathcal{O}^{\mathsf{I}_\lambda}$. 
Since the right term is also in $\mathcal{O}^{\mathsf{I}_\lambda}$, we find 
$T_s\Delta(\mu)\in \mathcal{O}^{\mathsf{I}_\lambda}$. This concludes the proof for $\mathsf{I}_\lambda$.

By equation~\eqref{twistVerma2}, the above actually proves that the set $\mathsf{I}_\lambda$ 
is closed under the left $W^{\mathfrak{p}}$-action. By definition, this implies that also 
$\mathsf{J}_\lambda$ is closed under the left $W^{\mathfrak{p}}$-action. Using 
equation~\eqref{twistVerma2} again thus proves that~$T_s\Delta(\mu)\in \mathcal{O}^{\mathsf{J}_\lambda}$, 
for any $\mu\in \mathsf{J}_\lambda$, which concludes the proof.
\end{proof}

\begin{proof}[Proof of Theorem~\ref{Thm1}]
We set $\mathsf{I}=\mathsf{I}_\lambda$ and $\mathsf{J}=\mathsf{J}_\lambda$. We use parabolic 
induction~${\rm Ind}^\mathfrak{g}_{\mathfrak{p}}$ from $\mathfrak{l}$-modules (interpreted as 
$\mathfrak{p}$-modules with trivial action of $\mathfrak{u}^+$) to~$\mathfrak{g}$-modules, which yields a functor
\begin{displaymath}
{\rm Ind}^{\mathfrak{g}}_{\mathfrak{p}}:\mathcal{O}_{\lambda}(\mathfrak{l})\to\mathcal{O}^{\mathsf{I}}. 
\end{displaymath}
This functor is full and faithful. Indeed, for any $N_1,N_2\in \mathcal{O}_{\lambda}(\mathfrak{l})$, 
by adjunction, we have
\begin{displaymath}
{\mathrm{Hom}}_{\mathcal{O}^{\mathsf{I}}}({\rm Ind}^{\mathfrak{g}}_{\mathfrak{p}} N_1,
{\rm Ind}^{\mathfrak{g}}_{\mathfrak{p}} N_2)\cong 
{\mathrm{Hom}}_{\mathcal{O}_{\lambda}(\mathfrak{l})}(N_1,
{\rm Res}^{\mathfrak{g}}_{\mathfrak{p}} {\rm Ind}^{\mathfrak{g}}_{\mathfrak{p}} N_2)\cong 
{\mathrm{Hom}}_{\mathcal{O}_{\lambda}(\mathfrak{l})}(N_1,N_2)
\end{displaymath}
since ${\rm Res}^{\mathfrak{g}}_{\mathfrak{p}} {\rm Ind}^{\mathfrak{g}}_{\mathfrak{p}} N_2\cong N_2\oplus X$
with ${\mathrm{Hom}}_{\mathcal{O}_{\lambda}(\mathfrak{l})}(N_1,X)=0$, where the latter follows by applying $\mathrm{ad}_{H_{\mathfrak{l}}}$.

Similarly we see that ${\rm Ind}^{\mathfrak{g}}_{\mathfrak{p}}$ is left adjoint to the composition
of ${\rm Res}^{\mathfrak{g}}_{\mathfrak{p}}$ followed by taking the correct $\mathrm{ad}_{H_{\mathfrak{l}}}$-eigenspace.
As the latter functor is exact, it follows that ${\rm Ind}^{\mathfrak{g}}_{\mathfrak{p}}$ sends projectives
to projectives. The easy observation
$[{\rm Res}^{\mathfrak{g}}_{\mathfrak{l}}L(\nu):L_{\mathfrak{l}}(\mu)]=\delta_{\nu\mu}$ implies
that all projectives in $\mathcal{O}^{\mathsf{I}}$ belong to the essential image of 
${\rm Ind}^{\mathfrak{g}}_{\mathfrak{p}}$. As ${\rm Ind}^{\mathfrak{g}}_{\mathfrak{p}}$ is full and faithful,
it is an equivalence of categories. The statement on Verma modules follows from the definition of 
parabolic induction.

By \eqref{TwistInd}, we have
\begin{displaymath}
T_y\circ{\rm Ind}^{\mathfrak{g}}_{\mathfrak{p}}\;\cong\; {\rm Ind}^{\mathfrak{g}}_{\mathfrak{p}}\circ T_y,
\end{displaymath}
for any $y\in W^{\mathfrak{p}}$. It thus suffices to show that~$T_y$ restricts 
to~$\mathcal{O}^{\mathsf{I}}$ and~$\mathcal{O}^{\mathsf{J}}$, which is done in Lemma~\ref{TwistPreserves}.
\end{proof}

\begin{corollary}\label{CorMult}
For any $x\in {}^{\mathfrak{p}}X$, $y\in X^{\mathfrak{p}}$ and $w_1,w_2\in W^{\mathfrak{p}}$, we have
\begin{displaymath}
[\Delta(w_1x):L(w_2x)]\;=\;[\Delta(w_1):L(w_2)]\;=\; [\Delta(yw_1):L(yw_2)].
\end{displaymath}
\end{corollary}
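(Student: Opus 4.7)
The plan is to prove the two equalities separately: the first via Theorem~\ref{Thm1} and the second via the twisting functor $T_y$.

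For the first equality, I apply Theorem~\ref{Thm1} with $\lambda:=x\cdot 0$. Since $x\in{}^{\mathfrak{p}}X$ is the shortest element of $W^{\mathfrak{p}}x$, the convention $w_1\cdot\mu\preceq w_2\cdot\mu\Leftrightarrow w_1\succeq w_2$ translates this into $\lambda$ being the maximum of $W^{\mathfrak{p}}\cdot\lambda$, so $\lambda$ is $W^{\mathfrak{p}}$-dominant. Theorem~\ref{Thm1} then supplies an equivalence $\Psi\colon\mathcal{O}_\lambda(\mathfrak{l})\tilde\to\mathcal{A}_\lambda$ with $\Psi(\Delta_{\mathfrak{l}}(w\cdot\lambda))\cong\Delta(wx)$ for all $w\in W^{\mathfrak{p}}$; passing to heads, it also matches $L_{\mathfrak{l}}(w\cdot\lambda)$ with the image of $L(wx)$ in $\mathcal{A}_\lambda$. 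Since equivalences preserve composition multiplicities and, within the Serre subquotient $\mathcal{A}_\lambda=\mathcal{O}^{\mathsf{I}_\lambda}/\mathcal{O}^{\mathsf{J}_\lambda}$, these multiplicities among simples indexed by $W^{\mathfrak{p}}\cdot\lambda$ coincide with the $\mathcal{O}$-multiplicities, we obtain
$$[\Delta(w_1x):L(w_2x)]_{\mathcal{O}}=[\Delta_{\mathfrak{l}}(w_1\cdot\lambda):L_{\mathfrak{l}}(w_2\cdot\lambda)]_{\mathcal{O}_\lambda(\mathfrak{l})}.$$
A second application of Theorem~\ref{Thm1} at $\lambda=0$ gives $[\Delta(w_1):L(w_2)]_{\mathcal{O}}=[\Delta_{\mathfrak{l}}(w_1):L_{\mathfrak{l}}(w_2)]_{\mathcal{O}_0(\mathfrak{l})}$. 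As both $\lambda$ and $0$ are dominant regular integral weights for $\mathfrak{l}$ (regularity of $\lambda$ follows from $W_0=\{e\}$), the blocks $\mathcal{O}_\lambda(\mathfrak{l})$ and $\mathcal{O}_0(\mathfrak{l})$ are equivalent via translation functors matching Verma and simple modules; the two $\mathfrak{l}$-multiplicities therefore coincide and the first equality follows.

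For the second equality, I would use the twisting functor $T_y$. The hypotheses $y\in X^{\mathfrak{p}}$, $w_i\in W^{\mathfrak{p}}$ give the length-additivity $\ell(yw_i)=\ell(y)+\ell(w_i)$, so iterating \eqref{twistVerma} yields $T_y\Delta(w_i)\cong\Delta(yw_i)$. By \eqref{commProj}, also $T_yP(w_2)\cong\theta_{w_2}\Delta(y)$. Since $P(w_2)$ has a $\Delta$-flag and is therefore $T_y$-acyclic, while $\mathcal{L}T_y$ is a derived autoequivalence of $\mathcal{D}^b(\mathcal{O})$, the module $T_yP(w_2)$ is indecomposable; its top contains $L(yw_2)$ via the surjection onto $T_y\Delta(w_2)=\Delta(yw_2)\tto L(yw_2)$, and is therefore equal to $L(yw_2)$, whence $P(yw_2)\tto T_yP(w_2)$. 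Combining this surjection with the Hom-iso\-mor\-phism \eqref{isomPT} for the $\Delta$-flag modules $P(w_2),\Delta(w_1)$ gives
$$[\Delta(w_1):L(w_2)] = \dim\mathrm{Hom}(T_yP(w_2),\Delta(yw_1)) \le \dim\mathrm{Hom}(P(yw_2),\Delta(yw_1)) = [\Delta(yw_1):L(yw_2)].$$

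The main obstacle is the reverse inequality. I would attempt it via the $(T_y,G_y)$ adjunction: because $\mathcal{R}G_y$ is the derived inverse of $\mathcal{L}T_y$ and length-additivity ensures Vermas are $G_y$-acyclic, one has $G_y\Delta(yw_1)\cong\Delta(w_1)$. Pushing this to a matching Hom-level inequality requires a parallel comparison between $P(yw_2)$ and $T_yP(w_2)$ (e.g., a natural surjection $P(w_2)\tto \mathcal{R}^0G_yP(yw_2)$ modulo $G_y$-acyclic corrections), which is delicate since projectives in $\mathcal{O}$ are not in general $G_y$-acyclic. As a fallback, the required multiplicity equality is equivalent to Deodhar's identity $P_{yw_2,yw_1}(q)=P_{w_2,w_1}(q)$ for parabolic Kazhdan--Lusztig polynomials under these hypotheses, which directly yields the claim.
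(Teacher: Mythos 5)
Your treatment of the first equality is correct and is essentially the paper's own argument: apply Theorem~\ref{Thm1} at $\lambda=x\cdot 0$ (noting $x\in{}^{\mathfrak{p}}X$ makes $\lambda$ regular and $W^{\mathfrak{p}}$-dominant) and at $\lambda=0$, and compare the two regular integral blocks of $\mathcal{O}(\mathfrak{l})$ via translation equivalences.

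The second equality is where you have a genuine gap. Your twisting-functor argument yields at most one inequality, and even there the inference ``$T_{y}P(w_2)$ is indecomposable; its top contains $L(yw_2)$, and is therefore equal to $L(yw_2)$'' is a non sequitur, since indecomposable modules need not have simple top; what you actually need is the content of Lemma~\ref{LemPC}, namely that $T_yP(w_2)\cong\theta_{w_2}\Delta(y)$ is the projective cover of $L(yw_2)$ in $\mathcal{O}^{\mathsf{K}_y}$, proved there by analysing the top of $\theta_{w_2}\Delta(y)$ via \eqref{eqtheDel} and \eqref{isomPT}. For the reverse inequality you admit you have no argument, and the fallback appeal to ``Deodhar's identity $P_{yw_2,yw_1}(q)=P_{w_2,w_1}(q)$'' does not close it: Deodhar's results relate the parabolic polynomials $P^{J,q}$ to ordinary ones indexed by \emph{maximal} coset representatives ($P_{xw_J,yw_J}$), not invariance of ordinary Kazhdan--Lusztig polynomials under left multiplication by a minimal representative; such an invariance can indeed be justified (via the local product structure of $X_{yv}$ over the open cell of its image in $G/P$), but you neither prove it nor cite a correct source, and in the paper's normalization, where $e$ is dominant and $[\Delta(a):L(b)]=P_{w_0b,w_0a}(1)$, the identity you would actually need is $P_{w_0yw_2,\,w_0yw_1}=P_{w_0w_2,\,w_0w_1}$, so even granting your identity an extra (omitted) translation is required. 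The missing idea is much simpler and is the paper's route: the second equality follows from the first via the standard symmetry $[\Delta(u):L(v)]=[\Delta(u^{-1}):L(v^{-1})]$ (equivalently $P_{a,b}=P_{a^{-1},b^{-1}}$, which the paper sketches using the Harish-Chandra bimodule duality of \cite[Satz~6.34]{Jantzen}), since $[\Delta(yw_1):L(yw_2)]=[\Delta(w_1^{-1}y^{-1}):L(w_2^{-1}y^{-1})]$ with $y^{-1}\in{}^{\mathfrak{p}}X$ and $w_1^{-1},w_2^{-1}\in W^{\mathfrak{p}}$, so the already-proved first equality applies.
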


\begin{proof}
By Theorem~\ref{Thm1}, for $\lambda=x\cdot0$, we have
\begin{displaymath}
[\Delta(w_1x):L(w_2x)]\;=\;[\Delta_{\mathfrak{l}}(w_1x\cdot0):L_{\mathfrak{l}}(w_2x\cdot 0)]. 
\end{displaymath}
By equivalence of regular integral blocks in $\mathcal{O}_{\mathfrak{l}}$, see 
e.g.~\cite[Theorem~7.8]{Humphreys}, this number is equal 
to~$[\Delta_{\mathfrak{l}}(w_1\cdot0):L(w_2\cdot 0)]$. Applying Theorem~\ref{Thm1}, for 
$\lambda=0$, then yields the first equality in the proposition. The second can be 
obtained from the first one and the equality
\begin{displaymath}
[\Delta(u):L(v)]\;=\;[\Delta(u^{-1}):L(v^{-1})].
\end{displaymath}
The above is a well-known property of Kazhdan-Lusztig combinatorics. A direct proof is sketched below.

Consider the equivalence $F:\mathcal{O}_0\to {}^{\infty}_0H^1_0$ of \cite[Theorem~5.9]{BG}, 
where the  ${}^{\infty}_0H^1_0$ denotes the category of Harish-Chandra bimodules which admit 
generalized central character $\chi_0$ on the left and central character $\chi_0$ on the right, 
with $\chi_0$ being the central character of the trivial module.

By \cite[Satz~6.34]{Jantzen}, $F(L(v))$ and $F(L(v^{-1}))$ are linked by the duality on 
${}^{\infty}_0H^\infty_0$, or ${}^{1}_0H^1_0$, which exchanges the left and right action. 
Furthermore the description of $\Delta(v)$ as the quotient of $P(v)$ with respect to the 
submodule corresponding to all images of $P(x)\to P(v)$, for $x\prec v$, implies that 
$F(\Delta(v))$ and $F(\Delta(v^{-1}))$ are linked by the same duality.
\end{proof}

\begin{lemma}\label{LemProj1}
Consider the equivalence $\Psi:\mathcal{O}_0(\mathfrak{l})\;\tilde\to\; \mathcal{A}_0$ 
of Theorem~\ref{Thm1}, for the case $\lambda=0$. For any $w\in W^{\mathfrak{p}}$, the diagram of functors
\begin{displaymath}
\xymatrix{
\mathcal{O}_0(\mathfrak{l})\ar[r]^{\Psi}\ar[d]_{\theta_w}&
\mathcal{A}_0\ar[d]^{\overline{\theta}_w}&\ar[l]_{\pi}\mathcal{O}_0\ar[d]^{\theta_w}\\
\mathcal{O}_0(\mathfrak{l})\ar[r]_{\Psi}&\mathcal{A}_0&\ar[l]^{\pi}\mathcal{O}_0
} 
\end{displaymath}
extending~\eqref{inducFun}, commutes up to isomorphism.
\end{lemma}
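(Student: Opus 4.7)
The plan is to verify the two commutative squares in the diagram separately. The right-hand square is a direct consequence of the general mechanism \eqref{inducFun} once we establish that $\theta_w$ preserves the Serre subcategories $\mathcal{O}^{\mathsf{I}_0}$ and $\mathcal{O}^{\mathsf{J}_0}$. Preservation of $\mathcal{O}^{\mathsf{I}_0}$ is automatic on the principal block. For $\mathcal{O}^{\mathsf{J}_0}$, writing $w=s_1\cdots s_k$ as a product of simple reflections from $W^{\mathfrak{p}}$ reduces us to the case of a simple reflection $s\in W^{\mathfrak{p}}$, since $\theta_w$ is a direct summand of the composition of the corresponding $\theta_{s_i}$'s and Serre subcategories are closed under summands. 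By \eqref{eqtheDel}, $\theta_s\Delta(y)$ has a Verma flag with subquotients $\Delta(y)$ and $\Delta(ys)$, and because right multiplication by $s\in W^{\mathfrak{p}}$ preserves the left coset $yW^{\mathfrak{p}}$, the condition $y\notin W^{\mathfrak{p}}$ is equivalent to $ys\notin W^{\mathfrak{p}}$. Hence Verma flags with indices outside $W^{\mathfrak{p}}$ are preserved, and applying $\theta_s$ to projective covers in $\mathcal{O}^{\mathsf{J}_0}$ extends the preservation to all objects by right exactness.

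For the left-hand square, both $\Psi\circ\theta^{\mathfrak{l}}_w$ and $\overline{\theta}_w\circ\Psi$ are exact functors $\mathcal{O}_0(\mathfrak{l})\to\mathcal{A}_0$, so it suffices to identify them on the projective generator $\bigoplus_{z\in W^{\mathfrak{p}}}P^{\mathfrak{l}}(z)$ of $\mathcal{O}_0(\mathfrak{l})$. From Theorem~\ref{Thm1} (applied with $\lambda=0$) we have $\Psi(\Delta_{\mathfrak{l}}(e))\cong\pi(\Delta(e))$, and since $\Psi=\pi\circ\mathrm{Ind}^{\mathfrak{g}}_{\mathfrak{p}}$ and $\mathrm{Ind}^{\mathfrak{g}}_{\mathfrak{p}}$ preserves projectives (as in the proof of Theorem~\ref{Thm1}), the object $\Psi(P^{\mathfrak{l}}(z))$ is the indecomposable projective cover of $L(z)$ in $\mathcal{A}_0$, and hence is isomorphic to $\pi(P(z))$. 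Using $\theta_wP(z)=\theta_w\theta_z\Delta(e)$ and $\theta^{\mathfrak{l}}_wP^{\mathfrak{l}}(z)=\theta^{\mathfrak{l}}_w\theta^{\mathfrak{l}}_z\Delta_{\mathfrak{l}}(e)$, the required identification reduces to matching the decompositions of $\theta_w\theta_z$ and $\theta^{\mathfrak{l}}_w\theta^{\mathfrak{l}}_z$ into indecomposable projective functors. This follows from the standard compatibility of Kazhdan-Lusztig bases for parabolic subgroups: the subalgebra of the Hecke algebra of $W$ spanned by $\{C_u:u\in W^{\mathfrak{p}}\}$ is closed under multiplication with the same structure constants as the Hecke algebra of $W^{\mathfrak{p}}$. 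Consequently $\theta_w\theta_z\cong\bigoplus_{u\in W^{\mathfrak{p}}}\theta_u^{\oplus m_{w,z}^u}$ with the same multiplicities $m_{w,z}^u$ as those giving $\theta^{\mathfrak{l}}_w\theta^{\mathfrak{l}}_z\cong\bigoplus_{u\in W^{\mathfrak{p}}}(\theta^{\mathfrak{l}}_u)^{\oplus m_{w,z}^u}$.

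The main obstacle is upgrading this object-level agreement on the projective generator to a natural isomorphism of functors. For this I would invoke the tensor-induction identity $\mathrm{Ind}^{\mathfrak{g}}_{\mathfrak{p}}(M)\otimes V\cong\mathrm{Ind}^{\mathfrak{g}}_{\mathfrak{p}}(M\otimes V|_{\mathfrak{p}})$ for finite-dimensional $\mathfrak{g}$-modules $V$, which naturally intertwines the tensor-product endofunctors used to build projective functors on $\mathcal{O}_0(\mathfrak{l})$ and $\mathcal{O}_0$ through parabolic induction; this intertwining passes to the projective-functor summands $\theta_w$. Composing with $\pi$ then yields the required natural isomorphism $\Psi\circ\theta^{\mathfrak{l}}_w\cong\overline{\theta}_w\circ\Psi$.
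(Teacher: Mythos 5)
Your treatment of the right-hand square is essentially correct and coincides with the paper's: the paper simply observes that well-definedness of $\overline{\theta}_w$ is the $z=e$ case of Lemma~\ref{LemTheta}, and your coset argument ($ys\in yW^{\mathfrak{p}}$ for $s\in W^{\mathfrak{p}}$, \eqref{eqtheDel}, then passing from Verma flags to all objects via projective covers) reproduces that proof, using implicitly that $\mathsf{J}_0$ is saturated so that projectives of $\mathcal{O}^{\mathsf{J}_0}$ have Verma flags indexed by $\mathsf{J}_0$. Your Hecke-algebra observation is also true (for $w,z\in W^{\mathfrak{p}}$ all Kazhdan--Lusztig basis elements occurring in $C_wC_z$ are indexed by $W^{\mathfrak{p}}$, with the same structure constants as in the Hecke algebra of $W^{\mathfrak{p}}$), but, as you yourself point out, object-level agreement of two exact functors on a projective generator does not produce an isomorphism of functors: the identifications must be compatible with endomorphisms of the generator. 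So the whole proof of the left-hand square rests on your final paragraph.

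That paragraph has a genuine gap. The tensor identity $\mathrm{Ind}^{\mathfrak{g}}_{\mathfrak{p}}(M)\otimes V\cong \mathrm{Ind}^{\mathfrak{g}}_{\mathfrak{p}}(M\otimes \mathrm{Res}^{\mathfrak{g}}_{\mathfrak{p}}V)$ intertwines parabolic induction with $-\otimes \mathrm{Res}^{\mathfrak{g}}_{\mathfrak{p}}V$ on the $\mathfrak{l}$-side, but the projective functors $\theta^{\mathfrak{l}}_w$ on $\mathcal{O}_0(\mathfrak{l})$ are summands of $-\otimes V_0$ for finite dimensional $\mathfrak{l}$-modules $V_0$, not of $-\otimes \mathrm{Res}^{\mathfrak{g}}_{\mathfrak{p}}V$; the restriction of $V$ is a much bigger module whose other $\mathrm{ad}_{H_{\mathfrak{l}}}$-layers scatter $M\otimes\mathrm{Res}^{\mathfrak{g}}_{\mathfrak{p}}V$ over many eigenvalues and blocks. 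So the identity you invoke does not yet relate the functors you need to compare. The missing step, which is precisely the heart of the paper's argument, is to take $V$ to be the simple $\mathfrak{g}$-module with the same highest weight as $V_0$ and to prove the natural isomorphism $\pi\bigl(\mathrm{Ind}^{\mathfrak{g}}_{\mathfrak{p}}(M\otimes V_0)\bigr)\cong \pi\bigl(\mathrm{Ind}^{\mathfrak{g}}_{\mathfrak{p}}(M\otimes \mathrm{Res}^{\mathfrak{g}}_{\mathfrak{p}}V)\bigr)$, i.e.\ that the contribution of the remaining layers of $\mathrm{Res}^{\mathfrak{g}}_{\mathfrak{p}}V$ dies under the projection to $\mathcal{A}_0$; only then does the tensor identity give $\Psi\circ(-\otimes V_0)\cong \overline{F}\circ\Psi$ for some projective functor $F$ on $\mathcal{O}_0$. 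After that, matching summands is cheap: evaluating on $\Delta_{\mathfrak{l}}(e)$ and using \eqref{DefTheta} together with $\Psi(P_{\mathfrak{l}}(x))\cong\pi P(x)$ identifies $\theta^{\mathfrak{l}}_w$ with $\overline{\theta}_w$, making your Kazhdan--Lusztig multiplicity computation unnecessary. As written, ``this intertwining passes to the projective-functor summands'' is an assertion rather than an argument, and the discrepancy between $V_0$ and $\mathrm{Res}^{\mathfrak{g}}_{\mathfrak{p}}V$ is exactly what it glosses over.
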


\begin{proof}
That $\overline{\theta}_w$ is well-defined, is a special case of Lemma~\ref{LemTheta} below. 
This proves existence of the right-hand side of the diagram.

To study projective functors on $\mathcal{O}_0(\mathfrak{l})$, it suffices to consider 
$-\otimes V_0$ for simple finite dimensional $\mathfrak{l}$-modules $V_0$ on which 
$H_{\mathfrak{l}}$ acts trivially. For any such simple module, we have the corresponding 
simple $\mathfrak{g}$-module $V$ with same highest weight.

Recall that we have $\Psi=\pi\circ{\rm Ind}^{\mathfrak{g}}_{\mathfrak{p}}$. We can then calculate
\begin{displaymath}
\pi({\rm Ind}^{\mathfrak{g}}_{\mathfrak{p}}(M\otimes V_0))\;\cong\; 
\pi({\rm Ind}^{\mathfrak{g}}_{\mathfrak{p}}(M\otimes 
{\rm Res}^{\mathfrak{g}}_{\mathfrak{p}}V))\;\cong\; \pi({\rm Ind}^{\mathfrak{g}}_{\mathfrak{p}}(M)\otimes V).
\end{displaymath}
This shows that $\Psi\circ \theta_w\cong \overline{F}\circ \Psi$, for some projective functor 
$F$ on $\mathcal{O}_0(\mathfrak{g})$.
The identification of projective functors then follows immediately from equation~\eqref{DefTheta} 
and the observation $\Psi(P_{\mathfrak{l}}(x))\cong P(x)$, for all $x\in W^{\mathfrak{p}}$.
\end{proof}

\subsection{Equivalences intertwining projective functors}\label{s5.3}

Now we fix an element $z\in X^{\mathfrak{p}}$. We define the sets 
$\mathsf{K}_z$ and $\mathsf{L}_z$ by
\begin{displaymath}
\mathsf{K}_z\;=\;\{x\in W\,|\,z\preceq x\}\;=\; zW^{\mathfrak{p}}\,\amalg\, \mathsf{L}_z. 
\end{displaymath}
Then we define the Serre subquotient
\begin{displaymath}
\mathcal{B}_z\;=\;\mathcal{O}^{\mathsf{K}_z}/\mathcal{O}^{\mathsf{L}_z}. 
\end{displaymath}

\begin{theorem}\label{2dequiv}
There exists an equivalence
\begin{displaymath}
\Phi: \mathcal{O}_{0}(\mathfrak{l})\;\tilde\to\; \mathcal{B}_z,
\quad\mbox{ with}
\quad\Phi(\Delta_{\mathfrak{l}}(y))\cong \Delta(zy),\quad\mbox{for all $y\in W^{\mathfrak{p}}$.} 
\end{displaymath}
Furthermore, for any $x\in W^{\mathfrak{p}}$, the diagram of functors
\begin{displaymath}
\xymatrix{
\mathcal{O}_0(\mathfrak{l})\ar[r]^{\Phi}\ar[d]_{\theta_x}&
\mathcal{B}_z\ar[d]^{\overline{\theta}_x}&
\ar[l]_{\pi}\mathcal{O}_0^{\mathsf{K}_z}\ar[d]^{\theta_x}\\
\mathcal{O}_0(\mathfrak{l})\ar[r]_{\Phi}&\mathcal{B}_z&\ar[l]^{\pi}\mathcal{O}_0^{\mathsf{K}_z}
} 
\end{displaymath}
extending~\eqref{inducFun}, commutes up to isomorphism.
\end{theorem}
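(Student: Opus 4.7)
Following the strategy of Theorem~\ref{Thm1}, I would define
\[
\Phi\;:=\;\pi\circ T_z\circ{\rm Ind}^{\mathfrak{g}}_{\mathfrak{p}}\;:\;\mathcal{O}_0(\mathfrak{l})\;\longrightarrow\;\mathcal{B}_z,
\]
with the twisting functor $T_z$ playing the role of transferring the ``bottom'' coset $W^{\mathfrak{p}}$ to the target coset $zW^{\mathfrak{p}}$. To see this is well-defined, i.e., that $T_z\circ{\rm Ind}^{\mathfrak{g}}_{\mathfrak{p}}$ lands in $\mathcal{O}^{\mathsf{K}_z}$, note that ${\rm Ind}^{\mathfrak{g}}_{\mathfrak{p}}\,P_{\mathfrak{l}}(y)$ has a Verma flag by $\Delta(y')$ with $y'\in W^{\mathfrak{p}}$. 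Since $\ell(zy')=\ell(z)+\ell(y')$ for $z\in X^{\mathfrak{p}}$, iterated application of~\eqref{twistVerma} along a reduced expression of $z$, combined with~\eqref{eq2}, yields $T_z\Delta(y')\cong \Delta(zy')\in \mathcal{O}^{\mathsf{K}_z}$. Preservation of Verma flags by $T_z$, together with right exactness of $T_z$ and exactness of ${\rm Ind}^{\mathfrak{g}}_{\mathfrak{p}}$, extends this to all of $\mathcal{O}_0(\mathfrak{l})$; the identification $\Phi(\Delta_{\mathfrak{l}}(y))\cong\Delta(zy)$ is then immediate from $\pi T_z\Delta(y)=\pi\Delta(zy)$.

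The core of the proof is to identify $T_z\,{\rm Ind}^{\mathfrak{g}}_{\mathfrak{p}}\,P_{\mathfrak{l}}(y)$ with the projective cover $P^{\mathsf{K}_z}(zy)$ in $\mathcal{O}^{\mathsf{K}_z}$. Both modules are Verma-flagged. The flag on $T_z\,{\rm Ind}^{\mathfrak{g}}_{\mathfrak{p}}\,P_{\mathfrak{l}}(y)$ consists of $\Delta(zy')$, $y'\in W^{\mathfrak{p}}$, with multiplicities $(P_{\mathfrak{l}}(y):\Delta_{\mathfrak{l}}(y'))$. For the flag on $P^{\mathsf{K}_z}(zy)$, BGG reciprocity gives Verma multiplicities $[\Delta(u):L(zy)]$ for $u\in\mathsf{K}_z$ with $u\succeq zy$. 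The key observation is that for $u\in\mathsf{L}_z$ the relation $u\succeq zy$ forces $u\succ zy$ strictly (since $u\notin zW^{\mathfrak{p}}$), and then~\eqref{VermaBGG} gives $[\Delta(u):L(zy)]=0$. Hence the Verma flag of $P^{\mathsf{K}_z}(zy)$ also involves only $\Delta(zy')$ for $y'\in W^{\mathfrak{p}}$, and by Corollary~\ref{CorMult} with matching multiplicities. Combining this with the surjection $T_z\,{\rm Ind}^{\mathfrak{g}}_{\mathfrak{p}}\,P_{\mathfrak{l}}(y)\tto \Delta(zy)\tto L(zy)$ coming from the top of the Verma flag, the universal property of the projective cover $P^{\mathsf{K}_z}(zy)\tto L(zy)$ then lifts to a map $P^{\mathsf{K}_z}(zy)\to T_z\,{\rm Ind}^{\mathfrak{g}}_{\mathfrak{p}}\,P_{\mathfrak{l}}(y)$, which by the dimension match forces an isomorphism $T_z\,{\rm Ind}^{\mathfrak{g}}_{\mathfrak{p}}\,P_{\mathfrak{l}}(y)\cong P^{\mathsf{K}_z}(zy)$.

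From here, $\Phi P_{\mathfrak{l}}(y)\cong \pi P^{\mathsf{K}_z}(zy)$, which is precisely the projective cover of $\bar L(zy)$ in $\mathcal{B}_z$ by Subsection~\ref{SecSerre}. The same subsection furnishes the isomorphism
\[
{\rm End}_{\mathcal{B}_z}\bigl(\textstyle\bigoplus_{y\in W^{\mathfrak{p}}}\Phi P_{\mathfrak{l}}(y)\bigr)\;\cong\;{\rm End}_{\mathcal{O}^{\mathsf{K}_z}}\bigl(\textstyle\bigoplus_{y\in W^{\mathfrak{p}}} P^{\mathsf{K}_z}(zy)\bigr),
\]
and combining with~\eqref{isomPT} applied to the Verma-flagged ${\rm Ind}^{\mathfrak{g}}_{\mathfrak{p}}\,P_{\mathfrak{l}}(y)$ and Theorem~\ref{Thm1} identifies the right-hand side with ${\rm End}_{\mathcal{O}_0(\mathfrak{l})}(\bigoplus_y P_{\mathfrak{l}}(y))$, exhibiting $\Phi$ as a Morita equivalence. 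Finally, the intertwining with $\theta_x$, $x\in W^{\mathfrak{p}}$, assembles from three compatibilities: commutation of $\theta_x$ with ${\rm Ind}^{\mathfrak{g}}_{\mathfrak{p}}$ as in (the proof of) Lemma~\ref{LemProj1}, commutation with $T_z$ via~\eqref{commProj}, and descent of $\theta_x$ to $\overline{\theta}_x$ on $\mathcal{B}_z$. The last point requires $\theta_x$ to preserve both $\mathcal{O}^{\mathsf{K}_z}$ and $\mathcal{O}^{\mathsf{L}_z}$, which can be argued by a Verma-flag analysis (the Verma factors of $\theta_x\Delta(u)$ are $\Delta(v)$ with $v\succeq u$) completely analogous to Lemma~\ref{TwistPreserves}.

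The main technical obstacle is the identification $T_z\,{\rm Ind}^{\mathfrak{g}}_{\mathfrak{p}}\,P_{\mathfrak{l}}(y)\cong P^{\mathsf{K}_z}(zy)$; while the matching of Verma multiplicities is an algebraic consequence of~\eqref{VermaBGG} and Corollary~\ref{CorMult}, upgrading the lift $P^{\mathsf{K}_z}(zy)\to T_z\,{\rm Ind}^{\mathfrak{g}}_{\mathfrak{p}}\,P_{\mathfrak{l}}(y)$ to an isomorphism really requires controlling the head of $T_z\,{\rm Ind}^{\mathfrak{g}}_{\mathfrak{p}}\,P_{\mathfrak{l}}(y)$, which is best done via the quasi-hereditary structure of $\mathcal{O}^{\mathsf{K}_z}$ and the ordering of Verma factors together with indecomposability of the module (whose local endomorphism algebra follows from~\eqref{isomPT} and full faithfulness of ${\rm Ind}^{\mathfrak{g}}_{\mathfrak{p}}$).
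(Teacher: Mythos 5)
Your overall architecture---transporting $\mathcal{O}_0(\mathfrak{l})$ into $\mathcal{B}_z$ via $\pi\circ T_z\circ{\rm Ind}^{\mathfrak{g}}_{\mathfrak{p}}$, identifying the images of the $P_{\mathfrak{l}}(y)$ with the $P^{\mathsf{K}_z}(zy)$, and concluding by a Morita-type argument together with \eqref{commProj} for the $\theta_x$-intertwining---is essentially the paper's route (there $\Phi$ is factored through the $\lambda=0$ case of Theorem~\ref{Thm1} and an equivalence $\mathcal{B}_e\to\mathcal{B}_z$ induced by $T_z$ on projectives, Lemma~\ref{LemProj1} and Proposition~\ref{PropEq}). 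However, the step you yourself single out as the main obstacle, $T_z\,{\rm Ind}^{\mathfrak{g}}_{\mathfrak{p}}P_{\mathfrak{l}}(y)\cong P^{\mathsf{K}_z}(zy)$, is not closed by your sketch, and your multiplicity comparison contains a concrete error. With the paper's conventions, \eqref{VermaBGG} says $[\Delta(u):L(zy)]\neq 0$ if and only if $u\preceq zy$ in the Bruhat order on $W$, so BGG reciprocity in $\mathcal{O}^{\mathsf{K}_z}$ puts into the Verma flag of $P^{\mathsf{K}_z}(zy)$ exactly those $\Delta(u)$ with $z\preceq u\preceq zy$; what must be excluded are elements $u\in\mathsf{L}_z$ with $u\preceq zy$, not with $u\succeq zy$ as you argue. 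That the Bruhat interval $\{u\,:\,z\preceq u\preceq zy\}$ lies in $zW^{\mathfrak{p}}$ is true, but it is a genuine combinatorial fact (monotonicity of the projection onto minimal coset representatives) which you neither state nor prove. More seriously, even granting matching Verma multiplicities, a lift $P^{\mathsf{K}_z}(zy)\to T_zP(y)$ of the cover map together with equality of dimensions does not force an isomorphism: its image can be proper unless one knows that the head of $T_zP(y)$ is simple, or that $T_zP(y)$ is projective in $\mathcal{O}^{\mathsf{K}_z}$; indecomposability via \eqref{isomPT} plus the ordering of Verma factors gives neither.

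The missing idea is the one driving the paper's Lemma~\ref{LemPC}: before counting anything, rewrite $T_zP(y)\cong T_z\theta_y\Delta(e)\cong\theta_yT_z\Delta(e)\cong\theta_y\Delta(z)$ using \eqref{DefTheta}, \eqref{commProj} and \eqref{twistVerma}, and note that $\Delta(z)\cong P^{\mathsf{K}_z}(z)$ (the largest quotient of $P(z)$ in $\mathcal{O}^{\mathsf{K}_z}$) is projective there. Since $\theta_y$ and its biadjoint $\theta_{y^{-1}}$ preserve $\mathcal{O}^{\mathsf{K}_z}$ (Lemma~\ref{LemTheta}), $\theta_y\Delta(z)$ is projective in $\mathcal{O}^{\mathsf{K}_z}$; its top involves only $L(zx')$ with $x'\in W^{\mathfrak{p}}$ by \eqref{eqtheDel}, and the Hom count via \eqref{twistVerma}, \eqref{isomPT} and Corollary~\ref{CorMult} then pins it down as $P^{\mathsf{K}_z}(zy)$---projectivity is what makes the count conclusive. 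Two further points need repair. Your justification that $\theta_x$ descends to $\mathcal{B}_z$ is wrong as stated: the Verma factors of $\theta_s\Delta(u)$ are $\Delta(u)$ and $\Delta(us)$, and $us$ may be smaller than $u$; the correct reason, as in Lemma~\ref{LemTheta}, is that $\mathsf{K}_z$ and $\mathsf{L}_z$ are unions of right $W^{\mathfrak{p}}$-cosets. And because your $\Phi$ applies $T_z$ before $\pi$, the commutation of $\theta_x$ with ${\rm Ind}^{\mathfrak{g}}_{\mathfrak{p}}$ borrowed from Lemma~\ref{LemProj1} (which is only valid after passing to the Serre quotient) requires the extra check that the superfluous filtration pieces of ${\rm Ind}^{\mathfrak{g}}_{\mathfrak{p}}(M)\otimes V$ are sent into $\mathcal{O}^{\mathsf{L}_z}$ by $T_z$---or, as in the paper, one should verify all isomorphisms on projectives only and extend by right exactness.
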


We start with the following lemma.

\begin{lemma}\label{LemTheta}
For any $x\in W^{\mathfrak{p}}$, the functor $\theta_x$ restricts to an 
endofunctor of both $\mathcal{O}^{\mathsf{K}_z}$ and $\mathcal{O}^{\mathsf{L}_z}$.
\end{lemma}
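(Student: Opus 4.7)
The plan is to reduce to checking Verma modules by exactness of $\theta_x$, use the identification $\theta_x \Delta(w) \cong T_w P(x)$, and then apply parabolic Bruhat combinatorics.

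First, since $\theta_x$ is exact and both $\mathcal{O}^{\mathsf{K}_z}$ and $\mathcal{O}^{\mathsf{L}_z}$ are Serre subcategories generated by simples, it suffices to show that every composition factor of $\theta_x L(w)$ is of the form $L(w')$ with $w'$ in the corresponding set, whenever $w$ is. From the surjection $\Delta(w) \twoheadrightarrow L(w)$ and exactness, composition factors of $\theta_x L(w)$ occur among those of $\theta_x \Delta(w)$, so it is enough to control $\theta_x \Delta(w)$.

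Writing $\Delta(w) \cong T_w \Delta(e)$ (by iterating \eqref{twistVerma} along a reduced expression of $w$, using that every suffix of a reduced expression is reduced) and combining with \eqref{DefTheta} and \eqref{commProj}, we obtain
\begin{displaymath}
\theta_x \Delta(w) \;\cong\; T_w \theta_x \Delta(e) \;\cong\; T_w P(x).
\end{displaymath}
Since $x \in W^{\mathfrak{p}}$ admits a reduced expression in simple reflections of $W^{\mathfrak{p}}$, iterated application of \eqref{eqtheDel} exhibits a Verma flag of $\theta_x \Delta(e)$, and hence of its summand $P(x)$, with subquotients of the form $\Delta(u)$, $u \in W^{\mathfrak{p}}$. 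Because projective functors preserve modules with Verma flag, $\theta_x \Delta(w)$ also has a Verma flag; comparing Grothendieck classes via \eqref{twistVerma2} identifies its subquotients as the $\Delta(wu)$, with $u \in W^{\mathfrak{p}}$ running through the previous list. Hence every composition factor of $\theta_x \Delta(w)$ is $L(w')$ with $w' \succeq wu$ in Bruhat order, for some $u \in W^{\mathfrak{p}}$.

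The final step is purely combinatorial and relies on the standard fact that the map $W \to X^{\mathfrak{p}}$ sending $v$ to the shortest representative $v^{\mathfrak{p}}$ of $vW^{\mathfrak{p}}$ is order-preserving for the Bruhat order. Since $z \in X^{\mathfrak{p}}$, we have $z = z^{\mathfrak{p}}$, and obviously $(wu)^{\mathfrak{p}} = w^{\mathfrak{p}}$; thus $w \in \mathsf{K}_z$ yields
\begin{displaymath}
z \;=\; z^{\mathfrak{p}} \;\preceq\; w^{\mathfrak{p}} \;=\; (wu)^{\mathfrak{p}} \;\preceq\; wu \;\preceq\; w',
\end{displaymath}
so $w' \in \mathsf{K}_z$. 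For $\mathsf{L}_z$, the assumption $w \notin zW^{\mathfrak{p}}$ forces $w^{\mathfrak{p}} \neq z$, hence $w^{\mathfrak{p}} \succ z$ strictly by the same chain of inequalities; then $(w')^{\mathfrak{p}} \succeq w^{\mathfrak{p}} \succ z$, so $w' \notin zW^{\mathfrak{p}}$. The point I expect to require the most care is ensuring that the Verma filtration of $T_w P(x)$ is genuinely given by the modules $\Delta(wu)$ rather than merely matching in the Grothendieck group, but this is secured once one invokes that projective functors preserve Verma flags, after which the whole argument reduces cleanly to the Coxeter-theoretic observation in the last step.
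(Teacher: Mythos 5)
Your argument is correct, but it takes a noticeably different route from the paper's. The paper first reduces to the case $x=s$ a simple reflection of $W^{\mathfrak{p}}$ (since $\theta_x$ is a summand of a composition of such $\theta_s$), where \eqref{eqtheDel} gives a two-step Verma filtration of $\theta_s\Delta(y)$; the lemma then reduces to the purely combinatorial statement that $\mathsf{K}_z$ and $\mathsf{L}_z$ are saturated and closed under right $W^{\mathfrak{p}}$-multiplication, which the paper extracts from the coset decomposition $\mathsf{K}_z=\coprod_{u\in X^{\mathfrak{p}},\,z\preceq u}uW^{\mathfrak{p}}$. You instead treat a general $x\in W^{\mathfrak{p}}$ at once: the identification $\theta_x\Delta(w)\cong T_wP(x)$ (which the paper only exploits later, in Lemma~\ref{LemPC}), the fact that the Verma flag of $P(x)$ is supported on $W^{\mathfrak{p}}$, and a Grothendieck-group comparison via \eqref{twistVerma2} pin down the Verma subquotients of $\theta_x\Delta(w)$ as the $\Delta(wu)$, $u\in W^{\mathfrak{p}}$; the concluding step, the order-preservation of the projection $W\to X^{\mathfrak{p}}$, is exactly the Coxeter fact underlying the paper's coset decomposition, and your explicit chains of inequalities subsume both the saturation and the right-multiplication closure that the paper invokes separately. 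The trade-off: your route needs more machinery (\eqref{twistVerma}, \eqref{twistVerma2}, \eqref{commProj}, preservation of Verma flags by projective functors and by taking direct summands, linear independence of Verma classes), while the paper's needs only \eqref{eqtheDel} plus an induction hidden in the reduction to simple reflections. One small wording slip: when you say the iterated use of \eqref{eqtheDel} gives a Verma flag of ``$\theta_x\Delta(e)$, and hence of its summand $P(x)$'', you mean the composition $\theta_{s_1}\cdots\theta_{s_k}\Delta(e)$ along a reduced expression of $x$ in $W^{\mathfrak{p}}$, of which $P(x)\cong\theta_x\Delta(e)$ is a direct summand (alternatively, BGG reciprocity gives the support of the flag of $P(x)$ directly, since $u\preceq x$ forces $u\in W^{\mathfrak{p}}$); this does not affect the validity of the argument.
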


\begin{proof}
It suffices to consider $x=s$, a simple reflection in $W^{\mathfrak{p}}$.
Since the sets $\mathsf{K}_z$ and $\mathsf{L}_z$ are saturated, 
we can just prove that~$\theta_s\Delta(y)$ 
is in $\mathcal{O}^{\mathsf{K}_z}$, resp. $\mathcal{O}^{\mathsf{L}_z}$, for any 
$y\in \mathsf{K}_z$, resp. $y\in\mathsf{L}_z$. Since $\theta_s\Delta(y)$ is an 
extension of $\Delta(ys)$ and $\Delta(y)$ by \eqref{eqtheDel}, this is 
equivalent to showing that the sets $\mathsf{K}$ and $\mathsf{L}$ are 
closed under right $W^{\mathfrak{p}}$-multiplication. The latter follows 
from the easy observation that
\begin{displaymath}
\mathsf{K}\;=\;\coprod_{u\in X^{\mathfrak{p}}\,|\, z\preceq u} uW^{\mathfrak{p}}.
\end{displaymath}
The claim follows.
\end{proof}

\begin{lemma}\label{LemPC}
For any~$x\in W^{\mathfrak{p}}$, the projective cover of $L(zx)$ in $\mathcal{O}^{\mathsf{K}_z}$ satisfies
\begin{displaymath}
P^{\mathsf{K}_z}(zx)\cong T_z P(x)\cong \theta_x \Delta(z). 
\end{displaymath}
\end{lemma}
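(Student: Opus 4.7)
First, the identification $T_z P(x)\cong \theta_x\Delta(z)$ is a short bookkeeping exercise: iterating \eqref{twistVerma} along a reduced decomposition of $z$ produces $\Delta(z)\cong T_z\Delta(e)$, and then \eqref{commProj} together with \eqref{DefTheta} yield $\theta_x\Delta(z)\cong \theta_x T_z\Delta(e)\cong T_z\theta_x\Delta(e)\cong T_z P(x)$. So the substance of the lemma is the identification of this module with $P^{\mathsf{K}_z}(zx)$.

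For this, I plan to verify that $\theta_x\Delta(z)$ is an indecomposable projective object of $\mathcal{O}^{\mathsf{K}_z}$ whose simple top is $L(zx)$, which characterizes $P^{\mathsf{K}_z}(zx)$ up to isomorphism. The key observation is that $z$ is the Bruhat-minimum of $\mathsf{K}_z$, so $z\cdot 0$ is the maximal weight appearing in $\mathcal{O}^{\mathsf{K}_z}$; hence $\mathrm{Ext}^1(\Delta(z),L(v))=0$ for every $v\in\mathsf{K}_z$ (the usual highest-weight-category vanishing), and $\Delta(z)\cong P^{\mathsf{K}_z}(z)$ is itself projective in $\mathcal{O}^{\mathsf{K}_z}$. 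By Lemma~\ref{LemTheta} applied to both $x$ and $x^{-1}$ (both in $W^{\mathfrak{p}}$), the biadjoint pair $(\theta_x,\theta_{x^{-1}})$ restricts to an adjunction on $\mathcal{O}^{\mathsf{K}_z}$; since the right adjoint $\theta_{x^{-1}}$ is exact, the restricted functor $\theta_x$ preserves projective objects, so $\theta_x\Delta(z)$ is projective in $\mathcal{O}^{\mathsf{K}_z}$. Indecomposability then comes from \eqref{isomPT}, which yields $\mathrm{End}(\theta_x\Delta(z))\cong \mathrm{End}(T_z P(x))\cong \mathrm{End}(P(x))$, a local algebra.

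It remains to identify the simple top. Applying the right exact functor $T_z$ to the canonical projection $P(x)\twoheadrightarrow \Delta(x)$ gives a surjection $\theta_x\Delta(z)\cong T_z P(x)\twoheadrightarrow T_z\Delta(x)$. Since $z\in X^{\mathfrak{p}}$ and $x\in W^{\mathfrak{p}}$ imply $\ell(zx)=\ell(z)+\ell(x)$, an iterated application of \eqref{twistVerma} along a reduced decomposition of $z$ produces $T_z\Delta(x)\cong \Delta(zx)$; composing with $\Delta(zx)\twoheadrightarrow L(zx)$ then yields $\theta_x\Delta(z)\twoheadrightarrow L(zx)$. Because $\theta_x\Delta(z)$ is an indecomposable projective in $\mathcal{O}^{\mathsf{K}_z}$, it has a simple top, which must therefore be $L(zx)$, and so $\theta_x\Delta(z)\cong P^{\mathsf{K}_z}(zx)$. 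The whole plan hinges on the single conceptual observation that $\Delta(z)$ is projective in $\mathcal{O}^{\mathsf{K}_z}$; once that is in hand, everything follows by combining Lemma~\ref{LemTheta} with the biadjointness of projective functors, and I do not anticipate a real obstacle.
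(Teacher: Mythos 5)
Your proof is correct, and its skeleton matches the paper's: both establish $T_zP(x)\cong\theta_x\Delta(z)$ from \eqref{twistVerma}, \eqref{commProj} and \eqref{DefTheta}, both identify $\Delta(z)\cong P^{\mathsf{K}_z}(z)$, and both deduce projectivity of $\theta_x\Delta(z)$ in $\mathcal{O}^{\mathsf{K}_z}$ by adjunction with $\theta_{x^{-1}}$ (your explicit appeal to Lemma~\ref{LemTheta} for $x^{-1}$ is exactly the point behind the paper's terse ``by adjunction''). Where you genuinely diverge is the final identification of which projective one has obtained. The paper notes via \eqref{eqtheDel} that the top of $\theta_x\Delta(z)$ consists of simples $L(zx')$ with $x'\in W^{\mathfrak{p}}$, and then computes
$\dim\mathrm{Hom}(\theta_xP^{\mathsf{K}_z}(z),\Delta(zx'))=\dim\mathrm{Hom}(P(x),\Delta(x'))=[\Delta(zx'):L(zx)]$
using \eqref{twistVerma}, \eqref{isomPT} and Corollary~\ref{CorMult}, pinning down the module by a multiplicity comparison. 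You instead get indecomposability directly from \eqref{isomPT} (since $\mathrm{End}(T_zP(x))\cong\mathrm{End}(P(x))$ is local) and exhibit the explicit surjection $T_zP(x)\tto T_z\Delta(x)\cong\Delta(zx)\tto L(zx)$, where $T_z\Delta(x)\cong\Delta(zx)$ is legitimate because $\ell(zx)=\ell(z)+\ell(x)$ for $z\in X^{\mathfrak{p}}$, $x\in W^{\mathfrak{p}}$; an indecomposable projective with a simple quotient $L(zx)$ must be $P^{\mathsf{K}_z}(zx)$. Your route is more self-contained: it bypasses Corollary~\ref{CorMult} (the Kazhdan--Lusztig multiplicity identity) and the analysis of the whole top of $\theta_x\Delta(z)$, whereas the paper's computation additionally records the Hom-dimensions into all Verma modules of $\mathcal{O}^{\mathsf{K}_z}$, which is extra information in the same spirit as what is reused in the proof of Proposition~\ref{PropEq}. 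Your justification that $\Delta(z)$ is projective in $\mathcal{O}^{\mathsf{K}_z}$ (Ext-vanishing against all $L(v)$, $v\in\mathsf{K}_z$, since $z\cdot 0$ is the maximal weight of the saturated set) is a minor variation of the paper's, which instead truncates the exact sequence $0\to M\to P(z)\to\Delta(z)\to 0$ of \cite[Theorem~3.11]{Humphreys}; both are sound.
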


\begin{proof}
By \cite[Theorem~3.11]{Humphreys}, we have a short exact sequence
\begin{displaymath}
0\to M\to P(z)\to \Delta(z)\to 0, 
\end{displaymath}
where $M$ has a $\Delta$-flag with all Verma modules appearing of the form~$\Delta(y)$ 
with $y\prec z$, so, in particular, $y\not\in\mathsf{K}_z$. It thus follows 
by~\eqref{twistVerma} that
\begin{displaymath}
P^{\mathsf{K}}(z)\cong \Delta(z)\cong  T_z P(e). 
\end{displaymath}
Consequently, by Equations~\eqref{DefTheta} and \eqref{commProj}, we have
\begin{displaymath}
\theta_x P^{\mathsf{K}}(z)\cong T_zP(x), 
\end{displaymath}
for any $x\in W^{\mathfrak{p}}$. By adjunction, the module $\theta_x P^{\mathsf{K}}(z)$ 
is again projective. We claim that~$\theta_xP^{\mathsf{K}}(z)\cong P^{\mathsf{K}}(zx)$. 
To show this we first observe that the top of the module  
$\theta_x P^{\mathsf{K}}(z)\cong \theta_x\Delta(z)$ consists of simple modules of the 
form $L(zx')$ with $x'\in W^{\mathfrak{p}}$, by \eqref{eqtheDel}. Furthermore, 
since $\theta_x P^{\mathsf{K}}(z)\cong T_zP(x),$ for any $x'\in W^{\mathfrak{p}}$, we have
\begin{eqnarray*}
\dim{\mathrm{Hom}}_{\mathcal{O}^{\mathsf{K}}}(\theta_x P^{\mathsf{K}}(z),\Delta(zx'))&=
&\dim {\mathrm{Hom}}_{\mathcal{O}}(T_zP(x),T_z\Delta(x'))\\
&=&\dim {\mathrm{Hom}}_{\mathcal{O}}(P(x),\Delta(x'))\\
&=&[\Delta(zx'):L(zx)].
\end{eqnarray*}
Here, we used equations \eqref{twistVerma}, \eqref{isomPT} and Corollary~\ref{CorMult}.
\end{proof}

\begin{proposition}\label{PropEq}
There exists an equivalence of abelian categories
\begin{displaymath}
\Sigma:\mathcal{B}_e\to \mathcal{B}_z,\quad\mbox{with}\quad 
\Sigma(\Delta(y))\cong\Delta(zy),\mbox{ for all $y\in W^{\mathfrak{p}}$,} 
\end{displaymath}
such, for all $x\in W^{\mathfrak{p}}$, we have a diagram
\begin{displaymath}
\xymatrix{
\mathcal{B}_e\ar[rr]^{\overline{\theta}_x}\ar[d]_{\Sigma}&&\mathcal{B}_e\ar[d]^{\Sigma}\\
\mathcal{B}_z\ar[rr]^{\overline{\theta}_x}&&\mathcal{B}_z
} 
\end{displaymath}
with $\overline{\theta}_x$ induced from $\theta_x$, which commutes up to isomorphism.
\end{proposition}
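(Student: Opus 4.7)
The strategy is to build $\Sigma$ by Morita theory, with the twisting functor $T_z$ acting as the bridge that identifies the endomorphism algebras of suitable projective generators of $\mathcal{B}_e$ and $\mathcal{B}_z$. Since $\mathsf{K}_e=W$, one has $\mathcal{O}^{\mathsf{K}_e}=\mathcal{O}_0$, so $Q_e:=\pi(P')$ with $P':=\bigoplus_{y\in W^{\mathfrak{p}}}P(y)$ is a projective generator of $\mathcal{B}_e$. By Lemma~\ref{LemPC}, $Q_z:=\pi(T_zP')=\pi\bigl(\bigoplus_{y}P^{\mathsf{K}_z}(zy)\bigr)$ is a projective generator of $\mathcal{B}_z$. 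The Serre-quotient description of Hom-spaces from Section~\ref{SecSerre} yields $\operatorname{End}_{\mathcal{B}_e}(Q_e)\cong\operatorname{End}_{\mathcal{O}}(P')$ and $\operatorname{End}_{\mathcal{B}_z}(Q_z)\cong\operatorname{End}_{\mathcal{O}}(T_zP')$, and since $P'$ has a $\Delta$-flag, \eqref{isomPT} turns the action of $T_z$ into an algebra isomorphism between these two endomorphism algebras. Morita theory then supplies an equivalence $\Sigma:\mathcal{B}_e\to\mathcal{B}_z$ sending $Q_e$ to $Q_z$.

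To verify $\Sigma(\pi\Delta(y))\cong\pi\Delta(zy)$ for $y\in W^{\mathfrak{p}}$, I would first record the identification $T_z\Delta(y)\cong\Delta(zy)$: since $z\in X^{\mathfrak{p}}$ and $y\in W^{\mathfrak{p}}$, we have $\ell(zy)=\ell(z)+\ell(y)$, so that concatenating reduced expressions and combining \eqref{eq2} with iterated applications of \eqref{twistVerma} yields $T_z\Delta(y)=T_{zy}\Delta(e)=\Delta(zy)$. Applying \eqref{isomPT} to the pair $(P',\Delta(y))$ of $\Delta$-filtered modules then gives
\[
\operatorname{Hom}_{\mathcal{B}_z}(Q_z,\pi\Delta(zy))\cong\operatorname{Hom}_{\mathcal{O}}(T_zP',T_z\Delta(y))\cong\operatorname{Hom}_{\mathcal{O}}(P',\Delta(y))\cong\operatorname{Hom}_{\mathcal{B}_e}(Q_e,\pi\Delta(y))
\]
as modules over the common endomorphism algebra $A:=\operatorname{End}_{\mathcal{O}}(P')$, where the outer two isomorphisms use the Serre-quotient description again. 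Under the Morita equivalence this forces $\Sigma(\pi\Delta(y))\cong\pi\Delta(zy)$.

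Finally, Lemma~\ref{LemTheta} (applied both to $\mathsf{K}_e,\mathsf{L}_e$ and to $\mathsf{K}_z,\mathsf{L}_z$) ensures $\overline{\theta}_x$ is defined on both $\mathcal{B}_e$ and $\mathcal{B}_z$ for $x\in W^{\mathfrak{p}}$. Since $\overline{\theta}_x$ is right exact and each object of $\mathcal{B}_e$ admits a presentation by copies of $Q_e$, it is enough to check the intertwining on $\pi P(y)\in\add(Q_e)$, where the calculation
\[
\Sigma\bigl(\overline{\theta}_x\pi P(y)\bigr)\cong\pi\bigl(T_z\theta_xP(y)\bigr)\stackrel{\eqref{commProj}}{\cong}\pi\bigl(\theta_xT_zP(y)\bigr)\cong\overline{\theta}_x\Sigma\bigl(\pi P(y)\bigr)
\]
supplies the required isomorphism. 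The main delicate point is naturality: one must check that the various identifications (the Serre-quotient Hom-isomorphisms, the algebra isomorphism coming from \eqref{isomPT}, and the natural isomorphism \eqref{commProj}) assemble into a \emph{coherent} natural isomorphism of functors $\Sigma\circ\overline{\theta}_x\cong\overline{\theta}_x\circ\Sigma$. This is essentially formal once one observes that all the isomorphisms in play come from the $(U,U)$-bimodule defining $T_z$ and are thus functorial in the modules involved, but writing the coherence out carefully is the only non-routine part of the argument.
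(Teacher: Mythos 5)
Your proposal is correct and takes essentially the same approach as the paper: there $\Sigma$ is defined from the equivalence $\mathcal{P}_0\to\mathcal{P}_z$ that $T_z$ induces on the additive categories of projectives $P(y)$, $y\in W^{\mathfrak{p}}$ (via Lemma~\ref{LemPC} and \eqref{isomPT}), which is exactly your Morita-theoretic construction, and the intertwining with $\overline{\theta}_x$ is likewise obtained from \eqref{commProj} on projectives. The only cosmetic difference is that the paper deduces $\Sigma(\Delta(y))\cong\Delta(zy)$ by applying $T_z$ and $\pi$ to a projective presentation of $\Delta(y)$, rather than by your comparison of Hom-spaces over the identified endomorphism algebras.
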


\begin{proof}
Let $\mathcal{P}_0$ denote the full additive subcategory of $\mathcal{O}_0$ consisting of direct sums of modules isomorphic to elements of $\{P(y)\,,\,y\in W^{\mathfrak{p}}\}$. Similarly $\mathcal{P}_z$, is the full additive subcategory of $\mathcal{O}^{\mathsf{K}}_0$ consisting of direct sums of modules isomorphic to elements of $\{P^{\mathsf{K}}(zy)\,,\,y\in W^{\mathfrak{p}}\}$.

By Lemma~\ref{LemPC} and equation~\eqref{commProj}, we have a commuting diagram
\begin{displaymath}
\xymatrix{
\mathcal{P}_0\ar[rr]^{{\theta}_x}\ar[d]_{T_z}&&\mathcal{P}_0\ar[d]^{T_z}\\
\mathcal{P}_z\ar[rr]^{{\theta}_x}&&\mathcal{P}_z
}
\end{displaymath}
Now $\mathcal{P}_0$, resp. $\mathcal{P}_z$, is equivalent to the category of projective modules 
in $\mathcal{B}_e$, resp $\mathcal{B}_z$. Under that equivalence, $\theta_x$ is interchanged 
with $\overline{\theta}_x$. We now let $\Sigma:\mathcal{B}_e\to \mathcal{B}_z$ denote the 
equivalence corresponding to the equivalence between the categories of projective 
modules induced by $T_z$. By construction this admits the commuting diagram as in the formulation.

Consider the exact sequence
\begin{equation}\label{rightseq}
P\to P(y)\to \Delta(y)\to 0,
\end{equation}
in $\mathcal{O}_0$, where $P$ is a direct sum of $P(y')$, with $y'\in W^{\mathfrak{p}}$. By \eqref{twistVerma}, applying~$T_z$ to \eqref{rightseq} yields an exact sequence
\begin{displaymath}
T_zP\to T_zP(y)\to \Delta(zy)\to 0, 
\end{displaymath}
which is in $\mathcal{O}^{\mathsf{K}_z}$, by Lemma~\ref{LemPC}.
We can interpret \eqref{rightseq} as an exact sequence in $\mathcal{B}_e$, by applying the 
exact functor $\pi$, which leads to an exact sequence
\begin{displaymath}
\Sigma(P)\to \Sigma (P(y))\to \Sigma(\Delta(y))\to 0 
\end{displaymath}
in $\mathcal{O}^{\mathsf{K}_z}/\mathcal{O}^{\mathsf{L}_z}$. Comparing both exact sequences 
using Lemma~\ref{LemPC} yields the isomorphism $\Sigma(\Delta(y))\cong \Delta(zy)$.
\end{proof}

\begin{proof}[Proof of Theorem~\ref{2dequiv}.]
This is a combination of the equivalences in Lemma~\ref{LemProj1} and Proposition~\ref{PropEq}.
\end{proof}

\begin{remark}\label{remthickO}
The equivalences in Theorems~\ref{Thm1} and  \ref{2dequiv} extend to the thick category
$\mathcal{O}$ as in \cite{So3}. After extension, the two equivalences
are related by the duality on thick category $\mathcal{O}$ coming from the duality on
Harish-Chandra bimodules used in the proof of Lemma~\ref{CorMult}.
\end{remark}

\begin{remark}\label{remeqshuff}
The equivalences in Theorems~\ref{Thm1} also intertwines the corresponding shuffling functors.
\end{remark}

%\begin{conjecture}
%The equivalence is given by the composition
%$$\xymatrix{\mathcal{O}_\lambda(\mathfrak{l})\ar[r]^{{\rm Ind}_{\mathfrak{p}}^{\mathfrak{g}}}& \mathcal{O}^I_0\ar[r]&\mathcal{A}}.$$
%\end{conjecture}

\subsection{Application to Conjecture~\ref{conj1}}\label{s5.4}

\begin{corollary}\label{cor71}
Let $\mathfrak{p}$ be a parabolic subalgebra of $\mathfrak{g}$. Let $x,y\in W^{\mathfrak{p}}$
and $z\in X^{\mathfrak{p}}$. Then $\theta_x L(y)$
is indecomposable in category $\mathcal{O}$ for $\mathfrak{l}$ if and only if
$\theta_x L(zy)$ is indecomposable in category $\mathcal{O}$ for $\mathfrak{g}$.
\end{corollary}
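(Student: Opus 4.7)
The plan is to combine the equivalence $\Phi\colon\mathcal{O}_0(\mathfrak{l})\;\tilde\to\;\mathcal{B}_z$ of Theorem~\ref{2dequiv} with a direct analysis of the top and socle of $\theta_x L(zy)$. Since $\Phi$ is exact and sends $\Delta_{\mathfrak{l}}(y)$ to $\Delta(zy)$, it sends the simple top $L_{\mathfrak{l}}(y)$ to $\pi L(zy)$, and the commuting diagrams $\Phi\circ\theta_x\cong\overline{\theta}_x\circ\Phi$ and $\overline{\theta}_x\circ\pi\cong\pi\circ\theta_x$ from Theorem~\ref{2dequiv} give
\begin{displaymath}
\Phi(\theta_x L_{\mathfrak{l}}(y))\;\cong\;\pi(\theta_x L(zy)).
\end{displaymath}
As $\Phi$ is an equivalence of abelian categories, $\theta_x L_{\mathfrak{l}}(y)$ and $\pi(\theta_x L(zy))$ agree on vanishing and on Krull--Schmidt decompositions. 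The corollary therefore reduces to showing that passing from $\theta_x L(zy)\in\mathcal{O}^{\mathsf{K}_z}$ to $\pi(\theta_x L(zy))\in\mathcal{B}_z$ preserves the number of indecomposable summands (with the zero module counted as having zero summands).

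The central technical claim is that both the top and the socle of $\theta_x L(zy)$, computed in $\mathcal{O}_0$, consist only of simples $L(zy')$ with $y'\in W^{\mathfrak{p}}$. For the top, $\theta_x L(zy)$ is a quotient of $\theta_x P^{\mathsf{K}_z}(zy)\cong T_z\theta_x P(y)$ by Lemma~\ref{LemPC} and \eqref{commProj}. Since $x,y\in W^{\mathfrak{p}}$, the composition $\theta_x\theta_y$ decomposes, in the Kazhdan--Lusztig basis, as a sum of $\theta_w$ with $\mathrm{supp}(w)\subset\mathrm{supp}(x)\cup\mathrm{supp}(y)\subset W^{\mathfrak{p}}$, so $\theta_x P(y)\cong\bigoplus_w a_w P(w)$ with $w\in W^{\mathfrak{p}}$. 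Lemma~\ref{LemPC} applied summand-by-summand identifies $T_z\theta_x P(y)$ with a direct sum of $P^{\mathsf{K}_z}(zw)$, whose tops are $L(zw)$ for $w\in W^{\mathfrak{p}}$, so the top of the quotient $\theta_x L(zy)$ lies in $zW^{\mathfrak{p}}$. For the socle, write any composition factor of it as $L(v)$ with $v=z'y''$, $z'\in X^{\mathfrak{p}}$, $y''\in W^{\mathfrak{p}}$. By biadjointness of $(\theta_x,\theta_{x^{-1}})$,
\begin{displaymath}
\mathrm{Hom}(L(v),\theta_x L(zy))\;\cong\;\mathrm{Hom}(\theta_{x^{-1}}L(z'y''),L(zy)),
\end{displaymath}
so $L(v)$ appears in the socle iff $L(zy)$ appears in the top of $\theta_{x^{-1}}L(z'y'')$. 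The top computation above applied with $z'$ in place of $z$ and $(x^{-1},y'')$ in place of $(x,y)$ constrains this top to $z'W^{\mathfrak{p}}$, forcing $zy\in z'W^{\mathfrak{p}}$, hence $z=z'$ by uniqueness of shortest coset representatives, so $v\in zW^{\mathfrak{p}}$.

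With top and socle confined to $zW^{\mathfrak{p}}$, any indecomposable summand $M_i$ in the Krull--Schmidt decomposition $\theta_x L(zy)=\bigoplus_i M_i$ admits no nonzero subobject or quotient in $\mathcal{O}^{\mathsf{L}_z}$. The direct-limit description of Hom-spaces in the Serre quotient recalled in Subsection~\ref{SecSerre} then collapses: the only admissible pair $(X',Y')$ with $M_i/X',Y'\in\mathcal{O}^{\mathsf{L}_z}$ is $(M_i,0)$, so the natural map $\End_{\mathcal{O}_0}(M_i)\to\End_{\mathcal{B}_z}(\pi M_i)$ is an isomorphism. Locality of $\End(M_i)$ forces $\pi M_i$ to be indecomposable, and $\pi M_i$ is nonzero because its top is nonzero. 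Hence $\pi(\theta_x L(zy))=\bigoplus_i\pi M_i$ is a Krull--Schmidt decomposition whose summand count agrees with that of $\theta_x L(zy)$, completing the proof.

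I expect the main obstacle to be the Kazhdan--Lusztig support argument confining $\theta_x\theta_y$ to $W^{\mathfrak{p}}$-indexed projective functors, together with the consistent bookkeeping of $T_z$ versus the various projective covers $P^{\mathsf{K}_z}(zw)$ inside $\mathcal{O}^{\mathsf{K}_z}$. Once these pieces are secured, the rest of the argument is a formal consequence of $\Phi$ being an equivalence and of the Serre quotient formalism from Subsection~\ref{SecSerre}.
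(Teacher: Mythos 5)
Your proposal is correct and follows the same overall skeleton as the paper: transfer the problem to the Serre quotient $\mathcal{B}_z$ via Theorem~\ref{2dequiv}, show that the top and socle of $\theta_xL(zy)$ only involve simples $L(zw)$ with $w\in W^{\mathfrak{p}}$, and deduce that the endomorphism algebra of $\theta_xL(zy)$ is the same computed in $\mathcal{O}_0$ or in $\mathcal{B}_z$, so that indecomposability passes back and forth. The difference is in how the top/socle confinement is established. The paper does it in one line: by \eqref{eqtheDel}, $\theta_x\Delta(zy)$ has a Verma flag with highest weights in $zW^{\mathfrak{p}}$, so the top of $\theta_x\Delta(zy)$, hence of its quotient $\theta_xL(zy)$, lies in $zW^{\mathfrak{p}}$, and the socle is handled symmetrically (equivalently, by self-duality of $\theta_xL(zy)$). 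You instead surject from $\theta_xP^{\mathsf{K}_z}(zy)\cong T_z\theta_xP(y)$, invoke the fact that the Kazhdan--Lusztig expansion of $\theta_x\theta_y$ only involves $\theta_w$ with $\mathrm{supp}(w)\subseteq\mathrm{supp}(x)\cup\mathrm{supp}(y)$, and apply Lemma~\ref{LemPC} summand by summand; this fact is true (KL polynomials for a parabolic subgroup agree with those for $W$, so the product stays in the parabolic Hecke subalgebra in the KL basis), but it is left unproved and is heavier than needed, since the same conclusion falls out of \eqref{eqtheDel} directly. On the other hand, your socle argument via the adjunction $(\theta_{x^{-1}},\theta_x)$, forcing the coset representative $z'$ of any socle constituent to equal $z$, is a clean alternative to the duality argument the paper leaves implicit, and your explicit verification that $\mathrm{End}_{\mathcal{O}_0}(M_i)\to\mathrm{End}_{\mathcal{B}_z}(\pi M_i)$ is an isomorphism (via the direct-limit description in Subsection~\ref{SecSerre}) spells out the step the paper states in one sentence. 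Both routes are valid; the paper's is more economical, yours more self-contained on the quotient-category side.
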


\begin{proof}
We have that $\theta_x L_{\mathfrak{l}}(y)$ is indecomposable if and only if 
$\theta_xL(zy)$ is indecomposable in $\mathcal{B}_z$ by Theorem~\ref{2dequiv}. 
From Equation~\eqref{eqtheDel}, we have that $\theta_x\Delta(zy)$ 
has a Verma flag with highest weights in $zW^{\mathfrak{p}}$, so the top of 
$\theta_x\Delta(zy)$ are simples of the form $L(zw)$, where $w\in W^{\mathfrak{p}}$. 
The top (and also socle) of $\theta_xL(zy)$ is a submodule of that for
$\theta_x\Delta(zy)$. Therefore the top (and the socle) 
of $\theta_xL(zy)$ consists only of simple modules not in $\mathcal{O}_0^{\mathsf{L}_z}$. 
Consequently, the endomorphism algebra of $\theta_xL(zy)$ in $\mathcal{O}_0$ 
is the same as it is in $\mathcal{B}_z$, so we find that 
$\theta_xL(zy)$ is indecomposable in $\mathcal{B}_z$
if and only if it is indecomposable in $\mathcal{O}_0$.
\end{proof}

As an immediate consequence of Corollary~\ref{cor71}, we obtain:

\begin{corollary}\label{cor72}
Conjecture~\ref{conj1} is true if and only if it is true under the additional assumption that
$\mathrm{supp}(x)=S$.
\hfill$\qed$
\end{corollary}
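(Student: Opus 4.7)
The plan is to derive this immediately from Corollary~\ref{cor71}. The ``only if'' direction is trivial, so the content lies in the converse: assuming Conjecture~\ref{conj1} holds whenever $\mathrm{supp}(x)=S$, I want to conclude it in general.

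Given arbitrary $x,y\in W$, I would first choose $\mathfrak{p}\subseteq \mathfrak{g}$ to be the standard parabolic subalgebra whose Levi factor $\mathfrak{l}$ has simple reflections exactly $\mathrm{supp}(x)$. By construction $x\in W^{\mathfrak{p}}$, and when $x$ is viewed inside $W^{\mathfrak{p}}$ (the Weyl group of $\mathfrak{l}$) it now has full support. Next, I would use the standard coset decomposition to write $y=zy'$ uniquely with $z\in X^{\mathfrak{p}}$ the shortest representative of $yW^{\mathfrak{p}}$ and $y'\in W^{\mathfrak{p}}$. Corollary~\ref{cor71} applied to this pair then yields that $\theta_xL(y)=\theta_xL(zy')$ is indecomposable in $\mathcal{O}_0(\mathfrak{g})$ if and only if $\theta_xL_{\mathfrak{l}}(y')$ is indecomposable in $\mathcal{O}_0(\mathfrak{l})$. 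Moreover the underlying equivalence $\Phi$ of Theorem~\ref{2dequiv} sends the zero module to the zero module, so the ``or zero'' clause of Conjecture~\ref{conj1} is preserved as well. Since $\mathfrak{l}$ is again of type A and $x$ has full support in its Weyl group, the support-restricted hypothesis applies in $\mathcal{O}_0(\mathfrak{l})$ and yields the desired conclusion for $\theta_xL(y)$.

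The only bookkeeping point requiring care is that $\mathfrak{l}$ stays within the reach of the hypothesis: as a Levi of $\mathfrak{sl}_n$, it is a product of smaller $\mathfrak{sl}_{n_i}$'s, so one has to interpret ``Conjecture~\ref{conj1}'' in the corollary as a statement covering all type-A reductive Lie algebras (of all ranks), and then apply the hypothesis factor by factor. This is purely notational — the substantive work is already packaged inside Corollary~\ref{cor71} — so no genuine obstacle arises.
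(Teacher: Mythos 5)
Your argument is correct and is essentially the paper's intended proof (the paper states the corollary as an immediate consequence of Corollary~\ref{cor71} with no further argument): take $\mathfrak{p}$ with simple reflections $\mathrm{supp}(x)$, write $y=zy'$ with $z\in X^{\mathfrak{p}}$ and $y'\in W^{\mathfrak{p}}$, apply Corollary~\ref{cor71}, and handle the Levi factor by factor. One small refinement: to see that $\theta_x L_{\mathfrak{l}}(y')=0$ forces $\theta_x L(zy')=0$ (and not merely that its image in $\mathcal{B}_z$ vanishes), you should invoke the observation from the proof of Corollary~\ref{cor71} that the top of $\theta_x L(zy')$ consists of simples $L(zw)$ with $w\in W^{\mathfrak{p}}$, which survive the quotient functor $\pi$ --- the remark that $\Phi$ sends zero to zero is by itself not quite sufficient.
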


\section{Conjecture~\ref{conj1} for small values of $n$}\label{s8}

\subsection{The case $n=2$}\label{s8.2}

For $n=2$, we have $W=\{e,s\}$ and all elements in $W$ are of the form $w_0^{\mathfrak{p}}$, for some
$\mathfrak{p}$. Therefore Conjecture~\ref{conj1} is true in this case due to Subsection~\ref{s3.3}.

\subsection{The case $n=3$}\label{s8.3}

For $n=3$, the Weyl group $W=\{e,s,t,st,ts,w_0\}$ contains four Kazhdan-Lusztig right cells:
\begin{displaymath}
\{e\},\qquad
\{s,st\},\qquad
\{t,ts\},\qquad
\{w_0\}
\end{displaymath}
and $I(W)=\{e,s,t,w_0\}$. Note that all involutions in $W$ are of the form $w_0^{\mathfrak{p}}$, for some
$\mathfrak{p}$. Therefore Conjecture~\ref{conj1} is true in this case due to Subsections~\ref{s3.2}
and \ref{s3.3}.

\subsection{The case $n=4$}\label{s8.4}

For $n=4$, two-sided cells in $W\cong S_4$ are indexed by partitions of $4$:
\begin{displaymath}
(4),\quad (3,1), \quad (2,2), \quad (2,1,1)\quad (1,1,1,1).
\end{displaymath}
The two-sided cell corresponding to some partition $\lambda$ will be denote $\mathcal{J}_\lambda$.
We have that $\theta_x L(y)\neq 0$ implies $x\leq_{\mathcal{J}}y$, see \eqref{neweq123}.
By \cite[Theorem~5.1]{Ge},  the two-sided order on $W$ coincides with the opposite of the
dominance order on partitions.

By Corollary~\ref{cor22}, we may assume $x\in I(W)$ and $y\in I'(W)$. If $x\in \mathcal{J}_{(4)}$
or $x\in \mathcal{J}_{(3,1)}$, then $x=w_0^{\mathfrak{p}}$, for some
$\mathfrak{p}$. If $y\in \mathcal{J}_{(1,1,1,1)}$
or $y\in \mathcal{J}_{(2,1,1)}$, then $y=w_0^{\mathfrak{p}}w_0$, for some
$\mathfrak{p}$. If $x,y\in \mathcal{J}_{(2,2)}$, then $x\sim_{\mathcal{J}}y$.
Therefore Conjecture~\ref{conj1} is true in this case due to Subsection~\ref{s3.3}.

\subsection{The case $n=5$}\label{s8.5}

For $n=5$, two-sided cells in $W\cong S_5$ are indexed by partitions of $5$:
\begin{displaymath}
(5),\quad (4,1), \quad (3,2), \quad (3,1^2)\quad (2^2,1),\quad (2,1^3)\quad (1^5).
\end{displaymath}
The two-sided order coincides with the opposite of the
dominance order and is still linear (increasing from left to right).
We keep the conventions from the previous subsection.
By Corollary~\ref{cor22}, we may assume $x\in I(W)$ and $y\in I'(W)$. 

We have the following general result.

\begin{proposition}\label{propatleast8}
For any $n\geq 5$, Conjecture~\ref{conj1} is true if we have $x\in \mathcal{J}_{(n)}$, $x\in\mathcal{J}_{(n-1,1)}$ 
or $x\in\mathcal{J}_{(n-2,2)}$. Similarly, Conjecture~\ref{conj1} is true if $y\in \mathcal{J}_{(1^n)}$, $y\in\mathcal{J}_{(2,1^{n-2})}$ 
or $y\in\mathcal{J}_{(2^2,1^{n-4})}$.
\end{proposition}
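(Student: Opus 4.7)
The plan is to prove the two halves of the proposition separately, with the $y$-half following from the $x$-half by the Koszul--Ringel duality \eqref{eq7}. For the $x$-half, I would first invoke Proposition~\ref{prop21} to replace $x$, within its right Kazhdan--Lusztig cell, by the unique Duflo involution in that cell; recall that in type A each right cell contains a unique involution. The cases $x\in\mathcal{J}_{(n)}$ and $x\in\mathcal{J}_{(n-1,1)}$ are then essentially immediate: in the former there is only $e$, and in the latter the $n-1$ Duflo involutions are exactly the simple reflections $s_1,\dots,s_{n-1}$, each of which equals $w_0^{\mathfrak{p}}$ for the rank-one parabolic generated by itself, so Subsection~\ref{s3.3} applies.

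The main case is $x\in\mathcal{J}_{(n-2,2)}$. The crucial step is an explicit description of the Duflo involutions in this cell: I would argue that they consist precisely of the two families
\begin{displaymath}
\{\,s_i s_j\,:\,|i-j|\ge 2\,\}\quad\text{and}\quad\{\,(i,i{+}2)(i{+}1,i{+}3)\,:\,1\le i\le n{-}3\,\}.
\end{displaymath}
To establish this, I would verify directly, via Greene's theorem applied to the one-line notation, that every member of both families is a $321$-avoiding involution with longest increasing subsequence of length $n-2$ and hence has RSK-shape $(n-2,2)$; then the identity $\tfrac{(n-2)(n-3)}{2}+(n-3)=\tfrac{n(n-3)}{2}=|\mathrm{SYT}(n-2,2)|$, obtained from the hook length formula, forces the classification to be exhaustive. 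Elements of the first family are longest elements of rank-two parabolics of type $A_1\times A_1$, so Subsection~\ref{s3.3} handles them. For an element $x$ of the second family, $\mathrm{supp}(x)=\{s_i,s_{i+1},s_{i+2}\}$ is a proper subset of $S$ because $n\ge 5$, and I would apply Corollary~\ref{cor71} to the parabolic $\mathfrak{p}$ with $W^{\mathfrak{p}}=\langle s_i,s_{i+1},s_{i+2}\rangle\cong S_4$: for any $y_0\in W$, writing $y_0=zy$ with $z\in X^{\mathfrak{p}}$ and $y\in W^{\mathfrak{p}}$, the corollary reduces indecomposability of $\theta_x L(y_0)$ in $\mathcal{O}$ to indecomposability of $\theta_x L_{\mathfrak{l}}(y)$ in $\mathcal{O}(\mathfrak{l})$, and the latter holds by the already-settled $n=4$ case of Subsection~\ref{s8.4}.

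The $y$-half then follows from \eqref{eq7}: in type A the map $y\mapsto y^{-1}w_0$ sends $\mathcal{J}_\lambda$ to $\mathcal{J}_{\lambda'}$ (since $y\mapsto y^{-1}$ preserves cells and $y\mapsto yw_0$ conjugates partitions), and the pairs $(1^n)\leftrightarrow(n)$, $(2,1^{n-2})\leftrightarrow(n-1,1)$, $(2^2,1^{n-4})\leftrightarrow(n-2,2)$ of conjugate partitions match the three cells appearing in the two halves of the proposition exactly.

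The step I expect to require the most care is the classification of Duflo involutions in $\mathcal{J}_{(n-2,2)}$: one must rule out, among the many involutions of cycle type $(2,2,1^{n-4})$, all those whose one-line notation contains a length-three decreasing subsequence, which amounts to a short case analysis showing that any $(a,b)(c,d)$ with $\{a,b,c,d\}$ distinct is $321$-avoiding only in the two configurations above. Once this enumeration is in place, the rest of the argument is a routine application of Corollary~\ref{cor71} together with the previously established cases.
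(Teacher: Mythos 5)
Your proposal is correct and follows essentially the same route as the paper: reduce via Proposition~\ref{prop21} to the unique involution in each right cell, handle $\mathcal{J}_{(n)}$ and $\mathcal{J}_{(n-1,1)}$ by the $w_0^{\mathfrak{p}}$ case of Subsection~\ref{s3.3}, split the involutions of $\mathcal{J}_{(n-2,2)}$ into the commuting products $s_is_j$ (again $w_0^{\mathfrak{p}}$) and the $n-3$ elements $(i,i{+}2)(i{+}1,i{+}3)$ of proper support handled by Corollary~\ref{cor71} and the $n=4$ case, with the hook-length count $\tfrac{(n-2)(n-3)}{2}+(n-3)=\tfrac{n(n-3)}{2}$ showing exhaustiveness, and finally deduce the $y$-statement from \eqref{eq7}. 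Your explicit RSK/Greene verification and the spelled-out matching of conjugate partitions under $y\mapsto y^{-1}w_0$ are just more detailed versions of steps the paper leaves implicit.
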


\begin{proof}
The second statement follows from the first one using Subsection~\ref{s3.1}, so we only prove the
first statement. If $x\in \mathcal{J}_{(n)}$
or $x\in \mathcal{J}_{(n-1,1)}$, then $x=w_0^{\mathfrak{p}}$, for some
$\mathfrak{p}$. So, $\theta_x L(y)$ is either zero or indecomposable, for any $y$,
by Subsection~\ref{s3.3}.

The two-sided cell $\mathcal{J}_{(n-2,2)}$ has, by the hook formula, $\frac{n(n-3)}{2}$ right Kazhdan-Lusztig cells.
If $s$ and $t$ are two commuting simple reflections, then we have $st\in \mathcal{J}_{(n-2,2)}$ and it has the form
$w_0^{\mathfrak{p}}$, for some $\mathfrak{p}$. Therefore for such $x=st$, the module $\theta_x L(y)$ is either 
zero or indecomposable, for any $y$, by Subsection~\ref{s3.3}. There are $\frac{(n-2)(n-3)}{2}$ such elements.

Additionally, $\mathcal{J}_{(n-2,2)}$ contains $n-3$ involutions of the form $strs$, where $s,r,t$ are simple
reflections such that $s$ commutes neither with $r$ nor with $t$. As $n\geq 5$, 
these involutions have the property that 
not all simple reflections appear in their reduced expressions. Therefore Corollary~\ref{cor71} reduces 
these involutions to the case $n=4$ which is already treated above. 
As 
\begin{displaymath}
\frac{(n-2)(n-3)}{2}+(n-3)=\frac{n(n-3)}{2}, 
\end{displaymath}
the above covers all Kazhdan-Lusztig right cells in $\mathcal{J}_{(n-2,2)}$,
completing the proof.
\end{proof}

After Proposition~\ref{propatleast8}, it remains to consider the case 
$x,y\in \mathcal{J}_{(3,1^2)}$ which follows from Subsection~\ref{s3.3}.
Consequently, Conjecture~\ref{conj1} is true in the case $n=5$.

\subsection{The case $n=6$}\label{s8.6}

For $n=6$, the dominance order on partitions of $6$ is as follows:
\begin{displaymath}
\resizebox{12.5cm}{!}{
\xymatrix{
\\
&&&(3^2)\ar@{-}[rd]&&(2^3)\ar@{-}[rd]\\
(6)\ar@{-}[r]&(5,1)\ar@{-}[r]&(4,2)\ar@{-}[ru]\ar@{-}[rd]&&(3,2,1)\ar@{-}[ru]
\ar@{-}[rd]&&(2^2,1^2)\ar@{-}[r]&(2,1^4)\ar@{-}[r]&(1^6)\\
&&&(4,1^2)\ar@{-}[ru]&&(3,1^3)\ar@{-}[ru]
}
}
\end{displaymath}
This also gives the two-sided order on two-sided cells. In what follows, 
we simply write $\mathtt{i_1}\cdots\mathtt{i_k}$ for 
$s_{i_1}\cdots s_{i_k}$, where, as usual, $s_i$ denotes the transposition $(i,i+1)\in S_6$.

By Corollary~\ref{cor22}, we may assume $x\in I(W)$ and $y\in I'(W)$.
The cases $x\in \mathcal{J}_{(6)}$, $x\in \mathcal{J}_{(5,1)}$, $x\in \mathcal{J}_{(4,2)}$
and $y\in \mathcal{J}_{(1^6)}$, $x\in \mathcal{J}_{(2,1^5)}$, $x\in \mathcal{J}_{(2^2,1^4)}$
follow from Proposition~\ref{propatleast8}.

As $\theta_x L(y)$ is zero unless $x\leq_{\mathcal{J}}y$ (see \eqref{neweq123}),
the above allows us to restrict the index of the two-sided cells of 
$x$ and $y$ to the list $(3^2)$, $(4,1^2)$, $(3,2,1)$, $(2^3)$, $(3,1^3)$. 
We now give a full list of all those pairs which cannot be dealt with by direct applications 
of the results in Section~\ref{s3} and Corollary~\ref{cor71}, 
(here $x\in \mathcal{J}_{(3^2)}$ and $y\in \mathcal{J}_{(3,2,1)}
\cup\mathcal{J}_{(2^3)}\cup\mathcal{J}_{(3,1^3)}$, or $x\in \mathcal{J}_{(4,1^2)}$ and
$y\in \mathcal{J}_{(3,2,1)}\cup\mathcal{J}_{(3,1^3)}$):
\begin{enumerate}[$($I$)$]
\item\label{list1} $(\mathtt{1}\mathtt{2}\mathtt{3}\mathtt{4}\mathtt{5}\mathtt{4}\mathtt{3}\mathtt{2}\mathtt{1},
\mathtt{5}\mathtt{2}\mathtt{3}\mathtt{4}\mathtt{2}\mathtt{3}\mathtt{1})$, 
\item\label{list2} $(\mathtt{1}\mathtt{2}\mathtt{3}\mathtt{4}\mathtt{5}\mathtt{4}\mathtt{3}\mathtt{2}\mathtt{1},
\mathtt{3}\mathtt{4}\mathtt{5}\mathtt{1}\mathtt{2}\mathtt{3}\mathtt{2})$, 
\item\label{list3} $(\mathtt{1}\mathtt{2}\mathtt{3}\mathtt{4}\mathtt{5}\mathtt{4}\mathtt{3}\mathtt{2}\mathtt{1},
\mathtt{2}\mathtt{3}\mathtt{2}\mathtt{4}\mathtt{3}\mathtt{2})$, 
\item\label{list4} $(\mathtt{4}\mathtt{5}\mathtt{2}\mathtt{3}\mathtt{4}\mathtt{1}\mathtt{2},
\mathtt{5}\mathtt{2}\mathtt{3}\mathtt{4}\mathtt{2}\mathtt{3}\mathtt{1})$, 
\item\label{list5} $(\mathtt{4}\mathtt{5}\mathtt{2}\mathtt{3}\mathtt{4}\mathtt{1}\mathtt{2},
\mathtt{3}\mathtt{4}\mathtt{5}\mathtt{1}\mathtt{2}\mathtt{3}\mathtt{2})$, 
\item\label{list6} $(\mathtt{4}\mathtt{5}\mathtt{2}\mathtt{3}\mathtt{4}\mathtt{1}\mathtt{2},
\mathtt{2}\mathtt{3}\mathtt{2}\mathtt{4}\mathtt{3}\mathtt{2})$, 
\item\label{list7} $(\mathtt{4}\mathtt{5}\mathtt{2}\mathtt{3}\mathtt{4}\mathtt{1}\mathtt{2},
\mathtt{1}\mathtt{2}\mathtt{1}\mathtt{4}\mathtt{5}\mathtt{4})$, 
\item\label{list8} $(\mathtt{4}\mathtt{5}\mathtt{2}\mathtt{3}\mathtt{4}\mathtt{1}\mathtt{2},
\mathtt{3}\mathtt{4}\mathtt{5}\mathtt{4}\mathtt{1}\mathtt{2}\mathtt{3}\mathtt{1})$, 
\item\label{list9} $(\mathtt{3}\mathtt{4}\mathtt{5}\mathtt{2}\mathtt{3}\mathtt{4}\mathtt{1}\mathtt{2}\mathtt{3},
\mathtt{5}\mathtt{2}\mathtt{3}\mathtt{4}\mathtt{2}\mathtt{3}\mathtt{1})$, 
\item\label{list10} $(\mathtt{3}\mathtt{4}\mathtt{5}\mathtt{2}\mathtt{3}\mathtt{4}\mathtt{1}\mathtt{2}\mathtt{3},
\mathtt{3}\mathtt{4}\mathtt{5}\mathtt{1}\mathtt{2}\mathtt{3}\mathtt{2})$, 
\item\label{list11} $(\mathtt{3}\mathtt{4}\mathtt{5}\mathtt{2}\mathtt{3}\mathtt{4}\mathtt{1}\mathtt{2}\mathtt{3},
\mathtt{2}\mathtt{3}\mathtt{2}\mathtt{4}\mathtt{3}\mathtt{2})$, 
\item\label{list12} $(\mathtt{3}\mathtt{4}\mathtt{5}\mathtt{2}\mathtt{3}\mathtt{4}\mathtt{1}\mathtt{2}\mathtt{3},
\mathtt{1}\mathtt{2}\mathtt{1}\mathtt{4}\mathtt{5}\mathtt{4})$, 
\item\label{list13} $(\mathtt{3}\mathtt{4}\mathtt{5}\mathtt{2}\mathtt{3}\mathtt{4}\mathtt{1}\mathtt{2}\mathtt{3},
\mathtt{3}\mathtt{4}\mathtt{5}\mathtt{4}\mathtt{1}\mathtt{2}\mathtt{3}\mathtt{1})$. 
\end{enumerate}
We observe  that $\mathtt{5234231}\sim_{\mathcal{L}}\mathtt{1343}$
and that $L(\mathtt{1343})$ is annihilated by $\theta_{\mathtt{2}},\theta_{\mathtt{5}}$.
Therefore $\theta_x L(y)=0$ in \eqref{list1} and \eqref{list4}. Similarly, 
$\mathtt{3451232}\sim_{\mathcal{L}}\mathtt{2325}$ and 
$L(\mathtt{2325})$ is annihilated by $\theta_{\mathtt{1}},\theta_{\mathtt{4}}$.
Therefore $\theta_x L(y)=0$ in \eqref{list2} and \eqref{list5}. 
Further, $L(\mathtt{232432})$ is annihilated by $\theta_{\mathtt{1}},\theta_{\mathtt{5}}$ and 
$L(\mathtt{34541231})$ is annihilated by $\theta_{\mathtt{2}}$
and $L(\mathtt{121454})$ is annihilated by $\theta_{\mathtt{3}}$.
Therefore $\theta_x L(y)=0$ in \eqref{list3}, \eqref{list8} and \eqref{list12}. 
We also note that \eqref{list9} and \eqref{list10} differ by a symmetry
of the root system.

Choosing the shortest elements in the right cell of the first component and 
in the left cell of the second component and using Proposition~\ref{prop21},
the remaining cases \eqref{list6}, \eqref{list7}, \eqref{list9}, \eqref{list11} and \eqref{list13} become:
\begin{enumerate}[$($I$)$]
\setcounter{enumi}{13}
\item\label{list14} $(\mathtt{45231},\mathtt{232432})$;
\item\label{list15} $(\mathtt{45231},\mathtt{121454})$;
\item\label{list16} $(\mathtt{345231},\mathtt{1343})$;
\item\label{list17} $(\mathtt{345231},\mathtt{232432})$;
\item\label{list18} $(\mathtt{345231},\mathtt{1214543})$.
\end{enumerate}
Note that both $\mathtt{45231}$ and $\mathtt{345231}$ are short-braid hexagon avoiding and hence,
by \cite{BW}, we have:
\begin{equation}\label{eqn771}
\theta_{\mathtt{45231}}\cong\theta_{\mathtt{1}}\theta_{\mathtt{3}}\theta_{\mathtt{2}}
\theta_{\mathtt{5}} \theta_{\mathtt{4}},\qquad
\theta_{\mathtt{345231}}\cong\theta_{\mathtt{1}}\theta_{\mathtt{3}}
\theta_{\mathtt{2}} \theta_{\mathtt{5}} \theta_{\mathtt{4}}\theta_{\mathtt{3}}. 
\end{equation}

To proceed, we need the following statement.

\begin{proposition}\label{prop81}
{\hspace{2mm}}

\begin{enumerate}[$($a$)$]
\item\label{prop81.1} Let $s\in S$ and $y\in W$ be such that $ys\prec y$. Then the module $\theta_s L(y)$
has the following graded picture (the left column gives the degree): 
\begin{displaymath}
\begin{array}{r|ccc}
-1&&& L(y)\\
\hline
0&&& \displaystyle \bigoplus_{z\in W} L(z)^{\oplus m_{z}}\\
\hline
1& &&L(y),
\end{array}
\end{displaymath}
where 
\begin{itemize}
\item the degree $-1$ contains the simple top of $\theta_s L(y)$,
\item the degree $1$ contains the simple socle of $\theta_s L(y)$,
\item the degree $0$ contains what is known as the {\em Jantzen middle} of $\theta_s L(y)$.
\end{itemize}
Furthermore, $m_z\neq 0$, for $z\in W$, implies that $z\prec  zs$ and $ys\preceq z$.
Moreover, for each $z\in W$ such that $z\prec zs$, we have 
\begin{displaymath}
m_z=\dim\mathrm{Ext}^1(L(y),L(z))=
\begin{cases}
\mu(y,z), & y\prec z;\\
\mu(z,y), & z\prec y.
\end{cases}
\end{displaymath}
\item\label{prop81.2} 
Let $s,t\in S$ be such that $sts=tst$ and let $y\in W$ be such that $ys\preceq y$ and $yt\succeq y$.
Then 
\begin{displaymath}
\theta_{t}\theta_{s}L(y)\cong 
\begin{cases}
\theta_{t}L(ys), &  yst\prec ys;\\
\theta_{t}L(yt), &  \text{otherwise.}
\end{cases}
\end{displaymath}
\end{enumerate}
\end{proposition}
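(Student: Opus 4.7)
For part (a), my plan is to synthesize classical graded-category results. First, using the self-adjunction of $\theta_s$, I would compute $\mathrm{Hom}(\theta_s L(y), L(y)) \cong \mathrm{Hom}(L(y), \theta_s L(y))$; the hypothesis $ys \prec y$ makes this one-dimensional, which pins down $L(y)$ as the simple top, and symmetrically the simple socle (the latter also following from $\theta_s$ commuting with the contravariant duality on $\mathcal{O}$). The graded shifts $\langle\pm 1\rangle$ are then forced by the standard graded lift of $\theta_s$ from Subsection~\ref{s2.6} together with the purity/Koszul property of $\mathcal{O}_0^{\mathbb{Z}}$, which confines all remaining composition factors to degree zero. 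For the middle, I would invoke the classical identity $[\theta_s L(y) : L(z)] = \dim\mathrm{Ext}^1(L(y), L(z))$ whenever $zs \succ z$ (obtained by applying $\mathrm{Hom}(-, L(z))$ to a short exact sequence involving $L(y)$ and using adjunction), combined with Vogan's theorem identifying this Ext-dimension with the Kazhdan--Lusztig $\mu$-function. The symmetry $\mu(y,z) = \mu(z,y)$ delivers the stated case distinction, while the support constraint $ys \preceq z$ is read off from the standard character computation for $\theta_s L(y)$.

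For part (b), the plan is to reduce the problem to the rank-two setting. Let $\mathfrak{p}$ be the parabolic subalgebra of $\mathfrak{g}$ with Weyl group $W^{\mathfrak{p}} = \langle s,t\rangle \cong S_3$; this is available since both $s$ and $t$ lie in $W^{\mathfrak{p}}$. Writing $y = z y'$ with $z \in X^{\mathfrak{p}}$ and $y' \in W^{\mathfrak{p}}$, Theorem~\ref{2dequiv} supplies an equivalence intertwining $\theta_s$ and $\theta_t$ acting on $\mathcal{O}_0(\mathfrak{l})$ with their induced counterparts on the Serre subquotient $\mathcal{B}_z$. Thus it suffices to verify the identity for $\mathfrak{g} = \mathfrak{sl}_3$ and $y \in \langle s,t\rangle$. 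The constraints $ys \prec y$ and $yt \succ y$ then restrict us to only $y \in \{s, ts\}$ (since neither $e$, $t$, $st$, nor $sts$ satisfies both descent/ascent conditions). Each remaining case is handled by a short computation: use part~(a) to describe $\theta_s L(y)$ explicitly as a three-layer module, apply $\theta_t$, and compare with $\theta_t L(ys)$ or $\theta_t L(yt)$ using the explicit $S_3$ Kazhdan--Lusztig data, with the discriminating condition $yst \prec ys$ determining which side of the formula applies.

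The main obstacle I expect is in part~(a): rigorously pinning the middle multiplicities to the Kazhdan--Lusztig $\mu$-function requires careful bookkeeping of the graded lifts and a delicate use of Koszul purity to exclude extra composition factors outside degree zero. In part~(b), the subtler point is that the reduction via Theorem~\ref{2dequiv} must genuinely respect the functorial isomorphism being claimed: one must confirm that all composition factors of $\theta_t\theta_s L(y)$ lie in the right coset $z W^{\mathfrak{p}}$ (which follows from Lemma~\ref{LemTheta}) and that both the top and socle of the resulting module are supported outside $\mathcal{O}^{\mathsf{L}_z}$, so that the isomorphism type transfers from $\mathcal{B}_z$ back to $\mathcal{O}_0$ in the spirit of Corollary~\ref{cor71}.
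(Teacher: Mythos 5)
Your treatment of part (a) is essentially the paper's: the statement is recorded there as a well-known consequence of the Kazhdan--Lusztig conjecture, with only the support constraint $ys\preceq z$ and the $\mathrm{Ext}^1$-identification sketched, so citing Vogan and the graded lifts is fine. The one soft spot is your claim that $\dim\mathrm{Hom}(\theta_sL(y),L(y))=1$ already ``pins down'' $L(y)$ as the simple top; a one-dimensional Hom to $L(y)$ does not by itself exclude other constituents of the top. The standard fix is to note that $\theta_sL(y)\cong\theta^{\mathrm{out}}\theta^{\mathrm{on}}L(y)$ is a quotient of $\theta^{\mathrm{out}}$ applied to the projective cover of the simple module $\theta^{\mathrm{on}}L(y)$ in the wall, and that this translation out of the wall is the indecomposable projective $P(y)$.

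The genuine gap is in part (b): the reduction to $\mathfrak{sl}_3$ via Theorem~\ref{2dequiv} cannot prove the claimed isomorphism in $\mathcal{O}_0$. That equivalence identifies $\mathcal{O}_0(\mathfrak{l})$ with the Serre \emph{subquotient} $\mathcal{B}_z=\mathcal{O}^{\mathsf{K}_z}/\mathcal{O}^{\mathsf{L}_z}$, so a rank-two computation only determines $\theta_t\theta_sL(y)$ and $\theta_tL(ys)$, $\theta_tL(yt)$ up to composition factors $L(u)$ with $u\in\mathsf{L}_z$. Your assertion that Lemma~\ref{LemTheta} forces all composition factors of $\theta_t\theta_sL(y)$ into the coset $zW^{\mathfrak{p}}$ is not what that lemma says (it only gives $u\in\mathsf{K}_z=zW^{\mathfrak{p}}\amalg\mathsf{L}_z$) and it is false: by part (a), the Jantzen middle of $\theta_sL(y)$ contains $L(u)$ for every $u$ with nonzero $\mu$-value against $y$ and $us\succ u$, and such $u$ routinely lie outside $yW^{\langle s,t\rangle}$ (for instance, for $\mathfrak{g}=\mathfrak{sl}_4$, $s=s_1$, $t=s_2$, $y=s_3s_1$, the factor $L(s_1s_2s_3)$ occurs). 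Moreover, even after controlling tops and socles, an isomorphism in a Serre quotient does not lift to an isomorphism in $\mathcal{O}_0$ (Corollary~\ref{cor71} transfers only the endomorphism algebra, i.e.\ indecomposability, not the isomorphism type), and the later applications of part (b) --- notably Corollary~\ref{funnymu}, where the $\mu$-sum runs over all of $W$ --- need the genuine isomorphism rather than its image in $\mathcal{B}_z$. The real content of (b) is exactly the control of middle factors far from the coset, and the paper obtains it differently: since $yt\succ y$ gives $\theta_tL(y)=0$, only middle constituents $L(z)$ with $zt\prec z$ can survive $\theta_t$; the type-A fact \cite[Fact~3.2]{Wa} then forces $|\ell(z)-\ell(y)|=1$ for any survivor, Bruhat combinatorics yields $z\in\{ys,yt\}$, and a local analysis of the Bruhat order shows exactly one of the two survives, giving the stated dichotomy. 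Some such global input on $\mu$-coefficients is unavoidable here and cannot be generated by the rank-two reduction you propose.
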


\begin{proof}
Claim~\eqref{prop81.1} is a well-known consequence of the Kazhdan-Lusztig conjecture.  
We just note that $\Delta(y)\tto L(y)$ and hence $\theta_s \Delta(y)\tto \theta_s L(y)$.
As $\theta_s \Delta(y)$ has $\Delta(ys)$ as a submodule and the corresponding quotient is
isomorphic to $\Delta(y)$ (which is, in turn, a submodule of $\Delta(ys)$), 
it follows that every simple subquotient of $\theta_s L(y)$
is a simple subquotient of $\Delta(ys)$. This justifies the relation $ys\preceq z$. The
$\mathrm{Ext}^1$-property follows from the observation that, if $M\in\mathcal{O}$ is
such that $M$ has simple top $L(y)$ and the kernel $K$ of $M\tto L(y)$ is killed by
$\theta_s$, then, by adjunction, $\theta_s L(y)\tto M$ and hence $K$ must be a submodule
of the Jantzen middle.

To prove claim~\eqref{prop81.2}, we note that $\theta_{t}L(y)=0$ by assumptions
and hence, due to  
claim~\eqref{prop81.1}, we just need to show that the Jantzen middle of $\theta_{s}L(y)$
contains a unique summand $L(z)$ such that $zt\prec z$, moreover, that 
this $z\in \{ys,yt\}$.

From  \cite[Fact~3.2]{Wa} it follows that a composition factor $L(z)$ of $\theta_{s}L(y)$,
which survives after $\theta_{t}$ must  satisfy $|\ell(z)-\ell(y)|=1$. If $z\prec y$, then 
from claim~\eqref{prop81.1} we have $ys\preceq z$ and hence $z=ys$. If $y\prec z$, then
$z=yt$ as $t$ is in the right descent set of  $z$ but not in the right descent set of $y$
and  \cite[Proposition~2.2.7]{BB} and~\cite[Corollary~2.2.5]{BB} give 
$y\preceq yt\preceq z$. This establishes $z\in \{ys,yt\}$.

Set $z=yt$ and $z'=ys$. To complete the proof, it now suffices to show that 
$zs\succeq z$ if and only if $z't\succeq z'$.
For the ``if" part, we note that $z't\succeq z'$ means that we have both
$yst\succeq ys$ and $y\succeq ys$.
By~\cite[Corollary~2.2.5]{BB} and~\cite[Corollary~2.2.8]{BB} 
we have $ysts,yt\succeq y, yst$. Once again, we obtain that $yts\succeq yt$.
The local Hasse diagram for the Bruhat order can, in this case, be depicted as follows:
\begin{displaymath}
\xymatrix@R=1.8pc@C=1.8pc{
&&yts\ar@{-}[dll]_{\cdot\,s}\ar@{-}[drr]^{\cdot\,t}&&\\
yt\ar@{-}[dr]_{\cdot\,t}\ar@{-}[drrr]^{\cdot\,tst}&&&&ysts\ar@{-}[dl]^{\cdot\,s}
\ar@{-}[dlll]_{\cdot\,sts}\\
&y\ar@{-}[dr]_{\dot\,s}&&yst\ar@{-}[dl]^{\cdot\,t}&\\
&&ys&&
}
\end{displaymath}
For the ``only if" part, if $zs\succeq z$, then we have $ys\preceq y\preceq yt\preceq yts$.
Since $\ell(y)-1+2\geq \ell(ysts)=\ell(ytst)\geq \ell(y)+2-1$, we have $\ell(ysts)=\ell(y)+1$ which implies
$yst\succeq ys$.
\end{proof}

Proposition~\ref{prop81} has an interesting general consequence (compare with \cite[Fact~3.2]{Wa}).

\begin{corollary}\label{funnymu}
Assume that we are in type $A$.
Let $s,t$ be two simple reflections and $w\in W$ be such that $ws\succ w$ and $wt\prec w$. Then
\begin{equation}\label{eqfunny}
\sum_{\tilde{w}\in W:\tilde{w}s\prec \tilde{w}, \tilde{w}t\succ\tilde{w} }\mu(w,\tilde{w})\leq 1. 
\end{equation}
\end{corollary}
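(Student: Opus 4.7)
The plan is to compute $\theta_s\theta_tL(w)$ in two different ways and compare the results via Krull--Schmidt.

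First, since $wt\prec w$, I would apply Proposition~\ref{prop81}\eqref{prop81.1} to $\theta_tL(w)$ (with the roles of $s$ and $t$ exchanged in the statement): the Jantzen middle of $\theta_tL(w)$ is $\bigoplus_{z:\,zt\succ z} L(z)^{\oplus m_z}$ with $m_z=\mu(w,z)$ (in the symmetric convention). Applying the exact functor $\theta_s$ and using $ws\succ w$, the top and socle copies of $L(w)$ both vanish, and in the Jantzen middle only those $L(z)$ with $zs\prec z$ survive. Since each surviving $\theta_sL(z)$ is indecomposable (it has simple top $L(z)$ by Proposition~\ref{prop81}\eqref{prop81.1}), and these tops are pairwise non-isomorphic for distinct $z$, Krull--Schmidt yields
\[
\theta_s\theta_tL(w)\;\cong\;\bigoplus_{\tilde{w}:\,\tilde{w}s\prec\tilde{w},\,\tilde{w}t\succ\tilde{w}}\theta_sL(\tilde{w})^{\oplus\mu(w,\tilde{w})}.
\]

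Second, I would compute $\theta_s\theta_tL(w)$ directly, exploiting that in type $A$ either $st=ts$ or $sts=tst$. In the commuting case, $\theta_s\theta_tL(w)\cong\theta_t\theta_sL(w)=0$, since $ws\succ w$ forces $\theta_sL(w)=0$. In the braid case, Proposition~\ref{prop81}\eqref{prop81.2} applied with $s$ and $t$ exchanged (so that the hypotheses $ys\preceq y$ and $yt\succeq y$ match our $wt\prec w$ and $ws\succ w$) gives $\theta_s\theta_tL(w)\cong\theta_sL(u)$ with $u\in\{ws,wt\}$. In both sub-cases one checks $us\prec u$ (automatic from $w\prec ws$ when $u=ws$, and built into the hypothesis $wts\prec wt$ of the relevant branch of Proposition~\ref{prop81}\eqref{prop81.2} when $u=wt$), so Proposition~\ref{prop81}\eqref{prop81.1} applies to $\theta_sL(u)$ and shows that it is indecomposable.

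Combining the two computations, $\theta_s\theta_tL(w)$ is either zero or indecomposable in every case. Comparing with the Krull--Schmidt decomposition from the first step forces the total multiplicity $\sum_{\tilde{w}}\mu(w,\tilde{w})$ to be $0$ or $1$, which is exactly \eqref{eqfunny}. The only non-bookkeeping step is identifying $\theta_s\theta_tL(w)$ with a single $\theta_sL(u)$ in the braid case, which is precisely what Proposition~\ref{prop81}\eqref{prop81.2} supplies; everything else is linear algebra of graded filtrations and elementary consequences of Proposition~\ref{prop81}\eqref{prop81.1}.
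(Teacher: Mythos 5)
Your proposal is correct and takes essentially the same route as the paper: it computes $\theta_s\theta_t L(w)$ once via Proposition~\ref{prop81}\eqref{prop81.1} (identifying the number of indecomposable summands with the left-hand side of \eqref{eqfunny}) and once via Proposition~\ref{prop81}\eqref{prop81.2} in the braid case, concluding that the module is indecomposable or zero. The only cosmetic difference is the commuting case, where you note directly that $\theta_s\theta_t L(w)\cong\theta_t\theta_s L(w)=0$ since $\theta_sL(w)=0$, whereas the paper invokes the special case $st=w_0^{\mathfrak{p}}$; both are valid.
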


\begin{proof}
If $t$ and $s$ commute, then $ts$ is of the form $w_0^{\mathfrak{p}}$,
for some $\mathfrak{p}$. If $t$ and $s$ do not commute, then $tst=sts$ and
$\theta_s\theta_t L(w)$ is either indecomposable or zero by Proposition~\ref{prop81}\eqref{prop81.2}. 
On the other hand, by 
Proposition~\ref{prop81}\eqref{prop81.1}, the number of indecomposable 
direct summands of $\theta_s\theta_t L(w)$ 
is given by the left hand side of \eqref{eqfunny}. The claim follows.
\end{proof}

We note that the property in Corollary~\ref{funnymu} fails outside type $A$, in fact, already in
type $B_2$. This is the origin for the failure of Conjecture~\ref{conj1} in type  $B_2$. 

Now we will go through all the remaining cases \eqref{list14}--\eqref{list18}.

{\bf The case \eqref{list14}.}
We claim that Conjecture~\ref{conj1} is true for the pair \eqref{list14}.
Indeed, by Proposition~\ref{prop81}, we have 
\begin{displaymath}
\theta_{\mathtt{5}}\theta_{\mathtt{4}} L(\mathtt{232432})\cong  \theta_{\mathtt{5}}L(\mathtt{2324325})
\end{displaymath}
and the claim now follows from \eqref{eqn771}, Corollary~\ref{cor71} and the cases of smaller $n$ as the support of 
$\mathtt{5231}$ is not the whole of $S$.

{\bf The case \eqref{list16}.}
Next we claim that Conjecture~\ref{conj1} is true for the pair \eqref{list16}.
Indeed, by Proposition~\ref{prop81}, we have 
\begin{displaymath}
\theta_{\mathtt{2}}\theta_{\mathtt{3}} L(\mathtt{1343})\cong  \theta_{\mathtt{2}}L(\mathtt{13432})
\text{ and }\theta_{\mathtt{1}}\theta_{\mathtt{2}} L(\mathtt{13432})\cong  \theta_{\mathtt{1}}L(\mathtt{1343}).
\end{displaymath}
Taking into account \eqref{eqn771} and $\mathtt{345231}=\mathtt{321453}$, we again can 
reduce indecomposability for our pair $(x,y)$ 
to indecomposability for a new pair  $(x',y')$ in which the support of $x'$ is not the whole of $S$.
The claim now follows from Corollary~\ref{cor71} and the cases of smaller $n$.

{\bf The case \eqref{list18}.} From Proposition~\ref{prop81} we have that
\begin{displaymath}
\theta_{\mathtt{4}}\theta_{\mathtt{3}} L(\mathtt{1214543})\cong \theta_{\mathtt{4}} L(\mathtt{121454}). 
\end{displaymath}
Now, from \eqref{eqn771} it follows that 
\begin{displaymath}
\theta_{\mathtt{45231}}L(\mathtt{121454}) \cong 
\theta_{\mathtt{345231}}L(\mathtt{1214543}).
\end{displaymath}
Therefore Conjecture~\ref{conj1} is true for the pair 
\eqref{list18} if and only if Conjecture~\ref{conj1} is true for the pair 
\eqref{list15}.

{\bf The case \eqref{list15}.} We claim that the module $\theta_{\mathtt{42531}}L(\mathtt{121454})$
has simple top $L(\mathtt{1214543})$ and hence is indecomposable. This will establish 
Conjecture~\ref{conj1} for the pair \eqref{list15}. 

\begin{lemma}\label{lem85}
If $L(z)$ appears in the top of  $\theta_{\mathtt{42531}}L(\mathtt{121454})$, for some
$z\in W$, then $z=\mathtt{1214543}$.
\end{lemma}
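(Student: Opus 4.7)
The plan is to combine a cell-theoretic restriction on the top with an iterated application of Proposition~\ref{prop81} and data from the Kazhdan-Lusztig tables.

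First I would invoke the well-known fact, already recalled in the proof of Theorem~\ref{thm31}, that every simple appearing in the top of $\theta_xL(y)$ belongs to the right Kazhdan-Lusztig cell of $y$. Since $\mathtt{121454}$ is the Duflo involution whose $P$- and $Q$-tableau both equal $[[1,4],[2,5],[3,6]]$ of shape $(2,2,2)$, its right cell consists, via RSK, of the five permutations indexed by the five standard Young tableaux of that shape. In one-line notation these five candidates for $z$ are $\mathtt{121454}=321654$, $\mathtt{1214543}=326154$, $326514$, $362154$, and $362514$.

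Second, using the commutation $s_2s_5=s_5s_2$ we have $\mathtt{42531}=\mathtt{45231}$, so~\eqref{eqn771} yields $\theta_{\mathtt{42531}}\cong\theta_{\mathtt{1}}\theta_{\mathtt{3}}\theta_{\mathtt{2}}\theta_{\mathtt{5}}\theta_{\mathtt{4}}$. Adjunction then gives
\begin{displaymath}
\mathrm{Hom}(\theta_{\mathtt{42531}}L(\mathtt{121454}),L(z))\;\cong\;\mathrm{Hom}(\theta_{\mathtt{3}}\theta_{\mathtt{2}}\theta_{\mathtt{5}}\theta_{\mathtt{4}}L(\mathtt{121454}),\theta_{\mathtt{1}}L(z)),
\end{displaymath}
and the vanishing $\theta_{\mathtt{1}}L(z)=0$ whenever $s_1$ is not a right descent of $z$, that is, whenever $z(1)<z(2)$ in one-line notation, rules out $362154$ and $362514$, leaving only the three candidates $\mathtt{121454}$, $\mathtt{1214543}$, and $326514$.

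For each of these three remaining $z$, I would proceed by iterated application of Proposition~\ref{prop81} -- using part~(a) to control the graded top, socle and Jantzen middle of the intermediate modules, and part~(b) whenever a braid-friendly pair of consecutive factors can be collapsed -- so as to compute the graded top of $\theta_{\mathtt{1}}\theta_{\mathtt{3}}\theta_{\mathtt{2}}\theta_{\mathtt{5}}\theta_{\mathtt{4}}L(\mathtt{121454})$ layer by layer, reading off $\mu$-coefficients from the Kazhdan-Lusztig tables of~\cite{Go} whenever needed. The hard part is exactly this step: at each stage the Jantzen middles produced by Proposition~\ref{prop81}(a) introduce simples $L(w)$ with $w$ outside the right cell of $\mathtt{121454}$, and one must verify that, after all five iterations, the contributions coming from these extraneous simples do not reassemble into $L(\mathtt{121454})$ or $L(326514)$ in the minimal graded degree. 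This is precisely the kind of case-by-case $\mu$-coefficient computation that the authors describe as unavoidable for the hard cases of $n=6$.
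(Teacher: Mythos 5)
Your first two steps are sound and agree with the paper: the restriction of the top of $\theta_xL(y)$ to the right cell of $y$ is exactly the fact recalled in the proof of Theorem~\ref{thm31}, and your RSK list of the five cell members ($321654$, $326154$, $326514$, $362154$, $362514$ in one-line notation) matches the paper's list $\{\mathtt{121454},\mathtt{1214543},\mathtt{12143543},\mathtt{12145432},\mathtt{121435432}\}$; moving the single factor $\theta_{\mathtt{1}}$ across by adjunction correctly kills $362154$ and $362514$. But at that point your argument stops being a proof: the elimination of the two remaining extraneous candidates $\mathtt{121454}=321654$ and $\mathtt{12143543}=326514$ is only announced as a plan (``iterated application of Proposition~\ref{prop81}, reading off $\mu$-coefficients from \cite{Go}''), and you yourself flag it as the hard part. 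That is a genuine gap: all the content of the lemma lies precisely there, the proposed layer-by-layer computation of $\theta_{\mathtt{1}}\theta_{\mathtt{3}}\theta_{\mathtt{2}}\theta_{\mathtt{5}}\theta_{\mathtt{4}}L(\mathtt{121454})$ is heavy and not carried out, and Proposition~\ref{prop81}(b) is not even applicable to most of the consecutive pairs occurring here, so it is not clear the plan would go through as described.

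The missing idea is to use the whole commuting block in the adjunction rather than just $\theta_{\mathtt{1}}$. Since $\theta_{\mathtt{2}}\theta_{\mathtt{5}}\cong\theta_{\mathtt{5}}\theta_{\mathtt{2}}$, formula \eqref{eqn771} gives $\theta_{\mathtt{42531}}\cong\theta_{\mathtt{135}}\theta_{\mathtt{24}}$ with $\theta_{\mathtt{135}}$ self-adjoint (as $\mathtt{135}$ is an involution, indeed of the form $w_0^{\mathfrak{p}}$), so
$\mathrm{Hom}(\theta_{\mathtt{42531}}L(\mathtt{121454}),L(z))\cong\mathrm{Hom}(\theta_{\mathtt{24}}L(\mathtt{121454}),\theta_{\mathtt{135}}L(z))$,
and a nonzero homomorphism forces $\theta_{\mathtt{135}}L(z)\neq 0$. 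By \eqref{neweq123} (or by factoring $\theta_{\mathtt{135}}$ through the wall for $\{s_1,s_3,s_5\}$), this requires $s_1$, $s_3$ and $s_5$ to be simultaneous right descents of $z$. Among your five candidates, $321654$ fails at $s_3$, $326514$ fails at $s_5$, and $362154$, $362514$ fail at $s_1$; only $326154=\mathtt{1214543}$ survives. This is exactly the paper's one-line argument, and no Kazhdan--Lusztig table data is needed for this lemma; the finer graded bookkeeping only enters in Lemma~\ref{lem86}, where the dimension of the surviving Hom-space is computed.
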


\begin{proof}
If $L(z)$ appears in the top of  $\theta_{\mathtt{42531}}L(\mathtt{121454})$, for some
$z\in W$, then $z\sim_{\mathcal{R}}\mathtt{121454}$, see the proof of \cite[Theorem~6]{SHPO2}.
The right cell of $\mathtt{121454}$ consists of the following elements:
\begin{equation}\label{eq85-1}
\{\mathtt{121454},\mathtt{1214543},\mathtt{12143543},\mathtt{12145432},\mathtt{121435432}\}.
\end{equation}
Using  \eqref{eqn771}, by adjunction, we also have
\begin{displaymath}
\mathrm{Hom}(\theta_{\mathtt{42531}}L(\mathtt{121454}),L(z))\cong
\mathrm{Hom}(\theta_{\mathtt{42}}L(\mathtt{121454}),\theta_{\mathtt{531}}L(z)),
\end{displaymath}
which means that $\theta_{\mathtt{531}}L(z)\neq 0$. The only $z$ in the list \eqref{eq85-1}
for which $\theta_{\mathtt{531}}L(z))\neq 0$ is $z=\mathtt{1214543}$. The claim follows.
\end{proof}

After Lemma~\ref{lem85}, the case \eqref{list15} is completed by:

\begin{lemma}\label{lem86}
We have $\dim\mathrm{Hom}(\theta_{\mathtt{42531}}L(\mathtt{121454}),L(\mathtt{1214543}))=1$.
\end{lemma}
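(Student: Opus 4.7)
The plan is to combine Lemma~\ref{lem85} with an adjunction argument and then to evaluate the resulting module step by step using Proposition~\ref{prop81}. By Lemma~\ref{lem85}, the only simple module that can appear in the top of $M := \theta_{\mathtt{45231}}L(\mathtt{121454})$ is $L(\mathtt{1214543})$, so what we need to show is that this copy occurs exactly once. Using \eqref{eqn771} together with the self-adjointness of each $\theta_{s_i}$, one has
\begin{displaymath}
\dim\mathrm{Hom}(M, L(\mathtt{1214543})) \;=\; \dim\mathrm{Hom}\bigl(L(\mathtt{121454}),\, \theta_{\mathtt{4}}\theta_{\mathtt{5}}\theta_{\mathtt{2}}\theta_{\mathtt{3}}\theta_{\mathtt{1}}L(\mathtt{1214543})\bigr),
\end{displaymath}
and the right-hand side equals the multiplicity of $L(\mathtt{121454})$ in the socle of $N := \theta_{\mathtt{4}}\theta_{\mathtt{5}}\theta_{\mathtt{2}}\theta_{\mathtt{3}}\theta_{\mathtt{1}}L(\mathtt{1214543})$.

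To compute this socle multiplicity, I would evaluate $N$ inductively. Set $N_0 := L(\mathtt{1214543})$ and successively produce $N_1 = \theta_{\mathtt{1}} N_0$, $N_2 = \theta_{\mathtt{3}} N_1$, $N_3 = \theta_{\mathtt{2}} N_2$, $N_4 = \theta_{\mathtt{5}} N_3$, and $N_5 = \theta_{\mathtt{4}} N_4 = N$. At each stage, Proposition~\ref{prop81}(a) describes the graded picture (top, Jantzen middle, socle) of $\theta_s$ applied to each simple top or socle factor of $N_{k-1}$, with the Jantzen middle controlled by the Kazhdan-Lusztig $\mu$-coefficients read off from the tables in \cite{Go}; whenever a braid-like configuration arises, Proposition~\ref{prop81}(b) can be used to simplify the output directly. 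Because only composition factors lying in the same right Kazhdan-Lusztig cell as $\mathtt{121454}$ (recorded in \eqref{eq85-1}) can eventually contribute to the socle copy of $L(\mathtt{121454})$, one can prune the running list of composition factors at every step by a right-cell and right-descent filter.

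The main obstacle will be the combinatorial growth of composition factors across five iterations: each application of $\theta_{s_i}$ in principle produces a new Jantzen middle, and the naive list of intermediate simples can blow up quickly. The key control comes from the graded structure of $\mathcal{O}^{\mathbb{Z}}$: a composition factor in $N_{k-1}$ can contribute to $L(\mathtt{121454})$ sitting in the socle of $N_5$ (which is the graded piece of top degree $\ell(\mathtt{45231})=5$) only if it itself sits at the corresponding extremal degree of $N_{k-1}$. This forces a propagation along the ``leading'' part of each Jantzen picture, and the iteration then collapses to a single explicit chain of $\mu$-coefficients. Verifying on \cite{Go} that this chain survives and produces exactly one copy of $L(\mathtt{121454})$ in the socle of $N$ then yields the desired equality.
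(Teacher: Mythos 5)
Your reduction via adjunction is fine as far as it goes: since each $\theta_{s}$ is self-adjoint, $\dim\mathrm{Hom}(\theta_{\mathtt{42531}}L(\mathtt{121454}),L(\mathtt{1214543}))$ does equal the multiplicity of $L(\mathtt{121454})$ in the socle of $N=\theta_{\mathtt{4}}\theta_{\mathtt{5}}\theta_{\mathtt{2}}\theta_{\mathtt{3}}\theta_{\mathtt{1}}L(\mathtt{1214543})$. But the proposed computation of that socle has a genuine gap. Proposition~\ref{prop81}\eqref{prop81.1} describes $\theta_s$ applied to a \emph{simple} module; the intermediate modules $N_1,\dots,N_4$ are not semisimple, and knowing their composition factors (which is all that exactness plus the $\mu$-tables give you) does not determine the socle of the next stage --- socles and tops are not additive over composition series. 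Proposition~\ref{prop81}\eqref{prop81.2} likewise only applies to simples under specific descent hypotheses. More seriously, your ``key control'' --- that a socle copy of $L(\mathtt{121454})$ in $N$ must sit in the extremal degree $\ell(\mathtt{45231})=5$, so that only extremal-degree factors of each $N_{k-1}$ can propagate --- is exactly the point at issue, not a fact you may assume: the socle of a graded module can have constituents in many degrees, the ungraded $\mathrm{Hom}$ counts all of them, and an extra socle copy of $L(\mathtt{121454})$ in a non-extremal degree is precisely what a decomposition of $\theta_{\mathtt{42531}}L(\mathtt{121454})$ would produce. Finally, the decisive step (``verifying on \cite{Go} that this chain survives and produces exactly one copy'') is not carried out, so even granting the strategy there is no proof yet.

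For contrast, the paper avoids the five-fold iteration entirely: it splits $\theta_{\mathtt{42531}}\cong\theta_{\mathtt{135}}\theta_{\mathtt{24}}$ using \eqref{eqn771}, moves only $\theta_{\mathtt{135}}$ across the adjunction, and then factors $\theta_{\mathtt{135}}=\theta_{\mathtt{135}}^{\mathrm{out}}\theta_{\mathtt{135}}^{\mathrm{on}}$ through the wall. Because $\mathtt{24}$ is of the form $w_0^{\mathfrak{q}}$ and $\theta_{\mathtt{135}}^{\mathrm{on}}L(\mathtt{121454})=0$, the module $M=\theta_{\mathtt{135}}^{\mathrm{on}}\theta_{\mathtt{24}}L(\mathtt{121454})$ is concentrated in degrees $\pm1,0$; then $\dim\mathrm{Ext}^1(L(\mathtt{121454}),L(\mathtt{1214543}))=1$ bounds the extremal multiplicities, self-duality and even--odd vanishing exclude $L(\lambda)$ from degree $0$, and a nonzero degree-$0$ part (coming from $L(\mathtt{12134543})$) forces the degree-$1$ copy out of the top, leaving a one-dimensional top and hence the claimed $\mathrm{Hom}$ space. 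If you want to pursue your route, you would need an argument of this structural kind (narrow grading window, self-duality, $\mathrm{Ext}^1$ bounds) to control socle constituents in non-extremal degrees; composition-factor bookkeeping alone cannot do it.
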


\begin{proof}
We use \eqref{eqn771} to write $\theta_{\mathtt{42531}}=\theta_{\mathtt{135}}\theta_{\mathtt{24}}$.
Hence, by adjunction, our claim is equivalent to 
\begin{displaymath}
\dim\mathrm{Hom}(\theta_{\mathtt{24}}L(\mathtt{121454}),\theta_{\mathtt{135}}L(\mathtt{1214543}))=1. 
\end{displaymath}
Note that the fact that this space is non-zero follows, by adjunction, from Lemma~\ref{lem85}
and the fact that $\theta_{\mathtt{42531}}L(\mathtt{121454})\neq 0$.

As $\mathtt{135}$ is of the form $w_0^{\mathfrak{p}}$, for some $\mathfrak{p}$, we can factorize
$\theta_{\mathtt{135}}=\theta_{\mathtt{135}}^{\mathrm{out}}\theta_{\mathtt{135}}^{\mathrm{on}}$
via translations on and out of the corresponding walls. Therefore, by adjunction, the above 
is equivalent to 
\begin{equation}\label{eq86-2}
\dim\mathrm{Hom}(\theta_{\mathtt{135}}^{\mathrm{on}}\theta_{\mathtt{24}}L(\mathtt{121454}),
\theta_{\mathtt{135}}^{\mathrm{on}}L(\mathtt{1214543}))=1. 
\end{equation}
The module $\theta_{\mathtt{135}}^{\mathrm{on}}L(\mathtt{1214543})$ is a simple module in a
singular block of $\mathcal{O}$, let us call it $L(\lambda)$. 

As $\mathtt{24}$ is of the form $w_0^{\mathfrak{q}}$, for some $\mathfrak{q}$, we know that
the graded module $\theta_{\mathtt{24}}L(\mathtt{121454})$ lives in degrees $\pm2,\pm1,0$.
Moreover, the degree $-2$ contains just its simple top $L(\mathtt{121454})$ and the
degree $2$ contains just its simple socle $L(\mathtt{121454})$. As $\mathtt{1214543}\succ \mathtt{121454}$,
we have  $\theta_{\mathtt{135}}^{\mathrm{on}}L(\mathtt{121454})=0$. This means that the module
$M:=\theta_{\mathtt{135}}^{\mathrm{on}}\theta_{\mathtt{24}}L(\mathtt{121454})$ lives in degrees
$\pm1,0$. 

Next we argue that $M$
can only have $L(\lambda)$ in the top (up to graded shift). Indeed, any occurrence of some other
$L(\mu)$ would, by adjunction, lead to a contradiction with Lemma~\ref{lem85}. This implies that the
degree $-1$ part of $M$ just consists of copies of $L(\lambda)$. In fact, there can only be a single
copy as $\dim\mathrm{Ext}^1(L(\mathtt{121454}),L(\mathtt{1214543}))=1$ 
(because $\mathtt{121454}$ and $\mathtt{1214543}$ differ by a single reflection) and hence appearance 
of $L(\mathtt{1214543})$ with multiplicity higher than one in degree $-1$ of $
\theta_{\mathtt{24}}L(\mathtt{121454})$ would contradict the
fact that $\theta_{\mathtt{24}}L(\mathtt{121454})$ has simple top.
Being a translation of 
a simple module, $M$ is self-dual, and hence the degree $1$ part of $M$ just consists of a 
single copy of $L(\lambda)$ as well. By the even-odd vanishing, $L(\lambda)$ cannot appear 
in degree $0$. 

The module $L(\mathtt{12134543})$ appears in the degree $0$ part 
of $\theta_{\mathtt{24}}L(\mathtt{121454})$ with non-zero multiplicity. As
$\theta_{\mathtt{135}}^{\mathrm{on}}L(\mathtt{12134543})\neq 0$, it follows that the
degree $0$ part of $M$ is, in fact, non-zero. By the previous paragraph, this degree $0$
part cannot contribute to the top of $M$. By self-duality, this degree $0$ part cannot
contribute to the socle of $M$ either. The latter, in turn, implies that the unique 
simple subquotient of the degree $1$ part cannot be in the top of the module.
This implies \eqref{eq86-2} and completes the proof.
\end{proof}

The arguments in the proof of Lemma~\ref{lem86} motivate the following question:

\begin{question}\label{quest87}
Let $n$ be arbitrary and define
\begin{displaymath}
\theta_{\mathrm{odd}}:=\theta_{\mathtt{1}}\theta_{\mathtt{3}}\theta_{\mathtt{5}}\cdots,\qquad
\theta_{\mathrm{ev}}:=\theta_{\mathtt{2}}\theta_{\mathtt{4}}\theta_{\mathtt{6}}\cdots.
\end{displaymath}
Is it true that, for all  $x,y\in W$, we have
\begin{displaymath}
\dim\mathrm{Hom}(\theta_{\mathrm{odd}}L(x),\theta_{\mathrm{ev}}L(y))\leq 1 ?
\end{displaymath}
\end{question}

We note that $\dim\mathrm{Hom}(\theta_{\mathrm{odd}}L(x),\theta_{\mathrm{ev}}L(y))>0$ is only possible
for $x,y\in W$ such that $x\sim_{\mathcal{R}}y$. For $x=y=w_{0}$, we have the following stronger statement.

\begin{proposition}\label{prop87-1}
Let $w\in W$ be such that each simple reflection appears at most once in a reduced expression of $w$.
Then $\theta_w L(w_0)$ has simple top. In particular, Question~\ref{quest87} has the positive answer,
for $x=w_0$ or $y=w_0$.
\end{proposition}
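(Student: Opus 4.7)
The plan is to realize $\theta_w L(w_0)$ as an indecomposable tilting module, from which the simple-top property is immediate; the key bridge will be Ringel duality. Since $w_0\cdot 0$ is the unique weight in its orbit below which nothing lies, $\Delta(w_0)$ has no proper submodules and so $L(w_0)\cong\Delta(w_0)$. Iterating \eqref{twistVerma} along a reduced expression of $w_0$---at each step the dot action strictly lowers the weight in the natural order, so the hypothesis $s\cdot\mu\le\mu$ is satisfied---yields $L(w_0)\cong T_{w_0}\Delta(e)$. Combining with \eqref{commProj} and \eqref{DefTheta},
\begin{displaymath}
\theta_w L(w_0)\;\cong\;\theta_w T_{w_0}\Delta(e)\;\cong\;T_{w_0}\theta_w\Delta(e)\;\cong\;T_{w_0}P(w).
\end{displaymath}

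Next I would argue that $T_{w_0}P(w)$ is an indecomposable tilting module. Indecomposability is immediate from \eqref{isomPT}: since $P(w)$ has a $\Delta$-flag, $T_{w_0}$ induces an isomorphism $\mathrm{End}(T_{w_0}P(w))\cong\mathrm{End}(P(w))$, and the latter is local because $P(w)$ is indecomposable. For the tilting property, I would invoke the classical identification of the derived auto-equivalence $\mathcal{L}T_{w_0}$ of $\mathcal{D}^b(\mathcal{O}_0)$ (noted in Subsection~\ref{s2.5}) with Ringel self-duality of $\mathcal{O}_0$, under which indecomposable projectives correspond to indecomposable tilting modules; thus $T_{w_0}P(w)\cong T(z)$ for some $z\in W$. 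Indecomposable tilting modules have simple top---the top of the highest-weight Verma in their $\Delta$-flag---which finishes the proof.

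The main obstacle is the reliance on the Ringel-duality identification, which is standard but external to the excerpt. I would also remark that the argument above does not use the distinctness hypothesis on the simple reflections appearing in $w$, so Proposition~\ref{prop87-1} in fact holds for every $w\in W$. A more self-contained, hypothesis-sensitive route would proceed by induction on $\ell(w)$ using the factorization $\theta_w\cong\theta_{s_{i_1}}\circ\cdots\circ\theta_{s_{i_k}}$, valid whenever the $s_{i_j}$ in a reduced expression $w=s_{i_1}\cdots s_{i_k}$ are pairwise distinct (the Kazhdan--Lusztig correction terms vanish because every $y\preceq s_{i_1}\cdots s_{i_{j-1}}$ has support disjoint from $\{s_{i_j}\}$, hence does not admit $s_{i_j}$ as a left descent), combined with Proposition~\ref{prop81}\eqref{prop81.1}; the delicate point in this alternative would be ruling out extra simple summands in the top after applying the outer $\theta_{s_{i_1}}$ to the inductively obtained module.
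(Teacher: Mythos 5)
Your reduction $\theta_w L(w_0)\cong T_{w_0}\theta_w\Delta(e)\cong T_{w_0}P(w)$ is correct (antidominant Verma $=$ simple, \eqref{twistVerma}, \eqref{commProj}, \eqref{DefTheta}), and \eqref{isomPT} does give indecomposability; the identification of this module with an indecomposable tilting module, namely $T(w_0w)$, is also the identification the paper uses via \cite[Theorem~8.1]{St} and \cite[Proposition~4]{FKM}. The genuine gap is the last step: it is false that indecomposable tilting modules in $\mathcal{O}_0$ have simple top. $T(x)$ contains $\Delta(x)$ as a submodule and surjects onto $\nabla(x)$, but in general neither its top nor its socle is simple. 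In fact, by the very references above, the socle of $\theta_w L(w_0)\cong T(w_0w)$ is a direct sum of copies of $L(w_0)$ whose multiplicity is $(T(w_0w):\nabla(w_0))=(P(w):\Delta(e))=[\Delta(e):L(w)]=P_{e,w}(1)$. Your own remark that the hypothesis on $w$ is never used, so that the statement should hold for every $w$, is the warning sign: for $w=s_2s_1s_3s_2\in S_4$ one has $P_{e,w}(q)=1+q$, so this multiplicity is $2$; the self-dual module $\theta_wL(w_0)$ is indecomposable (consistent with Conjecture~\ref{conj1}) but has socle $L(w_0)^{\oplus 2}$ and hence non-simple top. So your argument proves indecomposability only, not the simple-top property that the proposition (and its application to Question~\ref{quest87}) actually asserts.

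The hypothesis must therefore enter exactly at this point, and that is how the paper argues: an element in which every simple reflection occurs at most once is short-braid ($321$-hexagon) avoiding, so Billey--Warrington's formula \cite[Theorem~1]{BW} forces $(P(w):\Delta(e))=1$; then \cite[Theorem~8.1]{St} and \cite[Proposition~4]{FKM} give that $T(w_0w)\cong\theta_wL(w_0)$ has simple socle, hence simple top by self-duality. Your alternative inductive route via $\theta_w\cong\theta_{s_{i_1}}\cdots\theta_{s_{i_k}}$ runs into the same issue you flag yourself (controlling the top after each application), which is essentially the multiplicity question above in disguise. Finally, note that the "in particular" clause is not automatic from the first assertion: one needs the observation that $w_0\sim_{\mathcal{R}}z$ forces $z=w_0$, after which the bound on $\dim\mathrm{Hom}(\theta_{\mathrm{odd}}L(x),\theta_{\mathrm{ev}}L(y))$ follows by adjunction; your proposal does not address this step.
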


We note that the condition ``each simple reflection appears at most once in a reduced expression of $w$''
does not depend on the choice of a reduced expression as all reduced expressions can be obtained
from each other by applying braid relations, see \cite[Theorem~3.3.1]{BB}.

\begin{proof}
We start by noting that, for $z\in W$, the condition $w_0\sim_{\mathcal{R}}z$ implies $z=w_0$. Therefore
the second assertion of the proposition follows from the first one by adjunction. To prove the first one,
we first note that any $w$ as in the formulation is short-braid hexagon avoiding, cf. 
\cite[Equations~(9) and (10)]{BW}. Therefore we have the explicit form of the Kazhdan-Lusztig polynomial 
for such $w$, see \cite[Theorem~1]{BW} which implies that $\Delta(e)$ appears only once in the Verma 
flag of $P(w)$. By \cite[Theorem~8.1]{St} and \cite[Proposition~4]{FKM}, 
this means that $T(w_0w)\cong \theta_w L(w_0)$ 
has simple socle and thus also simple top, by self-duality. 
\end{proof}

{\bf The case \eqref{list17}.}
In this case we apply the same approach as in the previous case.

\begin{lemma}\label{lem87-2}
If $L(z)$ appears in the top of $\theta_{\mathtt{342531}}L(\mathtt{232432})$, for some $z\in W$,
then $z=\mathtt{23243215}$.
\end{lemma}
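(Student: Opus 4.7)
The plan is to follow the template of Lemma \ref{lem85}, adapted to the pair $(x,y)=(\mathtt{345231},\mathtt{232432})$. By the cell-theoretic argument from the proof of Lemma \ref{lem85} (which rests on the proof of \cite[Theorem~6]{SHPO2}), any simple $L(z)$ appearing in the top of $\theta_x L(y)$ must satisfy $z\sim_{\mathcal{R}}\mathtt{232432}$. The first step is therefore to enumerate the right Kazhdan-Lusztig cell of $\mathtt{232432}$ in $S_6$ using the tables in \cite{Go}, reducing the problem to a finite check over a short list of candidates.

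Next, combine \eqref{eqn771} with the commutations among $s_1,s_3,s_5$ and among $s_2,s_4$ to rewrite
\begin{displaymath}
\theta_{\mathtt{345231}} \;\cong\; \theta_{\mathtt{1}}\theta_{\mathtt{3}}\theta_{\mathtt{2}}\theta_{\mathtt{5}}\theta_{\mathtt{4}}\theta_{\mathtt{3}} \;\cong\; \theta_{\mathtt{135}}\,\theta_{\mathtt{24}}\,\theta_{\mathtt{3}},
\end{displaymath}
where we have slid $\theta_{\mathtt{5}}$ past $\theta_{\mathtt{2}}$ and then grouped. Since $\mathtt{135}$ is an involution, the adjunction $(\theta_{\mathtt{135}},\theta_{\mathtt{135}})$ yields
\begin{displaymath}
\mathrm{Hom}(\theta_{\mathtt{345231}}L(\mathtt{232432}),L(z))\;\cong\;\mathrm{Hom}(\theta_{\mathtt{24}}\theta_{\mathtt{3}}L(\mathtt{232432}),\theta_{\mathtt{135}}L(z)),
\end{displaymath}
so a necessary condition for $L(z)$ to appear in the top is $\theta_{\mathtt{135}}L(z)\neq 0$, which by \eqref{neweq123} is equivalent to $\mathtt{135}\leq_{\mathcal{L}}z$.

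The final step is to check that, among the elements of the right cell of $\mathtt{232432}$, the condition $\mathtt{135}\leq_{\mathcal{L}}z$ singles out $z=\mathtt{23243215}$. The main obstacle is that this single necessary condition may not suffice to eliminate every unwanted element: if several candidates survive, one has to extract additional necessary conditions from alternative groupings of \eqref{eqn771}, for instance by peeling off a simple factor from the opposite side, or by exploiting an alternative reduced expression of $\mathtt{345231}$ in the spirit of the argument used for case \eqref{list16}. Iterating such restrictions on the candidate set should leave $\mathtt{23243215}$ as the unique survivor, completing the proof. I expect the verification itself to be a tractable (if tedious) combinatorial check, wholly parallel to the one performed in Lemma \ref{lem85}.
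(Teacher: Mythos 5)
Your proposal is correct and follows the paper's argument essentially verbatim: the same cell-theoretic restriction $z\sim_{\mathcal{R}}\mathtt{232432}$ (via the proof of \cite[Theorem~6]{SHPO2}), the same regrouping of \eqref{eqn771} into $\theta_{\mathtt{135}}$ times the remaining factors, and the same adjunction giving the necessary condition $\theta_{\mathtt{135}}L(z)\neq 0$. Your worry in the last step is unfounded: the right cell of $\mathtt{232432}$ has ten elements, and the single condition $\theta_{\mathtt{531}}L(z)\neq 0$ already eliminates all of them except $z=\mathtt{23243215}$, so no further restrictions are needed.
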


\begin{proof}
If $L(z)$ appears in the top of $\theta_{\mathtt{342531}}L(\mathtt{232432})$, for some $z\in W$, then
$z\sim_{\mathcal{R}}\mathtt{232432}$, see the proof of~\cite[Theorem~6]{SHPO2}. 
The right cell of $\mathtt{232432}$
consists of the following elements:
\begin{eqnarray}
&\{\mathtt{232432}, \mathtt{2324321}, \mathtt{2324325}, \mathtt{23214321}, \mathtt{23243215},
\mathtt{23243254},\label{eq87-3}\\
&\mathtt{213214321}, \mathtt{232432154}, \mathtt{232432543}, \mathtt{232143215}\}\nonumber.
\end{eqnarray}
Using~\eqref{eqn771}, by adjunction, we also have
\begin{displaymath}
\dim\mathrm{Hom}(\theta_{\mathtt{342531}}L(\mathtt{232432}),L(z))\cong\dim
\mathrm{Hom}(\theta_{\mathtt{342}}L(\mathtt{232432}), \theta_{\mathtt{531}}L(z)),
\end{displaymath}
which means that $\theta_{\mathtt{531}}L(z)\neq0$. The only $z$ in the list \eqref{eq87-3} for which
$\theta_{\mathtt{531}}L(z)\neq0$  is $z=\mathtt{23243215}$. The claim follows.
\end{proof}

After Lemma~\ref{lem87-2}, the case~\eqref{list17} is completed by:

\begin{lemma}\label{lem88-2}
We have $\dim\mathrm{Hom}(\theta_{\mathtt{342531}}L(\mathtt{232432}), L(\mathtt{23243215}))=1.$
\end{lemma}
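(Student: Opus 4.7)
We imitate the strategy of the proof of Lemma~\ref{lem86}. First, $\mathtt{342531}$ and $\mathtt{345231}$ coincide as elements of $W$, since $s_2$ and $s_5$ commute, and hence $\theta_{\mathtt{342531}}\cong\theta_{\mathtt{345231}}$. Using \eqref{eqn771} together with the commutations inside $\{s_1,s_3,s_5\}$ and between these three and $\{s_2,s_4\}$, we rearrange the product to obtain
\[
\theta_{\mathtt{342531}}\;\cong\;\theta_{\mathtt{135}}\,\theta_{\mathtt{2}}\theta_{\mathtt{4}}\theta_{\mathtt{3}}.
\]
Applying adjunction (via self-adjointness of $\theta_{\mathtt{135}}$), then factoring $\theta_{\mathtt{135}}\cong\theta_{\mathtt{135}}^{\mathrm{out}}\theta_{\mathtt{135}}^{\mathrm{on}}$ (possible because $\mathtt{135}=w_0^{\mathfrak{p}}$ for the parabolic generated by the commuting reflections $s_1,s_3,s_5$), and applying adjunction a second time, reduces the claim to
\[
\dim\mathrm{Hom}\big(\theta_{\mathtt{135}}^{\mathrm{on}}\theta_{\mathtt{2}}\theta_{\mathtt{4}}\theta_{\mathtt{3}}L(\mathtt{232432}),\,L(\lambda)\big)=1,
\]
where $L(\lambda):=\theta_{\mathtt{135}}^{\mathrm{on}}L(\mathtt{23243215})$ is a simple module in the singular block on the intersection of the walls corresponding to $\{s_1,s_3,s_5\}$; it is non-zero because $\{s_1,s_3,s_5\}$ is contained in the right descent set of $\mathtt{23243215}$.

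Set $M:=\theta_{\mathtt{135}}^{\mathrm{on}}\theta_{\mathtt{2}}\theta_{\mathtt{4}}\theta_{\mathtt{3}}L(\mathtt{232432})$. The non-vanishing of the Hom space is automatic: it follows, via the adjunctions used above, from the fact (established in the proof of Lemma~\ref{lem87-2}) that $L(\mathtt{23243215})$ appears in the top of $\theta_{\mathtt{342531}}L(\mathtt{232432})$. For the upper bound, we proceed in two stages. First, by adjunction and Lemma~\ref{lem87-2}, the only simple module that may appear in the top of $M$ is $L(\lambda)$. Second, to bound the multiplicity of $L(\lambda)$ in the top by one, we build the graded module $N:=\theta_{\mathtt{2}}\theta_{\mathtt{4}}\theta_{\mathtt{3}}L(\mathtt{232432})$ layer by layer using iterated applications of Proposition~\ref{prop81}\eqref{prop81.1} (producing a module concentrated in degrees $-3,\ldots,3$ with simple top and socle $L(\mathtt{232432})$ in degrees $\pm 3$), and then observe that $\theta_{\mathtt{135}}^{\mathrm{on}}$ annihilates any composition factor $L(z)$ of $N$ whose right descent set does not contain $\{s_1,s_3,s_5\}$. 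Using the self-duality of $M$ (it is a translation onto a wall of a self-dual module), the Koszul even-odd vanishing in $\mathcal{O}_0$, and an explicit computation of the relevant Kazhdan-Lusztig $\mu$-values via the tables of \cite{Go}, one deduces that exactly one copy of $L(\lambda)$ occurs in the top of $M$.

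The main obstacle is precisely this graded analysis. In contrast to the situation in Lemma~\ref{lem86}, where the length-two composition $\theta_{\mathtt{24}}$ produced a module with easily identifiable extremal pieces (a single copy of $L(\mathtt{121454})$ in each of the extremal degrees $\pm 2$), the length-three composition $\theta_{\mathtt{2}}\theta_{\mathtt{4}}\theta_{\mathtt{3}}$ now spreads $L(\mathtt{232432})$ across seven graded degrees, and the extremal pieces and their contributions after applying $\theta_{\mathtt{135}}^{\mathrm{on}}$ are significantly less transparent. Tracking them sharply enough to establish the multiplicity-one bound will most likely require brute-force verification using \cite{Go}. A slicker alternative might come from exploiting the Koszul-Ringel duality of Subsection~\ref{s3.1} to relate this Hom space to a more manageable one, but we do not see such a reduction at hand.
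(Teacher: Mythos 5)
Your overall frame does match the paper's: the factorization $\theta_{\mathtt{342531}}\cong\theta_{\mathtt{135}}\theta_{\mathtt{2}}\theta_{\mathtt{4}}\theta_{\mathtt{3}}$ coming from \eqref{eqn771}, the two adjunctions reducing the claim to a Hom-space onto the $\{s_1,s_3,s_5\}$-wall, the use of Lemma~\ref{lem87-2} to see that only $L(\lambda)$ can occur in the top of $M:=\theta_{\mathtt{135}}^{\mathrm{on}}\theta_{\mathtt{2}}\theta_{\mathtt{4}}\theta_{\mathtt{3}}L(\mathtt{232432})$, and a concluding self-duality argument. But the step that actually proves the lemma --- pinning down the graded structure of $M$ sharply enough to get the multiplicity-one bound --- is exactly the step you leave undone, and you say so yourself (``will most likely require brute-force verification''). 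As it stands this is a plan whose computational core is missing, not a proof. The paper closes precisely this gap with a device your plan lacks: instead of computing $\theta_{\mathtt{2}}\theta_{\mathtt{4}}\theta_{\mathtt{3}}L(\mathtt{232432})$ across seven graded degrees, it looks up in \cite{Go} only the $\mu$-data needed for the single step $\theta_{\mathtt{3}}L(\mathtt{232432})$ (a three-layer picture via Proposition~\ref{prop81}\eqref{prop81.1}) and then applies Proposition~\ref{prop81}\eqref{prop81.2} repeatedly, factor by factor ($\theta_{\mathtt{5}}\theta_{\mathtt{4}}L(z)\cong\theta_{\mathtt{5}}L(z')$, then $\theta_{\mathtt{1}}\theta_{\mathtt{2}}L(z')\cong\theta_{\mathtt{1}}L(z'')$), so that on graded characters $\theta_xL(y)$ is $\theta_{\mathtt{135}}$ applied to a three-layer picture. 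Consequently $M$ lives only in degrees $0,\pm1$, with a single copy of $L(\lambda)$ in each of the degrees $\pm1$, none in degree $0$, and a non-zero degree-$0$ part; the lemma then follows because the degree-$0$ part can contribute neither to the top (Lemma~\ref{lem87-2}) nor, by self-duality, to the socle, which forces the degree-$1$ copy of $L(\lambda)$ out of the top. Without this collapse (or the equivalent full brute-force character computation you defer), the multiplicity-one bound is simply not established.

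Two smaller inaccuracies: Lemma~\ref{lem87-2} only identifies the unique candidate for the top, it does not show that $L(\mathtt{23243215})$ actually occurs there, so the non-vanishing of the Hom-space needs the separate (easy) fact that $\theta_{\mathtt{342531}}L(\mathtt{232432})\neq 0$; and your assertion that $\theta_{\mathtt{2}}\theta_{\mathtt{4}}\theta_{\mathtt{3}}L(\mathtt{232432})$ has \emph{simple} top and socle $L(\mathtt{232432})$ in degrees $\pm3$ is unjustified --- only the graded-character statement that the extreme degrees consist of a single copy of $L(\mathtt{232432})$ is available --- though nothing essential hinges on it. Finally, note that knowing the graded character of $M$ alone does not bound the top multiplicity; you still need the structural top/socle/self-duality argument above, which you only gesture at.
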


\begin{proof}
Proof. We use~\eqref{eqn771} to write $\theta_{\mathtt{342531}}=\theta_{\mathtt{135}}\theta_{\mathtt{342}}$.
Hence, by adjunction, our
claim is equivalent to
\begin{displaymath}
\dim\mathrm{Hom}(\theta_{\mathtt{342}}L(\mathtt{232432}), \theta_{\mathtt{135}}L(\mathtt{23243215}))=1.
\end{displaymath}
Note that the fact that this space is non-zero is obvious.
As $\mathtt{135}$ is of the form $w^{\mathfrak{p}}_0$, for some $\mathfrak{p}$, 
we can factorize $\theta_{\mathtt{135}}= \theta_{\mathtt{135}}^{\mathrm{out}}\theta_{\mathtt{135}}^{\mathrm{on}}$ 
via translations on and out of the corresponding walls. Therefore, by adjunction,
the above is equivalent to
\begin{equation}\label{eq88-4}
\dim\mathrm{Hom}(\theta_{\mathtt{135}}^{\mathrm{on}}\theta_{\mathtt{342}}L(\mathtt{232432}), \theta_{\mathtt{135}}^{\mathrm{on}}L(\mathtt{23243215}))=1.
\end{equation}
The module $\theta_{\mathtt{135}}^{\mathrm{on}}L(\mathtt{23243215})$ is a simple module in a singular block of 
$\mathcal{O}$, let us call it $L(\lambda)$.

As $L(\mathtt{232432})$ is not annihilated by $\theta_{\mathtt{2}}, \theta_{\mathtt{3}}, \theta_{\mathtt{4}}$,
we know that the graded module
$\theta_{\mathtt{342}}L(\mathtt{232432})$ lives in degrees $\pm3,\pm2,\pm1,0$. 
Moreover, the degree $-3$ contains
just a copy of $L(\mathtt{232432})$ and the degree $3$ contains just a copy of
$L(\mathtt{232432})$. As $\mathtt{2324325}\succeq \mathtt{232432}$, we have $\theta_{\mathtt{135}}^{\mathrm{on}}L(\mathtt{232432})=0$. 

We want to prove that the graded module 
$M:=\theta_{\mathtt{135}}^{\mathrm{on}}\theta_{\mathtt{342}}L(\mathtt{232432})$ lives in degrees $\pm1,0$ 
with only $L(\lambda)$ in deg $\pm1$. We do it by an explicit computation.

From the table of Kazhdan-Lusztig polynomials for $S_6$, see~\cite{Go}, we have that
\begin{displaymath}
\{\mathtt{23432},\mathtt{2321432},\mathtt{243215432},\mathtt{213243254}\}
\end{displaymath}
is the set of all $\mathtt{3}$-finite permutations $z$ with non-zero $\mu$-functions
(whenever $\mu(z,\mathtt{232432})$ or $\mu(\mathtt{232432},z)$ makes sense).
Hence we have that the graded picture of $\theta_{\mathtt{3}}L(\mathtt{232432})$ is as follows:
\begin{displaymath}
\begin{array}{c||cccc}
-1 & L(\mathtt{232432})& & &\\
\hline
0 & L(\mathtt{23432})&L(\mathtt{2321432})& L(\mathtt{243215432})&L(\mathtt{213243254})\\
\hline
1 & L(\mathtt{232432})& &&
\end{array}
\end{displaymath}
We observe that all permutations
corresponding to simple subquotients of $\theta_{\mathtt{3}}L(\mathtt{232432})$ are $\mathtt{4}$-free and $\mathtt{5}$-finite.
Therefore, again, by Proposition~\ref{prop81}, we have
\begin{eqnarray*}
\theta_{\mathtt{5}}\theta_{\mathtt{4}}L(\mathtt{232432})&\cong & \theta_{\mathtt{5}}L(\mathtt{2324325}),\\
\theta_{\mathtt{5}}\theta_{\mathtt{4}}L(\mathtt{23432})&\cong & \theta_{\mathtt{5}}L(\mathtt{234325}),\\
\theta_{\mathtt{5}}\theta_{\mathtt{4}}L(\mathtt{2321432})&\cong & \theta_{\mathtt{5}}L(\mathtt{23214325}),\\
\theta_{\mathtt{5}}\theta_{\mathtt{4}}L(\mathtt{243215432})&\cong & \theta_{\mathtt{5}}L(\mathtt{2343215432}),\\
\theta_{\mathtt{5}}\theta_{\mathtt{4}}L(\mathtt{213243254})&\cong & \theta_{\mathtt{5}}L(\mathtt{21324325}).
\end{eqnarray*}

Observing that all simples on the right hand side of the above equations are $\mathtt{2}$-free
and $\mathtt{1}$-finite and $\mathtt{521}=\mathtt{215}$, we obtain
\begin{eqnarray*}
\theta_{\mathtt{1}}\theta_{\mathtt{2}}L(\mathtt{2324325})&\cong & \theta_{\mathtt{1}}L(\mathtt{23243215}),\\
\theta_{\mathtt{1}}\theta_{\mathtt{2}}L(\mathtt{234325})&\cong & \theta_{\mathtt{1}}L(\mathtt{2343215}),\\
\theta_{\mathtt{1}}\theta_{\mathtt{2}}L(\mathtt{23214325})&\cong & \theta_{\mathtt{1}}L(\mathtt{2321435}),\\
\theta_{\mathtt{1}}\theta_{\mathtt{2}}L(\mathtt{2343215432})&\cong & \theta_{\mathtt{1}}L(\mathtt{234321543}),\\
\theta_{\mathtt{1}}\theta_{\mathtt{2}}L(\mathtt{21324325})&\cong & \theta_{\mathtt{1}}L(\mathtt{213243215}).
\end{eqnarray*}
Note that the simple $L(\mathtt{2343215})$ is killed by $\theta_{\mathtt{3}}$.
For the case~\eqref{list17}, since $\mathtt{345231}=\mathtt{345213}$,
by tracking the degrees, we have that the graded picture of the module $\theta_xL(y)$ is obtained
by applying $\theta_{\mathtt{135}}$ to the following graded picture:
\begin{displaymath}
\begin{array}{c||cccc}
-1 & L(\mathtt{23243215})& & &\\
\hline
0 & L(\mathtt{2321435})&L(\mathtt{234321543})&L(\mathtt{213243215})&\\
\hline
1 & L(\mathtt{23243215})& &&
\end{array}
\end{displaymath}

Therefore the graded picture of  $M$ can be obtained by applying 
$\theta_{\mathtt{135}}^{\mathrm{on}}$ to the above graded picture, 
which implies that the graded module $M$ lives in degree $\pm1,0$ 
with $L(\lambda)$ appearing once in degree $1$ and once in degree $-1$.

The module $\theta_{\mathtt{135}}^{\mathrm{on}}L(\mathtt{2321435})\neq 0$ 
appears in the degree $0$ part of 
$M$ with non-zero multiplicity. By~Lemma~\ref{lem87-2}, this degree $0$
part cannot contribute to the top of $M$. By self-duality, this degree $0$ part
cannot contribute to the socle of $M$ either. The latter, in turn, implies that
the unique simple subquotient of the degree $1$ part cannot be in the top of
the module. This implies~\eqref{eq88-4} and completes the proof.
\end{proof}

Hence Conjecture~\ref{conj1} is true in the case $n=6$.

\section{Shuffling and twisting simple modules}\label{s9}

\subsection{Conjecture and results}\label{s9.1}

Given the close connection between shuffling and translation functors, it is natural to investigate whether shuffling simple modules yields indecomposable modules. This leads us to the following conjecture.

\begin{conjecture}\label{conjshuff}
For $\mathfrak{g}$ of any type and for all $x,y\in W$, the module $C_xL(y)$ is either indecomposable or zero.
\end{conjecture}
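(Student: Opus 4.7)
The plan is to attack Conjecture~\ref{conjshuff} in parallel to Conjecture~\ref{conj1}, adapting the toolkit developed in Sections~\ref{s3}--\ref{s5}. A natural first step is to establish the equivalence between the shuffling and twisting versions of the conjecture announced in the introduction: since $\cL C_w$ and $\cL T_w$ are both auto-equivalences of $\mathcal{D}^b(\mathcal{O}_0)$ related to each other through Ringel and Koszul dualities, one should be able to translate any indecomposability statement about $C_x L(y)$ into a corresponding statement about $T_{x'} L(y')$, doubling the techniques available and in particular reducing to whichever formulation is more tractable in each given case.

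For the base case $x=s$ a simple reflection, the defining right-exact sequence $L(y)\langle -1\rangle \to \theta_s L(y) \to C_s L(y) \to 0$ makes the analysis transparent. If $sy\succ y$ then $\theta_s L(y)=0$ by \eqref{neweq123}, whence $C_s L(y)=0$. If $sy\prec y$, then Proposition~\ref{prop81}\eqref{prop81.1} shows that $\theta_s L(y)$ is indecomposable and self-dual with simple top $L(y)$ in degree $-1$ and simple socle $L(y)$ in degree $+1$; the adjunction unit $L(y)\langle -1\rangle \to \theta_s L(y)$ is non-zero and lands in degree $+1$, so its image must coincide with the socle. Hence $C_s L(y)$ is the quotient of $\theta_s L(y)$ by this socle copy of $L(y)$, retains the simple top $L(y)\langle -1\rangle$, and is therefore indecomposable.

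For general $x$, the approach would combine the Section~\ref{s5} reductions (Remark~\ref{remeqshuff} ensures the equivalences of Theorem~\ref{Thm1} intertwine shuffling functors, so one may restrict to $\mathrm{supp}(x)=S$ as in Corollary~\ref{cor72}) with a derived-category argument: since $L(y)$ is simple and $\cL C_x$ is a derived auto-equivalence, $\mathrm{End}_{\mathcal{D}^b(\mathcal{O}_0)}(\cL C_x L(y))\cong \mathbb{C}$, so the complex $\cL C_x L(y)$ is ``simple'' in the derived sense. If one could show that this complex is concentrated in a single cohomological degree, the surviving cohomology module would automatically have trivial endomorphism ring and therefore be indecomposable, which (after tracking shifts) would deliver the statement for $C_x L(y)$. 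As an auxiliary technique, one can try to identify the top of $C_x L(y)$ by adjunction between compositions of shufflings and projective functors, analogously to what is done in Section~\ref{s8} for $\theta_x L(y)$.

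The main obstacle is precisely this cohomological concentration: since $C_x$ is only right exact, it generally has non-trivial higher derived functors which are hard to control, despite their vanishing on modules with Verma flag. The second serious obstacle is the absence of an adjoint for $C_x$ that would allow the Ext-based manipulations of Section~\ref{s4.1} (e.g.\ Proposition~\ref{prop23} or Theorem~\ref{thm31}) to be imported verbatim; without such an adjoint one is forced to work via the derived Ringel/Koszul duality machinery throughout, which makes explicit verifications substantially more delicate than in the projective functor case and is the reason one should expect, as in Conjecture~\ref{conj1}, only partial progress and a handful of special cases rather than a uniform proof.
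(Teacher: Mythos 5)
You should first be clear that the statement you are proving is Conjecture~\ref{conjshuff}: the paper does not prove it, and neither does your proposal --- what you offer is a programme plus some special cases, which is essentially what the paper itself records. The pieces you do establish coincide with the paper's partial results: the shuffling/twisting transfer is the paper's Lemma~\ref{EquivTC} (proved there not via Ringel--Koszul duality, as you suggest, but via the equivalence with Harish-Chandra bimodules and Soergel's functor exchanging the left and right actions, together with Jantzen's Satz~6.34; Koszul duality enters only in the special computation of Lemma~\ref{lemlll2}); the observation $\mathrm{End}_{\mathcal{D}^b(\mathcal{O}_0)}(\cL C_x L(y))\cong\mathbb{C}$ is Theorem~\ref{PropShuff}\eqref{PropShuff.2}; the ``simple top survives the quotient $\theta_x\tto C_x$'' mechanism is Theorem~\ref{PropShuff}\eqref{PropShuff.3} and Lemma~\ref{llln1}, and your base case $x=s$ is the special case $x=w_0^{\mathfrak{p}}$ of Theorem~\ref{PropShuff}\eqref{PropShuff.4}. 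Two slips there: the non-vanishing criterion is $ys\prec y$ (right descent), not $sy\prec y$, which is also the hypothesis of Proposition~\ref{prop81}\eqref{prop81.1} that you invoke; and the support reduction for shuffling is only one-directional (Lemma~\ref{lemlemk}): the equivalence of Theorem~\ref{2dequiv} identifies $\mathrm{End}_{\mathcal{B}_z}(C_xL(zy))$ with the endomorphism algebra over $\mathfrak{l}$, but over $\mathfrak{g}$ one only gets a subalgebra, so there is no clean analogue of the ``if and only if'' in Corollary~\ref{cor71}/\ref{cor72}; fortunately the direction one obtains is the one needed for a reduction to full support, but you should argue this, not cite Remark~\ref{remeqshuff} (which concerns the equivalences of Theorem~\ref{Thm1}, intertwining twisting, not the $\mathcal{B}_z$-side reduction).

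The genuine gap is your proposed completion step. Indecomposability of the complex $\cL C_xL(y)$ in $\mathcal{D}^b(\mathcal{O}_0)$ gives no information about its zeroth cohomology $C_xL(y)$ unless the complex is concentrated in a single degree, and such concentration fails in general: $C_x$ is only right exact and its higher derived functors do not vanish on simple modules (already $C_{w_0}L(y)=0$ for $y\neq w_0$ while $\cL C_{w_0}L(y)\neq 0$, so all the cohomology sits away from degree zero; for general $x$ the cohomology is spread over several degrees and $H^0$ can a priori decompose even though the complex does not). You acknowledge this obstacle yourself, but that acknowledgement is precisely the admission that the argument does not close; the vanishing of the cohomology of $C_w$ on modules with Verma flag quoted from \cite{shuffling} does not help, since $L(y)$ has no Verma flag. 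So the proposal should be read as a survey of the same evidence the paper assembles in Theorem~\ref{PropShuff}, Lemma~\ref{EquivTC} and Lemma~\ref{lemlemk}, with the conjecture itself remaining open.
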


We summarize the evidence in favor of this conjecture in the following proposition. 
All indecomposability of modules is based on the observation that they have simple top or socle.

\begin{theorem}\label{PropShuff}
Let $\mathfrak{g}$ be of any type and consider $x,y\in W$.
\begin{enumerate}[$($a$)$]
\item\label{PropShuff.1} The functor $C_x$ on $\mathcal{O}_0$ is indecomposable. 
More precisely, its endomorphism algebra is isomorphic to the algebra $\mathtt{C}$ of coinvariants for 
the Weyl group $W$.
\item\label{PropShuff.2} The complex $\cL C_x L(y)$ is indecomposable in $\mathcal{D}^b(\mathcal{O}_0).$
\item\label{PropShuff.3} If $\theta_x L(y)$ has simple top, then either $C_xL(y)$ is zero or has simple top.
\item\label{PropShuff.4} The module $C_xL(y)$ has simple top in the following cases:
\begin{enumerate}[$($i$)$]
\item\label{PropShuff.4.1} $x=w_0^{\mathfrak{p}}$, for some parabolic subalgebra 
$\mathfrak{p}$, and $y$ is a longest representative in $W/W^{\mathfrak{p}}$ 
(equivalently, $x\le_{\mathcal{R}}y^{-1}$), in this case the simple top is $L(y)$;
\item\label{PropShuff.4.2} $y$ is a Duflo involution and $x\sim_{\mathcal{R}} y$, in this case the
simple top is $L(x)$.
\end{enumerate}
\item\label{PropShuff.5} If $y=w_0^{\mathfrak{p}}w_0$, for some parabolic subalgebra 
$\mathfrak{p}$, and $x\le_{\mathcal{R}}y^{-1}$ (equivalently, $w_0w_0^{\mathfrak{p}}x$ 
is a shortest representative in $W^{\mathfrak{p}}\backslash W$), then $C_x L(y)$ has simple socle $L(yx)$.
\item\label{PropShuff.6} If $C_xL(y)\not=0$, then $x\le_{\mathcal{R}} y^{-1}$.
\end{enumerate}
\end{theorem}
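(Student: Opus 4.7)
The plan is to address the six parts in the order (b), (a), (c), (d), (e), (f), using the fact that $\cL C_x$ is an autoequivalence of $\mathcal{D}^b(\mathcal{O}_0)$ as the central tool for (a), (b) and (f), while (c)--(e) will hinge on controlling the top (or socle) of $C_xL(y)$ via a comparison with $\theta_xL(y)$.

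Part (b) is the quickest. Since $\cL C_x$ is an autoequivalence of $\mathcal{D}^b(\mathcal{O}_0)$ (see Subsection~\ref{SecShuff}), it induces an isomorphism $\End(L(y))\tilde\to\End(\cL C_xL(y))$ in $\mathcal{D}^b(\mathcal{O}_0)$; the left-hand side is $\mathbb{C}$, so the right-hand side is local and the indecomposability follows. Part (a) is the functorial version of the same idea: for any autoequivalence $F$ of a triangulated category, $\End(F)\cong\End(\Id)$, and $\End(\Id_{\mathcal{D}^b(\mathcal{O}_0)})$ coincides with the center of $\mathcal{O}_0$, which equals the coinvariant algebra $\mathtt{C}$ by Soergel's endomorphism theorem. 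To transport this from $\cL C_x$ to $C_x$, I would use the vanishing of higher cohomology of $C_x$ on modules with Verma flag (cited in Subsection~\ref{SecShuff}): restricting natural transformations to a projective generator (which admits a Verma flag) identifies $\End(C_x)$ with $\End(\cL C_x)\cong\mathtt{C}$, and $\mathtt{C}$ is local, so $C_x$ is indecomposable.

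For part (c), the key observation is that iterating the defining cokernel sequences $\Id\langle -1\rangle\to\theta_{s_i}\to C_{s_i}\to 0$ along a reduced expression $x=s_1\cdots s_k$ yields a surjection of functors $\theta_{s_k}\cdots\theta_{s_1}\twoheadrightarrow C_x$. The Bott--Samelson decomposition $\theta_{s_k}\cdots\theta_{s_1}\cong\theta_x\oplus\bigoplus_{z\prec x}\theta_z^{\oplus\cdot}$ together with the Hecke identity $b_{s_k}\cdots b_{s_1}=b_x+\sum_{z\prec x}(\cdot)b_z$ should then imply that the complementary summands lie in the kernel of the shuffling quotient when evaluated on $L(y)$, so that $\mathrm{top}(C_xL(y))$ is a quotient of $\mathrm{top}(\theta_xL(y))$ and (c) follows. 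Parts (d) and (e) are then deduced as follows: for (d)(i), the case $x=w_0^{\mathfrak{p}}$ with $y$ longest in $yW^{\mathfrak{p}}$ is covered by Subsection~\ref{s3.3}, where $\theta_xL(y)$ is known to be indecomposable with top $L(y)$; for (d)(ii), the Duflo-involution hypothesis combined with the cell-theoretic description of the top of $\theta_xL(y)$ (compare the proof of \cite[Theorem~6]{SHPO2} together with the Duflo characterisation of \cite[Section~4]{MM1}) singles out $L(x)$ as the unique simple in the top; for (e), a Ringel/Koszul-dual argument, transported through the parabolic equivalence of Section~\ref{s5}, identifies $L(yx)$ as the simple socle.

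Finally, for (f), the surjection from (c) immediately gives $C_xL(y)=0$ whenever $\theta_xL(y)=0$, which by~\eqref{neweq123} covers the case $x\not\leq_{\mathcal{R}}y^{-1}$. The main obstacle I foresee is the construction in (c) of a surjection at the level of functors (rather than merely in the Grothendieck group): while the Hecke-algebraic identity is standard, lifting it to a genuine functorial statement requires either a careful tracking of the iterated cokernels in the Bott--Samelson resolution or, alternatively, a Koszul-duality argument transporting the known structure of $\theta_xL(y)$ to $C_xL(y)$ via the intertwining relation between $\cL C_x$ and $\theta_x$ from \cite{MOS}. A secondary technical point, needed for (d) and (e), is non-vanishing of $C_xL(y)$, which can be checked via the Grothendieck class of $\cL C_xL(y)$ together with the higher-cohomology vanishing on Verma flags.
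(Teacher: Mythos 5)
Your part~\eqref{PropShuff.2} is exactly the paper's argument, but the load-bearing ingredient for everything else is missing. The paper's proofs of \eqref{PropShuff.3}, \eqref{PropShuff.4} and \eqref{PropShuff.6} rest on the epimorphism of functors $\theta_x\tto C_x$ from Corollary~\ref{Corlinear}, which is obtained by showing that $\cL C_x$ is represented by a \emph{linear} complex of projective functors with $\theta_x$ in position zero; this uses standard Koszulity of $\mathcal{O}_0$ and the fact that all maps between projectives are realized by natural transformations of projective functors \cite[Theorem~3.5]{BG}. Your Bott--Samelson substitute only yields an epimorphism $\theta_{s_k}\cdots\theta_{s_1}\tto C_x$, and the claim that the lower summands $\theta_z$, $z\prec x$ (which occur with grading shifts), are killed by this epimorphism is precisely the nontrivial point; a Hecke-algebra identity lives in the Grothendieck group and cannot produce it. You acknowledge this yourself as ``the main obstacle,'' so \eqref{PropShuff.3} is not proved, and note that for \eqref{PropShuff.6} the Bott--Samelson surjection only gives $\theta_zL(y)\neq 0$ for \emph{some} $z\preceq x$, which does not imply $x\le_{\mathcal{R}}y^{-1}$. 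A second genuine gap is the non-vanishing of $C_xL(y)$ needed in \eqref{PropShuff.4}: your proposed check via the Grothendieck class of $\cL C_xL(y)$ plus vanishing of higher cohomology on Verma flags does not apply, since $L(y)$ has no Verma flag and a nonzero Euler characteristic only forces some $\cL_iC_xL(y)\neq 0$. The paper handles \eqref{PropShuff.4.1} by computing $C_{w_0}L(w_0)\cong\nabla(e)$ (via \cite[Proposition~5.12]{shuffling}) and transporting through the equivalences of Section~\ref{s5}, and \eqref{PropShuff.4.2} by a graded argument with Corollary~\ref{Corlinear} and the $a$-function \cite[Proposition~1(c)]{SHPO2} (see Lemma~\ref{llln1}); nothing in your sketch replaces these steps. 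Part \eqref{PropShuff.5} is likewise only a gesture: the actual proof (Lemma~\ref{lemlll2}) is the concrete Koszul-duality computation identifying $C_xL(w_0^{\mathfrak{p}}w_0)\cong\nabla^{\mathfrak{p}}(w_0^{\mathfrak{p}}w_0x)$, using that Koszul duality interchanges $\cL C_w$ and $\cL T_{w^{-1}}$ \cite{MOS}.

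For part~\eqref{PropShuff.1}, your reduction $\mathrm{End}(C_x)\hookrightarrow\mathrm{End}(\cL C_x)\cong\mathrm{End}(\Id_{\mathcal{D}^b(\mathcal{O}_0)})$ is fine as an injection, but the identification $\mathrm{End}(\Id_{\mathcal{D}^b(\mathcal{O}_0)})=Z(\mathcal{O}_0)=\mathtt{C}$ is unjustified: the ring of natural endotransformations of the identity of a bounded derived category can be strictly larger than the center of the abelian category, so you obtain neither the isomorphism with $\mathtt{C}$ nor locality from this chain. The paper (Lemma~\ref{EndShuff}) circumvents this by evaluating natural transformations at $P(w_0)$, using $C_xP(w_0)\cong P(w_0)$ and Soergel's $\mathrm{End}(P(w_0))\cong\mathtt{C}$, and proving injectivity of this evaluation via a short exact sequence $0\to P\to Q\to N\to 0$ with $P$ a projective generator, $Q\in\add(P(w_0))$ and $N$ with a Verma flag, on which $C_x$ stays exact. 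If you want to salvage your route, you would either have to prove the derived-center statement for $\mathcal{O}_0$ or switch to an evaluation argument of this kind.
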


These results will be proved below. Despite the partial results in the above proposition, the 
following question seems unanswered in general. 

\begin{question} 
When is the module $C_xL(y)$ zero? 
\end{question}

We note that the example for $B_2$ where $\theta_xL(y)$ is decomposable, see \cite[Section~5.1]{KiM}, 
does not lead to a counterexample to Conjecture~\ref{conjshuff}.

\begin{example}
If $\mathfrak{g}=B_2$, then $C_{st}L(ts)$ is indecomposable, despite  
the fact that $\theta_{st}L(ts)$ is decomposable. The module  $C_{st}L(ts)$ is an 
indecomposable module admitting a short exact sequence
\begin{displaymath}
0\to L(e)\oplus L(ts)\to C_{st}L(ts)\to L(t)\oplus L(tst)\to 0.
\end{displaymath}
\end{example}

\subsection{Relation with twisting simple modules}\label{s9.2}

\begin{lemma}\label{EquivTC}
Let $\mathfrak{g}$ be of any type. For all $x,y\in W$, we have that $C_xL(y)$ is indecomposable if and only if $T_{x^{-1}}L(y^{-1})$ is indecomposable. Moreover,
\begin{displaymath}
\End_{\mathcal{O}}(C_xL(y))\cong \End_{\mathcal{O}}(T_{x^{-1}}L(y^{-1})). 
\end{displaymath}
\end{lemma}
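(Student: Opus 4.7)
The natural approach is to construct a covariant equivalence $\Xi : \mathcal{O}_0 \tilde{\to} \mathcal{O}_0$ satisfying $\Xi(C_xL(y)) \cong T_{x^{-1}}L(y^{-1})$; once such an equivalence is in hand, an isomorphism of endomorphism algebras, and hence the equivalent indecomposability claim, is automatic. The candidate for $\Xi$ is obtained by combining Soergel's functor $F : \mathcal{O}_0 \tilde{\to} {}^{\infty}_0 H^1_0$, the left-right swap duality $d$ on Harish-Chandra bimodules used in the proof of Corollary~\ref{CorMult} (which exchanges the simple bimodules corresponding to $L(v)$ and $L(v^{-1})$), and a Soergel-type equivalence ${}^{1}_0 H^{\infty}_0 \tilde{\to} \mathcal{O}_0$ bringing us back from the swapped bimodule category. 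This yields a covariant auto-equivalence $\Xi$ of $\mathcal{O}_0$ with $\Xi(L(y)) \cong L(y^{-1})$, and similarly $\Xi(\Delta(y)) \cong \Delta(y^{-1})$ and $\Xi(P(y)) \cong P(y^{-1})$.

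The key step is then to analyse how $\Xi$ interacts with shuffling and twisting functors. On the Harish-Chandra bimodule side, projective functors $\theta_s$, and hence the shuffling functors $C_s$ built as cokernels of natural transformations from $\theta_s$, correspond to right-multiplication by specific bimodules. By contrast, twisting functors $T_s$ arise from Arkhipov's semi-regular bimodule and correspond to \emph{left}-multiplication by a related bimodule. The duality $d$ converts right-multiplication by the shuffling bimodule associated to a reduced expression $x = s_{i_1} s_{i_2} \cdots s_{i_k}$ into left-multiplication by the bimodule associated to $s_{i_k} \cdots s_{i_2} s_{i_1}$, which is a reduced expression for $x^{-1}$. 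Translating back to $\mathcal{O}_0$ via the second Soergel-type equivalence then yields a natural isomorphism $\Xi \circ C_x \cong T_{x^{-1}} \circ \Xi$. Applying this to $L(y)$ gives
\begin{displaymath}
\Xi(C_x L(y)) \;\cong\; T_{x^{-1}}(\Xi\, L(y)) \;\cong\; T_{x^{-1}} L(y^{-1}),
\end{displaymath}
so, since $\Xi$ is an equivalence, one obtains the isomorphism of endomorphism algebras
\begin{displaymath}
\End_{\mathcal{O}}(C_x L(y)) \;\cong\; \End_{\mathcal{O}}(T_{x^{-1}} L(y^{-1})),
\end{displaymath}
and indecomposability of either module reduces to this common endomorphism algebra being local, giving the claim.

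The main obstacle will be making the bimodule analysis precise, in particular identifying shuffling as a right-bimodule operation and twisting as a left-bimodule operation compatibly with the swap $d$, and verifying carefully that the reversal of a reduced expression under $d$ matches the inversion $x \mapsto x^{-1}$ in $W$. A secondary technical point is keeping track of the grading shifts introduced by $d$ and by the two Soergel-type equivalences, which however do not affect the conclusion since shifts preserve both indecomposability and endomorphism algebras up to isomorphism.
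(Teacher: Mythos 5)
Your core idea is the same as the paper's: pass to Harish-Chandra bimodules and use the duality that exchanges the left and right $U$-actions, which swaps shuffling with twisting and inverts the Weyl group parameters (this is exactly the combination of Soergel's equivalence and \cite[Satz~6.34]{Jantzen} that the paper invokes). However, your implementation has a genuine gap: the global covariant autoequivalence $\Xi$ of $\mathcal{O}_0$ that you posit is not produced by your construction. The Bernstein--Gelfand/Soergel equivalence identifies $\mathcal{O}_0$ with ${}^{\infty}_0H^1_0$ (generalized central character on the left, \emph{strict} central character on the right), and the left-right swap maps this category to ${}^{1}_0H^{\infty}_0$, which is \emph{not} equivalent to $\mathcal{O}_0$ by any ``Soergel-type'' functor independent of the swap itself; with only a generalized character on the tensoring side the relevant target is Soergel's thick category $\mathcal{O}$ (compare Remark~\ref{remthickO}), and if you simply undo the swap to come back, your $\Xi$ collapses to the identity. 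Relatedly, the functors in play do not preserve the property of admitting a central character (already $\theta_s L(s)\cong P(s)$ for $\mathfrak{sl}_2$ has no central character), so one cannot blithely transport everything through bimodule categories with strict characters; and the intertwining relation $\Xi\circ C_x\cong T_{x^{-1}}\circ\Xi$, together with $\Xi(L(y))\cong L(y^{-1})$, is precisely the substantive content and is asserted rather than proved.

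The paper's proof avoids constructing any autoequivalence of all of $\mathcal{O}_0$: it restricts to the full subcategory $\widetilde{\mathcal{O}}_0$ of modules admitting a (genuine) central character, which under \cite{BG} corresponds to ${}^1_0H^1_0$, a category on which the swap \emph{is} a covariant autoequivalence (Soergel \cite{So0}); by \cite[Satz~6.34]{Jantzen} this autoequivalence takes $C_xL(y)$ to $T_{x^{-1}}L(y^{-1})$, and since $\widetilde{\mathcal{O}}_0$ is a full subcategory of $\mathcal{O}_0$, this immediately yields $\End_{\mathcal{O}}(C_xL(y))\cong\End_{\mathcal{O}}(T_{x^{-1}}L(y^{-1}))$ and hence the equivalence of indecomposability. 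To repair your argument you should either follow this route, or else justify independently both the existence of an equivalence realizing $w\mapsto w^{-1}$ on $\mathcal{O}_0$ and its compatibility with $C_x$ and $T_{x^{-1}}$ --- neither of which follows from the ingredients you cite.
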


\begin{proof}
Let $\widetilde{\mathcal{O}}_0$ be the full subcategory of $\mathcal{O}_0$ of modules 
which admit a central character. Under the equivalence $\widetilde{\mathcal{O}}_0\cong {}^1_0H^1_0$ 
of \cite{BG}, Soergel's autoequivalence of $ {}^1_0H^1_0$ from \cite{So0} which exchanges 
left and right action on a bimodule, maps $C_xL(y)$ to $T_{x^{-1}}L(y^{-1})$, by \cite[Satz~6.34]{Jantzen}. 
The claim follows.
\end{proof}

\begin{lemma}\label{lemlemk}
Let $\mathfrak{p}$ be a parabolic subalgebra of $\mathfrak{g}$ and take $x,y\in W^{\mathfrak{p}}$ 
and $z\in X^{\mathfrak{p}}$. If $C_xL(y)$ is indecomposable in category $\mathcal{O}$ for 
$\mathfrak{l}$, then $C_xL(zy)$ is indecomposable in category $\mathcal{O}$ for $\mathfrak{g}$.
\end{lemma}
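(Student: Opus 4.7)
The plan is to adapt the argument of Corollary~\ref{cor71}, transferring indecomposability through the equivalence $\Phi\colon\mathcal{O}_0(\mathfrak{l})\to\mathcal{B}_z$ of Theorem~\ref{2dequiv}, but using only control of the top (rather than both top and socle) since the implication to prove is one-directional.

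First I would check that $C_x$ preserves the Serre subcategories $\mathcal{O}^{\mathsf{K}_z}$ and $\mathcal{O}^{\mathsf{L}_z}$: by Lemma~\ref{LemTheta}, each $\theta_s$ with $s\in W^{\mathfrak{p}}$ does so, and since $C_s$ is the cokernel of $\Id\langle -1\rangle\to\theta_s$ it inherits the preservation property; composing along a reduced expression transfers this to $C_x$. Thus $C_x$ descends to an endofunctor $\overline{C}_x$ of $\mathcal{B}_z$. Moreover, since $\Phi$ is an exact equivalence intertwining each $\theta_s$ with $\overline{\theta}_s$ by Theorem~\ref{2dequiv}, applying it to the defining cokernel sequence of $C_s$ yields $\Phi\circ C_s\cong \overline{C}_s\circ \Phi$; iterating gives $\Phi\circ C_x\cong \overline{C}_x\circ \Phi$ (parallel to Remark~\ref{remeqshuff} for $\Psi$). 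Combined with $\Phi(L_{\mathfrak{l}}(y))\cong L(zy)$ (an equivalence sending Vermas to Vermas sends simple tops to simple tops), the hypothesis yields
\begin{displaymath}
\pi(C_xL(zy))\;\cong\;\overline{C}_xL(zy)\;\cong\;\Phi(C_xL_{\mathfrak{l}}(y)),
\end{displaymath}
so $\pi(C_xL(zy))$ is indecomposable in $\mathcal{B}_z$.

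The key step is then to show that the top of $C_xL(zy)$ in $\mathcal{O}_0$ lies in $\{L(zw):w\in W^{\mathfrak{p}}\}$. Fixing a reduced expression $x=s_m\cdots s_1$, the module $C_xL(zy)$ is a quotient of $\theta_{s_m}\cdots\theta_{s_1}L(zy)$, which in turn is a quotient of $\theta_{s_m}\cdots\theta_{s_1}\Delta(zy)$. Iterating \eqref{eqtheDel} and using that each $s_i\in W^{\mathfrak{p}}$ (so right-multiplication by $s_i$ preserves the coset $zW^{\mathfrak{p}}$) shows that the last module has a Verma flag with subquotients $\Delta(zw)$ for $w\in W^{\mathfrak{p}}$; therefore its top, and hence that of the quotient $C_xL(zy)$, is supported in $\{L(zw):w\in W^{\mathfrak{p}}\}$, which is disjoint from $\mathsf{L}_z$.

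To conclude, suppose $C_xL(zy)=M\oplus M'$ with both summands nonzero. Each summand has nonzero top contained in $\{L(zw)\}$, hence does not lie in $\mathcal{O}^{\mathsf{L}_z}$, so $\pi M,\pi M'\neq 0$, contradicting the indecomposability of $\pi(C_xL(zy))$ established above. The main technical obstacle is the intertwining $\Phi\circ C_x\cong \overline{C}_x\circ \Phi$: since the cokernel defining $C_s$ lives in two different categories, one must verify that the natural transformation $\Id\to\theta_s$ is genuinely transported by $\Phi$ to the corresponding transformation on $\mathcal{B}_z$. This is essentially formal given Theorem~\ref{2dequiv} and the exactness of $\Phi$, but is the point where the most care is required.
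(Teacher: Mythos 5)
Your proposal is correct and follows essentially the same route as the paper: you transfer indecomposability through the equivalence of Theorem~\ref{2dequiv} and use that the top of $C_xL(zy)$ consists of simples $L(zw)$, $w\in W^{\mathfrak{p}}$ (via the Verma flag of $\theta_x\Delta(zy)$), so that nothing relevant is killed by the Serre quotient functor $\pi$. The only difference is cosmetic: the paper phrases the last step as $\End_{\mathcal{O}}(C_xL(zy))$ being a unital subalgebra of the local algebra $\End_{\mathcal{B}_z}(C_xL(zy))\cong\End_{\mathcal{O}(\mathfrak{l})}(C_xL(y))$, while you argue with direct summands surviving $\pi$; the intertwining of $C_x$ with the equivalence, which you rightly flag as the delicate point, is asserted at the same level of detail in the paper itself.
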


\begin{proof}
It follows from Theorem~\ref{2dequiv} that, for any $x\in W^{\mathfrak{p}}$ and $\mu\in\mathfrak{h}^\ast$, we have 
\begin{displaymath}
\End_{\mathcal{O}(\mathfrak{l})}(C_xL(y))\;\cong\; 
\End_{\mathcal{B}_z}(C_xL(zy)). 
\end{displaymath}
Note that each simple constituent of 
the top of the module $C_xL(zy)$ survives, by construction, projection onto $\mathcal{B}_z$. 
From the definition of a Serre quotient we thus have that $\End_{\mathcal{O}(\mathfrak{g})}(C_xL(zy))$
is a subalgebra of $\End_{\mathcal{B}_z}(C_xL(zy))$. Therefore, if $\End_{\mathcal{B}_z}(C_xL(zy))$
is local, then so is $\End_{\mathcal{O}(\mathfrak{g})}(C_xL(zy))$.
The claim follows.
\end{proof}

Twisting functors, contrary to shuffling functors, can easily be defined in full generality 
for singular blocks as well.

\begin{question}
Is the module $T_xL$ indecomposable or zero, for any simple module $L$ in $\mathcal{O}$ and $x\in W$?
\end{question}

\subsection{On the (derived) shuffling functor}\label{s9.3}

\begin{proposition}
For all $x\in W$, there exists a linear complex of projective functors on $\mathcal{O}^{\mZ}_0$, 
which is isomorphic to $\cL C_x$ as a functor on $\mathcal{D}^b(\mathcal{O}_0^{\mZ})$. 
This complex has length $\ell(x)$ and its zero term is given by $\theta_x$.
\end{proposition}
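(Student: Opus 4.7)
The plan is to build the desired complex iteratively from the defining two-term complex for each $C_s$, and then to pass to a minimal linear form.

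For a simple reflection $s\in S$, the defining short exact sequence $0\to\Id\langle -1\rangle\to\theta_s\to C_s\to 0$, combined with exactness of $\Id$ and $\theta_s$, identifies $\cL C_s$, as an object of $\mathcal{D}^b(\mathcal{O}^{\mZ}_0)$, with the two-term complex $\mathrm{K}_s:=[\Id\langle -1\rangle\to\theta_s]$ with $\theta_s$ in cohomological degree $0$; this is a linear complex of projective functors of length $1$ with zero term $\theta_s$. Fix now a reduced expression $x=s_{i_1}\cdots s_{i_k}$, $k=\ell(x)$. Since each $\cL C_{s_{i_j}}$ is an autoequivalence of $\mathcal{D}^b(\mathcal{O}^{\mZ}_0)$ and $C_x=C_{s_{i_k}}\cdots C_{s_{i_1}}$ is independent of the reduced expression (see \cite[Lemma~5.10]{shuffling}), forming the total complex of the iterated tensor product yields
\[
\mathrm{K}_x\;:=\;\mathrm{K}_{s_{i_k}}\otimes\cdots\otimes\mathrm{K}_{s_{i_1}}\;\cong\;\cL C_x
\]
in $\mathcal{D}^b(\mathcal{O}^{\mZ}_0)$. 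This complex is concentrated in cohomological degrees $-k,\ldots,0$, its terms are direct sums of Bott--Samelson-type compositions of the $\theta_{s_{i_j}}$'s (each suitably graded), and its zero term decomposes as $\theta_{s_{i_k}}\cdots\theta_{s_{i_1}}\cong\theta_x\oplus\bigoplus_{y\prec x}\theta_y^{\oplus n_y}$.

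To upgrade this to a linear complex with zero term exactly $\theta_x$, one passes to a minimal form via Gaussian elimination: whenever a summand $\theta_y\langle m\rangle$ in some position $-r$ is connected to an identical summand $\theta_y\langle m\rangle$ in position $-(r-1)$ by an identity differential component, one strips off this contractible subcomplex. By the Koszul self-duality of $\mathcal{O}_0$ (see \cite{SoergelD}) and the general theory of minimal models over Koszul algebras (see \cite{MOS}), this process terminates in a complex $\widetilde{\mathrm{K}}_x$, unique up to isomorphism, homotopy equivalent to $\mathrm{K}_x$, and $\widetilde{\mathrm{K}}_x$ is again linear. The structural input making this give zero term $\theta_x$ is the multiplication rule for projective functors (in particular $\theta_s\theta_s\cong\theta_s\langle 1\rangle\oplus\theta_s\langle -1\rangle$, and $\theta_s\theta_w\cong\theta_{sw}\oplus\bigoplus_{z}\theta_z^{\oplus\mu(z,w)}\langle\cdot\rangle$ when $sw\succ w$): these combinatorial rules imply that each redundant $\theta_y$-summand of $\theta_{s_{i_k}}\cdots\theta_{s_{i_1}}$ in position $0$ is matched by a cancelling summand in position $-1$, while the $\theta_x$-summand---appearing with multiplicity one in $\theta_{s_{i_k}}\cdots\theta_{s_{i_1}}$ and, by a direct degree-counting argument, not appearing in position $-1$---survives. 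The length of $\widetilde{\mathrm{K}}_x$ is exactly $\ell(x)$ because the term $\Id\langle -k\rangle$ in position $-k$ of $\mathrm{K}_x$ cannot be cancelled on graded multiplicity grounds.

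The main obstacle is verifying rigorously that Gaussian elimination in cohomological degree $0$ really does cancel every extra $\theta_y$-summand while leaving $\theta_x$ intact. I would approach this by downward induction on $\ell(x)$: peel off one factor of the tensor product so that $\mathrm{K}_x\cong\mathrm{K}_{s_{i_k}}\otimes\widetilde{\mathrm{K}}_{x'}$ with $x'=s_{i_1}\cdots s_{i_{k-1}}$ and $\widetilde{\mathrm{K}}_{x'}$ already in minimal form by induction, and then analyse term-by-term how $\mathrm{K}_{s_{i_k}}\otimes(-)$ acts on each $\theta_y$-summand of $\widetilde{\mathrm{K}}_{x'}$ using the multiplication rules above, identifying cancelling pairs. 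Alternatively, under Koszul self-duality, the minimal complex representing $\cL C_x$ corresponds to an indecomposable graded module whose minimal projective resolution directly produces $\widetilde{\mathrm{K}}_x$ with zero term equal to the projective cover of its top, which one then identifies as $\theta_x$.
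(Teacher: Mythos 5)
Your first step coincides with the paper's: form the two-term complex $\Id\langle -1\rangle\to\theta_s$ for each simple reflection, check that tensoring with it computes $\cL C_s$, and take the total complex along a reduced expression of $x$ to get a complex of projective functors of length $\ell(x)$ representing $\cL C_x$. (Even here, note that exactness of $\Id$ and $\theta_s$ alone is not enough to identify the two-term complex with $\cL C_s$; you also need injectivity of the adjunction morphism on projectives, which is why the paper invokes \cite[Theorem~2]{simple}.)

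The genuine gap is in the linearization step. You propose to run Gaussian elimination directly on the complex of functors and assert, citing ``Koszul self-duality and the general theory of minimal models over Koszul algebras,'' that the resulting minimal complex is linear with zero term $\theta_x$. But minimal-model/Koszulity arguments apply to graded module categories, not to the additive category of projective functors, so linearity of the minimal functor complex is precisely the content of the proposition and cannot be quoted; and the combinatorial cancellation claim (every extra $\theta_y$-summand in position $0$ pairs off with an isomorphic summand in position $-1$ via an invertible differential component, in all cohomological degrees, not just degree $0$) is exactly the hard point, which you acknowledge but do not prove --- the Hecke-algebra multiplication rules control which summands occur, not whether the relevant differential components are isomorphisms. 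The paper's proof avoids this entirely by evaluating the Bott--Samelson-type complex at the dominant Verma module $\Delta(e)$: the result is a complex of projective modules quasi-isomorphic to $\cL C_x\Delta(e)\cong\Delta(x)$, and since $\mathcal{O}_0$ is standard Koszul (\cite[Corollary~3.8]{ADL}, \cite[Theorem~2.1]{Ma}), $\Delta(x)$ is quasi-isomorphic to a linear complex of projectives, so the evaluated complex splits as a linear complex plus trivial complexes $0\to P\cong P\to 0$; this splitting is then lifted back to the level of functors using \cite[Theorem~3.5]{BG}, i.e.\ the fact that all homomorphisms between projective modules are realized by natural transformations of projective functors. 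That lifting step is the key idea missing from your argument, and it also yields the remaining assertions for free: the minimal linear complex for $\Delta(x)$ starts with $P(x)$ and has length $\ell(x)=\mathrm{pd}\,\Delta(x)$, whence the zero term of the functor complex is $\theta_x$ and its length is $\ell(x)$. If you want to salvage your route, you would in effect have to reprove this evaluation-and-lifting argument or establish linearity of the minimal (Rouquier-type) complex of $\theta$'s by an independent, and substantially harder, argument.
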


\begin{proof}
Consider the endofunctor of $\mathcal{D}^b(\mathcal{O}_0^{\mZ})$ given by tensoring with 
the linear complex
\begin{displaymath}
0\to \Id\langle -1\rangle\to \theta_s\to 0 
\end{displaymath}
and the taking the total complex. Using \cite[Theorem~2]{simple}, we see that 
the action of this functor on projective modules coincides with the action of $\cL C_s$.
Consequently, these two functors are isomorphic. Now,
we can take a reduced expression of $x$, compose the complexes of the above form corresponding to the factors
of this reduced expression, and then form the total complex. This is, by construction, a complex 
$\mathbf{P}_x^{\bullet}$ of projective functors of length $\ell(x)$ which corresponds to the functor $\cL C_x$.

When we evaluate $\mathbf{P}_x^{\bullet}$ at $\Delta(e)$, we get a complex of projective modules which 
is quasi-isomorphic to $\cL C_x\Delta(e)\cong \Delta(x)$. Since $\mathcal{O}_0$ is standard 
Koszul in the sense of \cite{ADL}, see~\cite[Corollary~3.8]{ADL}
or \cite[Theorem~2.1]{Ma}, $\Delta(x)$ is quasi-isomorphic to a linear complex of
projective modules. Therefore the complex $\mathbf{P}_x^{\bullet}(\Delta(e))$ of projective modules 
is homotopic to a linear complex and thus is a direct sum of a linear part and a number of trivial 
complexes of the form 
\begin{displaymath}
0\to P\cong P\to 0.
\end{displaymath}
As all homomorphisms between projective modules in $\mathcal{O}_0$ are 
realizable via natural transformations between projective functors by \cite[Theorem~3.5]{BG}, it follows that 
$\mathbf{P}_x^{\bullet}$ is isomorphic to the direct sum of a linear complex and a number of trivial complexes. 
The claim follows.
\end{proof}

\begin{corollary}\label{Corlinear}
The functor $C_x$ on $\mathcal{O}_0^{\mZ}$ is isomorphic to the cokernel of a natural transformation
\begin{displaymath}
\theta\langle -1\rangle\to \theta_x, 
\end{displaymath}
for $\theta$ a direct sum of projective functors $\theta_z$, with $z\in W$, without any grading shift.
\end{corollary}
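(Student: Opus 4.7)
The plan is to read off the claim from the preceding proposition by taking the $0$-th cohomology of the linear complex it produces. More precisely, by the preceding proposition, there exists a linear complex $\mathbf{P}_x^{\bullet}$ of projective functors on $\mathcal{O}_0^{\mZ}$ of length $\ell(x)$ whose zero term is $\theta_x$ (in cohomological degree $0$, with no grading shift) and which represents $\cL C_x$ on $\mathcal{D}^b(\mathcal{O}_0^{\mZ})$. Since the complex is linear and the zero term is in internal degree $0$, the term in cohomological degree $-1$ is a direct sum of projective functors each shifted by $\langle -1\rangle$.

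Now I would invoke right exactness of $C_x$. The functor $C_x$ is right exact (as recalled in Subsection \ref{SecShuff}), and $\cL C_x$ is its left derived functor, so $C_x \cong H^0(\cL C_x)$ as functors. Since $\mathbf{P}_x^{\bullet}$ is a complex of exact functors representing $\cL C_x$, applying it objectwise and taking $H^0$ gives $C_x$. Hence $C_x$ is isomorphic to the cokernel of the differential $\mathbf{P}_x^{-1}\to \mathbf{P}_x^{0} = \theta_x$.

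By the classification of indecomposable projective functors \cite{BG}, the term $\mathbf{P}_x^{-1}$ is isomorphic to $\theta\langle -1\rangle$ for some $\theta = \bigoplus_{z\in W} \theta_z^{\oplus n_z}$ with $n_z\in\mathbb{Z}_{\geq 0}$, and the differential is, by \cite[Theorem~3.5]{BG} and linearity, induced by a natural transformation $\theta\langle -1\rangle \to \theta_x$ of projective functors. This yields the required presentation.

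The only thing that requires a small verification is that taking cohomology commutes with evaluation on any object, which is immediate since each $\mathbf{P}_x^{i}$ is an exact functor (projective functors are exact). There is no real obstacle here: the statement is essentially a reformulation of the previous proposition together with the identification $C_x = H^0(\cL C_x)$, and the structure of the linear complex automatically forces the middle term to have pure internal shift $\langle -1\rangle$.
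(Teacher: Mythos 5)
Your proposal is correct and is essentially the argument the paper intends: the corollary is read off from the preceding proposition by using right exactness of $C_x$ to identify $C_x$ with $H^0(\cL C_x)$, so that $C_x$ is the cokernel of the degree $-1$ to degree $0$ differential of the linear complex, whose $-1$ term is forced by linearity to be of the form $\theta\langle -1\rangle$ with $\theta$ a direct sum of various $\theta_z$. (The appeal to \cite[Theorem~3.5]{BG} is unnecessary here, since the differential of a complex of functors is already a natural transformation, but this does no harm.)
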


\begin{lemma}\label{EndShuff}
For any $x\in W$, the algebra $\End(C_x)$  is isomorphic to $\mathtt{C}$.
\end{lemma}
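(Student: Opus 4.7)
The approach is to leverage the fact, cited just before the lemma, that $\cL C_x$ is an autoequivalence of $\mathcal{D}^b(\mathcal{O}_0)$. Given this, the natural thing to do is to compute $\End(\cL C_x)$ in terms of the center of the category, and then verify that nothing is lost in passing back to $\End(C_x)$.

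First, I would show that the canonical restriction map from degree-zero natural transformations of $\cL C_x$ to natural transformations of $C_x$ is an isomorphism. Let $P := \bigoplus_{w\in W}P(w)$ be the projective generator of $\mathcal{O}_0$ and $A := \End_{\mathcal{O}_0}(P)^{\mathrm{op}}$, so that $\mathcal{O}_0 \cong A\text{-mod}$. By Eilenberg--Watts, any right exact endofunctor $F$ of $\mathcal{O}_0$ is of the form $F(P)\otimes_A -$, and $\End(F)$ equals the algebra of $A$-bimodule endomorphisms of $F(P)$. Since $P$ is projective, $\cL C_x(P) \cong C_x(P)$ is concentrated in cohomological degree zero, so the $A$-bimodule $M := C_x(P)$ computes both $\End(C_x)$ and the degree-zero part of $\End(\cL C_x)$.

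Second, since $\cL C_x$ is an autoequivalence with quasi-inverse $\cL C_x^{-1}$, the map $\alpha \mapsto \cL C_x^{-1}\,\alpha$ yields an algebra isomorphism $\End(\cL C_x) \cong \End(\id_{\mathcal{D}^b(\mathcal{O}_0)})$, respecting the cohomological grading. In degree zero, endomorphisms of the identity functor on $\mathcal{D}^b(\mathcal{O}_0)$ coincide with the Bernstein center of the heart $\mathcal{O}_0$, and by Soergel's Endomorphismensatz this center is canonically isomorphic to the coinvariant algebra $\mathtt{C}$. Concatenating all three identifications yields $\End(C_x) \cong \mathtt{C}$.

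The main subtlety lies in the first step: I must make sure that all natural transformations $C_x \to C_x$ lift uniquely to natural transformations of the derived functor, rather than potentially producing new endomorphisms in the derived world. The Eilenberg--Watts argument sketched above handles this, but an alternative that avoids invoking Eilenberg--Watts would be to use Corollary~\ref{Corlinear}, which presents $C_x$ as the cokernel of a natural transformation $\theta\langle -1\rangle \to \theta_x$ between projective functors; applying the left exact functor $\Nat(-,C_x)$ to this presentation would give an explicit description of $\End(C_x)$ as an $A$-bimodule, from which the identification with $\mathtt{C}$ could be verified directly. Either way, once the comparison with $\End(\cL C_x)$ is established, the remaining steps are formal.
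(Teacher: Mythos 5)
Your use of Eilenberg--Watts to identify $\End(C_x)$ with the bimodule endomorphisms of $M:=C_x(P)$ is fine, but the two identifications you build on it are exactly the non-formal points, and neither is justified by what you say. First, the assertion that $M$ ``computes the degree-zero part of $\End(\cL C_x)$'' does not follow from $\cL C_x(P)\cong C_x(P)$ being concentrated in degree zero: natural endomorphisms of the triangulated functor $\cL C_x\cong M\otimes_A^{\mathrm{L}}-$ are not formally given by (derived) bimodule endomorphisms of $M$. There is a canonical injective map from $\End_{A\text{-}A}(M)$ to the natural endomorphisms of $\cL C_x$, but its surjectivity (``no new endomorphisms in the derived world'') is precisely what would need proof; Eilenberg--Watts only controls transformations between the abelian-level right exact functors. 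Second, the identification of the degree-zero endomorphisms of $\Id_{\mathcal{D}^b(\mathcal{O}_0)}$ with the center of the heart is also not formal: restriction to the heart is a split surjection onto $Z(\mathcal{O}_0)$, but its injectivity (absence of extra degree-zero elements in the graded center of the derived category) is not automatic for triangulated natural transformations. With only the canonical maps available, your chain does not yield an isomorphism $\End(C_x)\cong\mathtt{C}$, so there is a genuine gap at the central step; your fallback suggestion via Corollary~\ref{Corlinear} is likewise only a sketch.

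For comparison: your idea can be repaired while staying entirely at the bimodule level, since $\cL C_x\cong C_x(A)\otimes_A^{\mathrm{L}}-$ being an equivalence makes $C_x(A)$ a two-sided tilting complex in Rickard's sense, and for an invertible object of the derived category of bimodules one has $\End_{A\text{-}A}(C_x(A))\cong \mathrm{Hom}_{\mathcal{D}(A\text{-}A)}(A,A)=Z(A)\cong\mathtt{C}$ by Soergel; this avoids derived natural transformations but invokes heavier machinery. The paper's proof avoids both issues: it forms the commuting square linking $\End(\Id)\to\End(C_x)$ (whiskering) with evaluation $\Ev_{P(w_0)}$, uses $C_xP(w_0)\cong P(w_0)$, Soergel's isomorphisms $\End(\Id)\cong\mathtt{C}\cong\End_{\mathcal{O}}(P(w_0))$, and the fact that $C_x$ induces an automorphism of $\End_{\mathcal{O}}(P(w_0))$ (the derived equivalence is only used on this one object), and then reduces everything to the injectivity of $\Ev_{P(w_0)}$ on $\End(C_x)$; the latter is proved by embedding a projective generator $P$ into a direct sum $Q$ of copies of $P(w_0)$ with Verma-flagged cokernel, so that $C_xP\hookrightarrow C_xQ$ remains exact and any nonzero $\eta\in\End(C_x)$ has $\eta_P\neq 0$, hence $\eta_Q\neq 0$. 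If you want to complete your argument, some concrete comparison of this kind is the missing ingredient.
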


\begin{proof}
Consider endomorphism algebras of the functors $\Id$ and $C_x$ as objects in 
the category of $\mC$-linear additive functors on $\mathcal{O}_0$.
We have a commuting diagram of algebra morphisms
\begin{equation}\label{eqdiag1}
\xymatrix{
\End(\Id)\ar[rr]^{()_{C_x}}\ar[d]^{\Ev_{P(w_0)}}&&\End(C_x)\ar[d]^{\Ev_{P(w_0)}}\\
\End_{\mathcal{O}}(P(w_0))\ar[rr]^{C_x}&&\End_{\mathcal{O}}(P(w_0))
} 
\end{equation}
where the vertical arrows are evaluation of natural transformations on the module 
$P(w_0)$ and the upper horizontal arrow maps a natural transformation 
$\eta:\Id\to \Id$ to the natural transformation $\eta_{C_x}:\Id\circ C_x\to \Id\circ C_x$, 
defined by $(\eta_{C_x})_M=\eta_{C_xM}$, for all $M\in \mathcal{O}_0$. 
We also used the fact that $C_x P(w_0)\cong P(w_0)$ which follows, by induction on the length of 
$C_x$ from $C_s P(w_0)\cong P(w_0)$. The latter is checked by a direct computation directly
from the definitions. Indeed,
$\theta_s P(w_0)\cong P(w_0)\langle 1\rangle \oplus P(w_0)\langle -1\rangle$ and the adjunction
morphism $P(w_0)\langle -1\rangle\to \theta_s P(w_0)$ is injective as the simple socle of 
$P(w_0)$ is not killed by $\theta_s$, which yields $C_s P(w_0)\cong P(w_0)$.

It is proved in \cite[Section~2]{SoergelD} that $\End(\Id)\cong \mathtt{C}$ and that 
the left vertical arrow in \eqref{eqdiag1} is an isomorphism. That the lower horizontal arrow is an 
isomorphism follows from the fact that $\cL C_{x}$ is an auto-equivalence of 
$\mathcal{D}^b(\mathcal{O}_0)$. It thus suffices to prove that the right vertical arrow is injective.

Consider a short exact sequence
\begin{displaymath}
0\to \Delta(e)\to P(w_0)\to \mathrm{Coker}\to 0, 
\end{displaymath}
where $\mathrm{Coker}$ has a Verma flag. Applying all indecomposable projective functors 
to this sequence and  adding all this up gives a 
short exact sequence
\begin{displaymath}
0\to P\to Q\to N\to 0, 
\end{displaymath}
where $P$ is a projective generator of $\mathcal{O}_0$, the module $Q$ is a direct sum of copies of 
$P(w_0)$ and $N$ is a module with Verma flag. 
By Section~\ref{SecShuff}, we thus have a short exact sequence
\begin{displaymath}
0\to C_{x}P\to C_xQ\to C_{x}N\to 0.
\end{displaymath}
Evaluating an arbitrary $\eta\in \End(C_x)$ thus yields a commuting diagram with exact rows
\begin{displaymath}
\xymatrix{
0\ar[r]& C_xP\ar[r]\ar[d]^{\eta_P}&C_xQ\ar[d]^{\eta_Q}\\
0\ar[r]&C_xP\ar[r]&C_x Q
} 
\end{displaymath}
Since $P$ is projective generator, $\eta_P$ is not zero as soon as $\eta\neq 0$. 
It thus follows that $\eta_Q$ cannot be zero either in this case 
and the injectivity requested in the previous paragraph is proved.
\end{proof}

\subsection{Proof of Theorem~\ref{PropShuff}}\label{s9.4}

Claim~\eqref{PropShuff.1} is proved in Lemma~\ref{EndShuff}. Claim~\eqref{PropShuff.2} follows 
from the fact that $\cL C_x$ is an auto-equivalence of the bounded derived category. 
Claims~\eqref{PropShuff.3} and \eqref{PropShuff.6} follow from the epimorphism $\theta_x\tto C_x$ 
in  Corollary~\ref{Corlinear} and the criterion for vanishing of $\theta_xL(y)$ in 
\eqref{neweq123}.

In the following two lemmata we prove the remaining claims~\eqref{PropShuff.4} and \eqref{PropShuff.5}.

\begin{lemma}\label{llln1}
Let $\mathfrak{g}$ be of any type and consider $x,y\in W$.
\begin{enumerate}[$($a$)$]
\item  If $x=w_0^{\mathfrak{p}}$ for some parabolic subalgebra $\mathfrak{p}$, then
the module $C_xL(y)$ has simple top $L(y)$ if and only if $x\le_{\mathcal{R}}y^{-1}$ and is zero otherwise.
\item If $y$ is a Duflo involution and $x\sim_{\mathcal{R}} y$, then the module $C_xL(y)$ has simple top $L(x)$.
\end{enumerate}
\end{lemma}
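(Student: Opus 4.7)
The overall strategy for both parts exploits the surjection $\theta_x L(y) \twoheadrightarrow C_x L(y)$ obtained by evaluating the presentation $\theta\langle -1\rangle\to \theta_x \to C_x \to 0$ of Corollary~\ref{Corlinear} at $L(y)$. Consequently $C_x L(y)$ is either zero, or it inherits any simple top of $\theta_x L(y)$. The plan is therefore to (i) identify the top of $\theta_x L(y)$ as the predicted $L(z)$ (with $z=y$ in (a) and $z=x$ in (b)), and then (ii) rule out $C_x L(y)=0$ in the relevant non-trivial cases.

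For step (i) in part (a): if $x\not\leq_{\mathcal{R}} y^{-1}$ then $\theta_x L(y)=0$ by \eqref{neweq123} and hence $C_x L(y)=0$, yielding the ``zero otherwise'' clause. If $x\leq_{\mathcal{R}} y^{-1}$, Corollary~\ref{cor697} supplies indecomposability of $\theta_x L(y)$. Since $x=w_0^{\mathfrak{p}}$ satisfies $\theta_x\circ\theta_x\cong \theta_x^{\oplus k}$ by Example~\ref{ex62}, Proposition~\ref{prop65}\eqref{prop65.3} forces every simple in the top of $\theta_x L(y)$ to be isomorphic to $L(y)$; combined with the one-dimensionality of the extremal graded components of $\theta_x L(y)$ established in the proof of Corollary~\ref{cor697}, this pins the top down to a single copy of $L(y)$. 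For step (i) in part (b), I would invoke the cell-theoretic description in \cite[Subsection~4.5]{MM1}: under $y$ a Duflo involution and $x\sim_{\mathcal{R}} y$, the cell $2$-representation of $\cP$ associated with the right Kazhdan-Lusztig cell of $y$ has $L(y)$ as its cyclic (Duflo) generator, and the action of $\theta_x$ on this generator produces an indecomposable module with simple top $L(x)$.

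For step (ii) the plan is to use Lemma~\ref{EquivTC}, which identifies $\End(C_x L(y))\cong \End(T_{x^{-1}} L(y^{-1}))$ and thereby reduces non-vanishing of $C_x L(y)$ to non-vanishing of $T_{x^{-1}} L(y^{-1})$. In case (a), $x^{-1}=x=w_0^{\mathfrak{p}}$, and the hypothesis that $y$ is a longest representative in $W/W^{\mathfrak{p}}$ yields $\ell(y^{-1})=\ell(x)+\ell(y^{-1}x)$, so a reduced expression of $x$ appears as a left prefix of a reduced expression of $y^{-1}$. Iteratively applying $T_s$ along such an expression of $x$, at each stage the relevant simple reflection lies in the left descent set of the current argument, so the twisting survives and one obtains $T_x L(y^{-1})\neq 0$. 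Case (b) is analogous: $x\sim_{\mathcal{R}} y$ translates via $y^{-1}=y$ to $x^{-1}\sim_{\mathcal{L}} y$, and the Duflo structure of $y$ furnishes a reduced expression of $x^{-1}$ aligning with the left descents of $y$, yielding $T_{x^{-1}} L(y)\neq 0$.

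The main obstacle will be step (ii), since non-vanishing of twistings of simple modules depends delicately on Bruhat-descent combinatorics, and one must carefully exploit the alignment between reduced expressions and descent sets that the hypotheses of (a) and (b) are designed to produce. An alternative route would use the linear complex $\theta\langle -1\rangle\to \theta_x$ of Corollary~\ref{Corlinear} directly, arguing that the image of $\theta L(y)\langle -1\rangle$ sits in strictly higher graded degrees than the extremal degree hosting the simple top of $\theta_x L(y)$, so the top survives in $C_x L(y)$; however, making this precise requires controlling the minimal internal degrees appearing in each summand $\theta_z$ of $\theta$, and this seems to be no easier than the twisting-functor argument above.
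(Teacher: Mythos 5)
Your treatment of the ``simple top'' half is essentially sound and parallels the paper: the surjection $\theta_x L(y)\tto C_xL(y)$ coming from Corollary~\ref{Corlinear}, together with the fact that $\theta_{w_0^{\mathfrak{p}}}L(y)$ is zero or has simple top $L(y)$ in case (a), and, in case (b), the Duflo property of $y$ (a submodule $K\subset\Delta(e)$ with top $L(y)$ and $\theta_xK\cong P(x)$, see \cite[Proposition~17]{MM1}) giving $\theta_xL(y)$ simple top $L(x)$, reduces everything to showing $C_xL(y)\neq 0$. The genuine gap is precisely in that non-vanishing step (ii). The reduction via Lemma~\ref{EquivTC} to $T_{x^{-1}}L(y^{-1})\neq 0$ is legitimate, but your induction ``at each stage the relevant simple reflection lies in the left descent set of the current argument'' is not an argument: after the first step the module being twisted is no longer simple, and since $T_s$ annihilates exactly the $s$-finite modules you would have to prove that each intermediate module $T_{s_{i+1}}\cdots T_{s_k}L(y^{-1})$ fails to be $s_i$-finite, which a prefix property of reduced expressions does not give you. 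In case (b) the situation is worse: $x\sim_{\mathcal{R}}y$ is a cell condition and produces no ``alignment'' between a reduced expression of $x^{-1}$ and the left descents of $y$; no such combinatorial criterion is on the table (the paper explicitly records the general vanishing question for $C_xL(y)$ as open), so as written the non-vanishing in both cases is unproved.

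For comparison, the paper closes this step by different means. In (a) it first computes the extreme case: $C_{w_0}L(w_0)\cong\nabla(e)\neq 0$ by \cite[Proposition~5.12]{shuffling} and $C_{w_0}L(y)=0$ for $y\neq w_0$ (since $\theta_{w_0}L(y)=0$ surjects onto it), and then transports this through the Serre subquotient equivalence of Theorem~\ref{2dequiv}, which is compatible with shuffling; this yields non-vanishing exactly when $y$ is the longest representative in its coset, i.e.\ when $w_0^{\mathfrak{p}}\le_{\mathcal{R}}y^{-1}$. In (b) it uses the graded presentation $\theta L(y)\langle 1\rangle\to\theta_xL(y)\to C_xL(y)\to 0$: the simple top of $\theta_xL(y)$ lives in degree $-a(y)$ while the left-hand term vanishes in that degree, by \cite[Proposition~1(c)]{SHPO2}, so the top survives in the cokernel and $C_xL(y)\neq 0$. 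This degree bound is exactly the missing ingredient that would make your ``alternative route'' precise; without it, or without the equivalence argument of Theorem~\ref{2dequiv} in case (a), the proposal does not establish the lemma.
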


\begin{proof}
Since $\theta_{w_0^{\mathfrak{p}}}L(y)$ either has simple top $L(y)$ or is zero, 
the corresponding property for $C_{w_0^{\mathfrak{p}}}L(y)$ follows from the paragraph above the lemma.
We have
\begin{displaymath}
C_{w_0}L(y)=\begin{cases}0&\mbox{if }\;\; y\not=w_0\\
\nabla(e)&  \mbox{if } \;\;y=w_0.\end{cases} 
\end{displaymath}
Indeed, the case $y=w_0$ follows from \cite[Proposition~5.12]{shuffling}. The case $y\not=w_0$
follows by combining the facts that $\theta_{w_0}L(y)=0$ and, further, that $\theta_{w_0}L(y)\tto C_{w_0}L(y)$,
where the latter follows from Corollary~\ref{Corlinear}.
That $C_{w_0^{\mathfrak{p}}}L(y)$ is non-zero if and only if $y$ is a longest 
representative in $W/W^{\mathfrak{p}}$ then follows from Theorem~\ref{2dequiv}. 
The latter condition is equivalent to $w_0^{\mathfrak{p}}\le_{\mathcal{R}}y^{-1}$. This proves the first claim.

Now assume that $y$ is a Duflo involution. This means that there exists a module 
$K\subset\Delta(e)$ such that $\theta_x\Delta(e)\cong \theta_xK$, for any 
$x\sim_{\mathcal{R}}y$, and the top of $K$ is $L(y)$, see e.g.~\cite[Proposition~17]{MM1}. 
Consequently, $P(x)\cong\theta_x K$ surjects onto $\theta_xL(y)$, so 
$\theta_xL(y)$ has simple top $L(x)$. By Corollary~\ref{Corlinear}, 
we have an exact sequence in $\mathcal{O}_0^{\mZ}$
\begin{displaymath}
\theta L(y)\langle 1\rangle\to \theta_x L(y)\to C_xL(y)\to 0. 
\end{displaymath}
The module in the middle term has its simple top in degree $-a(y)$, see \cite[Proposition~1(c)]{SHPO2}. 
It is also proved in {\it loc. cit.} that the term in the left-hand side 
is zero in degree $-a(y)$. 
This means, in particular, that $C_xL(y)\not=0$ and that it has simple top, which completes the proof.
\end{proof}

For the following lemma, we need to introduce some notation for the parabolic version 
of category $\mathcal{O}$. Let $\mathcal{O}^{\mathfrak{p}}$ denote the full subcategory 
of modules in $\mathcal{O}$ which are locally $U(\mathfrak{l})$-finite. The standard modules in $\mathcal{O}^{\mathfrak{p}}_0$ are given by the parabolic Verma modules $\Delta^{\mathfrak{p}}(x)$, 
where $x\in {}^{\mathfrak{p}}X$. They can either be defined as the maximal locally 
$U(\mathfrak{l})$-finite quotient modules of the ordinary Verma modules, or as the modules 
induced from simple finite dimensional $\mathfrak{p}$-modules. By construction, 
$\Delta^{\mathfrak{p}}(x)$ has simple top $L(x)$. We also write $\nabla^{\mathfrak{p}}(x)$ 
for the dual module of $\Delta^{\mathfrak{p}}(x)$.

\begin{lemma}\label{lemlll2}
For a parabolic subalgebra $\mathfrak{p}$ of $\mathfrak{g}$ and $x\in W$ a shortest 
representative in $W'\backslash W$, with $W'=w_0W^{\mathfrak{p}}w_0$, we have
\begin{displaymath}
C_xL(w_0^{\mathfrak{p}}w_0)\cong \nabla^{\mathfrak{p}}(w_0^{\mathfrak{p}}w_0x).
\end{displaymath}
Hence, $C_xL(w_0^{\mathfrak{p}}w_0)$ is non-zero and  has simple socle $L(w_0^{\mathfrak{p}}w_0x)$.
\end{lemma}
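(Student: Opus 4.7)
The plan is to proceed by induction on $\ell(x)$, identifying at each stage the image $C_{x'}L(w_0^{\mathfrak{p}}w_0)$ with a dual parabolic Verma indexed by a progressively shorter element of ${}^{\mathfrak{p}}X$.

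For the base case $x=e$, the element $w_0^{\mathfrak{p}}w_0$ is the longest element of ${}^{\mathfrak{p}}X$, so both $\Delta^{\mathfrak{p}}(w_0^{\mathfrak{p}}w_0)$ and $\nabla^{\mathfrak{p}}(w_0^{\mathfrak{p}}w_0)$ are simple and coincide with $L(w_0^{\mathfrak{p}}w_0)$. For the inductive step, factor $x=x's$ reduced; since the shortest representatives in $W'\backslash W$ form a lower set under the right weak Bruhat order, $x'$ is again a shortest representative. Writing $w_0^{\mathfrak{p}}w_0=w_0\cdot w_0^{W'}$, where $w_0^{W'}$ denotes the longest element of $W'$ (of length $\ell(w_0^{\mathfrak{p}})$), the identity $\ell(w_0^{W'}x)=\ell(w_0^{W'})+\ell(x)$ yields $\ell(w_0^{\mathfrak{p}}w_0x)=\ell(w_0^{\mathfrak{p}}w_0)-\ell(x)$. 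Combining this with the relation $rw_0^{\mathfrak{p}}=w_0^{\mathfrak{p}}r^*$ for simple $r\in W^{\mathfrak{p}}$, one checks that both $y:=w_0^{\mathfrak{p}}w_0x'$ and $ys=w_0^{\mathfrak{p}}w_0x$ lie in ${}^{\mathfrak{p}}X$, with $\ell(ys)=\ell(y)-1$. The inductive hypothesis therefore reduces the lemma to the statement $C_s\nabla^{\mathfrak{p}}(y)\cong\nabla^{\mathfrak{p}}(ys)$.

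For this, I would appeal to the parabolic analogue of~\eqref{eqtheDel} (valid since $y,ys\in{}^{\mathfrak{p}}X$) and apply BGG duality to obtain the short exact sequence
\begin{displaymath}
0\to\nabla^{\mathfrak{p}}(y)\to\theta_s\nabla^{\mathfrak{p}}(y)\to\nabla^{\mathfrak{p}}(ys)\to 0
\end{displaymath}
together with $\theta_s\nabla^{\mathfrak{p}}(y)\cong\theta_s\nabla^{\mathfrak{p}}(ys)$. It then remains to identify the adjunction unit $\eta:\nabla^{\mathfrak{p}}(y)\to\theta_s\nabla^{\mathfrak{p}}(y)$ with the inclusion in this sequence. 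The composition of $\eta$ with the projection to $\nabla^{\mathfrak{p}}(ys)$ lies in $\mathrm{Hom}(\nabla^{\mathfrak{p}}(y),\nabla^{\mathfrak{p}}(ys))$, which vanishes because morphisms between distinct costandard modules in a highest weight category are zero; hence $\eta$ factors through the inclusion. Moreover, since $ys\prec y$ the simple $L(y)$ has $s$ in its right descent set, so $\theta^{\mathrm{on}}_s L(y)\neq 0$ and consequently $\theta^{\mathrm{on}}_s\nabla^{\mathfrak{p}}(y)\neq 0$, forcing $\eta$ to be nonzero. Thus $\eta$ agrees with the inclusion up to a scalar, and $C_s\nabla^{\mathfrak{p}}(y)\cong\mathrm{coker}\,\eta\cong\nabla^{\mathfrak{p}}(ys)$.

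The main obstacle is this final identification of the adjunction unit; the rest is bookkeeping with the highest weight structure of $\mathcal{O}^{\mathfrak{p}}$ and combinatorics of shortest coset representatives. Once the isomorphism $C_xL(w_0^{\mathfrak{p}}w_0)\cong\nabla^{\mathfrak{p}}(w_0^{\mathfrak{p}}w_0x)$ is established, the remaining assertions of the lemma---non\-vanishing and simple socle $L(w_0^{\mathfrak{p}}w_0x)$---are immediate from the standard properties of $\nabla^{\mathfrak{p}}$.
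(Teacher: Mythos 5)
Your proposal is correct in substance but follows a genuinely different route from the paper. The paper proves the lemma in two lines via Koszul duality: for $\lambda\in\Lambda^+$ with $W_\lambda=W'$, the duality of \cite{BGS} identifies $\mathcal{D}^b(\mathcal{O}_0^{\mathfrak{p}})^{\mathbb{Z}}$ with $\mathcal{D}^b(\mathcal{O}_{\lambda})^{\mathbb{Z}}$, sends $\nabla^{\mathfrak{p}}(z)$ to $\Delta(z^{-1}w_0\cdot\lambda)$ and interchanges $\cL C_x$ with $\cL T_{x^{-1}}$ (see \cite[Section~6.5]{MOS}), so everything collapses to the fact \eqref{twistVerma} that twisting a dominant Verma module gives a Verma module. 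You instead induct on $\ell(x)$ and reduce to the single abelian-level computation $C_s\nabla^{\mathfrak{p}}(y)\cong\nabla^{\mathfrak{p}}(ys)$ for $y,ys\in{}^{\mathfrak{p}}X$ with $ys\prec y$. Your combinatorial bookkeeping checks out ($x'=xs$ is again a shortest representative of its $W'$-coset, $y=w_0^{\mathfrak{p}}w_0x'$ and $ys=w_0^{\mathfrak{p}}w_0x$ lie in ${}^{\mathfrak{p}}X$ with $\ell(ys)=\ell(y)-1$), and so do the identification of $C_sM$ with the cokernel of the unit $\eta_M:M\to\theta_sM$, the equivalence $\eta_M\neq0\Leftrightarrow\theta^{\mathrm{on}}_sM\neq0$, and the implicit use of $\mathrm{End}(\nabla^{\mathfrak{p}}(y))\cong\mathbb{C}$. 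What your route buys is an elementary argument entirely inside abelian category $\mathcal{O}$ that makes visible how $C_s$ walks along dual parabolic Verma modules; the cost is that the parabolic analogue of \eqref{eqtheDel} and its dual must be quoted or proved (it follows by applying the parabolic truncation functor, which commutes with projective functors), whereas the paper's argument leans on the heavier but directly citable machinery of \cite{BGS} and \cite{MOS}.

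One justification must be replaced: you claim $\mathrm{Hom}(\nabla^{\mathfrak{p}}(y),\nabla^{\mathfrak{p}}(ys))=0$ because ``morphisms between distinct costandard modules in a highest weight category are zero.'' That general principle is false: already for $\mathfrak{sl}_2$ one has $\mathrm{Hom}(\nabla(e),\nabla(s))\neq0$, since $L(s)$ is the top of $\nabla(e)$ (dually, this is the classical inclusion $\Delta(s)\subset\Delta(e)$). The vanishing you need is nevertheless true, for an order-theoretic reason: a nonzero map $\nabla^{\mathfrak{p}}(y)\to\nabla^{\mathfrak{p}}(ys)$ has image containing the simple socle $L(ys)$, so $L(ys)$ would be a composition factor of $\nabla^{\mathfrak{p}}(y)$, hence of $\Delta(y)$; by \eqref{VermaBGG} this forces $ys\cdot 0\preceq y\cdot 0$, i.e. $ys\succeq y$, contradicting $ys\prec y$. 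With this substitution, $\eta$ indeed lands in the submodule $\nabla^{\mathfrak{p}}(y)\subset\theta_s\nabla^{\mathfrak{p}}(y)$, is an isomorphism onto it, and your induction closes.
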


\begin{proof}
Consider $\lambda\in\Lambda^+$ with $W_\lambda=W'$. By \cite{BGS}, we have the Koszul duality functor
\begin{displaymath}
K:\mathcal{D}^b(\mathcal{O}_0^{\mathfrak{p}})^{\mZ}\;\tilde\to\;\mathcal{D}^b(\mathcal{O}_{\lambda})^{\mZ},
\end{displaymath}
normalized such that $\nabla^{\mathfrak{p}}(z)$ is mapped to $\Delta(z^{-1}w_0\cdot\lambda)$. Under this duality, $\cL C_w$ is exchanged with $\cL T_{w^{-1}}$, see \cite[Section~6.5]{MOS}. 
With $w_0^\lambda$, the longest element in $W'$, we thus have
\begin{displaymath}
C_xL(w_0^{\mathfrak{p}}w_0)\cong K^{-1}(T_{x^{-1}}\Delta(w_0^\lambda\cdot\lambda))\cong K^{-1}(\Delta(x^{-1}w_0^\lambda\cdot\lambda))\cong\nabla^{\mathfrak{p}}(w_0^{\mathfrak{p}}w_0x). 
\end{displaymath}
This completes the proof.
\end{proof}

\noindent
KC: School of Mathematics and Statistics, University of Sydney, Australia;
E-mail: {\tt kevin.coulembier@sydney.edu.au}
\vspace{2mm}

\noindent
VM: Department of Mathematics, University of Uppsala, Box 480, SE-75106, Uppsala, Sweden;
E-mail: {\tt  mazor@math.uu.se}

\noindent
XZ: Department of Mathematics, University of Uppsala, Box 480, SE-75106, Uppsala, Sweden;
E-mail: {\tt  xiaoting.zhang@math.uu.se}
\date{}


\begin{thebibliography}{999999} % '2nd argument contains the widest acronym'


\bibitem[ADL]{ADL}
I.~\'Agoston, V.~Dlab, E.~Luk\'acs.
Quasi-hereditary extension algebras.
Algebr. Represent. Theory {\bf6} (2003), no. 1, 97--117.

\bibitem[AS]{AS}
H.~Andersen, C.~Stroppel.
Twisting functors on~$\mathcal{O}$.
Represent. Theory {\bf 7} (2003), 681--699.



\bibitem[Ar]{Arkhipov}
S. Arkhipov,
Algebraic construction of contragradient quasi-Verma modules in positive characteristic. Representation theory of algebraic groups and quantum groups, 27--68,
Adv. Stud. Pure Math., {\bf 40}, Math. Soc. Japan, Tokyo, 2004.


%\bibitem[Ba1]{Back}
%E.~Backelin.
%Koszul duality for parabolic and singular category O.
%Represent. Theory {\bf 3} (1999), 139--152.

% \bibitem[Ba2]{Backelin}
% E.~Backelin.
% The Hom-spaces between projective functors. Represent. Theory
% {\bf 5} (2001), 267--283

%\bibitem[BGi]{crossing}
%A.~Beilinson, V.~Ginzburg.
%Wall-crossing functors and D-modules.
%Represent. Theory {\bf 3} (1999), 1--31.


\bibitem[BGS]{BGS}
A.~Beilinson, V.~Ginzburg, W.~Soergel. Koszul duality patterns in representation theory.
J. Amer. Math. Soc. {\bf 9} (1996), no. 2, 473--527.

\bibitem[BG]{BG}
I.~Bern\v{s}tein, S.~Gel'fand.
Tensor products of finite- and infinite-dimensional
representations of semisimple Lie algebras.
Compositio Math. {\bf 41} (1980), no. 2, 245--285.

\bibitem[BGG1]{BGGr}
I.~Bern\v{s}tein, I.~Gel'fand, S.~Gel'fand.
Differential operators on the base affine space and a study of $\mathfrak{g}$-modules. 
in: Lie groups and their representations 
(Proc. Summer School, Bolyai J{\'a}nos Math. Soc., Budapest, 1971), 
pp. 21--64. Halsted, New York, 1975. 

\bibitem[BGG2]{BGG}
I.~Bern\v{s}tein, I.~Gel'fand, S.~Gel'fand.
A certain category of~$\mathfrak{g}$-modules.
Funkcional. Anal. i Prilo\v{z}en. {\bf 10} (1976), no. 2, 1--8.

\bibitem[BW]{BW} S.C. Billey, G.S. Warringtoon. Kazhdan-Lusztig polynomials for
$321$-hexagon-avoiding permutations.
J. Algebraic Combin. {\bf 13} (2001), 111-136

\bibitem[BB]{BB}
A.~Bj{\"o}rner, F.~Brenti.
Combinatorics of Coxeter groups.
Graduate Texts in Mathematics {\bf 231}. Springer, New York, 2005.

\bibitem[Ca]{Ca}
K.~Carlin. 
Extensions of Verma modules. 
Trans. Amer. Math. Soc. {\bf 294} (1986), no. 1, 29--43.

\bibitem[ChM]{ChM}
A.~Chan, V.~Mazorchuk.
Diagrams and discrete extensions for finitary $2$-rep\-re\-sen\-ta\-ti\-ons.
Preprint arXiv:1601.00080

\bibitem[CM]{CM1}
K.~Coulembier, V.~Mazorchuk.
Primitive ideals, twisting functors and star actions for classical Lie superalgebras.
J. Reine Angew. Math. {\bf 718} (2016), 207--253.

%\bibitem[CM2]{CM2}
%K.~Coulembier, V.~Mazorchuk.
%Some homological properties of the category~$\mathcal{O}$. IV. In preparation.

%\bibitem[DFO]{DFO}
%Yu.~Drozd, S.~Ovsienko, V.~Futorny. The Harish-Chandra $S$-homomorphism and
%5${\mathfrak G}$-modules generated by a semiprimitive element.
%Ukrain. Mat. Zh. {\bf 42} (1990), no. 8, 1031--1037; translation in
%Ukrainian Math. J. {\bf 42} (1990), no. 8, 919--924 (1991).

\bibitem[FKM]{FKM}
V.~Futorny, S.~K{\"o}nig, V.~Mazorchuk. 
$\mathcal{S}$-subcategories in $\mathcal{O}$. 
Manuscripta Math. {\bf 102} (2000), no. 4, 487--503.

\bibitem[Ge]{Ge}
M.~Geck.
Kazhdan-Lusztig cells and the Murphy basis.
Proc. Lond. Math. Soc. (3) {\bf 93(3)} (2006), 635--665.

\bibitem[Go]{Go}
M. Goresky. Tables of Kazhdan-Lusztig polynomials. 1996. Available at:
http://www.math.ias.edu/$\sim$goresky/tables.html.

\bibitem[Hu]{Humphreys}
J.~Humphreys.
\newblock{Representations of semisimple Lie algebras in the BGG category O.}
\newblock{Graduate Studies in Mathematics, {\bf 94}. American Mathematical Society, Providence, RI (2008).}

%\bibitem[Ir1]{Irving85}
%R.~Irving.
%Projective modules in the category~$\mathcal{O}_S$: self-duality.
%Trans. Amer. Math. Soc. {\bf 291} (1985), no. 2, 701--732.


%\bibitem[Ir2]{Irving}
%R.~Irving.
%Shuffled Verma modules and principal series modules over complex semisimple Lie algebras.
%J. London Math. Soc. (2) {\bf48} (1993), no. 2, 263--277.

\bibitem[Ja]{Jantzen}
J.C. Jantzen.
Einh\"ullende Algebren halbeinfacher Lie-Algebren.
Ergebnisse der Mathematik und ihrer Grenzgebiete, {\bf 3}. Springer-Verlag, Berlin, 1983.

\bibitem[KiM]{KiM}
T. Kildetoft, V. Mazorchuk,
Parabolic projective functors in type A.
Adv. Math. {\bf 301} (2016), 785--803.

\bibitem[KhM]{Kho}
O. Khomenko, V. Mazorchuk,
On Arkhipov's and Enright's functors.
Math. Z. {\bf 249} (2005), no. 2, 357--386.


% \bibitem[Le]{Lepowsky}
% J.~Lepowsky.
% A generalization of the Bernstein-Gelfand-Gelfand resolution.
% J. Algebra {\bf 49} (1977), no. 2, 496--511.

\bibitem[McL]{McL}
S.~MacLane.
Categories for the Working Mathematician.
Graduate Texts in Mathematics, Vol. {\bf 5}. Springer-Verlag, New York-Berlin, 1971.

\bibitem[Ma1]{Ma}
V.~Mazorchuk.
Applications of the category of linear complexes of tilting modules associated with the category $\mathcal{O}$. 
Algebr. Represent. Theory {\bf 12} (2009), no. 6, 489–512.

\bibitem[Ma2]{SHPO2}
V.~Mazorchuk.
Some homological properties of the category O. II.
Represent. Theory {\bf 14} (2010), 249--263.

\bibitem[MM1]{MM1}
V.~Mazorchuk, V.~Miemietz.
Cell 2-representations of finitary 2-categories. 
Compos. Math. {\bf 147} (2011), no. 5, 1519--1545. 

% \bibitem[MM2]{MM2}
% V.~Mazorchuk, V.~Miemietz.
% Additive versus abelian $2$-representations of fiat 
% $2$-categories. Mosc. Math. J. {\bf 14} (2014), no. 3, 595--615.

\bibitem[MM3]{MM3}
V.~Mazorchuk, V.~Miemietz.
Endomorphisms of cell 2-representations. 
Int. Math. Res. Not. IMRN {\bf 2016} (2016), no. 24, 7471--7498.

\bibitem[MM5]{MM5}
V.~Mazorchuk, V.~Miemietz.
Transitive 2-representations of finitary 2-categories.
Trans. Amer. Math. Soc. {\bf 368} (2016), no. 11, 7623--7644.

\bibitem[MM6]{MM6}
V.~Mazorchuk, V.~Miemietz.
Isotypic faithful $2$-representations of $\mathcal{J}$-simple fiat $2$-categories. 
Math. Z. {\bf 282} (2016), no. 1-2, 411--434.


\bibitem[MO]{pairing}
V.~Mazorchuk, S.~Ovsienko.
A pairing in homology and the category of linear complexes of tilting modules for a quasi-hereditary algebra.
With an appendix by Catharina Stroppel.
J. Math. Kyoto Univ. {\bf 45} (2005), no. 4, 711--741.


\bibitem[MOS]{MOS}
V.~Mazorchuk, S.~Ovsienko, C.~Stroppel.
Quadratic duals, Koszul dual functors, and applications.
Trans. Amer. Math. Soc. {\bf 361} (2009), no. 3, 1129--1172.

\bibitem[MS1]{shuffling}
V.~Mazorchuk, C.~Stroppel.
Translation and shuffling of projectively presentable modules and a categorification of a parabolic Hecke module.
Trans. Amer. Math. Soc. {\bf 357} (2005), no. 7, 2939--2973.

\bibitem[MS2]{simple}
V.~Mazorchuk, C.~Stroppel.
On functors associated to a simple root.
J. Algebra {\bf 314} (2007), no. 1, 97--128.


%\bibitem[MS3]{prinjective}
%V.~Mazorchuk, C.~Stroppel.
%Projective-injective modules, Serre functors and symmetric algebras.
%J. Reine Angew. Math. {\bf 616} (2008), 131--165.

%\bibitem[MS4]{cell}
%V.~Mazorchuk, C.~Stroppel.
%Categorification of (induced) cell modules and the rough structure of generalised Verma modules.
%Adv. Math. {\bf 219} (2008), no. 4, 1363--1426.


%\bibitem[Rin]{Ringel}
%C.M.~Ringel.
%The category of modules with good filtrations over a quasi-hereditary algebra has almost split sequences.
%Math. Z. {\bf 208} (1991), no. 2, 209--223.

%\bibitem[RC]{RC}
%A.~Rocha-Caridi.
%Splitting criteria for g-modules induced from a parabolic and the Ber\v{n}stein-Gel'fand-Gel'fand resolution of a finite-dimensional, irreducible $\mathfrak{g}$-module.
%Trans. Amer. Math. Soc. {\bf262} (1980), no. 2, 335--366.

%\bibitem[Ry]{RH}
%S. Ryom-Hansen.
%Koszul duality of translation- and Zuckerman functors.
%J. Lie Theory {\bf14} (2004), no. 1, 151--163.

\bibitem[So1]{So0}
W.~Soergel.
{\'E}quivalences de certaines cat{\'e}gories de $\mathfrak{g}$-modules. 
C. R. Acad. Sci. Paris S{\'e}r. I Math. {\bf 303} (1986), no. 15, 725--728. 
 
\bibitem[So2]{SoergelD}
W.~Soergel.
Kategorie O, perverse Garben und Moduln \"uber den Koinvarianten zur Weylgruppe.
J. Amer. Math. Soc. {\bf 3} (1990), no. 2, 421--445.

\bibitem[So3]{SoergelT}
W.~Soergel.
Charakterformeln f\"ur Kipp-Moduln \"uber Kac-Moody-Algebren.
Represent. Theory {\bf 1} (1997), 115--132.

\bibitem[So4]{So3}
W.~Soergel.
The combinatorics of Harish-Chandra bimodules. 
J. Reine Angew. Math. {\bf 429} (1992), 49--74.

\bibitem[St]{St}
C.~Stroppel. 
Category $\mathcal{O}$: quivers and endomorphism rings of projectives. 
Represent. Theory {\bf 7} (2003), 322--345. 

\bibitem[Wa]{Wa}
G. S. Warrington.
Equivalence classes for the $\mu$-coefficient of
Kazhdan-Lusztig polynomials in $S_n$. Exper. Math.
{\bf 20} (2011), no. 4, 457--466.



% \bibitem[So3]{SoergelCrelle}
% W.~Soergel.
% The combinatorics of Harish-Chandra bimodules.
% J. Reine Angew. Math. {\bf 429} (1992), 49--74.

%\bibitem[St1]{Stroppel}
%C.~Stroppel.
%Category O: gradings and translation functors.
%J. Algebra {\bf 268} (2003), no. 1, 301--326.

%\bibitem[St2]{Stroppel2}
%C.~Stroppel.
%Category O: quivers and endomorphism rings of projectives.
%Represent. Theory {\bf 7} (2003), 322--345.

\end{thebibliography}
\end{document}